\theoremstyle{plain}
\newtheorem{theorem}{Theorem}[section]
\newtheorem{corollary}[theorem]{Corollary}
\newtheorem{lemma}[theorem]{Lemma}
\newtheorem{Proposition}[theorem]{Proposition}
\newtheorem{fact}[theorem]{Fact}
\newtheorem{notation}[theorem]{Notation}
\theoremstyle{remark}
\newtheorem{remark}[theorem]{Remark}
\numberwithin{equation}{section}
\title[Circular law for block band matrices]{Circular law for non-Hermitian block band matrices with slowly growing bandwidth}
\author{Yi HAN}
\address{Institute for Advanced Study, 1 Einstein Drive, Princeton, NJ
}
\email{hanyi@ias.edu}
\begin{document}

\begin{abstract}
  We consider the empirical eigenvalue distribution for a class of non-Hermitian random block tridiagonal matrices $T$ with independent entries. The matrix has $n$ blocks on the diagonal and each block has size $\ell_n$, so the whole matrix has size $n\ell_n$. We assume that the nonzero entries are i.i.d. with mean 0, variance $(3\ell_n)^{-1/2}$ and having finite moments of all orders. We prove that when the entries have a bounded density, then whenever $\lim_{n\to\infty}\ell_n=\infty$ and $\ell_n=O(\operatorname{Poly}(n))$, the normalized empirical spectral distribution of $T$ converges almost surely to the circular law. The growing bandwidth condition $\lim _{n\to\infty}\ell_n=\infty$ is the optimal condition for the circular law with small bandwidth.  This confirms the folklore conjecture that the circular law holds whenever the bandwidth increases with the dimension, while all existing results for the circular law are only proven in the delocalized regime $\ell_n\gg n$. 
\end{abstract}

\maketitle

\section{Introduction}

Let $A$ be an $N\times N$ matrix with eigenvalues $\lambda_1,\cdots,\lambda_N$. We let $\mu_A:=\frac{1}{N}\sum_{i=1}^N
\delta_{\lambda_i}$ denote the empirical measure of its eigenvalues, where $\delta_\cdot$ is the delta measure. For a symmetric random matrix $A$ with independent entries of mean $0$ and variance $\frac{1}{n}$, then $\mu_A$ converges to the celebrated Wigner semicircle law \cite{wigner1958distribution}. For non-Hermitian random matrices, proving the convergence of $\mu_A$ is a much more challenging task because the method of moments no longer applies.
For a random matrix $A$ with i.i.d. entries, circular law was proven through a long list of partial results \cite{MR1428519}, \cite{tao2008random}, \cite{MR2663633} until Tao and Vu \cite{WOS:000281425000010} obtained the optimal condition.
Later, the convergence of limiting spectral distribution is proven for many other random matrix ensembles, most notably for heavy-tailed random matrices \cite{bordenave2011spectrum}, sparse directed graphs \cite{MR4195739}, and sparse i.i.d. matrices \cite{MR3945840}, \cite{sah2023limiting}. In a related direction, the convergence of ESDs for polynomials in Ginibre matrices has recently been proven in \cite{han2026brown}.

Much less is known about the convergence of $\mu_A$ for a non-Hermitian random matrix with a highly structured variance profile, and the technical reason behind it is that least singular value estimates are notoriously hard to obtain when the matrix does not have a flat variance profile. The most important examples in this category are non-Hermitian random band matrices, which can simply be obtained from a Hermitian band matrix by making every entry independently distributed. However, there is a crucial difference between a Hermitian and a non-Hermitian band matrix model: assuming that the entries have a density, then the random potentials on the diagonal  immediately provide a least singular value estimate for the Hermitian model (see for instance \cite{schenker2009eigenvector}, \cite{peled2019wegner}) which is polynomially small in the dimension. For non-Hermitian band matrices however, if we take the Hermitization, we cannot apply the same proof because the diagonal entries of the Hermitization are all zero! Thus very little is known in the latter setting, even for existence of the limiting density.

The most crucial parameter that governs a band matrix is its bandwidth versus size.
For a band matrix $A$ of size $N$ with bandwidth $W$, the spectral properties of $A$, including the convergence of $\mu_A$, depend in a crucial way on the relative magnitude of $W$ and $N$. The study of circular law on non-Hermitian band matrices was initiated in \cite{cook2018lower} and \cite{jain2021circular}, but these works focus on the range $W\sim N$ or $W\geq N^{1-c}$ for a small $c>0$. This is because the proof techniques use properties that hold in the mean-field (or Ginibre) case that degenerates when $W$ gets smaller. In the ICM survey \cite{MR4680362}, the study of limiting ESDs for inhomogeneous random matrices was also listed as a major open problem, about which we currently have little understanding. Recently, the threshold in $W$ for circular law convergence has been pushed much further in \cite{han2025circular}, where now the sufficient condition is that $W\geq N^{1/2+c}$ for any $c>0$ whenever the variance profile satisfies a certain regularity condition. The scale $W\sim\sqrt{N}$ is the critical scale separating localized and delocalized regimes of band matrix models, and the existing machinery cannot prove circular law deep into the localized regime. This leaves open the question whether $\mu_A$ also converges when $W$ is much smaller, say $W\sim N^{\alpha}$ for any $\alpha>0$. For very small $\alpha$, all the methods in these papers do not apply. 

In this work, we push beyond the current boundary for the circular law by showing that for a special class of models, the circular law holds whenever $W$ is slowly growing in $N$. This confirms the widespread belief that the ESD should converge to the circular law whenever $W$ grows, and provides the first instance of circular law proof in any part of the localized regime with growing $W$: $\Omega(1)\leq W\leq\sqrt{N}$.

\subsection{Models and main results}

Let $A_n$ be an ensemble of random matrices with empirical eigenvalue distribution $\mu_{A_n}$, defined on a common probability space. Let $\mu_c$ be the uniform probability measure on the unit disk in the center of the complex plane (i.e., the circular law). Then we say $\mu_{A_n}$ converges almost surely (resp., in probability) to $\mu_c$ if, for any smooth and compactly supported $f:\mathbb{C}\to\mathbb{R}$, the following expression$$
\int_\mathbb{C} f(z)d\mu_{A_n}(z)-\int_\mathbb{C} f(z)d\mu_c(z)
$$
converges to 0 almost surely (resp., in probability).

\begin{theorem}\label{maincircularlaw1.1} Let $\zeta$ be a random variable satisfying that $\mathbb{E}[\zeta]=0,\mathbb{E}[|\zeta|^2]=1,$ and having all moments finite: for all $p\geq 2$, $\mathbb{E}[|\zeta|^{p}]<c_p<\infty$
    for a $c_p>0$. We further assume that
    \begin{itemize}
        \item Either $\zeta$ is a real-valued random variable with distributional density on $\mathbb{R}$ bounded by some $L>0$;
\item or $\zeta$ has independent real and imaginary parts $\Re\zeta,\Im\zeta,$ and at least one of $\Re\zeta,\Im\zeta,$  has a 
distributional density on $\mathbb{R}$ bounded by some $L>0$.
    \end{itemize}
    Take two integers $n,\ell\in\mathbb{N}$ and we denote $\ell=\ell_n$. Consider the following block tridiagonal matrix $T$
\begin{equation}
\label{equationforalargeT}T=\begin{bmatrix}
A_1 &B_1&&&\\C_2&A_2&B_2&&\\&\ddots&\ddots&\ddots&\\&&C_{n-1}&A_{n-1}&B_{n-1}\\&&&C_n&A_n
\end{bmatrix} \end{equation}
where each block $A_i,B_i,C_i$ has size $\ell\times\ell$ and $T$ has size $\ell n$. Assume that each entry of $A_i,B_i,C_i$ is an i.i.d. copy of $\frac{1}{\sqrt{3\ell}}\zeta$ so that the variance is normalized. Then whenever $\lim_{n\to\infty}\ell_n=\infty$ with $n\geq \ell_n^{d}$ for some sufficiently small fixed constant $d>0$, $\mu_T$ converges almost surely to the circular law.
\end{theorem}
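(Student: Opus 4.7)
The plan is to apply Girko's Hermitization. Setting $T_z := T - zI_{n\ell_n}$ and $\nu_z^n := \frac{1}{n\ell_n}\sum_{i=1}^{n\ell_n}\delta_{\sigma_i(T_z)}$, by the Bordenave--Chafai replacement principle it suffices to show, for Lebesgue almost every $z\in\mathbb{C}$, that (i) $\nu_z^n$ converges weakly almost surely to a deterministic probability measure $\nu_z$, and (ii) $\log t$ is uniformly $\nu_z^n$-integrable almost surely, with the limit $\int\log t\,d\nu_z(t)$ equal to $-U_{\mu_c}(z)$, the negative logarithmic potential of the circular law.

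For (i), I would analyze the resolvent of the Hermitization of $T_z$ via the matrix Dyson / Schur-complement equations. Every row of $T$ has $\Theta(\ell_n)$ nonzero entries of variance $\tfrac{1}{3\ell_n}$, and $\ell_n\to\infty$, so each Schur-complement quadratic form self-averages and the matrix Dyson equation collapses, block-by-block, to the same scalar quadratic equation that governs the iid Ginibre case. Thus $\nu_z$ should coincide with the classical limiting singular value distribution of $G-zI$ for Ginibre $G$, whose logarithmic integral equals $-U_{\mu_c}(z)$. Almost-sure convergence follows from standard concentration of the Stieltjes transform combined with a Borel--Cantelli step, which is where the polynomial growth hypothesis $n\ge \ell_n^{d}$ is used to obtain summable deviation estimates.

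For (ii), the principal obstacle, I must control the small singular values of $T_z$ without any random potential on the diagonal of the Hermitization. The proof exploits the block-tridiagonal structure. Since the entries of $B_i, C_i$ have a bounded density, each such $\ell_n\times\ell_n$ block is a.s.\ invertible and satisfies $\sigma_{\min}(B_i),\sigma_{\min}(C_i)\ge \ell_n^{-C}$ with overwhelming probability. Writing the kernel equation $T_z x = 0$ block-wise as the recursion $C_i x_{i-1}+(A_i-zI)x_i+B_i x_{i+1}=0$ and solving for $x_{i+1}$ yields a transfer-matrix recursion with steps $M_i(z)$ of size $2\ell_n\times 2\ell_n$. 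Standard large-deviation estimates for products of independent random matrices then produce $\sigma_{\min}(T_z)\ge \exp(-Cn)$ with overwhelming probability, so $\tfrac{1}{n\ell_n}|\log\sigma_{\min}(T_z)|\le C/\ell_n\to 0$, which is exactly the order needed. To promote this bound on the single minimum into uniform integrability of $\log t$, I would count singular values below a threshold $\varepsilon$ via a Schur complement of $T_z$ against one diagonal block, reducing the problem to smallest-singular-value estimates for $\ell_n\times\ell_n$ random submatrices accessible by Rudelson--Vershynin type arguments under the bounded-density hypothesis. The hardest step is precisely this combination: the transfer-matrix / Lyapunov bound is one-sided and must be merged with an independent density-based argument to count all sub-threshold singular values; once assembled, a Borel--Cantelli step using $\ell_n=O(\operatorname{Poly}(n))$ upgrades the conclusion from convergence in probability to the almost-sure statement claimed.
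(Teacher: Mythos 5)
There is a genuine gap, and it sits exactly where the paper's main new idea lives. Your step (ii) invokes ``standard large-deviation estimates for products of independent random matrices'' to get $\sigma_{\min}(T_z)\ge e^{-Cn}$. No such standard theory applies here: the transfer matrices $M_i(z)$ have size $2\ell_n\times 2\ell_n$ with $\ell_n\to\infty$ while the number of factors $n$ also grows (and in the regime of interest $n$ can be vastly larger than $\ell_n$). Furstenberg--Le Page / Lyapunov-exponent machinery is a fixed-dimension theory, and the paper explicitly identifies the absence of any quantitative ergodic theory for products of growing-size matrices of this form (with a scalar shift and an inverse block inside) as the obstruction; its entire strategy is designed to avoid ever estimating Lyapunov exponents or $\sigma_{\min}$ of the full matrix through such products. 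Moreover, the transfer recursion controls the determinant, not the least singular value, so even granting invertibility of the $B_i$ you do not get a lower bound on $\sigma_{\min}(T_z)$ from it without further ideas.

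The second half of your (ii) has the same problem in a different guise. Counting sub-threshold singular values of the full $T_z$ is a rigidity statement, and in the localized regime $\ell_n\ll\sqrt{n\ell_n}$ the local-law/MDE error is polynomial in $\ell_n^{-1/2}$ (not in $N^{-1}$), hence useless when $\ell_n$ grows slowly (say $\ell_n=\log\log N$); your proposed Schur complement against a single diagonal block does not reduce the count to an $\ell_n\times\ell_n$ Rudelson--Vershynin problem, because the complement involves the inverse of the remaining $(n-1)\ell_n$-dimensional band matrix and destroys the independence structure you need. What the paper actually does is different in kind: it writes $\log|\det(T-zI)|$ as an additive cocycle over the $n$ transfer steps, proves martingale concentration of this cocycle around its conditional mean, and then decomposes the product into $\sim n/n_0$ independent segments of length $n_0\sim\ell^{d}$; each segment is identified (Proposition \ref{proposition168}) with an auxiliary block-tridiagonal matrix with a general boundary frame and only $n_0\le\ell^{1/12}$ blocks, i.e.\ a near-mean-field matrix for which least-singular-value bounds (Theorem \ref{theorem1.112}) and MDE rigidity (Proposition \ref{propositionrigidity}) \emph{are} available, uniformly in the boundary. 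Convergence of the expected log-determinant for each segment (Theorem \ref{theorem1.115}) plus the martingale concentration then gives the almost-sure circular law. Without this decomposition (or an equivalent mechanism bridging the regime $n\gg\ell$), your outline does not close: both the $\sigma_{\min}$ bound and the uniform integrability step are asserted rather than available.
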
 

The matrix $T$ has dimension $N:=\ell_n n$ and bandwidth $W:=\ell_n$.
Theorem \ref{maincircularlaw1.1} shows that with a continuous density, the circular law holds for $\mu_T$ whenever the bandwidth $\ell_n$ increases with the matrix dimension and whenever $n$, the number of blocks, is not too small.

The condition on $\ell_n$ relative to $n$ in Theorem \ref{maincircularlaw1.1} is essentially optimal. First, when $\ell_n$ is a fixed constant, then $\mu_T$ does not converge to the circular law in general. Convergence of ESD in the special case $\ell=1$ was studied before in \cite{MR2191234}, and the structure of the limiting density can be remarkably complex \cite{holz2003remarkable} despite simplicity of the model. It is fairly reasonable to expect that for any finite bandwidth $\ell=O(1)$, $\mu_T$ would have a distribution-dependent limiting density which is not the circular law. Therefore, universality kicks in only if $\ell_n$ is growing in $n$, and Theorem \ref{maincircularlaw1.1} justifies that this is exactly the case: the minimal condition $\ell_n\to\infty$ is sufficient to guarantee global universality to the circular law ($\zeta$ is not fixed to be a Gaussian). Our matrix $T$ shares properties both in the ergodic side with length of transfer governed by $n$, and in the mean-field side governed by the square blocks of size $\ell$. Universality in entry distribution arises from the mean-field side, and we have proven that in this non-Hermitian setting, even very mild mean-field mixing with scale $\ell_n\to\infty$ can restore global universality no matter how large the ergodic scale $n$ is. Finally, we remark that the condition $n\geq\ell_n^d$ for a small $d>0$ is also natural and almost necessary because our matrix $T$ is not translationally invariant because it lacks two corner blocks, so we need $n$ to be not too small to make the boundary effect negligible.

\begin{remark}(The density assumption) We use the continuous density of $\zeta$ at two places of the proof: the first is to factor out the determinant as a transfer matrix recursion in Corollary \ref{specialcaselemma1.3}, which uses invertibility of off-diagonal blocks $B_k$. When $\ell\gg\log n$, Bernoulli entries are admissible since almost surely all the matrices are nonsingular. But when $\ell\ll\log n$, almost surely many matrix blocks are singular and the determinant factorization does not hold. The second and more crucial usage is in the proof of the least singular value estimate for the block band matrices in Theorem \ref{theorem1.112} and \ref{updatedtails}. Although part of the estimate may still carry over without a bounded density, the estimates we can prove via the current available techniques are at best valid with probability $1-\Theta(\ell^{-1/2})$ (see \cite{jain2021circular}, Theorem 2.1), so that on the complementary event of probability $\Theta(\ell^{-1/2})$ we have no control over the log determinant at all. The transfer matrix approach requires a joint control over all these small blocks, and thus the estimate cannot work through when $n$ is not small relative to $\ell$. This technical difficulty disappears under a bounded density assumption, see Theorem \ref{updatedtails}, where we can control the least singular value with probability 1.

The main focus of this paper is the ergodic regime $n\gg \ell$, and a bounded density is typically assumed to guarantee sufficient regularity in this ergodic, or localized regime. For a Hermitian random band matrix, the Wegner estimate and dynamical localization for $W\ll\sqrt{N}$ are all proven \cite{drogin2025localization} \cite{cipolloni2024dynamical} \cite{schenker2009eigenvector} only when assuming a bounded density, and no Wegner estimate is known without a density assumption when $\Omega(1)\leq W\ll\sqrt{N}$.
 For the non-Hermitian tridiagonal matrix model \cite{MR2191234}, which corresponds to $W=O(1)$ in our paper, their proof of ESD convergence also uses a similar non-singularity assumption on off-diagonal entries.

Finally, we can assume that $\zeta$ has a density with a polynomially (in $\ell$) bounded $\ell^\infty$ norm, say we assume that $\zeta$ has a density on $\mathbb{R}$ bounded by $L\ell^C$ for any $C>0$. For example, this can originate from a vanishing smoothing with $\zeta=\sqrt{1-\ell^{-2C}}\zeta_1+\ell^{-C}g$, where $\zeta_1$ is a centered Rademacher variable and $g$ is an independent Gaussian. The result of Theorem \ref{maincircularlaw1.1} continues to hold under this assumption and the modification is straightforward. 
\end{remark}

\begin{remark}(Model)
The proof of Theorem \ref{maincircularlaw1.1} crucially uses the block tridiagonal structure of $T$, and thus does not immediately generalize to other band matrix models with a more general variance profile, such as
 the periodic case where entry $(i,j)$ has variance $\frac{1}{2W+1}$ if $\min(|i-j|,|N-|i-j||)\leq W$ 
and is identically $0$ otherwise. Interestingly, the periodic block band case (see \eqref{eqperiodic}), where we have an additional matrix $C_1$ on the top right corner and a matrix $B_n$ on the lower left corner of $T$, is not covered by Theorem 
\ref{maincircularlaw1.1}  either. For all these more general models, the current best result for the convergence of $\mu_T$ to the circular law is $\ell\geq (n\ell)^{1/2+c}$ for any $c>0$ \cite{han2025circular}, using the parameterization of Theorem \ref{maincircularlaw1.1}.

From a universality perspective, we expect the circular law to be proven for $\mu_T$ for a much more general class of variance profiles whenever the bandwidth is growing, and Theorem \ref{maincircularlaw1.1} provides the first rigorous justification of this heuristic for at least one type of variance profile.
\end{remark}

\begin{remark}(On the high moment assumption) The high moment assumptions in Theorem \ref{maincircularlaw1.1} are most essentially used in the singular value rigidity result of Proposition \ref{propositionrigidity}. We have not tried to optimize the moment condition here, but all existing papers on circular law for inhomogeneous matrices \cite{cook2018lower}, \cite{jain2021circular},\cite{han2025circular} require at least a finite $4+\epsilon$ moment to obtain quantitative rigidity estimates. Merely a finite variance condition does not work here.
The moment assumption is also used in bounding the transfer matrix in Section \ref{section2.11110} and estimating the least singular value in Section \ref{leastsingularvalueplugin}, although a weak moment condition might work in these cases.
As rigidity results are indispensable in all these papers, proving a circular law for these inhomogeneous models under only a two moment condition as in \cite{WOS:000281425000010} might require a complete reworking and is beyond the current scope of this paper.
\end{remark}

\begin{remark}(On almost sure circular law) Theorem \ref{maincircularlaw1.1} proves circular law for $\mu_T$ in the almost sure sense. If one adopts the normalization $N=n\ell$ and $W=\ell$ to align with prior works, one can check that the almost sure circular law is proven whenever $\Omega(1)\leq W\leq N^{1-d}$ for any small $d>0$. For the square case $W=N$, the almost sure circular law was proven much earlier, see \cite{WOS:000281425000010}. In contrast, previous circular law papers for band matrices \cite{jain2021circular}, \cite{tikhomirov2023pseudospectrum} only prove the weak (in probability) circular law. (But see \cite{han2025circular}, Theorem 1.8 where the proof indeed leads to almost sure circular law because the estimates of Theorem 2.9 therein are much stronger). The reason why we get strong circular law is twofold: first, a bounded density assumption leads to least singular value estimates with good tails (see Theorem \ref{theorem1.112}), and more importantly, we adopt a transfer matrix approach with $n$ transfer operators and apply martingale concentration inequality, so the larger $n$ is the stronger probabilistic concentration we gain.  
\end{remark}

The last 12 months have seen spectacular progress in the study of random band matrices. With the highest relevance to this work, Shcherbina and Shcherbina \cite{shcherbina2025characteristic} recently studied the second correlation function of the characteristic polynomial of a non-Hermitian band matrix model and showed distinct behaviors for $W\gg\sqrt{N}$ and $W\ll\sqrt{N}$, which serves as a first step towards the proof of Anderson type transition in non-Hermitian band matrices. For Hermitian band matrix models, Yau and Yin \cite{yau2025delocalization} proved for the block band model that whenever $W\geq N^{1/2+c}$ for any $c>0$, the eigenvectors of this band matrix are completely delocalized and quantum unique ergodicity holds. For more general 1-d random band matrices, the same results are proven by Erdős and Riabov in \cite{erdHos2025zigzag}. Higher-dimensional band matrices are also studied in \cite{dubova2025delocalization} and \cite{dubova2025delocalization3d}. In the localized regime $W\leq N^{1/2-c}$, the exponential localization of eigenfunctions was recently proven in Drogin \cite{drogin2025localization}, following a long line of previous papers \cite{cipolloni2024dynamical}, \cite{chen2022random}, \cite{schenker2009eigenvector},\cite{goldstein2022fluctuations}. The invertibility of general non-Hermitian random band matrices with Bernoulli entries was also studied in \cite{han2025invertibility}.
Although this paper does not directly touch upon localization/delocalization of eigenvectors, Theorem \ref{maincircularlaw1.1} studies the most fundamental spectral statistics, the ESD, in particular in the complete localized regime $W\ll \sqrt{N}$. To our best knowledge, almost no spectral results are known in the localized regime for the non-Hermitian band matrix models, and this work may serve as a first step to the investigation in this localized regime.

\subsection{A brief outline of the proof}
As with all modern proofs of circular law, we begin with the following replacement principle by Tao, Vu and Krishnapur (\cite{WOS:000281425000010}, Theorem 2.1), which offers a sufficient condition for the circular law via convergence of the log determinant:

\begin{theorem}\label{replacement}(Replacement principle) Suppose that for each $n\in\mathbb{N}$, there are two ensembles of random matrices $A_n,B_n\in M_n(\mathbb{C})$ such that\begin{enumerate}
    \item The expression
    \begin{equation}
        \frac{1}{n^2}\|A_n\|_F^2+\frac{1}{n^2}\|B_n\|_F^2
    \end{equation}
    is bounded in probability (resp. almost surely), where $\|\cdot\|_F$ denotes the matrix Frobenius norm, and
    \item For almost all complex numbers $z\in\mathbb{C}$, the quantity
    $$
\frac{1}{n}\log|\det(\frac{1}{\sqrt{n}}A_n-zI)|-\frac{1}{n}\log|\det(\frac{1}{\sqrt{n}}B_n-zI)|
    $$ converges in probability (resp. almost surely) to zero. Then $\mu_{\frac{1}{\sqrt{n}}A_n}-\mu_{\frac{1}{\sqrt{n}}B_n}$ converges in probability (resp. almost surely) to zero.
\end{enumerate}
    
\end{theorem}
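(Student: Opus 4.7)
The plan is to reduce the convergence of empirical measures to an integral identity via the logarithmic potential, use the hypothesized pointwise convergence of the log determinant difference, and upgrade pointwise convergence to integral convergence by establishing uniform integrability from hypothesis~(1).

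By linearity and a standard density argument, it suffices to show $\int f\,d(\mu_{A_n}-\mu_{B_n})\to 0$ in the relevant mode for each fixed $f\in C_c^\infty(\mathbb{C},\mathbb{R})$. Using the distributional identity $\Delta_z\log|z-w|=2\pi\,\delta_w$ together with Fubini applied to each matrix, one obtains the Green-type representation
\begin{equation*}
\int f\,d\mu_{A_n}-\int f\,d\mu_{B_n}=\frac{1}{2\pi n}\int_{\mathbb{C}}\Delta f(z)\bigl[\log|\det(A_n-zI)|-\log|\det(B_n-zI)|\bigr]\,dm(z),
\end{equation*}
where $m$ denotes Lebesgue measure on $\mathbb{C}$. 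The integrand is supported in the compact set $K:=\operatorname{supp}(\Delta f)$, and by hypothesis~(2) it converges to $0$ in probability (resp.\ almost surely) for a.e.\ $z\in K$.

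The key step is to show uniform integrability of $g_n(z):=\frac{1}{n}\log|\det(A_n-zI)|$ on $K$, and likewise for $B_n$. The positive part is immediate: by concavity of $\log$ applied to the $n$ singular values of $A_n-zI$,
\begin{equation*}
(g_n(z))_+\leq \tfrac{1}{2}\log_+\!\bigl(2|z|^2+\tfrac{2}{n}\|A_n\|_F^2\bigr),
\end{equation*}
which is uniformly bounded on $K$ by hypothesis~(1). For the negative part, using the representation $g_n(z)=\frac{1}{n}\sum_i\log|z-\lambda_i(A_n)|$ together with Jensen's inequality on the convex function $x\mapsto x^{1+\epsilon}$, Schur's inequality $\sum_i|\lambda_i(A_n)|^2\leq\|A_n\|_F^2$ to localize the eigenvalues, and the local $L^{1+\epsilon}$-integrability of $\log|z-\lambda|$ in the plane (logarithmic singularities being integrable to any finite power in two real dimensions), one derives a uniform bound
\begin{equation*}
\int_K|g_n(z)|^{1+\epsilon}\,dm(z)\leq C\bigl(\epsilon,K,\|A_n\|_F^2/n\bigr)
\end{equation*}
for some $\epsilon>0$, finite under hypothesis~(1).

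Combining the pointwise a.e.\ convergence to $0$ with this uniform $L^{1+\epsilon}(K)$-boundedness, Vitali's convergence theorem yields convergence in $L^1(K,dm)$, whence the Green-type integral tends to $0$ and $\int f\,d(\mu_{A_n}-\mu_{B_n})\to 0$ follows. In the almost-sure regime the argument is carried out pathwise on a probability-$1$ event; in the in-probability regime one passes to almost surely convergent subsequences via the standard subsequence principle and runs the deterministic argument along them. The principal technical point is the negative-part bound on $g_n$: hypothesis~(2) only asserts convergence of the \emph{difference} of log determinants, so the individual quantities may well diverge; it is essential that hypothesis~(1) together with the planar integrability of the logarithmic kernel suffice to dominate each $g_n$ separately, with no least-singular-value input required.
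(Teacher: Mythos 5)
This statement is not proved in the paper at all: it is quoted verbatim (up to a dropped $\tfrac1n$ normalization of the Frobenius norms, which should read $\tfrac1n\|A_n\|_F^2+\tfrac1n\|B_n\|_F^2$ as in the source) from Tao--Vu--Krishnapur, so there is no internal proof to compare against. Your argument is essentially the original proof of that theorem: the Green/log-potential representation $\int f\,d(\mu_{A_n}-\mu_{B_n})=\frac{1}{2\pi n}\int\Delta f\,[\log|\det(A_n-z)|-\log|\det(B_n-z)|]\,dm$, the upper bound on $(g_n)_+$ via Jensen and Schur's inequality $\sum_i|\lambda_i|^2\le\|A_n\|_F^2$ (note you correctly use the normalized $\|A_n\|_F^2/n$, consistent with the intended hypothesis), the uniform $L^{1+\epsilon}(K)$ control of the negative part from the local integrability of powers of $\log|z-\lambda|$ in the plane (translation-invariant, so no singular-value input is needed), and Vitali. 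These steps are all correct.

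One point you gloss over is the in-probability branch. Hypothesis (2) gives, for each fixed $z$, convergence in probability; the ``standard subsequence principle'' does not by itself produce a single further subsequence along which the convergence holds almost surely for a.e.\ $z$ simultaneously — you need, e.g., to observe that $\mathbb{E}\int_K\min(|h_n(z)|,1)\,dm(z)\to0$ by dominated convergence and then extract an a.e.\ convergent subsequence on the product space $\Omega\times K$ (Fubini). Moreover, hypothesis (1) being only ``bounded in probability'' cannot be upgraded to almost-sure boundedness along a subsequence, so the pathwise Vitali argument must be run after restricting to the event $\{\tfrac1n\|A_n\|_F^2+\tfrac1n\|B_n\|_F^2\le M\}$, whose complement has uniformly small probability, and letting $M\to\infty$ at the end. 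Both fixes are routine and do not affect the structure of your proof, but as written the in-probability case is incomplete. The almost-sure case also implicitly uses the Fubini exchange (``for a.e.\ $z$, a.s.'' to ``a.s., for a.e.\ $z$''), which you should state.
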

To verify condition (2) of Theorem \ref{replacement}, we need to estimate $\log|\det A_z|$ where $A_z:=A_n-zI$. The standard method for doing this (see the survey \cite{MR2908617}) is as follows: Let $s_1\geq\cdots\geq s_n$ denote the singular values of $A_z$, then $\frac{1}{n}\log|\det A_z|=\frac{1}{n}\sum_{i=1}^n\log s_i$. One can prove that the empirical measure for singular values of $A_z$ converges weakly to a deterministic limit, so the convergence of $\log\det A_z$ follows once we show that the smallest (and a $o(n)$ number of small-ish) singular values among $s_i$ make negligible contribution to the summation. Then we can truncate the summation at a threshold and apply the weak convergence result for a bounded test function. The control of $s_n$ and small-ish $s_i$ is equivalent to obtaining a least (and small-ish) singular value estimate for $A_z$, which typically has size $n^{-C}$ for some $C>0$ with high probability when $A$ has i.i.d. entries \cite{WOS:000281425000010}, \cite{rudelson2008littlewood}.

For a non-Hermitian random band matrix of size $N$ and bandwidth $W$, the currently known lower bounds for the least singular values \cite{jain2021circular}, \cite{han2025circular}, \cite{tikhomirov2023pseudospectrum} are all exponentially small and have the order $s_N\geq\exp(-\frac{N}{W}N^\epsilon)$.  Whether or not these bounds are sharp or can be considerably improved remains an open problem, especially when $W\ll\sqrt{N}$.
Thus to guarantee the convergence of log determinant, a popular method is to get a rigidity estimate for the singular values, namely can we guarantee that with high probability only an $N^c$ number of singular values are smaller than $N^{-1}$, say? Rigidity estimates can be proven via a local law, as in \cite{han2025circular}, which allows us to prove the circular law whenever $W\gg N^{1/2+c}$. However, it is clear that the local law estimate deteriorates when $W\ll\sqrt{N}$ and may be completely useless for small $W$, say when $W=\log\log N$.

The limitation of this rigidity estimate is that it requires all the singular values be simultaneously controlled, but in reality we may still guarantee convergence of the log determinant without uniform control over all singular values. Indeed, when $W\ll\sqrt{N}$, we are more in an ergodic regime than a mean field regime, and ideas from ergodic theory become more relevant. It is enlightening to observe that block tridiagonal matrices, a special class of band matrices, have a clear connection to ergodic theory via the transfer matrix recursion formula (Corollary \ref{specialcaselemma1.3}) when computing its determinant. The recursion formula was also used in \cite{MR2191234} for the tridiagonal case. Thus a natural option is to try to generalize the strategy of \cite{MR2191234}. However, an immediate generalization is not feasible. The work \cite{MR2191234} uses in an essential way ergodic theory of matrix products in $\operatorname{GL}_2(\mathbb{C})$ (see \cite{bougerol2012products} and \cite{oseledec1968multiplicative} for an account), but here we have the matrix product of size $2W$ and a total number of $N/W$ matrices, with both quantities tending to infinity. No general quantitative ergodic theory exists for a general random matrix that controls all Lyapunov exponents (but see \cite{cohen1984stability}, \cite{isopi1992triangle}, \cite{MR3737935} for the i.i.d. setting), and when $N/W\gg W$ this matrix product is notoriously difficult to study. As a reference, \cite{MR4268303} studied the Lyapunov exponents of products of Gaussian matrices with growing size, but our transfer matrix $M_k^{(B)}$ is a lot more complicated with general entry law, scalar shift and even an inverse matrix in it, so a direct analysis of Lyapunov exponent is clearly out of reach. 

A surprisingly effective way to tackle this problem is that we can decompose the transfer matrix product in Corollary \ref{specialcaselemma1.3} as the concatenation of many independent systems of transfer matrix products of a smaller size. The starting phase of one system (a unitary $\ell$- frame in $\mathbb{C}^{2\ell}$) depends on the output of the previous system, and the difference in every system is that we start the transfer matrix product at a different phase. Taking a filtration and conditioning, we apply martingale concentration (after we sum over many blocks) and show that the log determinant converges with high probability to its mean. By ergodic theory heuristics, the long term behavior should be independent of the initial phase, so we expect each such smaller system should have approximately  the same large scale limit. Why does this idea of decomposing into smaller systems become helpful? This is because, by Proposition \ref{proposition168}, we can naturally associate each smaller system with a tridiagonal block matrix with the same block size $\ell=W$ but having a much smaller number of blocks $n$. When $\ell$ is large and $n$ is small, say $\ell\gg n^{12}$, then we are back to the case where circular law can be proven for this smaller system \cite{han2025circular}, \cite{jain2021circular},\cite{tikhomirov2023pseudospectrum} by combining the (potentially suboptimal) least singular value estimate and the rigidity estimate, and what we indeed prove is convergence of the log determinant to the same fixed limit. The place to be cautious is that in Proposition \ref{proposition168}, we associate these subsystems to a tridiagonal block matrix with an upper and lower boundary layer determined by the initial phase of recursion, and a large portion of technical effort is to obtain analogous, boundary-independent estimates for this matrix with a general boundary. Combining the convergence in each subsystem allows us to prove the convergence of logarithmic determinant for the large tridiagonal block matrix even if $W\ll \sqrt{N}$.

In short, the proof takes an effective interplay between two distinct methods of computing the log determinant: as the summation of log-singular values and as a transfer matrix recursion. The proof combines ideas from ergodic theory and the circular law proof for near-mean-field band matrices, by introducing a general boundary to account for the different initial phase and showing boundary independence in all ensuing estimates.

\medskip
\noindent\textbf{Logical dependence of the proof.}
To clarify the main structure of this paper, we spell out the dependence structure of the main
arguments. The proof of Theorem \ref{maincircularlaw1.1} is reduced by the replacement principle to the
convergence of
\[
        \frac1{n\ell}\log |\det(T-zI)|
\]
for almost every fixed \(z\in\mathbb C\). Corollary \ref{specialcaselemma1.3} expresses this determinant
as the product of the determinants of the off-diagonal blocks, and the determinant of the output of a transfer matrix iteration. The block determinant contribution is standard, see Proposition \ref{proposition1.999}. The main work is thus to show that the transfer-matrix contribution converges to a specified deterministic value. This is achieved by decomposing the long transfer product
into shorter products of length \(n_0\simeq \ell^d\) and applying the finite-scale
log-determinant convergence theorem, Theorem \ref{theorem1.115}, to each piece. This yields
Corollary \ref{corollary2.17}, which is then used at the end of Section \ref{section22}  to conclude the proof of
Theorem \ref{maincircularlaw1.1}.

Theorem \ref{theorem1.115} is the central finite-scale input. Its proof is completed in Section \ref{section444},
relying on two technical estimates proved elsewhere. The first one is the least
singular value estimate, Theorem \ref{theorem1.112}, proved in Section \ref{leastsingularvalueplugin}. This theorem gives a quantitative lower bound for the smallest singular value. The second one is the
small-singular-value rigidity estimate, Proposition \ref{propositionrigidity}, whose proof is deferred to
Section \ref{959rigidity}. This result shows that there are not too many singular values below a fixed polynomial threshold. Meanwhile,
Corollary \ref{proofcorollary4.5} controls the bulk singular values, which is later used to show that the log summations of singular values converge to the specified limit via a truncation argument. These three estimates prove  Theorem \ref{theorem1.115}. The convergence in expectation follows from uniform integrability in Section \ref{uniformintegrability}, using the upgraded tail estimate
Theorem \ref{updatedtails}.

The proof of
Theorem \ref{theorem1.112} occupies Section \ref{leastsingularvalueplugin}. First, Lemma \ref{lemma3.2} gives a least singular value estimate for
a random rectangular matrix composed by a projection. On the high probability
event \(\mathcal E_K\) of Fact \ref{fact3,3}, Proposition \ref{proposition3.4} describes the geometry (i.e., non-trivial mass on each block) of normal
vectors for the interior rows. The boundary rows require a separate reduction in Fact \ref{fact3.575}
and Proposition \ref{proposition3.6}. Lemma \ref{lemma3.5}, the invertibility-via-distance reduction, then turns these normal-vector
estimates into a least singular value bound for an auxiliary matrix $\mathcal{T}_z^{\text{res,sq}}$ in Proposition \ref{proposition3.85}. Then in Proposition \ref{proposition3333} we use this information to complete our understanding of the geometry of the normal vectors at boundary rows. Combining all these, we prove Theorem \ref{theorem1.112}. The stronger tail estimate Theorem \ref{updatedtails} is obtained by applying the same argument on a stronger high-probability event.

The proof of Proposition \ref{propositionrigidity}  is also a bit involved so we outline its structure. The first part of the proof reduces the problem to a matrix with a simpler form and the second part uses an MDE argument. Lemma \ref{lemma5.1} first removes the general
boundary by a unitary change of variables and reduces the problem to a matrix \(Y_z^R\).
Lemma \ref{lemmas1231} and Lemma \ref{lemma5.2} then show that we can remove two random blocks of $Y_z^R$ so as to reduce \(Y_z^R\) to a block-diagonal form via a probabilistic argument. The large lower block is a canonical block
tridiagonal matrix. For this canonical matrix, Section \ref{section1463} derives the required rigidity
estimate from a local-law/MDE argument: Proposition \ref{proposition4.1} gives the resolvent comparison,
Lemmas \ref{proofoflemma4.2}--\ref{lem:pseudovariance-perturbation} and Corollary \ref{lem:block-tridiagonal-MDE-bound} identify and bound the deterministic MDE solution, and Corollary \ref{rigiditylargeboundcor}
converts this into a bound on the number of small singular values. Combining these
reductions proves Proposition \ref{propositionrigidity}.

\section{Proof of circular law: main structure of the argument}\label{section22}
This section has three components. First, we present the linear algebra result that reduces the determinant computation of block matrices to the transfer matrix problem and set up an inverse problem that, for a given initial value, produces a block tridiagonal matrix. Then we prove moment estimates for the transfer matrix, and apply them in a block martingale argument in Theorem \ref{theorem1.115}.

\subsection{Linear algebra results}
We first introduce a convenient notion $I_{mid}$ for certain block diagonal matrices:
\begin{notation}
    For given integers $n,\ell$, we let $I_{mid}$ be the following $(n+2)\ell\times(n+2)\ell$ diagonal matrix: it has entries 0 on the first $\ell$ rows and the last $\ell$ rows, and it has entries $1$ on the middle $n\ell$ rows.
\end{notation}
We shall need the following computation to handle general boundary conditions:

\begin{Proposition}\label{proposition168}
    Let $A_1,\cdots,A_n,B_1,\cdots,B_n,C_1,\cdots,C_n$ be $\ell\times\ell$ matrices such that $B_i$ are invertible for all $1\leq i\leq n$. Let $\Pi\in\mathbb{C}^{\ell\times 2\ell}$ and $\Xi\in\mathbb{C}^{2\ell\times\ell}$ be fixed matrices such that 
    $$
\det(\Pi\Pi^*)=\det(\Xi^*\Xi)=1,
    $$ where throughout the paper we use $\Xi^*$ to denote the complex conjugate transpose of $\Xi$. Then we can find a block tridiagonal matrix $\mathcal{T}_{n+2}$ of size $(n+2)\ell\times (n+2)\ell$  such that 
    $$
\det\left(\mathcal{T}_{n+2}-zI_{mid}\right)=(-1)^{(n+1)\ell^2}(\prod_{k=1}^n\det B_k)\det(\Pi(\prod_{k=n}^1 M_k^{(B)}(z))\Xi)
    $$ for any $z\in\mathbb{C}$, and where for each $k,$
    $$
M_k^{(B)}(z)=\begin{bmatrix}
    -B_k^{-1}(A_k-zI_\ell)&-B_k^{-1}C_k\\I_\ell&0
\end{bmatrix}, 
    $$ and $\mathcal{T}_{n+2}$ has the form
    \begin{equation}\label{equationnplus2}
\mathcal{T}_{n+2}=\begin{bmatrix}
    V&U&&&&\\C_1&A_1&B_1&&&&\\&C_2&A_2&B_2&&\\&&\ddots&\ddots&\ddots&\\&&&C_n&A_n&B_n\\&&&&S_{+}^{bd}&C_{+}
\end{bmatrix}     \in\mathbb{C}^{(n+2)\ell\times(n+2)\ell},   
    \end{equation} where $\begin{bmatrix}V,U\end{bmatrix},\begin{bmatrix}S_{+}^{bd},C_{+}\end{bmatrix}\in\mathbb{C}^{\ell\times 2\ell}$ are determined only by $\Pi$ and $\Xi$ that satisfy $$\begin{bmatrix}V,U\end{bmatrix}\begin{bmatrix}V,U\end{bmatrix}^*=\begin{bmatrix}S_{+}^{bd},C_{+}\end{bmatrix}\begin{bmatrix}S_{+}^{bd},C_{+}\end{bmatrix}^*=I_\ell.$$
\end{Proposition}
\begin{notation}
    We use the non-standard notation $\prod_{k=n}^1 M_k$ to denote the matrix product $M_nM_{n-1}\cdots M_2M_1$, since matrix multiplication is not commutative. 
\end{notation}

The symbols $\Xi$ and $\Pi$ denote the boundary conditions of this tridiagonal matrix iteration. We can specialize to block tridiagonal matrices as follows:

\begin{corollary}\label{specialcaselemma1.3}
    With the same assumption as Proposition \ref{proposition168}, define the following matrix
    \begin{equation}
\mathcal{T}_n^{tri}=  \begin{bmatrix}
A_1 &B_1&&&\\C_2&A_2&B_2&&\\&\ddots&\ddots&\ddots&\\&&C_{n-1}&A_{n-1}&B_{n-1}\\&&&C_n&A_n
\end{bmatrix}     
    \end{equation}
    then we have for any $z\in\mathbb{C}$, 
    $$
\det(\mathcal{T}_n^{tri}-zI_{n\ell})=(-1)^{n\ell^2}(\prod_{k=1}^n\det B_k)\det\left(\begin{bmatrix}
    I_\ell&0
\end{bmatrix}\prod_{k=n}^1M_k^{(B)}(z)\begin{bmatrix}
    I_\ell\\0
\end{bmatrix}\right).
    $$
\end{corollary}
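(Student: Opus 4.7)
My plan is to prove this directly via block LU factorization / Schur complements; equivalently, one could specialize Proposition \ref{proposition168} to $\Pi=\begin{bmatrix}I_\ell&0\end{bmatrix}$ and $\Xi=\begin{bmatrix}I_\ell\\0\end{bmatrix}$---both satisfy the normalization---and cofactor-expand the first and last block rows of $\mathcal{T}_{n+2}$ to absorb the sign shift $(-1)^{(n+1)\ell}\to(-1)^{n\ell}$, but the direct argument avoids unpacking the boundary construction. First I would introduce the Schur-complement sequence $S_1:=A_1-zI_\ell$ and $S_{k+1}:=A_{k+1}-zI_\ell-C_{k+1}S_k^{-1}B_k$. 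Assuming each $S_k$ is invertible---a restriction that can be removed at the end, since the identity to be proven is polynomial in $z$ and in the matrix entries of $A_k,B_k,C_k$ and thus extends from a Zariski-dense set---block LU decomposition yields
\[
\det(\mathcal{T}_n^{tri}-zI_{n\ell})=\prod_{k=1}^n\det S_k.
\]

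Second, I would match this Schur recursion to the transfer matrix product. Partitioning $P_k:=\prod_{j=k}^1 M_j^{(B)}(z)$ into $\ell\times\ell$ blocks $\begin{bmatrix}P_k^{11}&P_k^{12}\\P_k^{21}&P_k^{22}\end{bmatrix}$ with $P_0=I_{2\ell}$, the second block row of $M_k^{(B)}$ being $\begin{bmatrix}I_\ell&0\end{bmatrix}$ forces $P_k^{21}=P_{k-1}^{11}$, and reading off the first block row gives
\[
P_k^{11}=-B_k^{-1}(A_k-zI_\ell)P_{k-1}^{11}-B_k^{-1}C_kP_{k-2}^{11}.
\]
Set $\widetilde S_k:=-B_kP_k^{11}(P_{k-1}^{11})^{-1}$. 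The base case $\widetilde S_1=-B_1(-B_1^{-1}(A_1-zI_\ell))=A_1-zI_\ell=S_1$ is immediate, and the recursion above rearranges (using $P_{k-1}^{11}(P_{k-2}^{11})^{-1}=-B_{k-1}^{-1}\widetilde S_{k-1}$) into $\widetilde S_{k+1}=A_{k+1}-zI_\ell-C_{k+1}\widetilde S_k^{-1}B_k$, which is precisely the Schur-complement recursion; hence $\widetilde S_k=S_k$ for every $k$.

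Telescoping then closes the argument:
\[
\prod_{k=1}^n\det S_k=\prod_{k=1}^n(-1)^\ell\det(B_k)\,\frac{\det P_k^{11}}{\det P_{k-1}^{11}}=(-1)^{n\ell}\prod_{k=1}^n\det(B_k)\cdot\det P_n^{11},
\]
using $\det P_0^{11}=\det I_\ell=1$, and $\det P_n^{11}=\det\!\bigl(\begin{bmatrix}I_\ell&0\end{bmatrix}\prod_{k=n}^1 M_k^{(B)}(z)\begin{bmatrix}I_\ell\\0\end{bmatrix}\bigr)$ is exactly the scalar appearing on the right-hand side of the corollary. The only place requiring care is the identification $\widetilde S_k=S_k$, where one must track the signs and subscripts through the factor $B_k^{-1}$ in $M_k^{(B)}$; the block LU step and the telescoping product are otherwise routine.
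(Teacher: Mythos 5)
Your proof is correct, but it takes a different route from the paper's. The paper simply specializes Proposition~\ref{proposition168} with $\Pi=\Xi^*=[I_\ell,0]$, computes that $V=-I_\ell$, $U=0$, $C_+=I_\ell$, $S_+^{bd}=0$, and then observes that with this choice of boundary blocks $\det(\mathcal{T}_{n+2}-zI_{mid})=(-1)^\ell\det(\mathcal{T}_n^{tri}-zI_{n\ell})$, which immediately yields the corollary. Your argument instead bypasses the boundary machinery entirely: it runs block LU on $\mathcal{T}_n^{tri}-zI_{n\ell}$, identifies the Schur complements $S_k$ with $-B_kP_k^{11}(P_{k-1}^{11})^{-1}$ by an inductive matching of the two recursions (and indeed the second block row $[I\;0]$ of $M_k^{(B)}$ correctly propagates $P_k^{21}=P_{k-1}^{11}$, making the two-step recursion for $P_k^{11}$ exactly a disguised Schur recursion), and then telescopes the determinants. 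Both proofs ultimately rely on the same elementary linear algebra, but yours is more self-contained for the boundary-free case and makes the transfer-matrix/Schur correspondence explicit, which is conceptually illuminating; the paper's approach is more economical in context because Proposition~\ref{proposition168} is needed elsewhere anyway, and it handles arbitrary boundary frames uniformly. One small imprecision: the right-hand side is rational, not polynomial, in the entries of $B_k$ (because of $B_k^{-1}$ inside $M_k^{(B)}$), so the ``extend from a Zariski-dense set'' step should be phrased as an equality of rational functions valid on the open set where all $B_k$ (and generically all $S_k$) are invertible — the conclusion is unaffected, since all $B_k$ are assumed invertible in Proposition~\ref{proposition168}.
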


\begin{proof}[\proofname\ of Proposition \ref{proposition168}]

\textbf{Step 1: forming unitary boundary frames}. We define
$$
Q_{-}:=\Xi(\Xi^*\Xi)^{-1/2}\in\mathbb{C}^{2\ell\times\ell},\quad \widetilde{Q}_+:=(\Pi\Pi^*)^{-1/2}\Pi\in\mathbb{C}^{\ell\times 2\ell}, 
$$so that we necessarily have
$$
Q_{-}^*Q_{-}=I_\ell,\quad \widetilde{Q}_+\widetilde{Q}_{+}^*=I_\ell.
$$Then for any $2\ell\times2\ell$ matrix $K$, we use the following elementary identity to simplify:
\begin{equation}\label{344equation}
    \det(\widetilde{Q}_+KQ_{-})=\frac{\det(\Pi K\Xi)}{\sqrt{\det(\Pi\Pi^*)\det(\Xi^*\Xi)}}=\det(\Pi K\Xi),
\end{equation}using the assumption $\det(\Pi\Pi^*)=\det(\Xi^*\Xi)=1.$

From $Q_{-}$ we choose a unitary complement $Q_{-}^\perp\in\mathbb{C}^{2\ell\times\ell}$ (and thus $(Q_{-}^\perp)^*Q_{-}^\perp=I_\ell$) so that we have $$\begin{bmatrix}Q_{-}^\perp,Q_{-}\end{bmatrix}\in SU(2\ell),$$ the group of unitary matrices of size $2\ell$ and determinant 1 (This can always be achieved via multiplying an arbitrary unitary complement by a unitary $D\in U(\ell)$ with a specified determinant.) We then partition $(Q_{-}^\perp)^*$ into the following form $$(Q_{-}^\perp)^*=\begin{bmatrix}
    U\quad V
\end{bmatrix}\in\mathbb{C}^{\ell\times 2\ell},$$ so that $VV^*+UU^*=I_\ell$ and thus $\|V\|,\|U\|\leq 1 $.
In this proof, we use column labels $0,1,\cdots,n+1$ to index the columns of $\mathcal{T}_{n+2}$. 

We also define the following decomposition 
\begin{equation}
\widetilde{Q}_+=\begin{bmatrix}
    C_{+},S_{+}^{bd}
\end{bmatrix}.
\end{equation}

\textbf{Step 2: Elementary reductions of $\mathcal{T}_{n+2}$ that preserve the determinant.} We introduce three operations that can help us to compute the determinant of $\mathcal{T}_{n+2}-zI_{\text{mid}}$, using $0,1,\cdots,n+1$ to index the column blocks and the row blocks of the matrix $\mathcal{T}_{n+2}-zI_{\text{mid}}$. The following column pairs $(k,k+1)$ all refer to this labeling of the $k$-th and the $k+1$-th block.

 \textbf{2.1: Right multiplication removing $A_k$ and $B_k$.} For each $k=1,\cdots,n,$ we act on the column pair $(k,k+1)$ by the following right multiplier
$$
G_k:=\begin{bmatrix}
    I_\ell&0\\-B_k^{-1}(A_k-zI_\ell)&B_k^{-1}
\end{bmatrix}. 
$$Then we extend $G_k$ to the whole matrix via placing it at the $(k,k+1)$-block columns and block rows, while we preserve the identity elsewhere. Let $\mathcal{G}_k$ denote this constructed matrix, then we have $$\det\mathcal{G}_k=\det G_k=\det(B_k)^{-1}.$$
Note that our choice of $G_k$ always satisfies
$$
\begin{bmatrix}A_k-zI_\ell,B_k\end{bmatrix}G_k=\begin{bmatrix}0,I_\ell\end{bmatrix}.
$$
Then the entry in column $k+1$ is exactly $I_\ell$. After the product $\mathcal{P}$ is applied, the rows $1,\cdots,n$ each contain a unique identity block in distinct columns. 

\textbf{2.2: Right multiplication removing $C_k$.} For each $k=1,2,\cdots,n$, we perform the following elementary column update
$$
\operatorname{Col}_{k-1}\leftarrow \operatorname{Col}_{k-1}-C_k\operatorname{Col}_{k+1},
$$this is obtained by right multiplication of $\mathcal{T}_{n+2}-zI_{\text{mid}}$ by an elementary block matrix $\mathcal{E}_k$ of determinant 1.

\textbf{2.3: Clearing up the identity via left multiplication.}
For each $1\leq k\leq n$ define the following row operation
$$
\mathcal{L}_k:\text{ Row}_i\leftarrow\operatorname{ Row}_i-(\operatorname{Row}_i\text{ at column  }k+1)\cdot\operatorname{Row}_k,
$$and this is applied for all indices $i\neq k$ having a nonzero entry in the column $k+1$. In our sequential application (see below), each time we apply $\mathcal{L}_k$, the only such row to be modified by $\mathcal{L}_k$ is the bottom row $i=n+1$. Again, we have $\det\mathcal{L}_k=1$.

\textbf{Sequential cleaning}. For each $k=n,\cdots,1$ in decreasing order, we apply, in this order, the following operations on the matrix $\mathcal{T}_{n+2}$:
\begin{enumerate}
    \item  Right pivot on $B_k$: we multiply from the right the matrix $\mathcal{G}_k$, $\det\mathcal{G}_k=\det(B_k)^{-1}$.

\item Column clean-up: we multiply from the right the matrix $\mathcal{E}_k$ with $\det \mathcal{E}_k=1$.

\item  Row clean-up: we multiply from the left the matrix $\mathcal{L}_k$ with $\det \mathcal{L}_k=1$.\end{enumerate}

Set $\mathcal{L}=\prod_{k=1}^n \mathcal{L}_k,\quad \mathcal{P}=\prod_{k=n}^1 (\mathcal{G}_k\mathcal{E}_k)=(\mathcal{G}_n\mathcal{E}_n)\cdots (\mathcal{G}_1\mathcal{E}_1)$ and we denote 
$$
\mathcal{T}^{red}:=\mathcal{L}((\mathcal{T}_{n+2}-zI_{\text{mid}})\mathcal{P}).
$$

To compute the determinant $\det\mathcal{T}^{red}$, we first check that a number of properties are preserved under this sequential cleaning procedure. In the following we let $\rho_0,\rho_1,\cdots,\rho_{n+1}$ denote the entries of the last block row of $\mathcal{T}_{n+2}-zI_{\text{mid}}$, which changes after every iteration.

We claim that after the processing steps labeled $n,n-1,\cdots,k$ (i.e., after we apply $\mathcal{G}_n,\mathcal{E}_n,\mathcal{L}_n,\cdots,\mathcal{G}_k,\mathcal{E}_k,\mathcal{L}_k$), the following induction hypotheses hold for this value $k$:
\begin{itemize}
\item $\operatorname{Ind}(k)$ for interior levels: For each $n\geq j\geq k$, the row $j$ is a pure pivot in that 
$$
\text{row }j=\text{(only) }I_\ell\text{ at column }j+1,\text{ zeros elsewhere}. 
$$

\item $
\operatorname{Ind}_{top}
$: The top row of $\mathcal{T}_{n+2}$ is unchanged and remains $[V,U]$ supported on the columns $(0,1)$.

\item $\operatorname{Ind}_{bot}(k)$, the bottom row sliding pair: We assume that the row $n+1$ is supported only on columns $(k-1,k)$. We record the two blocks in the transfer order, that is column $k$ followed by column $k-1$. Then in this order,
$$
[\rho_k,\rho_{k-1}]=\begin{bmatrix}
    C_{+},S_{+}^{bd}
\end{bmatrix}(\prod_{m=n}^kM_m^{(B)}(z)).
$$
Equivalently, in the physical column order $(k-1,k)$, the two entries are $(\rho_{k-1},\rho_k)$. 
\end{itemize}

\textbf{Verification of induction hypotheses.}  For the initial value $k=n+1$, we have $\operatorname{Ind}(n+1)$ is vacuous, $\operatorname{Ind}_{top}$ is trivially true and $\operatorname{Ind}_{bot}(n+1)$ follows from definition. Then we prove that all these properties hold for all $k$ via taking a backward induction on $k$. 
In each inductive step $k\to k-1$, we assume the induction hypothesis $\operatorname{Ind}(k),\operatorname{Ind}_{top},\operatorname{Ind}_{bot}(k)$.

The application of $\mathcal{G}_{k-1}$ changes only columns $(k-1,k)$,  and then the interior $k-1$-th row becomes $[C_{k-1},0,I_\ell]$ on columns labeled $(k-2,k-1,k)$. The bottom pair $[\rho_{k-1},\rho_k]$ now becomes
$$\begin{bmatrix}\rho_{k-1}-\rho_kB_{k-1}^{-1}(A_{k-1}-zI_\ell),\rho_kB_{k-1}^{-1}\end{bmatrix}
$$ on entries $(k-1,k)$. Then applying $\mathcal{E}_{k-1}$, this zeros out $C_{k-1}$ in row $k-1$ and creates the new entry $-\rho_kB_{k-1}^{-1}C_{k-1}$ into column $k-2$ of the bottom row. The entry $\rho_kB_{k-1}^{-1}$ in column $k$ is still present. Finally, we apply $\mathcal{L}_{k-1}$, which uses the pivot $I_{\ell}$ in row $k-1$, to clear this column-$k$ entry from the bottom row.

Then we conclude that 
$$
[\rho_{k-1},\rho_{k-2}]=[\rho_{k},\rho_{k-1}]M_{k-1}^{(B)}=[C_{+},S_{+}^{bd}]\prod_{m=n}^kM_m^{(B)}(z)\cdot M_{k-1}^{(B)}(z),
$$thus verifying $\operatorname{Ind}(k-1)$ and $\operatorname{Ind}_{bot}(k-1)$. The top row never moves under these maps, so we verify $\operatorname{Ind}_{top}$. This verifies all the induction hypotheses for each $k=1,\cdots,n+1$.

From this inductive proof, we can see that $\mathcal{T}^{red}$ has the following form
$$
\mathcal{T}^{red}=\begin{pmatrix}
    \begin{bmatrix}V,U\end{bmatrix},&&& \\&&I_\ell&&&
\\&&&\ddots&&\\&&&&I_\ell\\\begin{bmatrix}\rho_0,\rho_1\end{bmatrix}
\end{pmatrix},
$$and, in the transfer order, the last row is
$$
[\rho_1,\rho_0]=[C_{+},S_{+}^{bd}]\prod_{k=n}^1M_k^{(B)}(z).
$$

Then let us denote by 
$$
\mathfrak{B}:=\begin{bmatrix}
    U& V\\\rho_1& \rho_0
\end{bmatrix}=\begin{bmatrix}
    U,\quad V\\\begin{bmatrix}C_+,S_{+}^{bd}\end{bmatrix}(\prod_{k=n}^1M_k^{(B)}(z))
\end{bmatrix} .
$$We have $\det\mathcal{T}^{red}=(-1)^{(n+1)\ell^2}\det\mathfrak{B}$, so that we conclude
$$
\det(\mathcal{T}_{n+2}-zI_{\text{mid}})=(-1)^{(n+1)\ell^2}(\prod_{k=1}^n\det B_k)\det\mathfrak{B}.
$$

\textbf{Step 3: computing $\det\mathfrak{B}$.}
By the following Fact \ref{fact3.45}, we have $$\det\mathfrak{B}=\det\begin{bmatrix}\begin{bmatrix}C_+,S_{+}^{bd}\end{bmatrix}\prod_{k=n}^1 M_k^{(B)}(z)Q_{-}\end{bmatrix}=\det\begin{bmatrix}\widetilde{Q}_+(\prod_{k=n}^1 M_k^{(B)}(z))Q_{-}\end{bmatrix},$$ which completes the proof by equation \eqref{344equation}. \end{proof}

\begin{fact}\label{fact3.45}
    Let $R\in\mathbb{C}^{\ell\times 2\ell}$ have orthonormal rows $RR^*=I_\ell$. Let $Q_{-}\in\mathbb{C}^{2\ell\times\ell}$ have unitary columns and $RQ_{-}=0$. Then we have, for any $X\in\mathbb{C}^{\ell\times 2\ell}$,
    $$
\det\begin{bmatrix}
    R\\X
\end{bmatrix}=\det(XQ_{-}),
    $$whenever the $2\ell\times 2\ell$ matrix block $\begin{bmatrix}
        R\\Q_{-}^*
    \end{bmatrix}$ is in $SU(2\ell)$.
\end{fact}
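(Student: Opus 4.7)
The identity is essentially a restatement of the fact that completing $R$ with $Q_-^*$ to a unitary basis of $\mathbb{C}^{2\ell}$ lets one read off the determinant block-wise. Concretely, set
$$
U \;:=\; \begin{bmatrix} R \\ Q_-^* \end{bmatrix} \in SU(2\ell),
$$
so that $U^* = \begin{bmatrix} R^* & Q_- \end{bmatrix}$ is the inverse of $U$. The plan is to right-multiply $\begin{bmatrix} R \\ X \end{bmatrix}$ by $U^*$, exploit the orthogonality relations $RR^* = I_\ell$ and $RQ_- = 0$ to get a block triangular matrix, and then observe that $\det U^* = 1$ causes no loss.

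Carrying this out, I would compute
$$
\begin{bmatrix} R \\ X \end{bmatrix} U^* \;=\; \begin{bmatrix} R \\ X \end{bmatrix}\begin{bmatrix} R^* & Q_- \end{bmatrix} \;=\; \begin{bmatrix} R R^* & R Q_- \\ X R^* & X Q_- \end{bmatrix} \;=\; \begin{bmatrix} I_\ell & 0 \\ X R^* & X Q_- \end{bmatrix},
$$
using the two orthogonality hypotheses to eliminate the top-right block and reduce the top-left block to the identity. The resulting matrix is block lower triangular, so its determinant equals $\det(I_\ell)\cdot \det(X Q_-) = \det(X Q_-)$.

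Taking determinants on both sides of the original multiplication then yields
$$
\det\begin{bmatrix} R \\ X \end{bmatrix} \cdot \det U^* \;=\; \det(X Q_-).
$$
Because $U \in SU(2\ell)$, $\det U = 1$ and hence $\det U^* = \overline{\det U} = 1$, so the claimed identity follows. There is essentially no obstacle here: the only thing to double-check is that the $SU$ (as opposed to merely $U$) assumption is what removes the unimodular phase factor that would otherwise appear, which is precisely why the hypothesis is stated in this stronger form.
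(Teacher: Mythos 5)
Your proof is correct and follows essentially the same route as the paper: both right-multiply $\begin{bmatrix} R \\ X \end{bmatrix}$ by the adjoint of the unitary completion $\begin{bmatrix} R \\ Q_-^* \end{bmatrix}\in SU(2\ell)$, use $RR^*=I_\ell$ and $RQ_-=0$ to obtain a block lower triangular matrix, and conclude via $\det U^*=1$. No gaps.
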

\begin{proof}
    We shall complete $R$ into the following unitary matrix 
    $$
\mathcal{Q}:=\begin{bmatrix}
    R\\R_\perp
\end{bmatrix}\in SU(2\ell),\quad R_\perp^*=Q_{-}.
    $$ Since $\det \mathcal{Q}=1$, and by left multiplication we have
    $$
\begin{bmatrix}
    R\\X
\end{bmatrix}\mathcal{Q}^*=\begin{bmatrix}
    RR^*&RR^*_{\perp}\\XR^*&XR^*_{\perp}
\end{bmatrix}=\begin{bmatrix}
    I_\ell&0\\XR^*&XQ_{-}
\end{bmatrix}.
    $$Thus we have
    $$
\det\begin{bmatrix}
    R\\X
\end{bmatrix}=\det (XQ_{-}),
    $$which completes the proof.
\end{proof}

Now we specialize these computations to the special case in Corollary \ref{specialcaselemma1.3}.
\begin{proof}[\proofname\ of Corollary \ref{specialcaselemma1.3}] For the determinant of $\mathcal{T}_n^{tri}$, one can check that with $\Pi=\Xi^*=[I_\ell,0]$, we have $V=-I_\ell,U=0,C_{+}=I_\ell,S_{+}^{bd}=0$. Taking these special values into definition of $\mathcal{T}_{n+2}$, we have $\det(\mathcal{T}_{n+2}-zI_{mid})=(-1)^{\ell^2}\det(\mathcal{T}_n^{tri}-zI_{n\ell})$, completing the proof.
\end{proof}

\subsection{Least singular value and operator norm bounds}We collect here several standard results on the least singular value and operator norm of a random matrix.

For entry distribution of bounded density, we use the following result of  \cite{tikhomirov2020invertibility}:
\begin{lemma}\label{prop1.5}(\cite{tikhomirov2020invertibility}, Corollary 1.2)
    Let $A$ be an $n\times n$ matrix with i.i.d. real entries of mean 0, variance 1 and having distributional density bounded by $L>0$. Then we can find $C=C(L)>0$ depending only on L such that
    \begin{equation}
        \mathbb{P}(s_{min}(A)\leq \epsilon n^{-1/2})\leq C\epsilon,\quad \epsilon>0.
    \end{equation}
\end{lemma}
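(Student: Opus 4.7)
The plan is to use the distance-to-a-hyperplane method, the workhorse of non-asymptotic invertibility estimates. Let $R_1,\dots,R_n$ denote the rows of $A$, let $H_i=\mathrm{span}\{R_j:j\neq i\}$, and let $d_i=\mathrm{dist}(R_i,H_i)$. Then from the identity
\[
\|A^{-1}\|_F^{\,2}=\sum_{i=1}^n d_i^{-2},
\]
combined with the standard comparison $\|A^{-1}\|\le\|A^{-1}\|_F$, one gets $s_{\min}(A)\ge n^{-1/2}\min_i d_i$. So it suffices to bound $\mathbb{P}(\min_i d_i\le\epsilon)$.

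Next, for each $i$, fix a unit normal $v_i$ to $H_i$; then $d_i=|\langle R_i,v_i\rangle|$. Condition on all rows $R_j$ with $j\ne i$, so that $v_i$ is deterministic. The entries of $R_i$ are i.i.d.\ with density bounded by $L$. Since $\|v_i\|=1$, pigeonhole gives some coordinate $j_\ast$ with $|v_{i,j_\ast}|\ge 1/\sqrt{n}$; conditioning on the remaining entries of $R_i$, the conditional distribution of $\langle R_i,v_i\rangle$ is that of $v_{i,j_\ast}\zeta+\text{const}$, which has density at most $L/|v_{i,j_\ast}|\le L\sqrt{n}$. This yields the crude small-ball estimate $\mathbb{P}(d_i\le t\mid H_i)\le 2Lt\sqrt{n}$.

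The key obstacle is that a naive union bound over the $n$ rows produces $\mathbb{P}(s_{\min}(A)\le \epsilon n^{-1/2})\le 2Ln^{3/2}\epsilon$, off by a factor of $n^{3/2}$ from the target bound $C\epsilon$. To remove this factor one must exploit that a \emph{typical} normal $v_i$ is well-spread rather than concentrated on a single coordinate. There are two natural routes: (i) a geometric route, showing that with overwhelming probability the random normal $v_i$ has many coordinates comparable in size to $1/\sqrt{n}$ (so that after conditioning on any single coordinate, the density of $\langle R_i,v_i\rangle$ is genuinely $O(L)$, not $O(L\sqrt{n})$); or (ii) an averaged route, bounding $\mathbb{E}[\|A^{-1}\|_F^{\,p}]$ for some $p<2$ by estimating $\mathbb{E}[d_i^{-p}]$ term by term and then converting the moment bound into the desired tail via Markov/Chebyshev. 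The approach of Tikhomirov goes essentially through route (i), analyzing the conditional distribution of the normal vectors more carefully under the bounded-density hypothesis, and combining this with a net argument on the unit sphere.

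Finally, one assembles the pieces: the sharpened small-ball bound $\mathbb{P}(d_i\le t)\le C(L)\,t$ (no $\sqrt n$ loss for typical $v_i$), combined with a suitably averaged argument that avoids a brute union bound over the $n$ rows, yields $\mathbb{P}(\min_i d_i\le\epsilon)\le C(L)\,\epsilon$ and hence the lemma. I expect the hard part to be precisely the removal of the spurious $n^{3/2}$ factor from the naive argument, which is where the bounded-density hypothesis is genuinely used; everything else is the standard Rudelson–Vershynin framework.
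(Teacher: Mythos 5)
This lemma is not proved in the paper at all; it is imported verbatim as Corollary~1.2 of Tikhomirov's work \cite{tikhomirov2020invertibility}, so there is no ``paper's own proof'' to compare against. Your write-up is an accurate sketch of the Rudelson--Vershynin framework, and you correctly locate the hard part, but what you produce is not a proof: the passage where you say ``The approach of Tikhomirov goes essentially through route (i)\dots'' is exactly where the entire content of the result lives, and you describe it rather than carry it out.

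Two of the concrete claims in your sketch also need correcting. First, the ``crude'' small-ball bound $\mathbb{P}(d_i\le t\mid H_i)\le 2Lt\sqrt n$ via pigeonhole on one coordinate is unnecessarily weak: by the anti-concentration result for one-dimensional projections of bounded-density vectors (the main theorem of \cite{rudelson2015small}, which this very paper invokes elsewhere in the proof of Lemma~\ref{lemma3.2}), $\langle R_i,v_i\rangle$ already has density $\le C'L$ for an absolute constant $C'$, with no $\sqrt n$ loss. So the $\sqrt n$ you are worried about never enters; the genuine obstacle is the factor $n$ coming from the union bound over the $n$ rows, and your route~(i) --- showing $v_i$ has many coordinates of size $\sim n^{-1/2}$ --- does not address that. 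Second, route~(ii) cannot work as stated: since $\mathbb{P}(d_i\le t)\asymp t$, one has $\mathbb{E}[d_i^{-p}]<\infty$ only for $p<1$, so the best Markov yields from this is $\mathbb{P}(s_{\min}(A)\le\epsilon n^{-1/2})\lesssim\epsilon^{p}$ with $p<1$, strictly weaker than the linear-in-$\epsilon$ bound claimed. Eliminating the spurious factor of $n$ while keeping the linear $\epsilon$ dependence is precisely what requires the net/decomposition machinery of Tikhomirov's paper; a proof would have to actually run that argument (or an equivalent), not gesture at it.
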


The main advantage of Lemma \ref{prop1.5} is that it does not have the exponential error term $e^{-cn}$ which appears when no density assumption is made. For Gaussian matrices, these estimates without exponential error were known much earlier \cite{sankar2006smoothed}. 

For the complex case, we can dispense with the following weaker estimate:

\begin{lemma}\label{lemma2.6}Let $\zeta$ be a mean 0, variance 1 random variable satisfying the density assumptions in Theorem \ref{maincircularlaw1.1}. Then we can find $C=C(L)>0$ such that 
 \begin{equation}
        \mathbb{P}(s_{min}(A)\leq \epsilon n^{-3/2})\leq C\epsilon,\quad \epsilon>0.
    \end{equation}
    
\end{lemma}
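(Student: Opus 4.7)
My plan splits into the real and complex cases. The real case is immediate: when $\zeta$ is real with bounded density, Lemma~\ref{prop1.5} gives the stronger estimate $\mathbb{P}(s_{\min}(A)\le \epsilon n^{-1/2})\le C\epsilon$, and since $n^{-3/2}\le n^{-1/2}$ the event $\{s_{\min}(A)\le \epsilon n^{-3/2}\}$ is contained in $\{s_{\min}(A)\le \epsilon n^{-1/2}\}$, so the conclusion is immediate.

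In the complex case I write $\zeta=\alpha+i\beta$ with $\alpha,\beta$ independent real variables and $\alpha$ having density $\le L$, and decompose $A=B+iC$ with $B_{jk}=\alpha_{jk}$ and $C_{jk}=\beta_{jk}$. My plan is to combine the elementary distance-to-hyperplane bound
$$
s_{\min}(A)\;\ge\; \frac{1}{\sqrt n}\,\min_{1\le i\le n}\operatorname{dist}(A_i,H_i),\qquad H_i=\operatorname{span}\{A_j:j\ne i\}\subset\mathbb{C}^n,
$$
with a complex anti-concentration estimate for $\langle A_i,u_i\rangle$ (where $u_i$ is the unit complex normal to $H_i$), followed by a union bound over $i$. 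For each $i$, I would condition on the other columns and on the imaginary part $C$. Since $|\langle A_i,u_i\rangle|$ is invariant under rotating $u_i$ by a phase $e^{i\theta}$, I first rotate so that $\|\operatorname{Re}(e^{-i\theta}\bar u_i)\|^2\ge 1/2$---always possible because the squared real-part norm averages to $1/2$ as $\theta$ varies over $[0,2\pi)$. The real part of $\langle A_i,u_i\rangle$ then becomes a weighted sum $\sum_j r_j\alpha_{ji}+(\text{constant})$ in which $\|r\|\ge 1/\sqrt 2$ and the $\alpha_{ji}$ are i.i.d.\ with density $\le L$.

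The target anti-concentration bound is $\mathbb{P}(|\langle A_i,u_i\rangle|\le t\mid\{A_j\}_{j\ne i},\,C)\le Ct$ with $C=C(L)$. Applied with $t=\epsilon/n$ and union-bounded over $i=1,\dots,n$, this gives $\mathbb{P}(\min_i\operatorname{dist}(A_i,H_i)\le \epsilon/n)\le C\epsilon$, and the distance bound then yields the claimed $\mathbb{P}(s_{\min}(A)\le \epsilon n^{-3/2})\le C\epsilon$.

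\textbf{Main obstacle.} The hard part is establishing the anti-concentration bound $\le Ct$ without an extra $\sqrt n$ factor. The naive convolution-of-densities argument only gives density $O(L\sqrt n)$ for $\sum_j r_j\alpha_{ji}$ (since $\max_j|r_j|\ge \|r\|/\sqrt n\ge 1/\sqrt{2n}$), hence $\mathbb{P}\le CL\sqrt n\,t$ and an extra $\sqrt n$ after the union bound. I would close this gap by conditioning on $C$ and establishing a non-centered complex analogue of Lemma~\ref{prop1.5} directly for the shifted random matrix $B+iC$, treating $iC$ as a deterministic complex shift and adapting Tikhomirov's invertibility argument; alternatively, one could invoke a local central limit theorem (legitimate under the all-moments hypothesis on $\zeta$) showing that the density of the weighted sum $\sum_j r_j\alpha_{ji}$ is uniformly $O(1)$ as long as $\|r\|\ge 1/\sqrt 2$. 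Either route removes the spurious $\sqrt n$ factor and delivers the stated bound, and I expect the first (non-centered Tikhomirov) route to be cleaner because it reuses the existing invertibility machinery rather than relying on delicate local limit theorems.
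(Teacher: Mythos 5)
Your structural plan is correct and matches the paper's intended argument: the real case drops out of Lemma~\ref{prop1.5}, and the complex case runs through the distance-to-hyperplane reduction $s_{\min}(A)\ge n^{-1/2}\min_i\operatorname{dist}(A_i,H_i)$, a phase rotation $e^{-i\theta}$ to load at least half the mass of the normal direction onto its real part, conditioning on the imaginary component, and a small-ball estimate for $\langle \operatorname{Re}A_i, r\rangle$. You also correctly diagnose the crux: the naive convolution bound for the density of the weighted sum $\sum_j r_j\alpha_{ji}$ degrades by a factor $\sqrt n$, which would wreck the final union bound.

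What you are missing is the tool the paper actually uses to close that gap. The paper defers the proof of Lemma~\ref{lemma2.6} to the proof of Lemma~\ref{lemma3.2}, and there the $\sqrt n$ loss is eliminated by invoking the main theorem of \cite{rudelson2015small} (Rudelson--Vershynin, small ball probabilities for linear images of high-dimensional distributions): if $X=(X_1,\dots,X_m)$ has independent coordinates each with density bounded by $L$, then for \emph{any} unit vector $r\in\mathbb{R}^m$ the density of $\langle X,r\rangle$ is bounded by $CL$ with $C$ universal, independent of $m$ and of the direction $r$. That single reference makes the anti-concentration bound $\sup_s\mathbb{P}(|\langle \operatorname{Re}A_i,r\rangle - s|\le t)\le C_L t$ immediate once $\|r\|\ge 1/2$, and the rest of your argument (take $t=\epsilon/n$, union-bound over $i$, feed into the distance bound, pick up exactly the advertised $n^{-3/2}$) then goes through verbatim.

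Both of your proposed substitutes are problematic. A local CLT cannot give a bound that is uniform in the direction $r$: when the normal vector is nearly concentrated on one coordinate the weighted sum is essentially a single $\alpha_{ji}$ and the Gaussian approximation is meaningless, whereas in the spread-out regime one must quantify the error in the local CLT uniformly over all $r$ with $\|r\|\ge 1/2$, which is delicate and requires controlling the L\'evy concentration function --- precisely what \cite{rudelson2015small} packages for you. Your other route (re-running Tikhomirov's argument for the shifted complex matrix $B+iC$, $C$ frozen) would in principle give a stronger $n^{-1/2}$ bound, but Tikhomirov's proof is a long compressible/incompressible decomposition and rebuilding it under a deterministic complex shift is far from the ``straightforward modification'' the lemma is meant to be; the paper deliberately avoids this by accepting the weaker $n^{-3/2}$ rate, which is still ample for everything downstream. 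So the proof structure you lay out is the right one, but the argument as stated has a genuine gap at the anti-concentration step that should be closed by citing the Rudelson--Vershynin projection-density bound rather than by the heavier machinery you sketch.
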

Lemma \ref{lemma2.6} can be proven in a way similar to Lemma \ref{lemma3.2} and is fairly standard by now, so we omit its proof since it is a straightforward modification of Lemma \ref{lemma3.2}.

We shall need a quantitative operator norm estimate, which follows from taking a truncation to \cite{bai2010spectral}, Theorem 5.9:

\begin{lemma}\label{astronger}  Let $A$ be an $n\times n$ matrix with i.i.d. complex-valued entries $\zeta$ having mean 0, variance 1 and all $p$-th moment finite. Then we can find $C_\zeta>0$ and $C_k>0$ for each $k\in\mathbb{N}_+$, depending only on the moments of $\zeta$ such that the following two estimates hold:
$$
\mathbb{E}\|A\|_{op}\leq C_\zeta\sqrt{n},
   \quad 
\mathbb{P}(\|A\|\geq C_\zeta t\sqrt{n})\leq C_k(tn)^{-k}\quad \forall k\in\mathbb{N}_+,\forall t\geq 1.
$$
\end{lemma}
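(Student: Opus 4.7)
The plan is to derive both assertions from a single truncation argument that reduces the problem to a matrix with polynomially bounded entries, to which Bai--Silverstein, Theorem 5.9, and a standard Lipschitz concentration inequality can both be applied. The expectation estimate will follow from Bai--Silverstein directly once the fourth moment is controlled; the tail estimate requires additionally the finiteness of arbitrarily high moments in order to absorb the probability of truncation.

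Concretely, fix a target exponent $k\in\mathbb{N}_+$ and $t\geq 1$. I would introduce a truncation level $M=(tn)^{1/4}$ and define
\[
\tilde\zeta_{ij}:=\zeta_{ij}\mathbb{1}_{|\zeta_{ij}|\le M}-\mathbb{E}\bigl[\zeta_{ij}\mathbb{1}_{|\zeta_{ij}|\le M}\bigr],\qquad \tilde A:=(\tilde\zeta_{ij})_{i,j=1}^n.
\]
On the event $E=\{|\zeta_{ij}|\le M\text{ for all }i,j\}$, the matrix $A$ coincides with $\tilde A$ up to an additive rank-one type shift of magnitude at most $n|\mathbb{E}[\zeta\mathbb{1}_{|\zeta|>M}]|\le n\,M^{-(p-1)}\mathbb{E}|\zeta|^p$, which is negligible relative to $\sqrt{n}$ once $p$ is large. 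By Markov's inequality and a union bound,
\[
\mathbb{P}(E^c)\le n^2\,M^{-p}\,\mathbb{E}|\zeta|^p=C_p\,n^2(tn)^{-p/4}.
\]
Choosing $p\ge 4(k+2)$ (permissible since all moments of $\zeta$ are finite) yields $\mathbb{P}(E^c)\le C_k(tn)^{-k}$.

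For the truncated matrix $\tilde A$, the entries are bounded by $2M$, have zero mean, and variance that differs from $1$ by at most $O(M^{-(p-2)})$. Bai--Silverstein, Theorem 5.9, applied to $\tilde A$ (after the trivial variance rescaling) gives $\mathbb{E}\|\tilde A\|_{op}\le C_\zeta\sqrt{n}/2$, say. Because $X\mapsto\|X\|_{op}$ is $1$-Lipschitz in the Frobenius metric on $\mathbb{R}^{2n^2}$ and the truncated entries lie in a compact set of diameter $O(M)$, Talagrand's convex concentration inequality (or the bounded-differences inequality applied entry-wise) gives
\[
\mathbb{P}\bigl(\|\tilde A\|_{op}\ge C_\zeta t\sqrt{n}\bigr)\le\exp\!\bigl(-c\,t^2 n/M^2\bigr)=\exp\!\bigl(-c\,t^{3/2}n^{1/2}\bigr),
\]
which is $\ll(tn)^{-k}$ for every $k$. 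Combining this with the bound on $\mathbb{P}(E^c)$ and the negligible centering correction yields the second claim. The expectation bound is obtained by the same truncation: $\mathbb{E}\|A\|_{op}\le\mathbb{E}\|\tilde A\|_{op}+\mathbb{E}[\|A-\tilde A\|_{op}\mathbb{1}_{E^c}]+\text{(centering)}$, and on $E^c$ one dominates $\|A-\tilde A\|_{op}\le\|A-\tilde A\|_F$ and uses Cauchy--Schwarz together with $\mathbb{E}|\zeta|^{2p}<\infty$ to control the tail contribution.

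The main obstacle, and really the only delicate point, is balancing the truncation parameter: the threshold $M$ must be small enough that the Lipschitz concentration for $\tilde A$ dominates any polynomial error, yet large enough that the event $E^c$ has probability $\le(tn)^{-k}$. Both constraints are loose once arbitrarily high moments are assumed, so the choice $M=(tn)^{1/4}$ with $p=4(k+2)$ works comfortably; the only care needed is that the constants $C_\zeta,C_k$ are extracted to depend \emph{only} on the moments of $\zeta$ and on $k$, not on $n$ or $t$, which is automatic in this scheme since both the Bai--Silverstein constant and the Talagrand concentration constant depend only on the entry-distribution data (here, the moments).
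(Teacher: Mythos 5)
Your proposal is correct and takes essentially the same route as the paper. The paper itself gives no proof beyond the one-line remark that the lemma "follows from taking a truncation to Bai--Silverstein, Theorem 5.9," and your truncation scheme with $M=(tn)^{1/4}$, the Markov/union bound on $\mathbb{P}(E^c)$ exploiting arbitrarily high moments, and the Talagrand concentration step for the truncated matrix is a standard and complete way to carry that out. The exponent bookkeeping works: $\mathbb{P}(E^c)\le C_p n^2(tn)^{-p/4}$ is $\le C_k(tn)^{-k}$ once $p\ge 4(k+2)$, and $\exp(-c\,t^{3/2}n^{1/2})$ dominates any fixed $(tn)^{-k}$ uniformly in $t\ge 1$.

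Two small points worth tightening. First, in the expectation estimate the decomposition $\mathbb{E}\|A\|_{op}\le\mathbb{E}\|\tilde A\|_{op}+\mathbb{E}[\|A-\tilde A\|_{op}\mathbb{1}_{E^c}]+(\text{centering})$ slightly misplaces the centering: $A-\tilde A$ already contains the deterministic centering matrix even on $E$, so it is cleaner to write $A=\tilde A + (\zeta_{ij}\mathbb{1}_{|\zeta_{ij}|>M})_{ij} + cJ$ with $J$ the all-ones matrix and $c=\mathbb{E}[\zeta\mathbb{1}_{|\zeta|\le M}]$, and bound the three pieces separately; the second piece is then controlled by $\|\cdot\|_F$ and Cauchy--Schwarz exactly as you indicate. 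Second, Bai--Silverstein Theorem 5.9 as stated gives an almost-sure limit, not a finite-$n$ expectation bound; for the truncated matrix you should instead invoke the underlying high-trace-moment estimate from its proof (or, more directly, a finite-$n$ operator-norm bound such as Seginer's or Lata\l a's theorem, both of which require only a uniform fourth-moment bound on the entries, which the truncation preserves uniformly in $M$). Neither issue affects the architecture of the argument.
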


Finally, we need a result on convergence of log determinants of random matrices. We use the normalization $(\frac{1}{3n})^{1/2}$ since the entries in $T$ are normalized by $(\frac{1}{3\ell})^{1/2}$.
\begin{Proposition}\label{proposition1.999}
    Let $A$ be an $n\times n$ matrix with i.i.d. complex-valued entries $\zeta$ having mean 0 and variance 1. Then 
    $$
\frac{1}{n}\log\left|\det\left((3n)^{-1/2}A\right)\right|\to_{n\to\infty} -\frac{1}{2}\log 3-\frac{1}{2}
    $$where the convergence holds both in probability and almost surely. Assuming moreover that $\zeta$ satisfies the assumptions in Theorem \ref{maincircularlaw1.1}, we also have the convergence in expectation
     $$
\frac{1}{n}\mathbb{E}\log\left|\det\left((3n)^{-1/2}A\right)\right|\to_{n\to\infty} -\frac{1}{2}\log 3-\frac{1}{2}.
    $$
\end{Proposition}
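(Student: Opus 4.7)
The plan is to reduce to a standard integral against the quarter-circle law via a simple rescaling. Writing
\[
\frac{1}{n}\log\bigl|\det\bigl((3n)^{-1/2}A\bigr)\bigr|=\frac{1}{n}\log\bigl|\det(n^{-1/2}A)\bigr|-\tfrac12\log 3,
\]
the task reduces to proving $\frac{1}{n}\log|\det(n^{-1/2}A)|\to -\tfrac12$ in the appropriate senses. With $s_1\geq\cdots\geq s_n$ the singular values of $A$ and $\nu_n=\frac{1}{n}\sum_i \delta_{s_i/\sqrt n}$ their empirical measure, this quantity equals $\int_0^\infty\log x\,d\nu_n(x)$. Under the assumption of mean $0$ and unit variance entries, the Marchenko--Pastur theorem applied to $A^*A/n$ gives that $\nu_n$ converges weakly almost surely to the quarter-circle law $\rho_{qc}(x)\,dx=\frac{1}{\pi}\sqrt{4-x^2}\mathbf 1_{[0,2]}(x)\,dx$, and a direct computation (substitution $x=2\sin\theta$, two integrations by parts) yields $\int_0^2\log x\,\rho_{qc}(x)\,dx=-\tfrac12$, exactly matching the target.

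The analytic heart of the argument is to justify passing to the limit inside $\int\log x\,d\nu_n(x)$, since $\log x$ is unbounded near both $0$ and $\infty$. I would partition the domain into $[0,\varepsilon)\cup[\varepsilon,R]\cup(R,\infty)$. On $[\varepsilon,R]$ the integrand is continuous and bounded, so the almost sure weak convergence of $\nu_n$ handles this region, and the error against $-\tfrac12$ tends to $0$ as $\varepsilon\to 0$ and $R\to\infty$. The upper tail is controlled by Lemma \ref{astronger}: since $\|A\|\leq C\sqrt n$ with probability $1-O(n^{-k})$ for every $k$, the measure $\nu_n$ is supported inside $[0,C]$ with overwhelming probability, so $(R,\infty)$ contributes nothing once $R>C$. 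The delicate step is the lower tail $\int_0^\varepsilon|\log x|\,d\nu_n(x)$. For this I would combine an intermediate small-singular-value count $\#\{i:s_i\leq\tau\sqrt n\}\leq C\tau n$ (a direct consequence of the Marchenko--Pastur limit applied to the interval $[0,\tau]$) with a polynomial lower bound on the smallest singular value. In the plain mean-$0$, variance-$1$ setting, the small-ball estimates underlying the Tao--Vu--Krishnapur circular law \cite{WOS:000281425000010} furnish $\mathbb{P}(s_n\leq n^{-A})\leq n^{-B}$ for arbitrary $A,B$, and Borel--Cantelli promotes this to an almost sure lower bound $s_n\geq n^{-A}$ eventually. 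Combining these two controls shows $\int_0^\varepsilon|\log x|\,d\nu_n(x)\to 0$ almost surely as $\varepsilon\to 0$, uniformly in large $n$, from which the almost sure and in-probability convergence follow.

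Convergence in expectation, which uses the density and moment hypotheses, requires uniform integrability of $\frac{1}{n}\log|\det(n^{-1/2}A)|$. The positive part is immediate from the moment control $\mathbb E\|A\|\leq C\sqrt n$ of Lemma \ref{astronger}: $(\log s_i)^+\leq\log(\|A\|+1)$, contributing $O(\log n)$ to the expectation when averaged. For the negative part the density hypothesis enters through Lemma \ref{lemma2.6}: the polynomial tail $\mathbb P(s_n\leq\varepsilon n^{-3/2})\leq C\varepsilon$ gives $\mathbb E|\log s_n|=O(\log n)$, hence an individual contribution of $O(\log n/n)\to 0$. To control the collective lower-tail $\mathbb E\bigl[\frac{1}{n}\sum_i(\log s_i)^-\bigr]$ I would use the negative-second-moment identity (as in \cite{WOS:000281425000010}) together with a layer-cake decomposition and small-ball bounds for all intermediate $s_i$, which promotes the individual polynomial tails into a uniform-in-$n$ bound on the truncated integral and yields the required uniform integrability.

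The principal obstacle in the whole argument is the coordinated control over the smallest singular values $s_n,s_{n-1},\ldots$ near the origin; the density hypothesis is precisely what makes this robust enough to upgrade almost sure convergence to convergence in expectation, while the mean-$0$, variance-$1$ assumption alone suffices for the almost sure and in-probability statements via the deep small-ball bounds of \cite{WOS:000281425000010}.
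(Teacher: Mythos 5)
Your route for the almost-sure statement is genuinely different from the paper's: the paper simply cites the Tao--Vu--Krishnapur comparison $\frac{1}{n}\log|\det(n^{-1/2}A)|-\frac{1}{n}\log|\det(n^{-1/2}G)|\to 0$ a.s. as a black box and then evaluates the Ginibre potential, whereas you re-derive the limit via Marchenko--Pastur, the quarter-circle density, and direct tail controls. Your integral $\int_0^2\log x\,\rho_{qc}(x)\,dx=-\tfrac12$ is correct, and the rescaling by $\tfrac12\log 3$ is correct.

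There is, however, a genuine gap in the lower-tail step. You assert that $\#\{i:s_i\leq\tau\sqrt n\}\leq C\tau n$ is ``a direct consequence of the Marchenko--Pastur limit applied to $[0,\tau]$.'' It is not: almost-sure weak convergence of $\nu_n$ gives $\nu_n([0,\tau_0])\to\nu_{qc}([0,\tau_0])$ for each \emph{fixed} $\tau_0$, but provides no control over $\nu_n([0,\tau])$ uniformly as $\tau\downarrow 0$, which is what the tail integral $\int_0^\varepsilon|\log x|\,d\nu_n(x)$ actually requires once you cut at $x\approx n^{-A}$. Concretely, $\int_{n^{-A}}^{\tau_0}|\log x|\,d\nu_n(x)\leq A\log n\cdot\nu_n([0,\tau_0])$, and for fixed $\tau_0$ this does \emph{not} tend to $0$ since $\log n\to\infty$; one needs the stronger pointwise bound $\nu_n([0,x])\leq Cx$ down to scales $x\sim 1/n$, i.e.\ a convergence rate in Kolmogorov distance (a local-law / rigidity input, as the paper develops in Corollary \ref{proofcorollary4.5} and Fact \ref{fact4,690}). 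The paper avoids this entirely for the a.s.\ part by outsourcing it to Tao--Vu--Krishnapur's replacement argument, and invokes the Kolmogorov-distance machinery only for the expectation. Your approach can be repaired by replacing ``MP limit'' with the appropriate quantitative rate, but as written the key estimate is not justified.

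For the convergence in expectation the two routes also diverge. The paper proves uniform integrability by bounding the \emph{second} moment $\mathbb E\bigl[|\tfrac1n\log|\det n^{-1/2}A||^2\bigr]$, using Cauchy--Schwarz to pass to $\tfrac1n\sum_i|\log s_i|^2$, a high-probability good event (Kolmogorov distance, operator norm, $s_{\min}\geq n^{-9}$) on which Fact \ref{fact4,690} with $\beta=2$ controls the integral, and a crude Cauchy--Schwarz bound on the tiny bad event. You instead propose the negative-second-moment identity plus a layer-cake argument. That is a plausible alternative, but note that $\mathbb E[\operatorname{dist}(r_i,H_i)^{-2}]$ can be infinite even under a bounded-density assumption (the projected density is merely bounded, not vanishing at the origin), so the identity cannot be used in raw form; a truncation is essential, and you have not indicated how to recombine the truncated pieces into a uniform-in-$n$ bound. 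The paper's second-moment-plus-good-event argument is more robust precisely because it never needs $\operatorname{dist}^{-2}$ to be integrable.
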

The statement for almost sure convergence follows from the circular law proof in \cite{WOS:000281425000010}, but we will actually use the version of convergence in expectation. We will justify this by proving uniform integrability of the log determinant, so that stronger assumptions (such as a bounded density) are needed. The proof of Proposition \ref{proposition1.999} is deferred to Section \ref{uniformintegrability}.

\subsection{Identification with a transfer operator iteration}

We will essentially be studying the quantity $\det(\Pi(\prod_{k=n}^1 M_k^{(B)}(z))\Xi).$

It is convenient to re-express this quantity in the wedge space $\wedge^\ell\mathbb{C}^{2\ell}$, and write it in the form of an ergodic dynamical system acting on $\wedge^\ell\mathbb{C}^{2\ell}$.

Denote by $\mathcal{W}:=\wedge^\ell\mathbb{C}^{2\ell}$. Then every linear map $g:\mathbb{C}^{2\ell}\to\mathbb{C}^{2\ell}$ lifts to the following linear map 
$$\wedge^\ell g:\mathcal{W}\to \mathcal{W},\quad \wedge^\ell g(v_1\wedge\cdots\wedge v_\ell)=(gv_1)\wedge(gv_2)\wedge\cdots\wedge(gv_\ell),
$$and similarly for any linear map $g:V_1\to V_2$ between two linear spaces $V_1,V_2$, the map $\wedge^\ell g$ is the lift: $\wedge^\ell g:\wedge^\ell V_1\mapsto\wedge^
\ell V_2$. We identify $\wedge^\ell\mathbb{C}^\ell$ with $\mathbb{C}$.

For the given boundary value $\Pi$ and $\Xi$, we let $\hat{f}:=\wedge^\ell \Pi$ and let $\hat{g}:=\wedge^\ell\Xi$, where we can identify $\hat{g}\in\mathcal{W}$ and $\hat{f}\in\mathcal{W}^*$, where $\mathcal{W}^*$ is the dual space of $\mathcal{W}$. Then we have the identity 
\begin{equation}\label{identity471559}
\det(\Pi(\prod_{k=n}^1 M_k^{(B)}(z))\Xi)=\langle\hat{f},\prod_{k=n}^1\wedge^\ell M_k^{(B)}(z)\hat{g}\rangle.
\end{equation}This equality follows from writing the Cauchy-Binet formula in the language of exterior algebra.

The following property will be frequently used:
\begin{fact}\label{fact2.100}
    Let $g:\mathbb{C}^{2\ell}\to \mathbb{C}^{2\ell}$ be a linear map with singular values $\sigma_1\geq\sigma_2\geq\cdots\geq\sigma_{2\ell}$. Then the singular values of $\wedge^\ell g$ are given by the following collection 
    $$
\{\sigma_{i_1}\sigma_{i_2}\cdots\sigma_{i_\ell}:\quad 1\leq i_1<i_2<\cdots<i_\ell\leq 2\ell\}.
    $$
\end{fact}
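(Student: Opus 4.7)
The plan is to prove this by lifting the singular value decomposition of $g$ to the exterior power. Write the SVD as $g=U\Sigma V^*$, where $U,V\in\mathcal{U}(2\ell)$ and $\Sigma=\operatorname{diag}(\sigma_1,\ldots,\sigma_{2\ell})$. By the functoriality of $\wedge^\ell$ with respect to composition,
\[
\wedge^\ell g=(\wedge^\ell U)(\wedge^\ell\Sigma)(\wedge^\ell V^*).
\]
The next step is to observe that $\wedge^\ell$ preserves unitarity: if $U^*U=I_{2\ell}$, then for the orthonormal basis $\{e_{i_1}\wedge\cdots\wedge e_{i_\ell}:1\leq i_1<\cdots<i_\ell\leq 2\ell\}$ of $\mathcal{W}=\wedge^\ell\mathbb{C}^{2\ell}$ (with the induced Hermitian structure), one checks that $\wedge^\ell U$ maps this orthonormal basis to an orthonormal set, hence $(\wedge^\ell U)^*(\wedge^\ell U)=I_{\mathcal{W}}$; in particular $\wedge^\ell(V^*)=(\wedge^\ell V)^*$.

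Then the key computation is to evaluate $\wedge^\ell\Sigma$ on the standard basis vectors. Since $\Sigma$ is diagonal with $\Sigma e_i=\sigma_i e_i$, applying the definition of $\wedge^\ell$ gives
\[
(\wedge^\ell\Sigma)(e_{i_1}\wedge\cdots\wedge e_{i_\ell})=(\sigma_{i_1}e_{i_1})\wedge\cdots\wedge(\sigma_{i_\ell}e_{i_\ell})=\sigma_{i_1}\cdots\sigma_{i_\ell}\,(e_{i_1}\wedge\cdots\wedge e_{i_\ell}).
\]
Therefore $\wedge^\ell\Sigma$ is diagonal on $\mathcal{W}$ with eigenvalues exactly the products $\sigma_{i_1}\cdots\sigma_{i_\ell}$ indexed by strictly increasing multi-indices.

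Putting the three factorizations together, $\wedge^\ell g$ is the composition of a unitary, a diagonal with non-negative entries $\{\sigma_{i_1}\cdots\sigma_{i_\ell}\}$, and another unitary; this is a bona fide SVD of $\wedge^\ell g$, so its singular values are precisely this multiset. There is no real obstacle here: the only thing to verify carefully is that $\wedge^\ell$ commutes with taking adjoints (equivalently, that it preserves the standard Hermitian inner product), which follows directly from the definition of the inner product on $\wedge^\ell\mathbb{C}^{2\ell}$ as $\langle v_1\wedge\cdots\wedge v_\ell,w_1\wedge\cdots\wedge w_\ell\rangle=\det(\langle v_i,w_j\rangle)$.
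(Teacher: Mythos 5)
Your proof is correct and is the standard argument; the paper states Fact~\ref{fact2.100} without proof, treating it as a well-known property of exterior powers. You supply exactly the proof one would expect: lift the SVD $g=U\Sigma V^*$ through the functor $\wedge^\ell$, observe that $\wedge^\ell$ sends unitaries to unitaries (because the Gram-determinant inner product on $\wedge^\ell\mathbb{C}^{2\ell}$ makes the induced basis $\{e_{i_1}\wedge\cdots\wedge e_{i_\ell}\}$ orthonormal and $\wedge^\ell$ respects adjoints), and read off the diagonal entries of $\wedge^\ell\Sigma$. Nothing to add.
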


Then we introduce two frequently used subspaces of $\mathcal{W}$. Let $\operatorname{Gr}:=\operatorname{Gr}(\ell,2\ell)$ be the Grassmannian of $\ell$-planes in $\mathbb{C}^{2\ell}$, then $\operatorname{Gr}(\ell,2\ell)$ is naturally identified as a subspace of $\mathbb{P}(\mathcal{W})$, the projective space of $\mathcal{W}$, via Plücker embedding. We also define the Stiefel manifold
$$
\operatorname{St}(\ell,2\ell):=\{\Pi\in\mathbb{C}^{\ell\times 2\ell}:\Pi\Pi^*=I_\ell\}.
$$ Then we have the natural identification, for $U(\ell)$ the unitary group, 
$$
\operatorname{Gr}(\ell,2\ell)=\operatorname{St}(\ell,2\ell)/U(\ell).
$$
We also introduce the decomposable cone of $\mathcal{W}$, which forms a basis of $\mathcal{W}$:
$$
\mathcal{C}_{dec}:=\{v_1\wedge\cdots\wedge v_\ell\in \mathcal{W}\setminus \{0\}\}.
$$There is a natural identification of its projectivization with the Grassmannian: 
$$
\mathbb{P}\mathcal{C}_{dec}\simeq \operatorname{Gr}(\ell,2\ell).
$$

We start from $Q_0:=\hat{g}\in \mathcal{C}_{dec}\subseteq \mathcal{W}$ and proceed inductively.
Given a current $\ell$-frame $Q_{k-1}\in \mathcal{C}_{dec}\subseteq \mathcal{W}$, we take one further matrix multiplication by $\wedge^\ell M_{k}^{(B)}$ and set
$$
Q_k:=\wedge^\ell M_{k}^{(B)}Q_{k-1}\in \mathcal{C}_{dec}\subseteq \mathcal{W}. 
$$Let $X_k=[Q_k]$ be the projective chain on the projective space $\mathbb{P}\mathcal{C}_{dec}\simeq \operatorname{Gr}(\ell,2\ell)$. We define the following function 
$$g_\ell(M_k^{(B)}(z),\hat{g}):=\log\frac{\|\wedge^\ell M_k^{(B)}(z)\hat{g}\|}{\|\hat{g}\|}$$ for any $\hat{g}\in\mathcal{C}_{dec}.$
Then we may decompose the additive cocycle as 
\begin{equation}\label{eq535}
\log\left|\det(\Pi(\prod_{k=n}^1 M_k^{(B)}(z))\Xi)\right|=\sum_{k=1}^{n-1} g_\ell(M_{k}^{(B)},Q_{k-1})+\log\left|\frac{\langle\hat{f},\wedge^\ell M_n^{(B)}(z)Q_{n-1}\rangle}{\|Q_{n-1}\|}\right|.
\end{equation}
Here $g_\ell(M,Q)$ depends only on $[Q]$, but for formulas involving the norms of inner products we always use a representative $Q$ for $[Q]$.

\subsection{Exponential moments for intermediate increment steps}\label{section2.11110}

\begin{lemma}\label{lemma1.110}
    Consider the function $$g_\ell(M_k^{(B)}(z),\hat{g}):=\log\frac{\|\wedge^\ell M_k^{(B)}(z)\hat{g}\|}{\|\hat{g}\|}$$ for any $\hat{g}\in\mathcal{C}_{dec}$. Assume that
    the random blocks $A_k,B_k,C_k$ in $M_k^{(B)}(z)$ satisfy the same assumption as in Theorem \ref{maincircularlaw1.1}. Then we can find two constants $c_0$ and $C_0$ (depending only on $|z|$ and the entry law of $M_k^{(B)}(z)$) such that, uniformly for any $\hat{g}\in\mathcal{C}_{dec},$ the following estimate holds for all $-\frac{c_0}{\ell}\leq c\leq \frac{c_0}{\ell}$:
    $$   \mathbb{E}\exp \left(cg_\ell(M_k^{(B)}(z),\hat{g})\right)\leq C_0\ell^{2}.
    $$We also have the following moment computations:
    $$
\quad\mathbb{E}[|g_\ell(M_k^{(B)}(z),\hat{g})|^2]\leq C_0(\ell\log\ell)^2,\quad \quad\mathbb{E}[|g_\ell(M_k^{(B)}(z),\hat{g})|^6]\leq C_0(\ell\log\ell)^6.
    $$
\end{lemma}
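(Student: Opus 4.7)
The plan is to reduce both estimates to operator norm bounds on $M_k^{(B)}(z)$ and its inverse. The key deterministic pointwise inequality, valid uniformly for $\hat g\in\mathcal C_{dec}$, is
\begin{equation*}
|g_\ell(M,\hat g)|\leq \ell\bigl(\log_+\|M\|_{op}+\log_+\|M^{-1}\|_{op}\bigr).
\end{equation*}
This follows from Fact \ref{fact2.100}: the quotient $\|\wedge^\ell M\hat g\|/\|\hat g\|$ is sandwiched between the smallest and largest singular values of $\wedge^\ell M$, which equal $\prod_{i=\ell+1}^{2\ell}\sigma_i(M)\geq \|M^{-1}\|_{op}^{-\ell}$ and $\prod_{i=1}^\ell\sigma_i(M)\leq\|M\|_{op}^\ell$ respectively. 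With this in hand the remaining task is to obtain strong tail bounds on $\|M_k^{(B)}(z)\|_{op}$ and $\|(M_k^{(B)}(z))^{-1}\|_{op}$.

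By direct block inversion,
\begin{equation*}
(M_k^{(B)}(z))^{-1}=\begin{bmatrix}0&I_\ell\\-C_k^{-1}B_k&-C_k^{-1}(A_k-zI_\ell)\end{bmatrix},
\end{equation*}
so both operator norms are dominated by an expression of the form $1+\|B_k^{-1}\|_{op}(\|A_k\|_{op}+|z|+\|C_k\|_{op})$, with $B_k$ and $C_k$ interchanged in the case of the inverse. Lemma \ref{astronger} gives $\|A_k\|_{op},\|B_k\|_{op},\|C_k\|_{op}$ polynomial tails with $O(1)$ means, thanks to the high moment assumption on $\zeta$. The density assumption enters via Lemma \ref{lemma2.6} (or Lemma \ref{prop1.5} in the real case), producing
\begin{equation*}
\mathbb{P}(\|B_k^{-1}\|_{op}\geq t)\leq C\ell^{3/2}/t,\qquad \mathbb{P}(\|C_k^{-1}\|_{op}\geq t)\leq C\ell^{3/2}/t,
\end{equation*}
for all $t\geq 1$. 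Combined with the elementary concavity inequality $(a+b)^{c_0}\leq a^{c_0}+b^{c_0}$ valid for $c_0\in(0,1)$ and the independence of $A_k,B_k,C_k$, this yields the fractional moment bounds
\begin{equation*}
\mathbb{E}\|M_k^{(B)}(z)\|_{op}^{c_0}\leq C\ell^{3c_0/2},\qquad \mathbb{E}\|(M_k^{(B)}(z))^{-1}\|_{op}^{c_0}\leq C\ell^{3c_0/2},
\end{equation*}
valid for any fixed $c_0\in(0,1)$.

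To finish the exponential moment estimate, I split $e^{cg_\ell}\leq 1+\|M_k^{(B)}(z)\|_{op}^{c\ell}$ when $c\geq 0$ and $e^{cg_\ell}\leq 1+\|(M_k^{(B)}(z))^{-1}\|_{op}^{|c|\ell}$ when $c<0$; then for $|c|\leq c_0/\ell$ the previous fractional moment bound gives $\mathbb{E} e^{cg_\ell}\leq 1+C\ell^{3c_0/2}\leq C_0\ell^2$ after choosing $c_0$ small (any $c_0<4/3$ works). For the polynomial moments, the tail $\mathbb{P}(\|M_k^{(B)}(z)\|_{op}\geq t)\leq C\ell^{3/2}/t$ yields a genuine sub-exponential tail on $\log_+\|M_k^{(B)}(z)\|_{op}$ once $s\geq C_1\log\ell$, so $\mathbb{E}(\log_+\|M_k^{(B)}(z)\|_{op})^p\leq C_p(\log\ell)^p$, and likewise for the inverse. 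Combining with the pointwise bound of the first paragraph gives $\mathbb{E}|g_\ell|^p\leq C_p(\ell\log\ell)^p$ for every $p\geq 1$, which specializes to the claimed estimates at $p=2$ and $p=6$.

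The main technical tension is the tight coupling between the factor $\ell$ in the pointwise bound and the relatively weak $1/t$ tail on $\|B_k^{-1}\|_{op}$, which only allows $\mathbb{E}\|B_k^{-1}\|_{op}^p<\infty$ for $p<1$. This forces $c\ell<1$, which is precisely why the admissible range of the exponential parameter $c$ must shrink like $c_0/\ell$; any improvement in the tail of $\|B_k^{-1}\|_{op}$ beyond the Tikhomirov bound would correspondingly enlarge the admissible range of $c$.
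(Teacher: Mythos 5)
Your proposal is correct and follows essentially the same route as the paper: sandwich $g_\ell$ between $\ell\log s_{\min}(M_k^{(B)})$ and $\ell\log s_{\max}(M_k^{(B)})$ via Fact~\ref{fact2.100}, reduce to operator-norm tails on $M_k^{(B)}$ and its inverse, feed in the least-singular-value bound for $B_k$ (resp.\ $C_k$) from Lemma~\ref{lemma2.6} and the norm tails from Lemma~\ref{astronger}, and combine. The only slip worth noting is a normalization constant: the blocks have entries of law $(3\ell)^{-1/2}\zeta$, so applying Lemma~\ref{lemma2.6} after rescaling gives $\mathbb{P}(\|B_k^{-1}\|_{op}\geq t)\lesssim \ell^{2}/t$, not $\ell^{3/2}/t$; this does not affect the conclusion since either exponent yields the stated $C_0\ell^2$ and $(\log\ell)^p$ bounds once $c_0$ is taken small enough. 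Your use of the concavity $(a+b)^{c_0}\leq a^{c_0}+b^{c_0}$ where the paper cites Cauchy--Schwarz is an interchangeable detail, and your explicit derivation of $\mathbb{E}(\log_+\|\cdot\|)^p\lesssim(\log\ell)^p$ from the $1/t$ tail fills in a step the paper only asserts.
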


\begin{proof}
By Fact \ref{fact2.100}, we have
$$
s_{min}(M_k^{(B)}(z))^\ell\leq\|\wedge^\ell M_k^{(B)}(z)\|\leq s_{max}(M_k^{(B)}(z))^\ell, 
$$and similarly we have
$$\|(\wedge^\ell M_k^{(B)}(z))^{-1}\|
\leq s_{min}(M_k^{(B)}(z))^{-\ell}=s_{\max}\big((M_k^{(B)}(z))^{-1}\big)^\ell.
$$
We can bound 
$$
\|M_k^{(B)}(z)\|\leq \|B_k^{-1}\|(\|B_k\|+\|A_k\|+|z|+\|C_k\|). 
$$Then by Lemma \ref{prop1.5}, \ref{lemma2.6} and \ref{astronger} and applying Cauchy-Schwarz inequality, we can find constants $c_0>0,C_0>0$ depending only on $z$ and $\zeta$ such that for any $c\in(0,c_0)$, we have
$$
\mathbb{E}\|M_k^{(B)}(z)\|^c \leq C_0\ell^{2}.
$$Since the entries $C_k$ have bounded density, $C_k$ is invertible almost surely.
We then compute $$(M_k^{(B)}(z))^{-1}=\begin{bmatrix}
    0&I_\ell\\-C_k^{-1}B_k&-C_k^{-1}(A_k-zI_\ell)
\end{bmatrix},$$so that for the same choice of $c_0,C_0$ we have for all $c\in(0,c_0)$,
$$
\mathbb{E}\|(M_k^{(B)}(z))^{-1}\|^c\leq C_0\ell^{2}.
$$ Combining these two bounds yields the first estimate. For the estimate on variance, by these moment computations we verify that $\mathbb{E}\left[\left|\log\|\wedge^\ell M_k^{(B)}(z)\|\right|^2\right]\leq C_0\ell^2\log^2\ell$ for a sufficiently large $C_0$. The computation for the sixth moment is exactly the same.
\end{proof}
Via a more involved argument, we can show that 
\begin{lemma}\label{exactlythesame} We take $\hat{f}=\wedge^\ell[I_\ell,0]$.
    For the two constants $c_0,C_0$ in Lemma \ref{lemma1.110}, we have uniformly for any $\hat{g}\in \mathcal{C}_{dec}$ with $\|\hat{g}\|=1$, for all $-\frac{c_0}{\ell}\leq c\leq \frac{c_0}{\ell}$:
    $$
\mathbb{E}\exp\left(c\log|\langle\hat{f},\wedge^\ell M_n^{(B)}(z)\hat{g}\rangle| \right)\leq C_0\ell^2.
    $$We also have the following moment computations:
    $$
\mathbb{E}[|\log|\langle\hat{f},\wedge^\ell M_n^{(B)}(z)\hat{g}\rangle||^2]\leq C_0(\ell\log\ell)^2,\quad \mathbb{E}[|\log|\langle\hat{f},\wedge^\ell M_n^{(B)}(z)\hat{g}\rangle||^6]\leq C_0(\ell\log\ell)^6.
    $$
\end{lemma}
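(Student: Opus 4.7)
The plan is to convert the inner product into an explicit $\ell\times\ell$ determinant and then rerun the two-sided norm argument of Lemma~\ref{lemma1.110}; the only genuinely new ingredient is a least-singular-value estimate for a rectangular random matrix restricted to a subspace. Since $\|\hat g\|=1$ and $\hat g\in\mathcal{C}_{dec}$, I realize $\hat g=v_1\wedge\cdots\wedge v_\ell$ by a unitary $\ell$-frame $V=[v_1\cdots v_\ell]\in\mathbb{C}^{2\ell\times\ell}$ with $V^*V=I_\ell$, and split $V=\begin{bmatrix}V_1\\V_2\end{bmatrix}$ so that $V_1^*V_1+V_2^*V_2=I_\ell$. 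Because $\hat f=\wedge^\ell[I_\ell,0]$ is the top-$\ell$-rows determinant functional, the block structure of $M_n^{(B)}(z)$ combined with the Cauchy--Binet identity gives
$$
\langle\hat f,\wedge^\ell M_n^{(B)}(z)\hat g\rangle=(-1)^\ell(\det B_n)^{-1}\det\bigl((A_n-zI_\ell)V_1+C_nV_2\bigr)=(-1)^\ell(\det B_n)^{-1}\det(PV),
$$
where $P:=[A_n-zI_\ell,\,C_n]\in\mathbb{C}^{\ell\times 2\ell}$. Taking logs, the task reduces to controlling the exponential moments of $-\log|\det B_n|$ and of $\log|\det(PV)|$ separately.

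For $\log|\det B_n|$, the sandwich $s_{\min}(B_n)^\ell\le|\det B_n|\le\|B_n\|^\ell$ reduces matters to $\mathbb{E}\|B_n\|^{c\ell}$ (bounded by Lemma~\ref{astronger}) and $\mathbb{E}\,s_{\min}(B_n)^{-c\ell}$ (bounded by integrating the polynomial tail $\mathbb{P}(s_{\min}(B_n)\le\epsilon)\le C\epsilon\cdot\ell^{O(1)}$ from Lemma~\ref{prop1.5}/\ref{lemma2.6} after inserting the $(3\ell)^{-1/2}$ normalization); both are polynomial in $\ell$ for $|c\ell|\le c_0$. The upper bound $|\det(PV)|\le\|P\|^\ell\le(\|A_n\|+\|C_n\|+|z|)^\ell$ follows from $\|V\|=1$ and Lemma~\ref{astronger}. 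The polynomial moment estimates then come from splitting the tail integral at the threshold $t\sim\log\ell$: the region $t\le\log\ell$ contributes $O((\ell\log\ell)^p)$ and the fast-decaying tails make the region $t>\log\ell$ contribute $O(1)$, exactly as in the proof of Lemma~\ref{lemma1.110}.

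The heart of the argument is the lower tail $\mathbb{E}[s_{\min}(PV)^{-c\ell}]$. The key observation is
$$
s_{\min}(PV)=\inf_{x\in\operatorname{Range}(V),\,\|x\|=1}\|Px\|,
$$
so I need a least-singular-value estimate for $P$ restricted to the $\ell$-dimensional subspace $\operatorname{Range}(V)\subset\mathbb{C}^{2\ell}$. The entries of $P$ are independent with variance $(3\ell)^{-1}$ and bounded density in the real direction, and the orthonormality constraint $V_1^*V_1+V_2^*V_2=I_\ell$ forces each unit $x=Vu\in\operatorname{Range}(V)$ to have full-variance projection onto every row of $P$, so anticoncentration is uniformly effective on $\operatorname{Range}(V)$. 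A leave-one-row-out step combined with an $\epsilon$-net on the unit sphere of $\operatorname{Range}(V)$ then yields $\mathbb{P}(s_{\min}(PV)\le\epsilon\ell^{-3/2})\le C\epsilon$, and integrating $\epsilon^{-c\ell}$ against this tail gives $\mathbb{E}[s_{\min}(PV)^{-c\ell}]\le C_0\ell^{O(1)}$ for $|c|\le c_0/\ell$.

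The main obstacle is this last step. Unlike Lemma~\ref{lemma1.110}, where it sufficed to bound the full operator norm $\|\wedge^\ell M_k^{(B)}\|$, here I need a directional bound on one specific $\ell$-dimensional slice of a rectangular random matrix. The difficulty is that $V_1$ and $V_2$ can individually be arbitrarily ill-conditioned, so one cannot simply invert one of them to rewrite $PV$ as an i.i.d.\ matrix plus a shift; a direct net-plus-anticoncentration proof on the restricted matrix is required. Fortunately, the constraint $V^*V=I_\ell$ is exactly strong enough to make the bounded-density anticoncentration from Lemma~\ref{prop1.5}/\ref{lemma2.6} effective in every direction of $\operatorname{Range}(V)$, after which the bookkeeping for the claimed exponential and polynomial moment bounds mirrors that of Lemma~\ref{lemma1.110}.
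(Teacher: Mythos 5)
Your reduction is essentially the paper's: after writing $\hat g=\wedge^\ell V$ with $V^*V=I_\ell$ and using Cauchy--Binet, the pairing factors as $(-1)^\ell(\det B_n)^{-1}\det(PV)$ with $P=[A_n-zI_\ell,\,C_n]$, and the paper arrives at exactly the same two ingredients (a $\log|\det B_n|$ term plus the determinant of the random $\ell\times2\ell$ block applied to a fixed unitary frame) via Proposition \ref{proposition168} with $n=1$ and Fact \ref{fact3.45}; your direct computation is a slightly cleaner way to get there, and your treatment of $\log|\det B_n|$, the operator-norm upper bound, and the moment bookkeeping all match the paper. Moreover, the restricted small-ball estimate you need, $\mathbb{P}\bigl(s_{\min}(PV)\le \epsilon\,\mathrm{poly}(\ell)^{-1}\bigr)\le C\epsilon$, is precisely Lemma \ref{lemma3.2} of the paper (take $F=[A_n,C_n]$, $S=V$, $Z=zV_1$, adjusting for the $(3\ell)^{-1/2}$ normalization), so you could simply cite it, which is what the paper's proof does after an orthogonal splitting into the row space of the boundary block and its complement.

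The gap is in how you propose to prove that estimate. A ``leave-one-row-out step combined with an $\epsilon$-net on the unit sphere of $\operatorname{Range}(V)$'' cannot deliver a tail that is linear in $\epsilon$ with no additive error: an $\epsilon$-net of the $\ell$-dimensional unit sphere of $\operatorname{Range}(V)$ has cardinality of order $(C/\epsilon)^{2\ell}$, while for a fixed unit $x$ the bounded-density anticoncentration only gives $\mathbb{P}(\|Px\|\le \epsilon\ell^{-3/2})\le (C\sqrt{\ell}\,\epsilon)^{\ell}$, so the union bound diverges as $\epsilon\to0$; and the standard compressible/incompressible refinement produces an additive term $e^{-c\ell}$. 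That additive term is fatal for your argument, because you bound $|\det(PV)|^{-c}\le s_{\min}(PV)^{-c_0}$ and then need $\mathbb{E}[s_{\min}(PV)^{-c_0}]$ to be polynomial in $\ell$: with a tail of the form $C\epsilon+e^{-c\ell}$ the layer-cake integral for the negative moment is not even finite, since on the exceptional event of probability $e^{-c\ell}$ there is no quantitative lower bound on $s_{\min}$ at all (this is exactly the obstruction the paper's density assumption is designed to remove). The correct tool is the one in the paper's proof of Lemma \ref{lemma3.2}: the negative second moment identity reduces $s_{\min}(PV)$ to distances from rows to the span of the remaining rows, and each distance is controlled by one-dimensional anticoncentration of $\langle F_i, Vn_i\rangle$ using the fact (from the Rudelson--Vershynin projection result) that a bounded-density i.i.d.\ vector has bounded density along every fixed direction, with $\|Vn_i\|=1$ guaranteed by $V^*V=I_\ell$. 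No net is needed, and no exponential error appears. If you replace your net step by this distance argument (or simply invoke Lemma \ref{lemma3.2}), the rest of your proposal goes through and coincides with the paper's proof.
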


\begin{proof}
We can find some $\Xi\in\mathbb{C}^{2\ell\times\ell}$, $\det(\Xi^*\Xi)=1$, such that $\hat{g}=\wedge^\ell\Xi$ since $\hat{g}$ is decomposable with unit norm. Then by \eqref{identity471559}, we only need to compute the moments of $\det(\Pi M_n^{(B)}(z)\Xi)$, with $\Pi=[I_\ell,0]$. Then by Proposition \ref{proposition168} with $n=1$ and $\Pi=[I_\ell,0]$, so that Proposition \ref{proposition168} yields  $[S_{+}^{bd},C_{+}]=[0,I_\ell]$ and $[V,U]$ depending on $\Xi$, we have
$$
\log|\det (\Pi M_n^{(B)}(z)\Xi)|=-\log|\det B_n|+\log\left|\det \begin{bmatrix}
V&U&0\\C_n&A_n-zI_\ell&B_n\\0&0&I_\ell
\end{bmatrix}\right|.
$$ We simply use \begin{equation}\label{imple}|\log|\det B_n||\leq \ell|\log s_{max}(B_n)|+\ell|\log s_{min}(B_n)|
\end{equation}for the first term, and it suffices to bound $|\log|\det E_n||$ where $$E_n=\begin{bmatrix} V&U\\
    C_n&A_n-zI_\ell\\ 
\end{bmatrix}.$$ By \eqref{imple} with $B_n$ replaced by $E_n$, we only need a lower tail estimate for $s_{min}(E_n)$ as the upper tail estimate for $s_{max}(E_n)$ follows from Lemma \ref{astronger}. This lower tail for $s_{min}(E_n)$ can be derived as a special case of Theorem \ref{theorem1.112} but we outline a simpler proof here. The idea is as follows: since the rows of $[V,U]$ are unitary, it maps by isometry on the subspace $W_1$ of $\mathbb{C}^{2\ell}$ spanned by these rows. Then we consider the vectors in $\mathbb{C}^{2\ell}$ orthonormal to the rows of $[V,U]$, and restrict the mapping $[C_n,A_n-zI_\ell]$ to this subspace $W_2$. By Lemma \ref{lemma3.2}, the restricted mapping has least singular value bounded by \eqref{lemma3.2estimates}, so that we have $\mathbb{P}(s_{min}([C_n,A_n-zI_\ell]\mid_{W_2})\leq\epsilon \ell^{-2})\leq C_L\epsilon$ for all $\epsilon>0$ (we switch from $\ell^{-1.5}$ there to $\ell^{-2}$ here since the entries are normalized by $(3\ell)^{-1/2}$.) We also need an operator norm estimate: by Lemma \ref{astronger} we can assume that $\mathbb{P}(\|E_n\|\geq 4K_z\epsilon^{-1})\leq\epsilon$ for all $\epsilon\leq 1$, where $K_z>1$ is a constant depending only on $|z|$ and $\zeta$. Now for any unit vector $w\in\mathbb{C}^{2\ell}$ with orthonormal decomposition $w=w_1+w_2,w_1\in W_1,w_2\in W_2$, if $\|w_1\|\leq \epsilon^2\ell^{-2}(16K_z)^{-1}$, then on the event where $s_{min}$ is not too small and $\|E_n\|$ is bounded as above, we have that $$\|E_n w\|\geq s_{min}([C_n,A_n-zI_\ell]\mid _{W_2})\|w_2\|-\|[C_n,A_n-zI_\ell]\|\|w_1\|\geq \epsilon\ell^{-2}(16K_z)^
{-1}.$$ If $\|w_1\|\geq \epsilon^2\ell^{-2}(16K_z)^{-1}$ we also trivially have $\|E_nw\|\geq \epsilon^2\ell^{-2}(16K_z)^{-1}$. This completes the proof that $\mathbb{P}(s_{min}(E_n)\leq\epsilon^2\ell^{-2}(16K_z)^{-1})\leq2C_L\epsilon$ for any $\epsilon>0$. 

Combining all these estimates leads us to the result of Lemma \ref{exactlythesame}. Finally, we can slightly modify the constants $c_0,C_0$ in Lemma \ref{lemma1.110}, so they also work here.
\end{proof}

 Then we have the following estimate:

\begin{corollary}\label{corollary1.133}Under the same assumptions in Lemma \ref{lemma1.110} and Lemma \ref{exactlythesame}, denote by  \begin{equation}\operatorname{Pro}_{n,\ell}^z(\Pi,\Xi):=\log|\det(\Pi(\prod_{k=n}^1 M_k^{(B)}(z))\Xi)|,\quad \operatorname{Pro}_{n,\ell}^z(\Xi):=\log\|(\prod_{k=n}^1 \wedge^\ell M_k^{(B)}(z)\cdot \wedge^\ell\Xi)\|,\end{equation} where $\Pi=[I_\ell,0]$ and $\Xi^*$ is taken from the Stiefel manifold $\operatorname{St}(\ell,2\ell)$. Then
\begin{enumerate}\item
The values of these maps do not depend on the representative of $[\Xi^*]\in\operatorname{Gr}(\ell,2\ell)$ under the identification $\operatorname{Gr}(\ell,2\ell)=\operatorname{St}(\ell,2\ell)/U(\ell)$.  
\item Moreover, the following sixth-moment bounds hold. Let \(m\ge 1\), and let
\(\mathcal F_0\) be a sigma-field independent of the transfer matrices
\(M_1^{(B)}(z),\ldots,M_m^{(B)}(z)\). Let \(\Xi\) be an
\(\mathcal F_0\)-measurable boundary frame satisfying \(\det(\Xi^*\Xi)=1\).
Then, uniformly in \(\Xi\),
\[
\mathbb E\left[
\left|\operatorname{Pro}_{m,\ell}^z(\Xi)\right|^6
\mid \mathcal F_0
\right]
\le
C(m\ell\log\ell)^6,
\]
and
\[
\mathbb E\left[
\left|\operatorname{Pro}_{m,\ell}^z(\Pi,\Xi)\right|^6
\mid \mathcal F_0
\right]
\le
C(m\ell\log\ell)^6.
\]
Consequently, we have the centered moment estimates
\[
\mathbb E\left[
\left|
\operatorname{Pro}_{m,\ell}^z(\Xi)
-
\mathbb E[\operatorname{Pro}_{m,\ell}^z(\Xi)\mid \mathcal F_0]
\right|^6
\mid \mathcal F_0
\right]
\le
C(m\ell\log\ell)^6,
\]
and
\[
\mathbb E\left[
\left|
\operatorname{Pro}_{m,\ell}^z(\Pi,\Xi)
-
\mathbb E[\operatorname{Pro}_{m,\ell}^z(\Pi,\Xi)\mid \mathcal F_0]
\right|^6
\mid \mathcal F_0
\right]
\le
C(m\ell\log\ell)^6.
\]
Here the constant \(C\) depends only on \(z\) and on the law $\zeta$ of the entries.

\end{enumerate}
\end{corollary}
\begin{proof}The independence of values on the representation follows from Cauchy–Binet formula and the fact that for any $R\in\mathbb{C}^{\ell\times\ell}$, we have that $\wedge^\ell (\Xi R)=(\det R)\wedge^\ell\Xi$.

To prove the sixth-moment bound, we condition on \(\mathcal F_0\). Then the initial frame
\(\Xi\) is fixed, while the transfer matrices in the block are independent of \(\mathcal F_0\).

First consider \(\operatorname{Pro}_{m,\ell}^z(\Xi)\). We write each phase as
\[
Q_0=\wedge^\ell\Xi,
\qquad
Q_k=
\left(\prod_{s=k}^{1}\wedge^\ell M_s^{(B)}(z)\right)\wedge^\ell\Xi
,\qquad X_k:=[Q_k],
\]
we have the additive decomposition
\[
\operatorname{Pro}_{m,\ell}^z(\Xi)
=
\sum_{k=1}^{m}
g_\ell(M_k^{(B)}(z),Q_{k-1}).
\]
For each \(k\), \(Q_{k-1}\) is measurable with respect to
\[
\mathcal F_{k-1}:=
\mathcal F_0\vee \sigma(M_1^{(B)}(z),\ldots,M_{k-1}^{(B)}(z)),
\]
and \(M_k^{(B)}(z)\) is independent of \(\mathcal F_{k-1}\). Therefore Lemma \ref{lemma1.110} gives
\[
\mathbb E\left[
\left|
g_\ell(M_k^{(B)}(z),Q_{k-1})
\right|^6
\mid \mathcal F_{k-1}
\right]
\le
C_0(\ell\log\ell)^6.
\]
Taking conditional expectation once more gives
\[
\mathbb E\left[
\left|
g_\ell(M_k^{(B)}(z),Q_{k-1})
\right|^6
\mid \mathcal F_0
\right]
\le
C_0(\ell\log\ell)^6.
\]
By conditional Minkowski,
\[
\begin{aligned}
\left(
\mathbb E\left[
\left|\operatorname{Pro}_{m,\ell}^z(\Xi)\right|^6
\mid \mathcal F_0
\right]
\right)^{1/6}
&\le
\sum_{k=1}^{m}
\left(
\mathbb E\left[
\left|
g_\ell(M_k^{(B)}(z),Q_{k-1})
\right|^6
\mid \mathcal F_0
\right]
\right)^{1/6}  \\
&\le
C m\ell\log\ell.
\end{aligned}
\]
This proves
\[
\mathbb E\left[
\left|\operatorname{Pro}_{m,\ell}^z(\Xi)\right|^6
\mid \mathcal F_0
\right]
\le
C(m\ell\log\ell)^6.
\]

The proof for \(\operatorname{Pro}_{m,\ell}^z(\Pi,\Xi)\) is the same, using the decomposition
\[
\operatorname{Pro}_{m,\ell}^z(\Pi,\Xi)
=
\sum_{k=1}^{m-1}
g_\ell(M_k^{(B)}(z),Q_{k-1})
+
\log
\frac{
|\langle \hat f,\wedge^\ell M_m^{(B)}(z)Q_{m-1}\rangle|
}{\|Q_{m-1}\|}.
\]
The first \(m-1\) terms are controlled by Lemma \ref{lemma1.110}. For the final term, after replacing
\(Q_{m-1}\) by \(Q_{m-1}/\|Q_{m-1}\|\), Lemma \ref{exactlythesame} gives
\[
\mathbb E\left[
\left|
\log
\frac{
|\langle \hat f,\wedge^\ell M_m^{(B)}(z)Q_{m-1}\rangle|
}{\|Q_{m-1}\|}
\right|^6
\mid \mathcal F_{m-1}
\right]
\le
C(\ell\log\ell)^6.
\]
Another application of conditional Minkowski yields
\[
\mathbb E\left[
\left|\operatorname{Pro}_{m,\ell}^z(\Pi,\Xi)\right|^6
\mid \mathcal F_0
\right]
\le
C(m\ell\log\ell)^6.
\]

Finally, the centered bounds follow from conditional Jensen and the triangle inequality:
\[
\begin{aligned}
&\left(
\mathbb E\left[
\left|
Z-\mathbb E[Z\mid\mathcal F_0]
\right|^6
\mid \mathcal F_0
\right]
\right)^{1/6} \\
&\qquad\le
\left(
\mathbb E\left[
|Z|^6
\mid \mathcal F_0
\right]
\right)^{1/6}
+
|\mathbb E[Z\mid\mathcal F_0]| \\
&\qquad\le
2
\left(
\mathbb E\left[
|Z|^6
\mid \mathcal F_0
\right]
\right)^{1/6}.
\end{aligned}
\]
Applying this with
\[
Z=\operatorname{Pro}_{m,\ell}^z(\Xi)
\quad\text{or}\quad
Z=\operatorname{Pro}_{m,\ell}^z(\Pi,\Xi)
\]
gives the two centered estimates.
\end{proof}

\begin{remark}\label{wecananalogously}
   By the second claim of Proposition \ref{proposition1.999},  $$\frac{1}{n\ell}\sum_{i=1}^n\log|\det B_i|\to -\frac{1}{2}\log 3-\frac{1}{2},\quad a.s.,\quad  n\to\infty,$$and the same holds with $C_i$ in place of $B_i$. Indeed, the expectation converges by Proposition \ref{proposition1.999}. For the centered part, the variables
\(\log|\det B_i|-\mathbb E\log|\det B_i|\) are independent, and the same proof as in
Lemma \ref{exactlythesame} offers a sixth-moment bound \(C(\ell\log\ell)^6\). Hence,
\[
\mathbb E\left|\sum_{i=1}^n
(\log|\det B_i|-\mathbb E\log|\det B_i|)
\right|^6
\le C n^3(\ell\log\ell)^6.
\]
After division by \(n\ell\), Markov's inequality yields an error probability $C_\eta n^{-3}(\log\ell)^6$. Since $\ell=O(\operatorname{Poly}(n))$, the error is summable and Borel--Cantelli leads to the almost sure convergence.
\end{remark}

\subsection{Decomposition of the ergodic sum}
Corollary \ref{specialcaselemma1.3} reduces the problem to studying the additive cocycle $\operatorname{Pro}_{n,\ell}^z$. We expect that the value of $\frac{1}{n\ell}\mathbb{E}\operatorname{Pro}_{n,\ell}^z(\Pi,\Xi)$ will be asymptotically independent of the precise speed for $n$ and $\ell$ tending to infinity, and asymptotically independent of the initial frame $\Xi$. The same asymptotics should also hold for $\operatorname{Pro}_{n,\ell}^z(\Xi)$. We now rigorously justify this heuristic by identifying the asymptotic value of these short block systems, and eventually use it to prove the circular law theorem for slowly growing $\ell$. The following theorem is the most essential technical step in verifying these heuristics:
\begin{theorem}\label{theorem1.115}
    Take $\Pi=[I_\ell,0]$ and consider any frame $\Xi\in \mathbb{C}^{2\ell\times\ell}$ satisfying $\det(\Xi^*\Xi)=1$. Consider the block tridiagonal matrix $\mathcal{T}_{n+2}$ defined in equation \eqref{equationnplus2}, where all assumptions of Theorem \ref{maincircularlaw1.1} are verified. We assume that $\ell^{1/12}\geq n\geq \ell^d$ for some fixed $d\in(0,\frac{1}{12})$. Then for any $z\in\mathbb{C}$, the following limit holds in probability as $n,\ell\to\infty$:
    $$
\frac{1}{n\ell}\log|\det(\mathcal{T}_{n+2}-zI_{mid})|\to\mathcal{U}(z)=\begin{cases}
\frac{|z|^2-1}{2}    ,\quad |z|\leq 1,\\\log|z|,\quad |z|>1.
\end{cases}
    $$We also have the convergence in expectation as $n,\ell\to\infty$:
    $$
\frac{1}{n\ell}\mathbb{E}\log|\det(\mathcal{T}_{n+2}-zI_{mid})|\to\mathcal{U}(z).
    $$
Moreover, the convergence rate in probability and in expectation can be quantified to be uniform over the choice of frame $\Xi$, and the rate depends only on $\zeta$, $|z|$ and $d>0$.
\end{theorem}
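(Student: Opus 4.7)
Write $M_z:=\mathcal{T}_{n+2}-zI_{mid}$, a matrix of size $(n+2)\ell$. The plan is to execute Girko's Hermitization on $M_z$ and invoke the mean-field band-matrix machinery available in the hypothesized regime $\ell^{1/12}\geq n\geq\ell^d$: here $W=\ell$ and $N=n\ell$ satisfy $W\geq N^{12/13}$, placing us deep in the delocalized range where the techniques of \cite{han2025circular}, \cite{jain2021circular} and \cite{tikhomirov2023pseudospectrum} become effective. Concretely, I would write
$$
\frac{1}{n\ell}\log|\det M_z|=\frac{n+2}{n}\int_0^\infty\log x\, d\nu_{n,\ell}^z(x),
$$
with $\nu_{n,\ell}^z$ the normalized empirical singular-value measure of $M_z$; the prefactor converges to $1$, reducing the task to analyzing this integral as $n,\ell\to\infty$.

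The first step is a uniform-in-$\Xi$ least singular value estimate: with probability $1-o(1)$, $s_{\min}(M_z)\geq(n\ell)^{-C}$ for some $C$ depending only on $\zeta,|z|,d$. This is precisely the content of Theorems \ref{theorem1.112}/\ref{updatedtails}, which adapt Rudelson--Vershynin-type small-ball arguments to the block band structure and exploit the bounded density of $\zeta$ to remove the $e^{-cn}$ loss typical of Littlewood--Offord bounds. The technical novelty over the standard i.i.d.\ case is robustness to the deterministic boundary rows $[V,U]$ and $[S_{+}^{bd},C_{+}]$: these are only constrained to have orthonormal rows, so any invertibility argument must be formulated in a way that is insensitive to a preferred projection in $\mathbb{C}^{\ell\times 2\ell}$. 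Combined with Lemma \ref{astronger}, which gives $\|M_z\|\leq C_\zeta\sqrt{n\ell}$ with high probability, this confines $|\log s_i(M_z)|\leq C\log(n\ell)$ for every index $i$.

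The second step uses the rigidity estimate of Proposition \ref{propositionrigidity} as a black box: with high probability, $\#\{i:s_i(M_z)\leq\eta\}\leq(n\ell)^{1-c_0}$ for a threshold $\eta=(n\ell)^{-c_1}$ and suitable $c_0,c_1>0$. Combined with the previous step, the contribution of small singular values to the integral is $O((n\ell)^{-c_0}\log(n\ell))=o(1)$. The bulk contribution then reduces to weak convergence of the truncated measure to the deterministic Ginibre-shifted singular-value limit $\nu_z$, which is produced by the local-law analysis of \cite{han2025circular} applicable since $W\geq N^{1/2+c}$. Boundary insensitivity is immediate at this scale: swapping the boundary rows alters $M_z$ by a perturbation of rank $\leq 2\ell$, so by Cauchy interlacing at most $2\ell$ singular values are shifted, contributing only $O(\log(n\ell)/n)=o(1)$ to the normalized log-determinant. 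The identity $\int\log x\, d\nu_z(x)=\mathcal{U}(z)$ is then the standard circular-law log-potential computation, consistent with Proposition \ref{proposition1.999}. To upgrade convergence in probability to convergence in expectation uniformly in $\Xi$, I would establish uniform integrability of $\frac{1}{n\ell}\log|\det M_z|$, again relying on the density assumption as in Proposition \ref{proposition1.999}.

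The hardest step is Theorems \ref{theorem1.112}/\ref{updatedtails}: producing a polynomial least singular value tail with constants genuinely \emph{independent} of the boundary $\Xi$. Every small-ball or net argument must be formulated to be insensitive to arbitrary $[V,U]$ and $[S_{+}^{bd},C_{+}]$, and the bounded density hypothesis is indispensable — one needs the sharp tail $\mathbb{P}(s_{\min}(M_z)\leq\epsilon(n\ell)^{-C})\leq C'\epsilon$ rather than the weaker $C'\epsilon+\ell^{-1/2}$ available without density, since the $\ell^{-1/2}$ term would be fatal after the union bound over the $\sim n$ subsystems used in the ergodic decomposition of Section \ref{section22}. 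Proposition \ref{propositionrigidity} is then expected to follow by adapting the Schur-complement / self-consistent-equation local-law approach of \cite{han2025circular} to the boundary-perturbed block tridiagonal $M_z$, with boundary insensitivity again inherited from the rank-$O(\ell)$ nature of the perturbation.
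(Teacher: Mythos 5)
The overall plan (replacement principle via singular values, least singular value estimate, rigidity, Kolmogorov convergence of the singular value measure, uniform integrability for the expectation version) matches the paper's proof in outline, but the proposal contains a genuine error in the central quantitative step, and as a result never engages with why the hypothesis $\ell^{1/12}\geq n$ is actually needed.

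You claim that Theorems \ref{theorem1.112}/\ref{updatedtails} give a polynomial least singular value bound $s_{\min}(M_z)\gtrsim (n\ell)^{-C}$, and consequently that $|\log s_i(M_z)|\leq C\log(n\ell)$ for every $i$. Both are wrong. Theorem \ref{theorem1.112} gives $s_{\min}(\mathcal{T}_{n+2}-zI_{mid})\gtrsim \ell^{-10n}(\ell^2 n)^{-10}$, i.e.\ an $\emph{exponentially small in $n$}$ lower bound — with $n\asymp \ell^{1/12}$ this is $\exp(-\Theta(\ell^{1/12}\log\ell))$, vastly below any power of $n\ell$. (Also, because the entries are normalized by $(3\ell)^{-1/2}$, Lemma \ref{operatornormbound} gives $\|M_z\|=O(1)$, not $C_\zeta\sqrt{n\ell}$; the relevant upper bound for $|\log s_i|$ is $O(n\log\ell)$, coming from $s_{\min}$, not $O(\log(n\ell))$.) Since you believe the smallest singular values are only polynomially small, your bound for their contribution is automatically $o(1)$ from the rigidity count alone, and you never see why the constraint $\ell\geq n^{12}$ enters. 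The paper's proof needs this constraint precisely to close the gap: it splits the log integral over $I_1=[\ell^{-11n},(n\ell)^{-50}]$ and $I_2=[(n\ell)^{-50},K]$, and the $I_1$ contribution is
$$
\frac{1}{n\ell}\cdot O\!\left(n\log\ell\cdot\frac{(n\ell)^{1+c}}{\ell^{0.1}}\right),
$$
which vanishes only because $n\log\ell$ (the worst-case $|\log s_i|$) is small compared to the gain $\ell^{0.1}/(n\ell)^c$ from rigidity — a balance that requires $\ell$ polynomially large compared to $n$. Relatedly, your Cauchy interlacing argument for boundary insensitivity ("contributes only $O(\log(n\ell)/n)$") implicitly assumes that the shifted singular values remain polynomially bounded below, which is the same unproven polynomial lower bound; the paper instead proves the least singular value and rigidity estimates directly for the boundary-perturbed matrix (Theorem \ref{theorem1.112}, Proposition \ref{propositionrigidity}), with boundary insensitivity built into those proofs rather than obtained post hoc.

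The rest of your plan is essentially correct: Corollary \ref{proofcorollary4.5} is indeed how the bulk integral over $I_2$ is controlled, Proposition \ref{propositionrigidity} is the rigidity black box you describe, and the convergence in expectation is proven via uniform integrability relying on the upgraded tail in Theorem \ref{updatedtails}. But the argument as written is incomplete because it substitutes a polynomial $s_{\min}$ bound that the paper does not have and cannot be expected to hold for general boundary $\Xi$; the exponentially small bound is what the ergodic/band structure genuinely forces, and controlling its damage is where the hypothesis $\ell^{1/12}\geq n$ does all the work.
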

The function $\mathcal{U}(z)$ is the well-known Ginibre potential and Theorem \ref{theorem1.115} is essentially stating the circular law for $\mathcal{T}_{n+2}$. What is important here is the relative magnitude of $\ell$ and $n$: it allows $n$ to be polynomial growth in $\ell$ but still require the block size $\ell$ to be much larger than the number of blocks $n$, so that we are close to the delocalization regime and should have mean-field type behavior for the matrix (although this is not yet rigorously proven). We have stated the theorem in the special case $\Pi=[I_\ell,0]$ to simplify its proof as this is the only case we will actually use, but it is not hard to verify that the proof works for general $\Pi$ via the same argument.

For a large $n$ and a given $\ell$, we use the following fact to decompose $[n]$ into smaller components so that Theorem \ref{theorem1.115} can be applied separately to each component. 
\begin{fact}\label{fact2.16}
    Let $\ell$ be a sufficiently large integer, and $n\geq 10\ell^{d}$ for some $d>0$. Then we can find an integer $n_0$ with $2\ell^{d}\leq n_0\leq 4\ell^{d}$ such that $n-n_0\lfloor\frac{n}{n_0}\rfloor\geq\frac{1}{2}n_0$, where $\lfloor x\rfloor$ is the integer part of a real number $x$. 
\end{fact}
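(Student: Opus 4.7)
The approach is constructive: rather than searching directly over $n_0$, I fix the intended quotient $q:=\lfloor n/n_0\rfloor$ first and reverse-engineer $n_0$. For each positive integer $q$, define
\[
n_0(q) := \left\lfloor \frac{2n}{2q+1}\right\rfloor.
\]
The defining inequality $(2q+1)\,n_0(q)\le 2n$ rearranges immediately to $n - q\,n_0(q)\ge n_0(q)/2$, which is the desired lower bound on the remainder, provided the chosen $q$ coincides with $\lfloor n/n_0(q)\rfloor$.

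To locate $q$ so that $n_0(q)\in[2\ell^{d},4\ell^{d}]$, inverting the floor translates the constraint into $q$ lying in the real interval
\[
J := \Bigl(\tfrac{n}{4\ell^{d}+1}-\tfrac12,\;\tfrac{n}{2\ell^{d}+1}-\tfrac12\Bigr].
\]
A direct computation gives $|J|\ge n/(4\ell^{d})-O(1)$, and the hypothesis $n\ge 10\ell^{d}$ makes this $>1$ once $\ell$ is sufficiently large, so $J$ contains an integer. Picking $q\in J\cap\mathbb{Z}$, I still need quotient consistency $\lfloor n/n_0(q)\rfloor = q$. One direction, $\lfloor n/n_0(q)\rfloor\ge q$, follows from $q\,n_0(q)\le n - n_0(q)/2 < n$. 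The reverse bound $n<(q+1)n_0(q)$ reduces, via $n_0(q)>2n/(2q+1)-1$, to the algebraic inequality $n>(q+1)(2q+1)$; plugging the upper endpoint of $J$ into this shows it is comfortably satisfied in the regime of interest.

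Combining these steps, for $n_0:=n_0(q)$ we get $n-n_0\lfloor n/n_0\rfloor = n-q\,n_0\ge n_0/2$ and $n_0\in[2\ell^{d},4\ell^{d}]$, completing the proof. The main obstacle is the quotient consistency check $\lfloor n/n_0(q)\rfloor = q$; if a direct substitution does not immediately give $n>(q+1)(2q+1)$ for the initial choice of $q$, I would use the flexibility $|J|>1$ to shift $q$ by one (which decreases $n_0(q)$ and makes the inequality strictly more slack) and proceed. The remaining steps---the length computation for $J$ and the initial rearrangement---are routine.
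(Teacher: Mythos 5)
Your quotient-consistency step is where the argument breaks. The reduction of the bound $\lfloor n/n_0(q)\rfloor \le q$ to the inequality $n>(q+1)(2q+1)$ is algebraically correct, but since $q$ must lie in $J$ we have $q > n/(4\ell^{d}+1)-1/2$, so $(q+1)(2q+1)$ is of order $n^{2}/\ell^{2d}$. The requirement $n>(q+1)(2q+1)$ therefore amounts to an upper bound $n = O(\ell^{2d})$, which is not a hypothesis of Fact~\ref{fact2.16} and is not implied by the sole hypothesis $n\ge 10\ell^{d}$. In the paper's application (the proof of Theorem~\ref{maincircularlaw1.1}), $n$ is allowed to be polynomially larger than $\ell$, so the ``regime of interest'' provides no such bound. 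Your proposed fallback---increasing $q$ by one so that $n_0(q)$ decreases---makes $(q+1)(2q+1)$ larger, not smaller, and so makes the consistency inequality strictly harder, not easier.

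The gap cannot be closed within this scheme because Fact~\ref{fact2.16} as stated is actually false without an upper bound on $n$. If $n$ is a common multiple (for instance the least common multiple) of all integers in $[2\ell^{d},4\ell^{d}]$, then $n\ge 10\ell^{d}$ holds trivially, yet $n-n_0\lfloor n/n_0\rfloor = 0 < n_0/2$ for every admissible $n_0$; taking $n$ one more than that common multiple gives a remainder identically equal to $1$, ruling out even a ``zero or large'' version. The paper states the Fact without proof; its downstream use in the block decomposition in the proof of Theorem~\ref{maincircularlaw1.1} is unharmed in spirit, since a zero or small remainder simply means the ``last interval'' is absent or can be merged into the penultimate block (still of size at most $2n_0\le 8\ell^{d}$, comfortably below $\ell^{1/12}$ for large $\ell$), but the Fact as an unconditional assertion cannot be proven. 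A correct statement needs either the extra assumption $n=O(\ell^{2d})$, under which your construction does close, or a weakened conclusion that permits a vanishing remainder together with the merging fix above.
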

\begin{corollary}\label{corollary2.17} Assume that $\ell^{1/12}\geq n\geq \ell^d$ for some $d>0$, and take $\Pi=(I_\ell,0)$. Then for any $\Xi\in\mathbb{C}^{2\ell\times\ell}$ with $\det(\Xi^*\Xi)=1$, we have for any fixed $z\in\mathbb{C}$, 
    $$\mathbb{E}
\frac{1}{n\ell}\operatorname{Pro}_{n,\ell}^z(\Pi,\Xi)\to \mathcal{U}(z)+\frac{1}{2}\log 3+\frac{1}{2},\quad \mathbb{E}
\frac{1}{n\ell}\operatorname{Pro}_{n,\ell}^z(\Xi)\to \mathcal{U}(z)+\frac{1}{2}\log 3+\frac{1}{2}
    $$ and the convergence rate is uniform over the choice of the frame $\Xi$.
\end{corollary}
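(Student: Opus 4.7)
The plan is to handle the two claims separately: the first follows directly from results already collected in the paper, while the second requires a short comparison argument between $\operatorname{Pro}_{n,\ell}^z(\Pi,\Xi)$ and $\operatorname{Pro}_{n,\ell}^z(\Xi)$. For the first claim, I would apply Proposition \ref{proposition168} with $\Pi=[I_\ell,0]$ and the given $\Xi$ to produce the tridiagonal matrix $\mathcal{T}_{n+2}$ from equation \eqref{equationn+2}. Taking absolute values in the determinant identity kills the sign $(-1)^{(n+1)\ell}$ and leaves
\[
\log\bigl|\det(\mathcal{T}_{n+2}-zI_{mid})\bigr|=\sum_{k=1}^n\log|\det B_k|+\operatorname{Pro}_{n,\ell}^z(\Pi,\Xi).
\]
Taking expectations and dividing by $n\ell$, Theorem \ref{theorem1.115} yields $\tfrac{1}{n\ell}\mathbb{E}\log|\det(\mathcal{T}_{n+2}-zI_{mid})|\to\mathcal{U}(z)$ with a rate uniform in $\Xi$. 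Each $B_k$ is $\ell\times\ell$ with i.i.d.\ entries of the form $(3\ell)^{-1/2}\zeta$, so Proposition \ref{proposition1.999} with matrix size $\ell$ gives $\tfrac{1}{\ell}\mathbb{E}\log|\det B_k|\to-\tfrac{1}{2}\log 3-\tfrac{1}{2}$; since this limit is independent of $k$ and of $\Xi$, averaging over $k$ preserves it. Rearranging produces the first claim with the required uniformity.

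For the second claim it suffices to show that
\[
\Delta_n:=\operatorname{Pro}_{n,\ell}^z(\Xi)-\operatorname{Pro}_{n,\ell}^z(\Pi,\Xi)
\]
satisfies $\mathbb{E}|\Delta_n|=O(\ell\log\ell)$ uniformly in $\Xi$. Setting $Q_0=\wedge^\ell\Xi$, which is a unit vector because $\det(\Xi^*\Xi)=1$, and iterating $Q_k=\wedge^\ell M_k^{(B)}(z)Q_{k-1}$, the telescoping identity $\log\|Q_n\|=\sum_{k=1}^n g_\ell(M_k^{(B)}(z),Q_{k-1})$ combined with the decomposition in equation \eqref{eq535} yields
\[
\Delta_n=g_\ell\bigl(M_n^{(B)}(z),\hat Q_{n-1}\bigr)-\log\bigl|\langle\hat f,\wedge^\ell M_n^{(B)}(z)\hat Q_{n-1}\rangle\bigr|,
\]
where $\hat Q_{n-1}:=Q_{n-1}/\|Q_{n-1}\|\in\mathcal{C}_{dec}$ (all first $n-1$ increments cancel). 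Conditioning on $\hat Q_{n-1}$, Lemma \ref{lemma1.110} bounds $\mathbb{E}[|g_\ell(M_n^{(B)}(z),\hat Q_{n-1})|^2\mid\hat Q_{n-1}]\le C_0(\ell\log\ell)^2$, and Lemma \ref{exactlythesame} gives the same bound for the second summand. Cauchy--Schwarz then yields $\mathbb{E}|\Delta_n|\le 2\sqrt{C_0}\,\ell\log\ell$. Dividing by $n\ell$ gives $\tfrac{1}{n\ell}\mathbb{E}|\Delta_n|=O(\log\ell/n)=o(1)$ under $n\ge\ell^d$, which combines with the first claim to yield the second, with uniformity automatically preserved.

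The main obstacle is Part 2, namely controlling $\Delta_n$ without knowing the intermediate direction $\hat Q_{n-1}$ generated by the preceding transfer dynamics, which depends on $\Xi$ and on all earlier $M_k^{(B)}(z)$. This is precisely the reason Lemmas \ref{lemma1.110} and \ref{exactlythesame} are proven with second moment bounds that are uniform over all unit vectors in $\mathcal{C}_{dec}$: the conditional estimate with $\hat Q_{n-1}$ frozen is worst-case, and plugging it into iterated expectation is immediate. With these uniform bounds in hand, the boundary-mismatch step reduces to one application of Cauchy--Schwarz, and no further probabilistic input is needed.
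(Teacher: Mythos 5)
Your proof is correct and follows essentially the same route as the paper: the first limit comes from the determinant factorization of Proposition \ref{proposition168} together with Theorem \ref{theorem1.115} and Proposition \ref{proposition1.999} (both in expectation), and the second comes from isolating the single boundary increment at step $n$ and bounding it by $O(\ell\log\ell)$ uniformly via Lemmas \ref{lemma1.110} and \ref{exactlythesame}. Your only deviations are cosmetic simplifications: you use linearity of expectation instead of invoking the concentration inequalities (Corollary \ref{corollary1.133} and Remark \ref{wecananalogously}), and you cancel the first $n-1$ increments directly rather than writing the multiplicative decomposition \eqref{expansionsecond} and re-indexing $n-1\to n$.
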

\begin{proof}
   For the convergence regarding $\operatorname{Pro}_{n,\ell}^z(\Pi,\Xi)$, we simply take this $\Pi$ and $\Xi$ and apply Theorem \ref{theorem1.115}. Indeed, we write 
   $$
\log|\det(\mathcal{T}_{n+2}-zI_{\text{mid}})|=\sum_{k=1}^n\log|\det B_k|+\operatorname{Pro}_{n,\ell}^z(\Pi,\Xi).
   $$We take the expectation and divide by $n\ell$. Theorem \ref{theorem1.115} gives the limit in expectation of the log determinant, and Proposition \ref{proposition1.999} gives the expectation limit for $\sum_k\log|\det B_k|$, so the claim follows. For the convergence regarding \(\operatorname{Pro}_{n,\ell}^z(\Xi)\), write
\[
Q_{n-1}:=\left(\prod_{k=n-1}^{1}\wedge^\ell M_k^{(B)}(z)\right)\wedge^\ell\Xi.
\]
Then
\[
\operatorname{Pro}_{n,\ell}^z(\Pi,\Xi)
=
\operatorname{Pro}_{n-1,\ell}^z(\Xi)
+
\log\frac{
|\langle \hat f,\wedge^\ell M_n^{(B)}(z)Q_{n-1}\rangle|
}{\|Q_{n-1}\|}.
\]
Taking conditional expectation over \(Q_{n-1}\), Lemma \ref{exactlythesame} gives the upper bound
\[
\left|
\mathbb E
\log\frac{
|\langle \hat f,\wedge^\ell M_n^{(B)}(z)Q_{n-1}\rangle|
}{\|Q_{n-1}\|}
\right|
\le C\ell\log\ell .
\]
Therefore the following difference is small:
\[
\frac1{n\ell}
\left|
\mathbb E\operatorname{Pro}_{n,\ell}^z(\Pi,\Xi)
-
\mathbb E\operatorname{Pro}_{n-1,\ell}^z(\Xi)
\right|
\le
\frac{C\log\ell}{n}=o(1).
\]
The difference between
\(\operatorname{Pro}_{n-1,\ell}^z(\Xi)\) and
\(\operatorname{Pro}_{n,\ell}^z(\Xi)\) is upper bounded in the same way by Lemma \ref{lemma1.110}:
\[
\frac1{n\ell}
\left|
\mathbb E\operatorname{Pro}_{n,\ell}^z(\Xi)
-
\mathbb E\operatorname{Pro}_{n-1,\ell}^z(\Xi)
\right|
\le
\frac{C\log\ell}{n}=o(1).
\]
This proves the second convergence, uniformly in \(\Xi\).
   \end{proof}

With these technical results, we can quickly conclude the proof of Theorem \ref{maincircularlaw1.1}.

\begin{proof}[\proofname\ of Theorem \ref{maincircularlaw1.1}] 
As we assume the atom variable $\zeta$ of the matrix $T$ has finite second moment, it is standard to verify that $\frac{1}{N^2}\|\sqrt{N}T\|_F^2$ is $O(1)$ almost surely, justifying criterion (1) of Theorem \ref{replacement}.

We apply the replacement principle (Theorem \ref{replacement}) between the matrix $\sqrt{N}T$ and an independent complex Ginibre matrix. By the determinantal expansion in Corollary \ref{specialcaselemma1.3} and the almost sure convergence of $\frac{1}{n\ell}\sum_{k=1}^n\log|\det B_k|$ term (which follows from Proposition \ref{proposition1.999} and Remark \ref{wecananalogously}), we only need to show the a.s. convergence of 
\begin{equation}\label{line65000}\frac{1}{n\ell}\log|\langle \wedge^\ell\Pi,\prod_{k=n}^1M_k^{(B)}(z)\cdot\wedge^\ell\Xi\rangle | 
 \end{equation} to the deterministic limit $\mathcal{U}(z)+\frac{1}{2}\log 3+\frac{1}{2}$ for $\Pi=\begin{bmatrix}
     I_\ell,0
 \end{bmatrix}$ and $\Xi=\begin{bmatrix}
     I_\ell\\0
 \end{bmatrix}.$

 We choose a smaller value $d_0\in(0,\min(\frac{2d}{3},\frac{1}{12}))$.
 By Fact \ref{fact2.16}, we can choose some $n_0$ with $2\ell^{d_0}\leq n_0\leq 4\ell^{d_0}$. Denote by $\underline{n}=\lfloor \frac{n}{n_0}\rfloor$ so that $\underline{n}\geq \frac{n}{n_0}-1\geq\frac{1}{4} \ell^{d-d_0}-1\to\infty$. Moreover, $\underline{n}\geq\frac{1}{4}n^{1-d_0/d}-1$. Then we decompose the integer interval $[1,n]$
as the union $\{[(k-1)n_0+1,kn_0]\}_{1\leq k\leq\underline{n}}\cup[n_0\underline{n}+1,n]$. The last interval has length at least $\frac{1}{2}n_0$ by Fact \ref{fact2.16}. Recall that we denote by $Q_k:=\prod_{m=k}^1\wedge^\ell M_m^{(B)}(z)\cdot\wedge^\ell\Xi$ and $X_k=[Q_k]$ its projection on the Grassmannian $\operatorname{Gr}(
\ell,2\ell)$. 

Denote by $$
Z_j:=\operatorname{Pro}^z_{n_0,\ell}(X_{(j-1)n_0}),\quad m_j:=\mathbb{E}[Z_j\mid\mathcal{G}_{j-1}],
$$
where $\mathcal{G}_{j-1}$ is the sigma-field generated by the $1,2,\cdots,j-1$-th block. Here $Z_j$ is computed using the transfer matrices with indices $(j-1)n_0+1,\cdots,jn_0$ with incoming boundary frame $X_{(j-1)n_0}$. Since $\mathcal{G}_{j-1}$ is the sigma-field generated by the first $j-1$ blocks, we see that $Z_j-m_j$ forms a martingale difference sequence. For the remainder we use 
$$
Z_{\text{rem}}:=\operatorname{Pro}^z_{n-\underline{n}n_0}(\Pi,X_{\underline{n}n_0}),\quad m_{\text{rem}}:=\mathbb{E}[Z_{\text{rem}}\mid \mathcal{G}_{\underline{n}}].
$$
Then 
$$
\operatorname{Pro}_{n,\ell}^z(\Pi,\Xi)=\sum_{j=1}^{\underline{n}}m_j+m_{\text{rem}}+\sum_{j=1}^{\underline{n}}(Z_j-m_j)+(Z_{\text{rem}}-m_{\text{rem}}).
$$
The first summation $\sum_{j=1}^{\underline{n}}m_j+m_{\text{rem}}$, divided by $n\ell$, converges to the specified limit by the uniform-over-$\Xi$ statement of Corollary \ref{corollary2.17} (since $n-n_0\underline{n}\geq n_0/2$, we verified the condition in Corollary \ref{corollary2.17} for $m_{\text{rem}}$). For the second part, the summation $\sum_j(Z_j-m_j)$ is a martingale so we use the sixth moment bound and Burkholder/Rosenthal in item (2) of Corollary \ref{corollary1.133} (which implies $\mathbb{E}[|Z_j-m_j|^6\mid\mathcal{G}_{j-1}]\leq C(n_0\ell\log\ell)^6$ uniformly in the incoming frame) to bound that 
$$
\mathbb{E}\left|\sum_{j=1}^{\underline{n}}(Z_j-m_j)\right|^6\leq C\underline{n}^3(n_0\ell\log\ell)^6.
$$
Then by Markov's inequality, for any $\eta>0$, 
$$
\mathbb{P}(|\sum_{j=1}^{\underline{n}}(Z_j-m_j)|\geq\eta n\ell)
\leq C_\eta \underline{n}^{-3}(\log\ell)^6,
$$so that the second summation converges almost surely to 0 after dividing by $n\ell$ (the error term $\underline{n}^{-3}(\log\ell)^6$ is summable in $n$ since $\underline{n}\geq\frac{1}{4}n^{1-d_0/d}-1,d_0/d<\frac{2}{3}$, and apply Borel-Cantelli). For the third part we use the analogous sixth-moment bound
$$
\mathbb{P}(|Z_{\text{rem}}-m_{\text{rem}}|\geq\eta n\ell)\leq C_\eta \underline{n}^{-6}(\log\ell)^6,
$$
so that $\frac{Z_{\text{rem}}-m_{\text{rem}}}{n\ell}\to 0$, and the convergence is almost sure by Borel-Cantelli. Since the convergence \eqref{line65000} is proven for each $z\in\mathbb{C}$, this completes the proof of circular law. \end{proof}

\section{Least singular value for the auxiliary matrix}\label{leastsingularvalueplugin}
In this section, we derive a least singular value estimate in Theorem \ref{theorem1.112} as the first step to the proof of Theorem \ref{theorem1.115}.

We first prove the following high probability least singular value bound for $\mathcal{T}_{n+2}-zI_{mid}$, whose matrix structure we recall here:
\begin{equation}
\mathcal{T}_{n+2}-zI_{mid}=\begin{bmatrix}
    V&U&&&&\\C_1&A_1-zI_\ell&B_1&&&&\\&C_2&A_2-zI_\ell&B_2&&\\&&\ddots&\ddots&\ddots&\\&&&C_n&A_n-zI_\ell&B_n\\&&&&S_{+}^{bd}&C_{+}
\end{bmatrix}     \in\mathbb{C}^{(n+2)\ell\times(n+2)\ell},   
    \end{equation}
\begin{theorem}\label{theorem1.112}
    Take the specialization $\Pi=[I_\ell,0]$ so that $[S_{+}^{bd},C_{+}]=[0,I_\ell]$. Let $\zeta$ satisfy the same conditions as in Theorem \ref{maincircularlaw1.1}. Then we can find $C>0$ depending only on $|z|$ and the law of $\zeta$ such that, whenever $\ell$ is sufficiently large, 
$$
\mathbb{P}(s_{min}(\mathcal{T}_{n+2}-zI_{mid})\leq \ell^{-10n}(\ell^2n)^{-10}t)\leq Ct+\ell^{-5},\quad t>0.
$$\end{theorem}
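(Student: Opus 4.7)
The plan is to exploit the specialization $\Pi = [I_\ell, 0]$, which forces $[S_{+}^{bd}, C_{+}] = [0, I_\ell]$, so that the bottom block row of $\mathcal{T}_{n+2} - zI_{mid}$ is $[0, \ldots, 0, I_\ell]$. This makes the whole matrix block upper triangular,
\[
\mathcal{T}_{n+2} - zI_{mid} = \begin{pmatrix} \mathcal{A} & \mathcal{B} \\ 0 & I_\ell \end{pmatrix},
\]
where $\mathcal{A}$ is the top-left $(n+1)\ell \times (n+1)\ell$ block: top row $[V, U]$, interior tridiagonal structure, ending at row $n$ with $[C_n, A_n - zI_\ell]$ (no $B_n$ term). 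A short splitting argument on a unit test vector $v = (v', v'')$ with $v'' \in \mathbb{C}^\ell$ shows $s_{\min}(\mathcal{T}_{n+2} - zI_{mid}) \geq s_{\min}(\mathcal{A})/(C(1 + \|B_n\|))$, reducing the task to a lower bound on $s_{\min}(\mathcal{A})$.

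Next, I would fix a regularity event $\mathcal{E}$ on which (i) $\|A_k\|, \|B_k\|, \|C_k\| \leq K$ for all $1 \leq k \leq n$, by Lemma \ref{astronger}, and (ii) $\|B_k^{-1}\|, \|C_k^{-1}\| \leq \ell^{a}$ for all $k$ and a suitable threshold $a$ (slightly larger than $6$), by Lemmas \ref{prop1.5} and \ref{lemma2.6} applied to the normalized blocks. A careful union bound over $k = 1, \ldots, n$ yields $\mathbb{P}(\mathcal{E}^c) \leq \ell^{-5}$ in the paper's working regime of $n$ polynomial in $\ell$. On $\mathcal{E}$ each transfer matrix and its inverse satisfy $\|M_k^{(B)}(z)^{\pm 1}\| \leq C \ell^{a}$, which is the workhorse bound for what follows.

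For a candidate approximate null vector $v = (v_0, \ldots, v_n)$ of $\mathcal{A}$ with residual $r = \mathcal{A} v$, set $f_k := (v_k, v_{k-1})$. The interior rows give $f_{k+1} = M_k^{(B)}(z) f_k + (B_k^{-1} r_k, 0)$, the top row gives $[V, U] f_1 = r_0$, and the last row of $\mathcal{A}$ gives $[A_n - zI_\ell, C_n] f_n = r_n$. Substituting $f_n = M_{n-1} \cdots M_1 f_1 + e$ with $\|e\| \leq C n \ell^{a n} \|r\|$ on $\mathcal{E}$ yields the $2\ell \times 2\ell$ boundary system
\[
\mathfrak{M}' f_1 = \tilde r, \qquad \mathfrak{M}' := \begin{pmatrix} [V, U] \\ [A_n - zI_\ell, C_n] \, M_{n-1} \cdots M_1 \end{pmatrix}.
\]
Propagating $f_1$ forward through all $n$ interior blocks and invoking $\|v\| = 1$ gives a bound of the form $s_{\min}(\mathcal{A}) \geq c \cdot s_{\min}(\mathfrak{M}') \cdot n^{-3/2} \ell^{-2 a n}$. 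The invertibility of $\mathfrak{M}'$ is in turn controlled, via a geometric splitting across the orthogonal decomposition $\mathbb{C}^{2\ell} = \mathrm{range}([V, U]^*) \oplus \mathrm{range}(\tilde Q_-)$ (with $\tilde Q_- \in \mathbb{C}^{2\ell \times \ell}$ having orthonormal columns and $[V, U] \tilde Q_- = 0$, in the spirit of Fact \ref{fact3.45}), by the $\ell \times \ell$ random matrix
\[
\mathcal{N} := [A_n - zI_\ell, C_n] \, M_{n-1} \cdots M_1 \, \tilde Q_-,
\]
through the estimate $s_{\min}(\mathfrak{M}') \geq s_{\min}(\mathcal{N})/(2 \|\mathfrak{M}'\|)$.

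The key probabilistic step is the lower bound on $s_{\min}(\mathcal{N})$. The crucial observation is that the column span $W := \mathrm{range}(M_{n-1} \cdots M_1 \tilde Q_-)$ depends only on the blocks with index $< n$ and is therefore independent of $(A_n, C_n)$. Conditioning on $W$, a restriction-version of the invertibility estimate (Lemma \ref{lemma3.2}, as used in the proof of Lemma \ref{exactlythesame}) produces $\mathbb{P}(s_{\min}([A_n - zI_\ell, C_n]|_W) \leq \epsilon \ell^{-2}) \leq C\epsilon$; combined with the deterministic bound $s_{\min}(M_{n-1} \cdots M_1 \tilde Q_-) \geq (C\ell^{a})^{-(n-1)}$ on $\mathcal{E}$ and the operator-norm bound $\|\mathfrak{M}'\| \leq C\ell^{a n}$, setting $\epsilon = t$ and assembling all inequalities produces the advertised bound $s_{\min}(\mathcal{T}_{n+2} - zI_{mid}) \geq c \, t \, \ell^{-10n}(\ell^2 n)^{-10}$ with failure probability $\leq Ct + \ell^{-5}$. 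The main technical obstacle is the exponent bookkeeping: the product $M_{n-1} \cdots M_1$ contributes $\ell^{O(n)}$ in both $\|\mathfrak{M}'\|$ and the propagation error, and achieving the clean $\ell^{-10n}$ prefactor requires carefully optimizing the threshold $a$ in the definition of $\mathcal{E}$ against the $n$-fold union bound.
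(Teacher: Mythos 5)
Your approach is genuinely different from the paper's. The paper uses the Rudelson--Vershynin ``invertibility via distance'' framework: for each row $T_k$ it bounds $\operatorname{dist}(T_k,H_k)$ from below, and the iteration along the tridiagonal structure appears only once, inside a structural lemma (Propositions \ref{proposition3.4}, \ref{proposition3.6}) that shows a normal vector to $H_k$ has mass $\geq \ell^{-10n}(\ell n)^{-1/2}$ on three blocks near row $k$. You instead propagate a candidate near-null vector through the transfer cocycle, reduce to a $2\ell\times 2\ell$ boundary system $\mathfrak{M}'$, split across $\mathrm{range}([V,U]^*)\oplus\mathrm{range}(\tilde Q_-)$ in the spirit of Fact \ref{fact3.45}, and apply a single small-ball bound to $\mathcal{N}=[A_n-zI_\ell,C_n]M_{n-1}\cdots M_1\tilde Q_-$ after conditioning on the first $n-1$ blocks. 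The structural core is sound: the block upper-triangular reduction is correct, the independence of $W=\mathrm{range}(M_{n-1}\cdots M_1\tilde Q_-)$ from $(A_n,C_n)$ is genuine and exactly the right observation, and the conditioned application of Lemma \ref{lemma3.2} is the analogue of what the paper does in the proof of Lemma \ref{exactlythesame}. Your route also avoids the union bound over all $(n+2)\ell$ rows that Lemma \ref{lemma3.5} requires, which is a modest conceptual simplification.

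There is, however, a real gap in the quantitative claim. The transfer product enters your argument in four independent places, each contributing a factor $(C\ell^{a})^{n}$ (with $a$ the threshold on $\|B_k^{-1}\|,\|C_k^{-1}\|$ in $\mathcal{E}$): in the accumulated error $\|e\|$ when forming $\tilde r$; in the propagation from $\|f_1\|$ to $\|v\|=1$; in $\|\mathfrak{M}'\|$; and in $s_{\min}(M_{n-1}\cdots M_1\tilde Q_-)$. Carrying these through gives $s_{\min}(\mathcal{A})\gtrsim s_{\min}([A_n-zI_\ell,C_n]|_W)\,n^{-3/2}\,\ell^{-4a(n-1)}C^{-n}$, hence a prefactor of order $\ell^{-4an}$, not $\ell^{-an}$. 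The paper, by contrast, pays the factor $\ell^{-10}$ exactly once per block in a single one-directional pass (that is why it achieves $\ell^{-10n}$ with the event $\mathcal{E}_K$ at threshold $\ell^{-10}$). Your proposed fix, ``optimizing the threshold $a$ against the $n$-fold union bound,'' cannot close the gap: to obtain $4an\leq 10n$ you would need $a\leq 2.5$, but then $\mathbb{P}(s_{\min}(B_k)\leq\ell^{-a})\lesssim\ell^{1-a}$ only gives $\mathbb{P}(\mathcal{E}^c)\lesssim n\ell^{-1.5}$, which is not $\leq\ell^{-5}$ even when $n$ is a small power of $\ell$. One needs $a\gtrsim 6$ for the union bound, so the best your route yields is roughly $\ell^{-24n}$; with the paper's choice $a=10$ it is $\ell^{-40n}$. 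In other words, what you prove is $\mathbb{P}(s_{\min}(\mathcal{T}_{n+2}-zI_{mid})\leq\ell^{-c_0 n}(\ell^2 n)^{-10}t)\leq Ct+\ell^{-5}$ for some $c_0\approx 30$--$40$, which is a strictly weaker statement than the theorem as written (the event on the left is smaller, so the inequality is easier). It is worth noting that this weaker statement is still sufficient for the downstream use of Theorem \ref{theorem1.112} in the proof of Theorem \ref{theorem1.115}: there, only the order $O(n\log\ell)$ of $|\log s_{\min}|$ is used, and the constant $10$ versus $40$ is immaterial. So your approach does give a correct proof of what the paper actually needs, but it does not prove the theorem with the stated constant, and the bookkeeping paragraph should be corrected to reflect this.

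A minor remark: with $f_k=(v_k,v_{k-1})$ the top boundary equation reads $[U,V]f_1=r_0$, not $[V,U]f_1=r_0$; this is an inconsequential indexing slip but worth fixing.
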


The lower bound in Theorem \ref{theorem1.112} is exponentially small in the dimension, and we will only use this estimate when $\ell$ is much larger than $n$, say when $\ell\geq n^{12}$.

The proof of Theorem \ref{theorem1.112} is similar to \cite{jain2021circular}, Section 2, and uses the invertibility via distance approach pioneered in \cite{rudelson2008littlewood}. However, the first block row and last block row of $\mathcal{T}_{n+2}$ are purely deterministic and $[V,U]$ is arbitrary: this adds significant technical difficulty and is new in the literature. 

To handle these generic boundary conditions, we first prove the following estimate: 
\begin{lemma}\label{lemma3.2} Fix $L>0$.
    Let $\zeta$ be a random variable which is either real with distributional density on $\mathbb{R}$ bounded by $L$, or has independent real and imaginary parts $\Re\zeta,\Im\zeta$ such that at least one of them has distributional density bounded by $L$. Let $F$ be a $\ell\times 2\ell$ matrix with i.i.d. entries of law $\zeta$, and $S\in\mathbb{C}^{2\ell\times\ell}$ be a fixed (complex-valued) matrix with unitary columns. Let $Z\in\mathbb{C}^{\ell\times\ell}$ be an arbitrary fixed matrix. Then we can find $C_L>0$ depending only on $L$ such that for any $\epsilon>0$,
    \begin{equation}\label{lemma3.2estimates}
        \mathbb{P}(s_{min}(FS-Z)\leq \epsilon\ell^{-1.5})\leq C_L\epsilon.
    \end{equation}
\end{lemma}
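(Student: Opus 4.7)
The plan is to follow the invertibility-via-distance approach of Rudelson--Vershynin, sharpened under a bounded-density hypothesis as in Tikhomirov~\cite{tikhomirov2020invertibility}, and adapt it to the structured matrix $M := FS - Z$. Starting from the elementary bound $s_{\min}(M) \geq \ell^{-1/2}\min_{1 \leq i \leq \ell}\operatorname{dist}(r_i, H_i)$, where $r_i$ is the $i$-th row of $M$ and $H_i := \operatorname{span}\{r_k : k \neq i\}$, the target event $\{s_{\min}(M) \leq \epsilon\ell^{-3/2}\}$ is contained in the union $\bigcup_{i=1}^\ell\{\operatorname{dist}(r_i, H_i) \leq \epsilon\ell^{-1}\}$, so it suffices to prove a sufficiently strong small-ball estimate for each row distance and then apply a union bound.

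For each fixed $i$, I would pick a unit vector $v_i \in \mathbb{C}^\ell$ orthogonal to $H_i$; since $v_i$ is measurable with respect to $\{F_k : k \neq i\}$, it is independent of $F_i$. Setting $w := Sv_i \in \mathbb{C}^{2\ell}$, the isometric identity $S^*S = I_\ell$ gives $\|w\| = 1$, and with $c := \langle v_i, Z_i\rangle$ the distance becomes $|F_i w - c|$, a shifted linear combination of the $2\ell$ i.i.d.\ copies of $\zeta$ in $F_i$. The central analytic step is then the pointwise small-ball estimate
$$\sup_{\|w\|=1,\, c \in \mathbb{C}} \mathbb{P}(|F_i w - c| \leq t) \leq C_L t \sqrt{\ell}, \qquad t > 0.$$
Writing $w = a + ib$ with $a, b \in \mathbb{R}^{2\ell}$, the normalization $\|a\|^2 + \|b\|^2 = 1$ combined with pigeonhole over the $4\ell$ real coordinates forces some $|a_{j^*}|$ or $|b_{j^*}|$ to be at least $(2\sqrt{\ell})^{-1}$. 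Focusing on whichever of $\Re(F_iw - c)$ or $\Im(F_iw - c)$ is dominated by this large coordinate and conditioning on all remaining entries of $F_i$, the problem reduces to the anti-concentration of an affine image of a single copy of $\zeta$ (or of $\Re\zeta, \Im\zeta$ in the complex case), whose real density is at most $2L\sqrt{\ell}$ because the dominating coefficient has absolute value $\geq (2\sqrt{\ell})^{-1}$. A standard one-dimensional Lebesgue small-ball bound then yields the displayed estimate.

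Combining the small-ball estimate with $t = \epsilon\ell^{-1}$ and the union bound over the $\ell$ rows produces the stated probability bound, provided one matches the $\sqrt{\ell}$ loss in the anti-concentration against the threshold $\ell^{-3/2}$ in the statement. The main technical subtlety is the complex case in which only one of $\Re\zeta, \Im\zeta$ has bounded density: after expanding $\Re(F_{ij} w_j) = a_j \Re F_{ij} - b_j \Im F_{ij}$ (and similarly for the imaginary part), one must carefully condition on those coordinates of $F_{ij}$ that lack a density hypothesis so that the final one-dimensional anti-concentration is applied to a genuine linear form in the density-bounded variables, while still keeping the dominant-coordinate coefficient of size $\geq (2\sqrt{\ell})^{-1}$. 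A case analysis over whether the dominant coordinate lies in $a$ or $b$ and over which of $\Re\zeta, \Im\zeta$ carries the density assumption closes the argument, yielding a constant $C_L$ depending only on $L$.
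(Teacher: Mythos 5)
Your overall skeleton (distance decomposition, conditioning on the $i$-th row, reduction to anti-concentration of a linear form, union bound) is exactly the one the paper uses, but the central small-ball estimate you propose is quantitatively too weak and the argument does not close.

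The problem is the pigeonhole step. You extract a single real coordinate of $w$ of magnitude $\geq (2\sqrt{\ell})^{-1}$, condition on the rest, and get an anti-concentration bound of order $L\sqrt{\ell}\,t$ for $|F_iw-c|$. Then with the threshold $t=\epsilon\ell^{-1}$ dictated by $s_{\min}(M)\geq\ell^{-1/2}\min_i\operatorname{dist}(r_i,H_i)$ and the stated cutoff $\epsilon\ell^{-3/2}$, the per-row bound is $\mathbb{P}\big(\operatorname{dist}(r_i,H_i)\leq\epsilon\ell^{-1}\big)\lesssim_L\epsilon\ell^{-1/2}$, and summing over $\ell$ rows yields $\lesssim_L\epsilon\ell^{1/2}$, not $\lesssim_L\epsilon$. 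The $\sqrt{\ell}$ loss is not "matched against" the $\ell^{-3/2}$ in the statement — it survives and makes the bound trivial for $\epsilon\gtrsim\ell^{-1/2}$. One sanity check: the stated lemma implies $s_{\min}(FS-Z)\gtrsim\ell^{-1.5}$ with constant probability, whereas your per-row bound only yields $s_{\min}(FS-Z)\gtrsim\ell^{-2}$ with constant probability, which is a strictly weaker statement.

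The missing ingredient is a dimension-free anti-concentration bound for projections. The paper, after rotating by a phase $e^{-i\theta}$ so that $\|\Re(e^{-i\theta}Sn_i)\|\geq\tfrac12$, invokes the result of Rudelson and Vershynin \cite{rudelson2015small}: if the coordinates of a random vector in $\mathbb{R}^{2\ell}$ are independent with density bounded by $L$, then any one-dimensional marginal along a unit vector has density bounded by $C'L$ with $C'$ independent of $\ell$. This replaces your $L\sqrt{\ell}$ by $C'L$, yielding $\mathbb{P}(\operatorname{dist}(r_i,H_i)\leq t)\leq C''Lt$, and the union bound then closes. (The paper also phrases the reduction through the negative second moment identity $\|(FS-Z)^{-1}\|_{HS}^2\leq\sum_i\operatorname{dist}(F_iS-Z_i,H_i)^{-2}$, but that only recovers the same $s_{\min}\geq\ell^{-1/2}\min_i\operatorname{dist}_i$ inequality you started from, so the packaging is not where you diverge.) Your handling of the complex case — choosing a phase and conditioning on the density-free component — is in the right spirit and essentially matches the paper's, but it too must be run through the Rudelson--Vershynin projection theorem rather than single-coordinate pigeonhole.
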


The proof of Lemma \ref{lemma3.2} is given at the end of the section.

For the upper boundary $\begin{bmatrix}
    V&U
\end{bmatrix}$, we denote by $P_{V,U}^\perp$ its unitary complement in $\mathbb{C}^{2\ell}$:
\begin{equation}
P_{V,U}^\perp=\{v\in\mathbb{C}^{2\ell}:\begin{bmatrix}
    V\quad U
\end{bmatrix}\cdot v=0\}.
\end{equation} We denote by $\begin{bmatrix}
    C_1,A_1-zI_\ell
\end{bmatrix}\mid_{P_{V,U}^\perp}$ the restriction of this $\ell\times 2\ell$ matrix to the subspace $P_{V,U}^\perp$.

We first introduce a family of high probability events on the matrix blocks. For $K>0$,
\begin{equation}\begin{aligned}\label{EK}
\mathcal{E}_K:=\{&\forall i\in[n]:\|B_i\|,\|C_i\|,\|A_i\|\leq K;s_{min}(B_i)\geq \ell^{-9},s_{min}(C_i)\geq\ell^{-9},\\& s_{min}(\begin{bmatrix}
    C_1,A_1-zI_\ell
\end{bmatrix}\mid_{P_{V,U}^\perp})\geq\ell^{-9} \}.\end{aligned}
\end{equation}
\begin{fact}\label{fact3,3}
    For each $z\in\mathbb{C}$ we can find $K>0$ such that 
$\mathbb{P}(\mathcal{T}_{n+2}\in\mathcal{E}_K)\geq 1-K\ell^{-5}$.
\end{fact}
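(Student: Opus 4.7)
The plan is to verify each of the three defining conditions of $\mathcal{E}_K$ separately, each with failure probability at most a fixed negative power of $\ell$, and then take a union bound. Since we are in the regime where $n$ is at most a small polynomial in $\ell$ (in particular $n\leq\ell^{1/12}$ for the application inside the proof of Theorem \ref{theorem1.115}), the union bound over the $O(n)$ blocks costs only a small polynomial factor that is easily absorbed into the $\ell^{-5}$ budget. For the operator norm conditions $\|A_i\|,\|B_i\|,\|C_i\|\leq K$, I would apply Lemma \ref{astronger} to each of the $3n$ blocks, whose entries are i.i.d.\ of variance $\frac{1}{3\ell}$: for any $k$, one obtains $\mathbb{P}(\|A_i\|>K)\leq C_k\ell^{-k}$ for $K$ large enough in $|z|$ and $\zeta$, and choosing $k$ large (say $k=10$) and union-bounding over the $3n$ blocks gives failure probability $O(\ell^{-10+1/12})$, well within budget.

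For the invertibility of the square blocks $B_i,C_i$, I would apply Lemma \ref{prop1.5} in the real case and Lemma \ref{lemma2.6} in the complex case. After accounting for the $(3\ell)^{-1/2}$ normalization of the entries, these yield $\mathbb{P}(s_{min}(B_i)\leq \epsilon\ell^{-2})\leq C_L\epsilon$ (with an analogous bound for $C_i$); taking $\epsilon=\ell^{-8}$ gives $s_{min}(B_i)\geq\ell^{-10}$ with failure probability $C_L\ell^{-8}$ per block, and the union bound over the $2n$ such blocks is again absorbed.

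The remaining, more delicate condition is the lower bound on $s_{min}\bigl([C_1,A_1-zI_\ell]\mid_{P_{V,U}^\perp}\bigr)$, which must hold for an arbitrary deterministic boundary $[V,U]$. To handle this, let $S\in\mathbb{C}^{2\ell\times\ell}$ be a matrix whose orthonormal columns span $P_{V,U}^\perp$. Writing $[C_1,A_1]=(3\ell)^{-1/2}\widetilde{F}$ with $\widetilde{F}$ an $\ell\times 2\ell$ matrix of i.i.d.\ entries of law $\zeta$, the restriction equals $(3\ell)^{-1/2}\bigl(\widetilde{F}S-(3\ell)^{1/2}[0,zI_\ell]S\bigr)$. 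This is exactly the setup of Lemma \ref{lemma3.2} with the deterministic shift $Z=(3\ell)^{1/2}[0,zI_\ell]S$; applying that lemma with $\epsilon=\ell^{-8}$ and rescaling gives $s_{min}\geq\ell^{-10}$ with failure probability $C_L\ell^{-8}$. This step is the main obstacle: the subspace $P_{V,U}^\perp$ is arbitrary, so no symmetry reduction is available, and the reason for placing Lemma \ref{lemma3.2} immediately before this statement is precisely to supply a least singular value bound that is uniform in an arbitrary deterministic unitary embedding $S$ and arbitrary deterministic shift $Z$. Combining the three bounds by union bound and choosing $K$ as the maximum of the individual constants completes the proof.
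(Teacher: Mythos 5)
Your proposal is correct and takes exactly the same route as the paper, which simply cites Lemma~\ref{astronger}, Lemma~\ref{prop1.5} (or Lemma~\ref{lemma2.6} in the complex case), and Lemma~\ref{lemma3.2} without elaboration; you have merely filled in the rescaling by $(3\ell)^{-1/2}$, the identification of the deterministic shift $Z=(3\ell)^{1/2}[0,zI_\ell]S$ when invoking Lemma~\ref{lemma3.2} on the restricted matrix, and the union bound over $O(n)$ blocks, all of which are the obvious missing details. One cosmetic remark: the operator-norm bound of Lemma~\ref{astronger} depends only on the moments of $\zeta$ and not on $|z|$ (since $\mathcal{E}_K$ bounds $\|A_i\|$, not $\|A_i-zI_\ell\|$), so the phrase ``for $K$ large enough in $|z|$ and $\zeta$'' slightly overstates the dependence, though this is harmless as the Fact is stated for each fixed $z$; and the rescaled least-singular-value threshold carries an extra factor of $\sqrt{3}$ which you should absorb by adjusting $\epsilon$ by a constant, again immaterial for large $\ell$.
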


\begin{proof}
This follows from combining Lemma \ref{lemma2.6} or Lemma \ref{prop1.5} with Lemma \ref{astronger} and then apply Lemma \ref{lemma3.2}.  
\end{proof}

\subsection{Structure of normal vectors}
Let $T_1,\cdots,T_{(n+2)\ell}$ be the rows of $\mathcal{T}_{n+2}-zI_{mid}$ from top to bottom. For each $1\leq i\leq (n+2)\ell$, let $H_{i}$ be the linear span of all rows of $\mathcal{T}_{n+2}-zI_{mid}$ in $\mathbb{C}^{(n+2)\ell}$ except the $i$-th row. We prove the following structural theorem:

\begin{Proposition}\label{proposition3.4} For any unit vector $v\in\mathbb{C}^{(n+2)\ell}$ we denote by $v_{[1]},\cdots,v_{[n+2]}$ its restriction to the columns of the blocks labeled $1,2,\cdots,n+2$. For each value $1\leq k\leq (n+2)\ell$ we denote by $N_k=\lfloor \frac{k-1}{\ell}\rfloor+1$, so that the $k$-th row lies in the $N_k$-th block from top to bottom. 
Then on the event $\mathcal{E}_K$, 
 for each $\ell+1\leq k\leq (n+1)\ell$, let $v$ be a unit vector orthogonal to $H_k$.
Then for sufficiently large $\ell$, $$\max(
\|v_{[N_k-1]}\|_2,\|v_{[N_k]}\|_2,\|v_{[N_{k}+1]}\|_2)\geq\ell^{-10n}(\ell n)^{-1/2}.$$

\end{Proposition}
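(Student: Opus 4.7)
The plan is to argue by contradiction: assume $\max(\|v_{[N_k-1]}\|,\|v_{[N_k]}\|,\|v_{[N_k+1]}\|) < \epsilon := \ell^{-10n}(\ell n)^{-1/2}$ and derive a contradiction with the unit-norm constraint $\|v\|=1$. The essential tool is the transfer matrix recursion already built into $\mathcal{T}_{n+2}$, which can be run on both sides of the missing row block $N_k$ starting from the two "overlapping" initial pairs $(v_{[N_k-1]},v_{[N_k]})$ and $(v_{[N_k]},v_{[N_k+1]})$.

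Write out the orthogonality relations $\langle T_j,v\rangle=0$ for $j\neq k$ block by block. Row block $1$ yields $Vv_{[1]}+Uv_{[2]}=0$, the bottom block yields $v_{[n+2]}=0$ (using the specialization $[S_{+}^{bd},C_{+}]=[0,I_\ell]$), and each middle block $m\in\{2,\ldots,n+1\}\setminus\{N_k\}$ yields the vector identity
\[
C_{m-1}v_{[m-1]}+(A_{m-1}-zI_\ell)v_{[m]}+B_{m-1}v_{[m+1]}=0,
\]
equivalent to $\begin{bmatrix}v_{[m+1]}\\ v_{[m]}\end{bmatrix}=M_{m-1}^{(B)}(z)\begin{bmatrix}v_{[m]}\\ v_{[m-1]}\end{bmatrix}$. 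On the event $\mathcal{E}_K$, the bounds $s_{\min}(B_i),s_{\min}(C_i)\geq\ell^{-10}$ together with $\|A_i\|,\|B_i\|,\|C_i\|\leq K$ imply $\|M_m^{(B)}(z)\|\vee\|(M_m^{(B)}(z))^{-1}\|\leq C_0\ell^{10}$ for some $C_0=C_0(K,|z|)$. I will not use the top and bottom boundary relations for this argument — they are merely additional constraints on $v$, and discarding them can only weaken the hypothetical lower bound needed for the contradiction.

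Propagate forward from the pair $(v_{[N_k]},v_{[N_k+1]})$, applying $M_{N_k}^{(B)},\ldots,M_n^{(B)}$ in turn (this invokes only row blocks $N_k+1,\ldots,n+1$, all of which are available), to obtain $\|v_{[m]}\|\leq(C_0\ell^{10})^{m-N_k}\sqrt{2}\,\epsilon$ for $m\geq N_k$. Symmetrically, propagate backward from $(v_{[N_k-1]},v_{[N_k]})$ via inverses of $M_{N_k-2}^{(B)},\ldots,M_1^{(B)}$ to get $\|v_{[m]}\|\leq(C_0\ell^{10})^{N_k-m}\sqrt{2}\,\epsilon$ for $m\leq N_k$. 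Summing squared norms over all $n+2$ block indices gives
\[
1=\|v\|^2=\sum_{m=1}^{n+2}\|v_{[m]}\|^2\leq 2(n+2)(C_0\ell^{10})^{2n}\epsilon^2,
\]
which forces $\epsilon\geq 1/(\sqrt{2(n+2)}(C_0\ell^{10})^n)$. For $\ell$ sufficiently large this lower bound exceeds $\ell^{-10n}(\ell n)^{-1/2}$, yielding the contradiction.

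The conceptual point is that the three-block hypothesis on $(v_{[N_k-1]},v_{[N_k]},v_{[N_k+1]})$ is exactly what is required to seed both transfer chains simultaneously; together they reconstruct $v$ across all $n+2$ blocks, so the global unit-norm condition can be pulled back to a pointwise lower bound on one of the three seeds. The main technical obstacle is uniformly controlling $\|M_m^{(B)}(z)\|$ and $\|(M_m^{(B)}(z))^{-1}\|$ on $\mathcal{E}_K$, which reduces to the least singular value estimates on $B_i$ and $C_i$ already baked into the definition of $\mathcal{E}_K$ via Lemma \ref{lemma3.2} and Lemma \ref{astronger}. A secondary subtlety is that the constant factor $C_0^n$ in the final bound must be absorbed into the slack $\sqrt{\ell n}$, which explains why the conclusion is stated only for $\ell$ sufficiently large — harmless for the intended application where $\ell$ is much larger than $n$.
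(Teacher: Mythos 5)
Your proposal is correct and is logically the contrapositive of the paper's own proof: the paper observes that some block $j_0$ necessarily satisfies $\|v_{[j_0]}\|\geq(2n)^{-1/2}$ and propagates that lower bound \emph{toward} $N_k$ via the three-term block recurrence, losing a factor $\approx \ell^{-10}$ per two-block jump; you instead assume the three blocks around $N_k$ are all small and propagate \emph{outward} via the same recurrence (packaged as $M_m^{(B)}$ and its inverse), then contradict $\|v\|=1$. Both propagations rest on exactly the same $\mathcal{E}_K$ bounds $s_{\min}(B_i),s_{\min}(C_i)\geq\ell^{-10}$ and $\|A_i\|,\|B_i\|,\|C_i\|\leq K$, and both correctly avoid the (partially deleted) row block $N_k$. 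So this is the same argument, not a genuinely different route.

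One small remark about your closing comment on the constant factor. Absorbing the accumulated $C_0^n$ into the slack $(\ell n)^{1/2}$ would require $\ell\gtrsim C_0^{2n}$, i.e.\ $\ell$ exponentially large in $n$, which the application to Theorem~\ref{theorem1.112} (where $\ell\sim n^{12}$) does not provide. The place where the constant is actually absorbed — in the paper's proof just as much as yours — is the exponent $10$ in $\ell^{-10}$: the lower bound on $s_{\min}(B_i),s_{\min}(C_i)$ that Lemmas~\ref{prop1.5}/\ref{lemma2.6} actually give on $\mathcal{E}_K$ is around $\ell^{-7}$, so there is polynomial slack $\ell^{-3}$ per step, enough to swallow the $O(1)$ constant once $\ell$ exceeds a fixed threshold depending only on $K,|z|$. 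It is worth stating this correctly, because the naive ``absorb into $\sqrt{\ell n}$'' reading does not close the argument.
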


\begin{proof}In this proof we write $A_k^z:=A_k-zI_\ell$ and we prove a slightly stronger result. Since $v$ is a unit vector, there must exist a  $j_0\in[n+2]$ such that $\|v_{[j_0]}\|\geq (2n)^{-1/2}$. Without loss of generality assume that $j_0<N_k-1$ (the opposite case $j_0>N_{k}+1$ is analogous, and the special cases $j_0=N_k,N_k-1,N_k+1$ already verify the claim).

Step 1: To the left of $j_0$. The normal vector $v$ must satisfy the following equation 
$$
C_{j_0-2}v_{[j_0-2]}+A^z_{j_0-2}v_{[j_0-1]}+B_{j_0-2}v_{[j_0]}=0.
$$Then on the event $\mathcal{E}_K$ we get that
$$
\|B_{j_0-2}v_{[j_0]}\|\geq \ell^{-9}\|v_{[j_0]}\|\geq \ell^{-9}(2n)^{-1/2},
$$and that 
\begin{equation}\label{lines804}
\|C_{j_0-2}v_{[j_0-2]}+A_{j_0-2}^zv_{[j_0-1]}\|\leq (K+|z|)(\|v_{[j_0-2]}\|+\|v_{[j_0-1]}\|).
\end{equation}
(Note that the invertibility of $B_{j_0-2}$ is valid since $j_0\leq N_k$ and we only remove a row of $B_{N_k-1}$). Combining both sides, we see whenever $\ell$ is large enough relative to $K,z$, then 
\begin{equation}\label{equation7}
    \text{either } \|v_{[j_0-2]}\|\geq \ell^{-10}(2n)^{-1/2}\text{ or }\|v_{[j_0-1]}\|\geq \ell^{-10}(2n)^{-1/2}.
\end{equation}(Otherwise the right hand side of \eqref{lines804} is at most $2(K+|z|)\ell^{-10}(2n)^{-1/2}\ll \ell^{-9}(2n)^{-1/2}$ when $\ell$ is sufficiently large, a contradiction.)
Then we take $j_{-1}$ to be the smaller one of $j_0-1$ or $j_0-2$ satisfying \eqref{equation7}. If $j_{-1}\geq 3$ we iterate the argument with $j_{-1}$ and we can find $j_{-2}\in\{j_{-1}-1,j_{-1}-2\}$ with 
    $$
\|v_{[j_{-2}]}\|\geq \ell^{-20}(2n)^{-1/2}.
    $$
    Then we continue in this manner and find a sequence of indices $j_0,j_{-1},\cdots,j_{-x},x\leq n$ so that $j_{-x}\in\{1,2\}$ and that for all $i\in[x]$, we have
    $$
|j_{-i}-j_{-i-1}|\leq 2,\text{ and }\|v_{[j_{-i}]}\|\geq \ell^{-10i}(2n)^{-1/2}.
    $$(If $j_0=1$ or $j_0=2$ then this procedure is unnecessary).

    Step 2: To the right of $j_0$. Since $v$ solves the following equation 
    \begin{equation}
        C_{j_0}v_{[j_0]}+A^z_{j_0}v_{[j_0+1]}+B_{j_0}v_{[j_0+2]}=0, 
    \end{equation}so long as $j_0<N_k-1$ so that $C_{j_0}$ is invertible, we conduct exactly the same computation as in Step 1 to deduce that at least one of $\|v_{[j_0+1]}\|$ or $\|v_{[j_0+2]}\|$ has norm at least $\ell^{-10}(2n)^{-1/2}$. Iterating, we can find a sequence of indices $j_1,j_2,\cdots,j_y$, $y\leq n$, so that $j_y\in \{N_k-1,N_k\}$, and for all $i\in[y]$,
$$
  |j_i-j_{i-1}|\leq 2,\quad\text{and}\|v_{[j_i]}\|\geq \ell^{-10i}(2n)^{-1/2}.
$$ This completes the proof. \end{proof}

In Proposition \ref{proposition3.4}, we have excluded values of $k$ in the first and last block row. A special treatment is needed for them. We now introduce an auxiliary matrix:

$$
\mathcal{T}_z^{res}:=\begin{bmatrix}
    C_1&A_1-zI_\ell&B_1&&&\\&C_2&A_2-zI_\ell&B_2&\\&&\ddots&\ddots&\ddots\\&&&C_n&A_n-zI_\ell
\end{bmatrix}\in\mathbb{C}^{n\ell\times(n+1)\ell}.
$$
This matrix is obtained from $\mathcal{T}_{n+2}$ by removing the first row and the last row and column. From now on we take the specialization $[S_{+}^{bd},C_{+}]=[0,I_\ell]$.

We will evaluate $\mathcal{T}_z^{res}$ on the following subset of $\mathbb{C}^{(n+1)\ell}$:
$$
\operatorname{Vec}_{V,U}^\perp:=\{v\in\mathbb{C}^{(n+1)\ell}:(v_{[1]},v_{[2]})\in P_{V,U}^\perp\}.
$$ To allow for a more convenient linear algebra treatment, we take a projection on the first two components of $\mathbb{C}^{(n+1)\ell}$ and reduce $\mathcal{T}_z^{res}$ to the following square matrix:

\begin{fact}\label{fact3.575}There exists a matrix $C_2^*$  and a matrix $A_1^*:=[C_1,A_1-zI_\ell]\mid_{P_{V,U}^\perp }$ such that the following defined matrix 
$$
\mathcal{T}_z^{res,sq}:=\begin{bmatrix}
A_1^* & B_1 & & \\
C_2^* & A_2 - zI_\ell & B_2 & \\
     & \ddots & \ddots & \ddots \\
     &        & C_n    & A_n - zI_\ell \\
\end{bmatrix}\in\mathbb{C}^{n\ell\times n\ell} 
$$ satisfies
$$
s_{min}(\mathcal{T}_z^{res,sq})=s_{min}(\mathcal{T}_z^{res}\mid_{\operatorname{Vec}_{V,U}^\perp}),
$$
where $C_2^*$ is purely a function of $C_2$ defined by $C_2^*=[0,C_2]\mid_{P_{V,U}^\perp}$.
\end{fact}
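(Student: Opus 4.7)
The plan is to construct an explicit isometric parametrization of the subspace $\operatorname{Vec}_{V,U}^\perp$ by $\mathbb{C}^{n\ell}$, and to observe that in this parametrization the rectangular operator $\mathcal{T}_z^{res}$ acts as the claimed square block-tridiagonal matrix $\mathcal{T}_z^{res,sq}$. Since the parametrization is an isometry, the two least singular values must coincide.

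Concretely, since $[V,U]$ has orthonormal rows, $P_{V,U}^\perp \subset \mathbb{C}^{2\ell}$ is $\ell$-dimensional; let $Q\in\mathbb{C}^{2\ell\times\ell}$ be any matrix whose columns form an orthonormal basis of $P_{V,U}^\perp$, so that $Q^*Q = I_\ell$. Decomposing $Q=\begin{bmatrix}Q^{(1)}\\ Q^{(2)}\end{bmatrix}$ into two $\ell\times\ell$ blocks, define $\Psi:\mathbb{C}^{n\ell}\to\mathbb{C}^{(n+1)\ell}$ by
\begin{equation*}
\Psi(w_{[1]},w_{[2]},\dots,w_{[n]}) = \bigl(Q^{(1)}w_{[1]},\, Q^{(2)}w_{[1]},\, w_{[2]},\, w_{[3]},\,\dots,\,w_{[n]}\bigr).
\end{equation*}
Then $\Psi$ is an isometry onto $\operatorname{Vec}_{V,U}^\perp$: the last $n-1$ blocks contribute $\|w_{[2]}\|^2+\cdots+\|w_{[n]}\|^2$, and the first two blocks contribute $\|Qw_{[1]}\|^2 = \|w_{[1]}\|^2$; and any element of $\operatorname{Vec}_{V,U}^\perp$ has its first two blocks lying in the range of $Q$, hence is uniquely of this form.

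Next I would compute $\mathcal{T}_z^{res}\Psi w$ block by block. The first block-row of $\mathcal{T}_z^{res}$ applied to $\Psi w$ produces
\begin{equation*}
C_1\,Q^{(1)}w_{[1]} + (A_1-zI_\ell)\,Q^{(2)}w_{[1]} + B_1 w_{[2]} = [C_1,\,A_1-zI_\ell]\,Q\,w_{[1]} + B_1 w_{[2]},
\end{equation*}
so the effective $(1,1)$-block is $A_1^* = [C_1,A_1-zI_\ell]\,Q = [C_1,A_1-zI_\ell]\mid_{P_{V,U}^\perp}$, and the $(1,2)$-block is $B_1$. The second block-row of $\mathcal{T}_z^{res}$ applied to $\Psi w$ gives
\begin{equation*}
C_2\,Q^{(2)}w_{[1]} + (A_2-zI_\ell)\,w_{[2]} + B_2 w_{[3]} = [0,C_2]\,Q\,w_{[1]} + (A_2-zI_\ell)w_{[2]} + B_2 w_{[3]},
\end{equation*}
so the $(2,1)$-block is precisely $C_2^* := [0,C_2]\mid_{P_{V,U}^\perp}$. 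For every $j\geq 3$ the $j$-th block-row touches only the free coordinates $w_{[j-1]}, w_{[j]}, w_{[j+1]}$ and therefore retains the unchanged coefficients $C_j, A_j-zI_\ell, B_j$. Assembling, $\mathcal{T}_z^{res}\Psi = \mathcal{T}_z^{res,sq}$ as operators on $\mathbb{C}^{n\ell}$.

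Finally, since $\Psi$ is an isometric bijection onto $\operatorname{Vec}_{V,U}^\perp$,
\begin{equation*}
s_{min}(\mathcal{T}_z^{res,sq}) = \inf_{\|w\|=1}\|\mathcal{T}_z^{res}\Psi w\| = \inf_{v\in\operatorname{Vec}_{V,U}^\perp,\,\|v\|=1}\|\mathcal{T}_z^{res}v\| = s_{min}\bigl(\mathcal{T}_z^{res}\mid_{\operatorname{Vec}_{V,U}^\perp}\bigr).
\end{equation*}
There is no serious obstacle here: the only point requiring any care is that the identification $A_1^* = [C_1,A_1-zI_\ell]\mid_{P_{V,U}^\perp}$ and $C_2^* = [0,C_2]\mid_{P_{V,U}^\perp}$ does not depend on the particular choice of orthonormal basis $Q$, which follows because a different choice of basis $Q' = QR$ (with $R\in U(\ell)$) merely right-multiplies $A_1^*$ and $C_2^*$ by $R$ and hence leaves singular values invariant.
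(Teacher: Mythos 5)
Your proof is correct and is exactly the fleshed-out version of what the paper asserts in one sentence ("this fact follows from applying an isometry on the first two components of $\mathbb{C}^{(n+1)\ell}$"): you parametrize $\operatorname{Vec}_{V,U}^\perp$ isometrically via $\Psi$, verify $\mathcal{T}_z^{res}\Psi=\mathcal{T}_z^{res,sq}$ block by block, and conclude equality of least singular values. No gap; same approach, with details supplied.
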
This fact follows from applying a projection onto the first two components of $\mathbb{C}^{(n+1)\ell}$. We then prove the following geometric property for $\mathcal{T}_z^{res,sq}$:
\begin{Proposition}\label{proposition3.6}
    Let $T_1^{res},\cdots,T_{n\ell}^{res}$ be the rows of $\mathcal{T}_z^{res,sq}$ from top to bottom. Also let $H_i^{res}$ be the linear span of all rows of $\mathcal{T}_{z}^{res,sq}$ in $\mathbb{C}^{n\ell}$ except the $i$-th row, for all $1\leq i\leq n\ell$. Then on the event $\mathcal{E}_K$, for each $1\leq k\leq n\ell$, let $v$ be a unit vector orthogonal to $H_k^{res}$, and that row $k$ lies in the $N_k$-th block from top to bottom. Then for sufficiently large $\ell$,
$$\max(
\|v_{[N_k-1]}\|_2,\|v_{[N_k]}\|_2,\|v_{[N_{k}+1]}\|_2)\geq\ell^{-10n}(\ell n)^{-1/2}.$$ (When we encounter $v_{[0]}$ or $v_{[n+1]}$, we simply remove them from the maximum bracket).

\end{Proposition}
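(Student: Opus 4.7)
The plan is to run the same propagation scheme as in Proposition \ref{proposition3.4}, now on the reduced square matrix $\mathcal{T}_z^{res,sq}$. The only new feature is that the first two block rows contain the auxiliary blocks $A_1^*$ and $C_2^*$ in place of the standard $C_1$ and $C_2$; apart from that, $\mathcal{T}_z^{res,sq}$ is tridiagonal with $n$ block rows, so the geometric argument transfers almost verbatim, and care is required only at the boundary.

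First I pigeonhole: since $\|v\|=1$ and $v=(v_{[1]},\ldots,v_{[n]})$, some $j_0\in[n]$ satisfies $\|v_{[j_0]}\|\geq (2n)^{-1/2}$. If $j_0\in\{N_k-1,N_k,N_k+1\}$ the stated bound is immediate, so assume otherwise; without loss of generality $j_0\leq N_k-2$ (the case $j_0\geq N_k+2$ is symmetric). Next I record what is available on $\mathcal{E}_K$: the bounds $s_{\min}(A_1^*),\,s_{\min}(B_i),\,s_{\min}(C_j)\geq \ell^{-10}$ for $1\leq i\leq n-1$ and $3\leq j\leq n$, together with $\|A_i\|,\|B_i\|,\|C_i\|\leq K$. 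No useful lower bound on $s_{\min}(C_2^*)$ is available, but since $C_2^*=C_2 P_2$ with $P_2$ a sub-block of an isometry, the operator norm bound $\|C_2^*\|\leq \|C_2\|\leq K$ comes for free, and this is all the propagation will need. For any block row $i\neq N_k$ the orthogonality $v\perp H_k^{res}$ yields one of the relations $A_1^*v_{[1]}+B_1v_{[2]}=0$ ($i=1$), $C_2^*v_{[1]}+(A_2-zI_\ell)v_{[2]}+B_2v_{[3]}=0$ ($i=2$), $C_iv_{[i-1]}+(A_i-zI_\ell)v_{[i]}+B_iv_{[i+1]}=0$ ($3\leq i\leq n-1$), or $C_nv_{[n-1]}+(A_n-zI_\ell)v_{[n]}=0$ ($i=n$).

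The one-step rightward propagation, starting from an index $j\leq N_k-2$ where $\|v_{[j]}\|$ is already bounded below, goes as follows. For $j=1$ I use block row $1$, obtaining $\|A_1^*v_{[1]}\|=\|B_1v_{[2]}\|$ and therefore $\|v_{[2]}\|\geq s_{\min}(A_1^*)\|v_{[1]}\|/\|B_1\|\geq \ell^{-10}\|v_{[1]}\|/K$; crucially, block row $2$ is not used here, which is what sidesteps the lack of control on $s_{\min}(C_2^*)$. For $j\geq 2$ with $j+1\neq N_k$ I use block row $j+1$, which is of the standard three-term form (since $j+1\geq 3$): $\|C_{j+1}v_{[j]}\|\geq \ell^{-10}\|v_{[j]}\|$ combined with the trivial upper bound $\|(A_{j+1}-zI_\ell)v_{[j+1]}+B_{j+1}v_{[j+2]}\|\leq (K+|z|)(\|v_{[j+1]}\|+\|v_{[j+2]}\|)$ forces $\max(\|v_{[j+1]}\|,\|v_{[j+2]}\|)\geq \ell^{-10}\|v_{[j]}\|/(2(K+|z|))$. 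Iterating at most $n$ times, each step moving the large-block index by $1$ or $2$ strictly toward $N_k$ without ever using block row $N_k$, produces an index in $\{N_k-1,N_k,N_k+1\}$ with norm at least $(2n)^{-1/2}(\ell^{-10}/(2(K+|z|)))^n$. For $\ell$ sufficiently large relative to $K,|z|$ this is bounded below by $\ell^{-10n}(\ell n)^{-1/2}$ (absorb the fixed constants into a slight weakening of the exponent), as claimed. The mirror leftward case $j_0\geq N_k+2$ is analogous, the only boundary novelty being propagation across block row $2$ to reach index $1$: one pivots on $s_{\min}(B_2)\geq\ell^{-10}$ and the only new estimate required is the already-available $\|C_2^*\|\leq K$.

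The main obstacle is precisely the absence of a lower singular-value estimate for $C_2^*$: a naive line-by-line translation of Proposition \ref{proposition3.4}'s argument would try to propagate across block row $2$ by pivoting on $s_{\min}(C_2^*)$, which is not controlled. The resolution is to cross the first two block rows using the surrogate invertibility of $A_1^*$ (controlled by $\mathcal{E}_K$) together with the automatic operator norm bound $\|C_2^*\|\leq \|C_2\|\leq K$, and otherwise to rely on the interior estimates from $\mathcal{E}_K$ exactly as in the proof of Proposition \ref{proposition3.4}.
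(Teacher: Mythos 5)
Your proof is correct and follows essentially the same propagation scheme as the paper's: pigeonhole a heavy block, propagate toward $N_k$ by at most $n$ one- or two-block steps using the invertibility bounds on $\mathcal{E}_K$, with the only boundary subtlety being that block rows $1$ and $2$ are crossed by pivoting on $s_{\min}(A_1^*)$ and $s_{\min}(B_2)$ respectively, never on $C_2^*$. You are actually more explicit than the paper about the key point — that $\mathcal{E}_K$ gives no lower bound for $s_{\min}(C_2^*)$ and only $\|C_2^*\|\leq\|C_2\|\leq K$ is available, so the propagation must route around block row $2$ when moving right — and about absorbing the multiplicative constants $\bigl(2(K+|z|)\bigr)^{-n}$ into a slightly worsened exponent (the paper silently does the same, and indeed uses the slack $\ell^{-11n}$ rather than $\ell^{-10n}$ downstream).
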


\begin{proof}

    If $2\leq N_k\leq n$, we can always find a $j_0\in[n]$ such that $\|v_{[j_0]}\|\geq (2n)^{-1/2}$. Then if $j_0=1$, we apply the relation $$A_1^*v_{[1]}+B_1v_{[2]}=0$$ and deduce that on $\mathcal{E}_K$, $\|v_{[2]}\|\geq \ell^{-10}(2n)^{-1/2}$. Then apply the relation $$C_3v_{[2]}+A_3^zv_{[3]}+B_3v_{[4]}=0$$ to deduce that $\max(\|v_{[3]}\|,\|v_{[4]}\|)\geq \ell^{-20}(2n)^{-1/2}$. Applying this iteratively completes the proof. Suppose that $2\leq j_0\leq N_k-2$, then we start from $C_{j_0+1}v_{[j_0]}+A^z_{j_0+1}v_{[j_0+1]}+B_{j_0+1}v_{[j_0+2]}=0$ and do the iteration. The other case $N_k+1\leq j_0\leq n$ is exactly analogous.

 If $N_k=1$, and that $\|v_{[1]}\|\leq (2n)^{-1/2}$, then we can find some $2\leq j_0\leq n$ such that $\|v_{[j_0]}\|\geq (2n)^{-1/2}$. Then we apply the relation
$$
C_{j_0-1}v_{[j_0-2]}+A_{j_0-1}^zv_{[j_0-1]}+B_{j_0-1}v_{[j_0]}
=0$$ to deduce that $\max(\|v_{[j_0-1]}\|,\|v_{[j_0-2]}\|)\geq \ell^{-10}(2n)^{-1/2}$. Then we iterate all the way to the left and deduce that $\max(\|v_{[1]}\|,\|v_{[2]}\|)\geq \ell^{-10n}(2n)^{-1/2}$, 
completing the proof.
\end{proof}

Before proceeding to the next step, let us conclude that we have gained a good understanding of the geometry of the normal vector to the subspaces $H_k^{res}$ of $\mathcal{T}_z^{res,sq}$ for all $k$, and the geometry of the normal vector to the subspaces $H_k$ of $\mathcal{T}_{n+2}-zI_{mid}$ except those in the first and last block. To complete the picture for $\mathcal{T}_{n+2}-zI_{mid}$, we will first bound the least singular value of $\mathcal{T}_z^{res,sq}$ in the following section.

\subsection{Invertibility via distance}
We use the following simple geometric lemma to reduce singular value to distance estimates. The version we present is actually weaker than the version in \cite{rudelson2008littlewood} and \cite{jain2021circular}, but it relieves us of the task of studying compressible/incompressible vectors separately, and the loss of quantitative bound is negligible for our purpose.

\begin{lemma}\label{lemma3.5}
With the notations above, we have for any $t>0$,
$$\begin{aligned}&
\mathbb{P}(\mathcal{E}_K\cap\{s_{min}(\mathcal{T}_{n+2}-zI_{mid})\leq t(2n\ell)^{-1/2}\})\leq\sum_{k=1}^{\ell (n+2)}\mathbb{P}(\mathcal{E}_K\cap\{\operatorname{dist}(T_k,H_k)\leq t\}),\end{aligned}
$$
$$\begin{aligned}&
\mathbb{P}(\mathcal{E}_K\cap\{s_{min}(\mathcal{T}_{z}^{res,sq})\leq t(n\ell)^{-1/2}\})\leq\sum_{k=1}^{\ell n}\mathbb{P}(\mathcal{E}_K\cap\{\operatorname{dist}(T_k^{res},H_k^{res})\leq t\}).\end{aligned}
$$
\end{lemma}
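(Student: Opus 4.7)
The plan is to reduce the least singular value bound to a union bound over the row-distances $\operatorname{dist}(T_k,H_k)$ via the classical invertibility-via-distance identity. The deterministic kernel is the following statement: for any $N\times N$ matrix $M$ with rows $T_1,\ldots,T_N$ and $H_k:=\operatorname{span}\{T_j:j\neq k\}$,
\[
s_{min}(M)\;\geq\;\frac{1}{\sqrt{N}}\,\min_{1\le k\le N}\operatorname{dist}(T_k,H_k).
\]
To prove this I would pick a unit vector $w\in\mathbb{C}^N$ with $\|M^*w\|=s_{min}(M)$, select $k^*$ satisfying $|w_{k^*}|\ge 1/\sqrt{N}$ (which exists by pigeonhole since $\|w\|_2=1$), and solve the identity $M^*w=\sum_j w_j T_j^*$ for $T_{k^*}^*$, giving
\[
\operatorname{dist}(T_{k^*},H_{k^*})\;\le\;\frac{\|M^*w\|}{|w_{k^*}|}\;\le\;\sqrt{N}\,s_{min}(M).
\]

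Applying this with $M=\mathcal{T}_{n+2}-zI_{mid}$ and $N=(n+2)\ell$ yields, for any $t>0$, the inclusion
\[
\bigl\{s_{min}(\mathcal{T}_{n+2}-zI_{mid})\le t((n+2)\ell)^{-1/2}\bigr\}\;\subseteq\;\bigcup_{k=1}^{(n+2)\ell}\bigl\{\operatorname{dist}(T_k,H_k)\le t\bigr\}.
\]
For $n\ge 2$ one has $2n\ge n+2$ and hence $t(2n\ell)^{-1/2}\le t((n+2)\ell)^{-1/2}$, so the event appearing under the probability in the lemma is contained in the event on the left above. Intersecting with $\mathcal{E}_K$ and taking the union bound gives the first displayed inequality. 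The second inequality is obtained in exactly the same way by applying the deterministic identity to $\mathcal{T}_z^{res,sq}$ with $N=n\ell$; there the threshold $t(n\ell)^{-1/2}$ matches $t/\sqrt{N}$ exactly.

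I do not expect any real obstacle in executing this plan: the deterministic identity is one of the oldest tools in quantitative invertibility of random matrices, and the lemma differs from it only by a harmless union bound over the $\mathcal{E}_K$-conditioned events. The only mild subtlety worth flagging is that some rows of $\mathcal{T}_{n+2}-zI_{mid}$ (namely the first and last block rows, which depend only on the deterministic boundary data $V,U,S_+^{bd},C_+$) and of $\mathcal{T}_z^{res,sq}$ (the first block row after the reduction in Fact~\ref{fact3.575}) do not carry fresh randomness from the i.i.d.\ entries. This causes no difficulty at the level of Lemma~\ref{lemma3.5} because the geometric identity above is purely algebraic; however, controlling the tail of $\mathbb{P}(\mathcal{E}_K\cap\{\operatorname{dist}(T_k,H_k)\le t\})$ uniformly over all row indices $k$, including these distinguished ones, is the substantive work to be done next and will rely on the normal-vector structure established in Propositions~\ref{proposition3.4} and~\ref{proposition3.6}.
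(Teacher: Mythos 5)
Your proposal is correct and follows essentially the same route as the paper: both reduce to the classical deterministic inequality $s_{min}(M)\ge N^{-1/2}\min_k\operatorname{dist}(T_k,H_k)$ (the paper states it for columns and passes to rows via $s_{min}(E)=s_{min}(E^*)$, while you apply it directly to $M^*$, a cosmetic difference), and then take a union bound intersected with $\mathcal{E}_K$. Your observation that the first displayed threshold $t(2n\ell)^{-1/2}$ is absorbed by the natural $t((n+2)\ell)^{-1/2}$ once $n\ge 2$ is the right way to reconcile the stated constant with the argument.
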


\begin{proof}Both estimates follow from the following general principle: let $E\in\mathbb{C}^{n\times n}$ and $v\in\mathbb{C}^{n}$ with $\|v\|=1$ such that $\|Ev\|=s_{min}(E)$. There exists a coordinate $i\in[n]$ such that $\|v_i\|\geq n^{-1/2}$, and that we use $$\|Ev\|\geq\operatorname{dist}(E_iv_i,\mathcal{H}_i)
\geq n^{-1/2}\operatorname{dist}(E_i,\mathcal{H}_i)$$
    where $E_i$ is the $i$-th column and $\mathcal{H}_i$ the span of all other columns except the $i$-th. Then we take a union bound for the small ball events of $\operatorname{dist}(E_i,\mathcal{H}_i)$ over all $i$, and we use $s_{min}(E)=s_{min}(E^*)$ to switch the role of rows and columns.
\end{proof}

Before proceeding to bounding from below $s_{min}(\mathcal{T}_z^{res,sq})$, we would like to insert here the proof of Lemma \ref{lemma3.2} because we will reuse a key computation in the proof.

\begin{proof}[\proofname\ of Lemma \ref{lemma3.2}]
    Let $F_i$ denote the $i$-th row of $F$ and $Z_i$ the $i$-th row of $Z$, then $F_iS-Z_i$ is the $i$-th row of $FS-Z$. Let $H_i$ be the subspace of $\mathbb{C}^\ell$ spanned by other rows of $FS-Z$ except the $i$-th row. Applying the negative second moment identity (see \cite{WOS:000281425000010},Lemma A.4) to $FS-Z$,
\begin{equation}\label{negativesecondmoments}
\|(FS-Z)^{-1}\|_{HS}^2= \sum_{i=1}^\ell \operatorname{dist}(F_iS-Z_i,H_i)^{-2}.\end{equation} 
     Let $n_i\in\mathbb{C}^\ell$ be a unit normal to $H_i$, which is independent of $F_i$. Then \begin{equation}\label{stars1}\operatorname{dist}(F_iS-Z_i,H_i)\geq|\langle F_iS-Z_i,n_i\rangle|=|\langle F_i,Sn_i\rangle-q|\end{equation} where $q=\langle Z_i,n_i\rangle$. We have
     $\|Sn_i\|_2=1$ since $S$ has unitary columns.
 When $\zeta$ is real-valued, we can choose $\theta\in[0,2\pi]$ such that $\|\Re(e^{i\theta}Sn_i)\|\geq\frac{1}{2}$.
Then
\begin{equation}\label{stars2}
|\langle F_i,Sn_i\rangle-q|=|e^{-i\theta}\langle F_i,Sn_i\rangle-e^{-i\theta}q|\geq |\langle F_i,\Re (e^{i\theta}Sn_i)\rangle-\Re(e^{-i\theta}q)|.
\end{equation} By the main result of  \cite{rudelson2015small}, since $F_i$ has i.i.d. coordinates of bounded density $L$, its projection to any 1-dimensional subspace of $\mathbb{R}^{2\ell}$ has density bounded by $C'L$ for some $C'>0$. Thus \begin{equation}\label{stars3}\sup_{s\in\mathbb{R}}\mathbb{P}(|\langle F_i,\Re (e^{i\theta}Sn_i)\rangle-s|\leq t)\leq C''t\end{equation} for a $C''$ depending only on $L$. We take $t=\epsilon\ell^{-1}$ and sum up the estimate over all $i=1,\cdots,\ell$ by a union bound. Then we plug the estimate into \eqref{negativesecondmoments} and finally complete the proof.

In the complex case, we condition on one component $\Re \zeta$ or $\Im\zeta$ and apply the argument to  the other component with a bounded density to complete the proof.
\end{proof}

Now we state and prove the least singular value lower bound for $\mathcal{T}_z^{res,sq}$:
\begin{Proposition}\label{proposition3.85}
With the assumptions above, we have the following estimate for some $C>0$ depending on $z$ and $\zeta$:
\begin{equation}
    \mathbb{P}(s_{min}(\mathcal{T}_z^{res,sq})\leq t\ell^{-10n}\ell^{-2.5}n^{-2})\leq C(t+\ell^{-5}),\quad t>0.
\end{equation}

\end{Proposition}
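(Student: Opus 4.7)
The plan is to combine the geometric information about normal vectors from Proposition~\ref{proposition3.6} with the invertibility-via-distance reduction of Lemma~\ref{lemma3.5}, followed by an anti-concentration argument in the spirit of Lemma~\ref{lemma3.2}. First I would restrict to the event $\mathcal{E}_K$, whose complement contributes at most $K\ell^{-5}$ and is absorbed directly into the target bound. On $\mathcal{E}_K$, Lemma~\ref{lemma3.5} yields
\[
\mathbb{P}\bigl(\mathcal{E}_K\cap\{s_{\min}(\mathcal{T}_z^{res,sq})\leq t(n\ell)^{-1/2}\}\bigr)\leq\sum_{k=1}^{n\ell}\mathbb{P}\bigl(\mathcal{E}_K\cap\{\operatorname{dist}(T_k^{res},H_k^{res})\leq t\}\bigr),
\]
so the task reduces to a uniform small-ball estimate for $\operatorname{dist}(T_k^{res},H_k^{res})=|\langle T_k^{res},v\rangle|$, where $v\in\mathbb{C}^{n\ell}$ is any unit normal to $H_k^{res}$.

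The key observation is that $v$ is measurable with respect to all rows except the $k$-th, hence is independent of the random entries of $T_k^{res}$. For an interior row ($2\leq N_k\leq n-1$), the inner product decomposes as
\[
\langle T_k^{res},v\rangle=\langle C_{N_k}^{(r)},v_{[N_k-1]}\rangle+\langle (A_{N_k}-zI_\ell)^{(r)},v_{[N_k]}\rangle+\langle B_{N_k}^{(r)},v_{[N_k+1]}\rangle,
\]
where $(r)$ denotes the appropriate row inside the block. Proposition~\ref{proposition3.6} guarantees that on $\mathcal{E}_K$ the concatenated vector $w:=(v_{[N_k-1]},v_{[N_k]},v_{[N_k+1]})\in\mathbb{C}^{3\ell}$ satisfies $\|w\|_2\geq\ell^{-10n}(\ell n)^{-1/2}$, so conditionally on $v$ the inner product is an affine function of a fresh i.i.d. row of entries distributed as $\zeta/\sqrt{3\ell}$, paired against a quantitatively non-degenerate coefficient vector $w$. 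Boundary rows are treated the same way: Proposition~\ref{proposition3.6} still produces a non-degenerate block of $v$ among the at most three surrounding blocks, and the modifications $A_1^*,C_2^*$ in the first block do not suppress the randomness of any full row of $\mathcal{T}_z^{res,sq}$.

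Applying the Rudelson--Vershynin one-dimensional projection bound used in the proof of Lemma~\ref{lemma3.2} (after conditioning, in the complex case, on one of $\Re\zeta$ or $\Im\zeta$), the conditional density of $\langle T_k^{res},v\rangle$ is bounded by $C_L\sqrt{\ell}/\|w\|_2$, where the $\sqrt{\ell}$ factor comes from the $1/\sqrt{3\ell}$ normalization. Combining with the lower bound on $\|w\|_2$ produces
\[
\mathbb{P}\bigl(\operatorname{dist}(T_k^{res},H_k^{res})\leq t\bigr)\leq C_L\sqrt{\ell}\cdot\ell^{10n}(\ell n)^{1/2}\cdot t.
\]
Summing over the $n\ell$ rows, substituting the threshold corresponding to $s_{\min}\leq s\,\ell^{-10n}(\ell^2 n)^{-1}$ through Lemma~\ref{lemma3.5}, and collecting the $\mathcal{E}_K^c$ contribution yields a bound of the claimed form, with the residual polynomial factors in $n,\ell$ absorbed by the slack built into $(\ell^2n)^{-1}$ and the constant $C$.

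The main technical obstacle is that the normal vector $v$ is \emph{global}: it depends on every row of $\mathcal{T}_z^{res,sq}$ except the $k$-th, so one cannot isolate its dependence on a small subset of the randomness. What rescues the argument is precisely the block-locality of $T_k^{res}$ combined with Proposition~\ref{proposition3.6}, which forces at least one of the three blocks of $v$ interacting with $T_k^{res}$ to be quantitatively non-degenerate; conditioning on all randomness outside that row then keeps $v$ fixed while leaving the anti-concentration input active. The iterative loss $\ell^{-10n}$ is the unavoidable cost of propagating non-degeneracy across all $n$ block rows in Proposition~\ref{proposition3.6}, and it is harmless in the regime $n\leq\ell^{1/12}$ assumed in Theorem~\ref{theorem1.115}.
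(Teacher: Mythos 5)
Your argument matches the paper's proof almost line for line: restrict to $\mathcal{E}_K$, invoke Lemma~\ref{lemma3.5} to reduce to a union of small-ball events $\{\operatorname{dist}(T_k^{res},H_k^{res})\leq t\}$, plug in Proposition~\ref{proposition3.6} to lower-bound the three-block restriction of the normal vector, and apply the Rudelson--Vershynin projection bound (as recycled from the proof of Lemma~\ref{lemma3.2}) to get the anti-concentration with the $\sqrt{\ell}$ factor from the $(3\ell)^{-1/2}$ normalization.

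The only place where your proposal is terser than the paper is the first block row $N_k=1$. Here Proposition~\ref{proposition3.6} leaves two subcases: if $\|{n_k}_{[2]}\|$ carries the mass you pair against the $k$-th row of $B_1$ exactly as in the interior case; but if instead $\|{n_k}_{[1]}\|$ carries the mass, you must pair against the $k$-th row of $A_1^*=[C_1,A_1-zI_\ell]\mid_{P_{V,U}^\perp}$, which is \emph{not} an i.i.d. row — it is $\hat{\zeta}\cdot S$, a $2\ell$-dimensional i.i.d. vector $\hat{\zeta}$ pushed through a $2\ell\times\ell$ matrix $S$ with unitary columns. The anti-concentration then follows by applying Rudelson--Vershynin to $\hat{\zeta}$ and the non-degenerate direction $S\,{n_k}_{[1]}$ (whose norm equals $\|{n_k}_{[1]}\|$ since $S$ has unitary columns), which is precisely the computation in equations \eqref{stars1}--\eqref{stars3}. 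Your remark that ``the modifications $A_1^*,C_2^*$ do not suppress the randomness'' is correct in spirit, but this subcase deserves to be made explicit since the row is not literally i.i.d. With that clarification the proposal is complete and follows the paper's route.
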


\begin{proof}
    We work on the event $\mathcal{E}_K$ since $\mathbb{P}(\mathcal{E}_K^c)\leq\ell^{-5}$, and let $n_k$ denote a unit normal vector to $H_k^{res}$ for each $k$. Then when $2\leq N_k\leq n-1$, by Proposition \ref{proposition3.6} we have $\operatorname{Bor}_k:=({n_k}_{[N_k-1]},{n_k}_{[N_k]},{n_k}_{[N_k+1]})\in\mathbb{C}^{3\ell}$ satisfies $\|\operatorname{Bor}_k\|_2\geq \ell^{-10n}(\ell n)^{-1/2}$, so that by independence, 
    \begin{equation}\label{thepreviouscase}
    \mathbb{P}(\operatorname{dist}(T_k^{res},H_k^{res})\leq t)\leq\sup_{s\in\mathbb{C}}\mathbb{P}(|\langle\operatorname{Bor}_k,{T_k^{res}}_{[N_k-1,N_k,N_k+1]}\rangle-s|\leq t)\leq Ct\ell^{10n}(\ell^2 n)^\frac{1}{2},
    \end{equation}where ${T_k^{res}}_{[N_k-1,N_k,N_k+1]}$ is the restriction of $T_k^{res}$ to columns in blocks $N_k-1,N_k,N_k+1$, and we use that this vector has i.i.d. entries of bounded density normalized by $\ell^{-1/2}$. The small ball probability $Ct$ follows exactly as in the proof of Lemma \ref{lemma3.2}. When $N_k=n$, we do the same but remove all the vectors with restriction $[N_k+1]=[n+1]$.

    Finally, when $N_k=1$, suppose that $\|{n_k}_{[2]}\|\geq \ell^{-10n}(\ell n)^{-1/2}$, we can condition on $A_1^*$ and use the $k$-th row of $B_1$ to bound the small ball probability just as in \eqref{thepreviouscase}. If this does not hold then we must have $\|{n_k}_{[1]}\|\geq \ell^{-10n}(\ell n)^{-1/2}$ by Proposition \ref{proposition3.6}. Let $A_1^*[k]$ denote the $k$-th row of $A_1^*$, then it suffices to bound $\sup_{s\in\mathbb{C}}\mathbb{P}(|\langle A_1^*[k],{n_k}_{[1]}\rangle-s|\leq t)$. By the definition of $A_1^*$, $A_1^*[k]=\hat{\zeta}\cdot S$ where $\hat{\zeta}$ is a $2\ell$ dimensional vector with i.i.d. entry of law $(3\ell)^{-1/2}\zeta$ and $S:\mathbb{C}^{2\ell}\to\mathbb{C}^\ell$ has unitary columns. Then we are precisely in the situation of \eqref{stars1}, \eqref{stars2}, \eqref{stars3} and thus the same small ball probability follows.

    Combining all these estimates with Lemma \ref{lemma3.5}, where we take $
r=t\ell^{-10n}\ell^{-2}n^{-3/2},
$
and then using the union bound over \(n\ell\) rows gives
\[
\mathbb P\left(s_{\min}(\mathcal{T}_z^{\rm res,sq})
\leq t\ell^{-10n}\ell^{-2.5}n^{-2}\right)
\leq Ct+C\ell^{-5}.
\]
\end{proof}

\subsection{Back to the original matrix}
Now we return to $\mathcal{T}_{n+2}$. First, we complete the study of the geometric structure of its normal vectors:

\begin{Proposition}\label{proposition3333} We take the specialization $[S_+^{bd},C_+]=[0,I_\ell]$.
On the event $\mathcal{E}_K\cap\{s_{min}(\mathcal{T}_z^{res,sq})\geq \ell^{-10n}(\ell^2n)^{-4}\}$,\begin{enumerate}
    \item 
 For any $1\leq k\leq \ell$, let $v$ be a unit vector orthogonal to $H_k$, then $|\langle v,[V,U][k]\rangle |\geq \ell^{-10n}(\ell^2 n)^{-5}$, where $[V,U][k]$ is the $k$-th row of the matrix $\mathcal{T}_{n+2}$.

\item Meanwhile, for any $(n+1)\ell+1\leq k\leq (n+2)\ell,$ let $v$ be a unit vector orthogonal to $H_k$, then the $k$-th entry of $v$ has absolute value at least $\ell^{-10n}(\ell^2n)^{-5}$.\end{enumerate}
\end{Proposition}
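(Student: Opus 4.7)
\textbf{Proof plan for Proposition~\ref{proposition3333}.} The strategy is to translate the orthogonality condition $v\perp H_k$ into an inhomogeneous linear equation of the form $\mathcal{T}_z^{res,sq}\tilde u=(\text{known vector})$, where the right-hand side is proportional to the scalar quantity to be bounded from below. Inverting against the hypothesis on $s_{min}(\mathcal{T}_z^{res,sq})$ and invoking an orthogonal decomposition of $v$ then yields the stated bound.

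For part (1), under the specialization $[S_+^{bd},C_+]=[0,I_\ell]$ the last $\ell$ rows of $\mathcal{T}_{n+2}-zI_{mid}$ are the standard basis vectors supported on the last block, forcing $v_{[n+2]}=0$. The orthonormality of the rows of $[V,U]$ together with $v\perp[V,U][j]$ for $j\in[\ell]\setminus\{k\}$ forces $(v_{[1]},v_{[2]})=c\,[V,U][k]^*+w^\perp$ for some $c\in\mathbb{C}$ and $w^\perp\in P_{V,U}^\perp$, and $|c|=|\langle v,[V,U][k]\rangle|$ is precisely the quantity to be bounded. Orthogonality to the middle block rows asserts $\mathcal{T}_z^{res}v|_{[1,n+1]}=0$. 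Decomposing $v|_{[1,n+1]}=c\,u_1+u_2$, with $u_1=([V,U][k]^*,0,\ldots,0)$ and $u_2=(w^\perp,v_{[3]},\ldots,v_{[n+1]})\in\operatorname{Vec}_{V,U}^\perp$, gives $\mathcal{T}_z^{res}u_2=-c\,\mathcal{T}_z^{res}u_1$, where $\mathcal{T}_z^{res}u_1=[C_1,A_1-zI_\ell]\,[V,U][k]^*$ is supported only in the first output block with norm at most $K+|z|$ on $\mathcal{E}_K$. Via Fact~\ref{fact3.575} this becomes $\mathcal{T}_z^{res,sq}\tilde u_2=-c\,\mathcal{T}_z^{res}u_1$, and so $\|u_2\|=\|\tilde u_2\|\leq(K+|z|)|c|/s_{min}(\mathcal{T}_z^{res,sq})$. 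Since $u_1\perp u_2$, the Pythagorean identity gives $1=\|v\|^2=|c|^2+\|u_2\|^2$, whence $|c|\geq s_{min}(\mathcal{T}_z^{res,sq})/(\sqrt{2}(K+|z|))$, which for $\ell$ sufficiently large exceeds $\ell^{-10n}(\ell^2n)^{-5}$.

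For part (2), with $k\in((n+1)\ell,(n+2)\ell]$ and $j:=k-(n+1)\ell$, orthogonality to the other last-block rows forces $v_{[n+2]}=\alpha\,e_j$, so $|\alpha|$ equals the $k$-th entry of $v$. Orthogonality to the first $\ell$ rows forces $(v_{[1]},v_{[2]})\in P_{V,U}^\perp$, so $v|_{[1,n+1]}\in\operatorname{Vec}_{V,U}^\perp$. The middle-row equations $C_mv_{[m]}+(A_m-zI_\ell)v_{[m+1]}+B_mv_{[m+2]}=0$ for $m\in[1,n]$ involve $v_{[n+2]}$ only in the last one ($m=n$), so $\mathcal{T}_z^{res}v|_{[1,n+1]}=\beta$, where $\beta$ is supported in its last output block with value $-\alpha\,B_ne_j$; thus $\|\beta\|\leq K|\alpha|$ on $\mathcal{E}_K$. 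Applying Fact~\ref{fact3.575} gives $\mathcal{T}_z^{res,sq}\tilde v=\beta$, hence $\|\tilde v\|\leq K|\alpha|/s_{min}(\mathcal{T}_z^{res,sq})$, and the identity $1=\|v\|^2=|\alpha|^2+\|\tilde v\|^2$ yields the same form of lower bound on $|\alpha|$. No serious obstacle is anticipated; the argument is essentially bookkeeping, the key observation being that $c$ (in part~(1)) and $\alpha$ (in part~(2)) each play the role of the single degree of freedom killed by the isometric reduction in Fact~\ref{fact3.575}, so that after the reduction the residual vector satisfies an inhomogeneous linear equation whose right-hand side is proportional to the quantity to be bounded.
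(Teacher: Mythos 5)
Your proposal is correct and takes essentially the same route as the paper's own proof: in both arguments one first deduces $v_{[n+2]}=0$ (resp.\ $v_{[n+2]}=\alpha e_j$) from the lower boundary $[0,I_\ell]$, decomposes the remaining vector into a part colinear with $[V,U][k]$ (resp.\ a part in $\operatorname{Vec}_{V,U}^\perp$) plus a component of $\operatorname{Vec}_{V,U}^\perp$, transfers the hypothesis on $s_{min}(\mathcal{T}_z^{res,sq})$ through Fact~\ref{fact3.575}, and closes with the operator-norm bound on $\mathcal{E}_K$ and the Pythagorean identity. The only differences are cosmetic — you phrase the step as an inhomogeneous equation $\mathcal{T}_z^{res,sq}\tilde u_2=-c\,\mathcal{T}_z^{res}u_1$ rather than via $\mathcal{T}_{n+2}^{\setminus\{k\}}$, you write out part (2) which the paper omits as analogous, and your constant $K+|z|$ should really be $2K+|z|$, a harmless slack absorbed by the drop from $(\ell^2n)^{-4}$ to $(\ell^2n)^{-5}$.
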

\begin{proof}The proofs of the two claims are analogous. For claim (1), we must have $v_{[n+2]}=0$ for $v$ to be the normal vector, using this lower boundary specialization $[0,I_\ell]$. We can take an orthogonal decomposition $v=v_1+v_2,$ with $v_1\in P_{V,U}$ and $v_2\in \operatorname{Vec}_{V,U}^\perp$ since the last block entry vanishes. Using again $v$ is the normal vector of $H_k$, we see that $v_1$ must be exactly colinear with the $k$-th row of $[V,U]$ for this relation to hold. By orthogonality, $\|v_1\|^2+\|v_2\|^2=1$ and $\mathcal{T}_{n+2}$ acts on $v_2$ just as via $\mathcal{T}_{z}^{res}$. By Fact \ref{fact3.575}, the latter matrix has the same least singular value as $\mathcal{T}_z^{res,sq}$, which is at least $\ell^{-10n}(\ell^2n)^{-4}$. Meanwhile, $v_1$ is supported only on the first two coordinate blocks, so by the operator norm bound we have $$\|\mathcal{T}_{n+2}^{\setminus \{k\}}\cdot v_1\|\leq 4K\|v_1\|,$$ where we denote by $\mathcal{T}_{n+2}^{\setminus \{k\}}$ as $\mathcal{T}_{n+2}$ with the $k$-th row removed. This combined with $$\mathcal{T}_{n+2}^{\setminus\{k\}}\cdot(v_1+v_2)=0,\quad \|\mathcal{T}_{n+2}^{\setminus\{k\}}\cdot v_2\|= \|\mathcal{T}_{n+2}\cdot v_2\|\geq \ell^{-10n}(\ell^2n)^{-4}\|v_2\|$$ yield the desired lower bound for $\|v_1\|$. Finally, recall that $v_1$ is colinear to $[V,U][k]$.

The proof for case (2) is exactly the same and omitted.
\end{proof}
Finally we complete the proof of the main result on lower bounding $\sigma_{min}(\mathcal{T}_{n+2}-zI_{mid})$:

\begin{proof}[\proofname\ of Theorem \ref{theorem1.112}]This is exactly the same as the proof of Proposition \ref{proposition3.85}. By Proposition \ref{proposition3.85}, we have that $\mathbb{P}(s_{min}(\mathcal{T}_z^{res,sq})\leq\ell^{-10n}(\ell^2n)^{-4})\leq\ell^{-5}.$ Then on the complement of this event and on $\mathcal{E}_K$, proceed as follows.
For interior rows with $\ell+1\leq k\leq (n+1)\ell$, Proposition \ref{proposition3.4} offers the geometric description for the normal vector. One then takes the inner product with the $k$-th row which has i.i.d. entries with bounded density on blocks $[N_k-1,N_k,N_{k}+1]$. This implies the same small ball probability estimate as in \eqref{thepreviouscase}. For boundaries where $k\leq \ell$ or $k\geq (n+1)\ell+1$, Proposition \ref{proposition3333} shows the normal vector $v$ already has an overlap with an inner product of absolute value at least $\ell^{-10n}(\ell^2n)^{-5}$ with the $k$-th row of $\mathcal{T}_{n+2}-zI_{mid}$. Combining all these with Lemma \ref{lemma3.5} completes the proof.
\end{proof}

\subsection{An upgraded tail estimate}
Currently, the least singular value bound in Theorem \ref{theorem1.112} has an additive error $\ell^{-5}$. When we later deduce uniform integrability of log determinant in Section \ref{uniformintegrability}, we will need a version of Theorem \ref{theorem1.112} with a much stronger tail estimate. We present here a strengthening of Theorem \ref{theorem1.112} with better tail control:

\begin{theorem}\label{updatedtails} In the same setting as Theorem \ref{theorem1.112}, the following estimate holds for all $i\in\mathbb{N}_+$: whenever $\ell$ is sufficiently large,
    \begin{equation}\label{momentseverals}
\mathbb{P}(s_{min}(\mathcal{T}_{n+2}-zI_{mid})\leq \ell^{-10ni}(\ell^2n)^{-10}t)\leq Ct+\ell^{-5i},\quad t>0.
\end{equation}
Consequently, for each $\alpha\geq1$ we can find $C_\alpha>0$ depending only on $\alpha,|z|,\zeta$ such that 
$$
\mathbb{E}[|\log|s_{min}(\mathcal{T}_{n+2}-zI_{mid})||^\alpha]\leq C_\alpha (n(\log\ell+\log n))^\alpha.
$$
\end{theorem}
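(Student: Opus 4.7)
The plan has two stages. First, I would upgrade the tail bound to \eqref{momentseverals} by revisiting the proof of Theorem \ref{theorem1.112} with a tighter definition of the good event, simply replacing the parameter $10$ in all invertibility thresholds by $10i$. Second, I would deduce the moment bound from \eqref{momentseverals} via a layer-cake representation with $i$ chosen as a function of the integration variable.

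For the tail, I would define for each $i\in\mathbb{N}_+$ the strengthened event
\[
\mathcal{E}_K^{(i)}:=\bigl\{\|A_k\|,\|B_k\|,\|C_k\|\leq K,\ s_{min}(B_k),s_{min}(C_k)\geq\ell^{-10i}\ \forall k\in[n],\ s_{min}([C_1,A_1-zI_\ell]\mid_{P_{V,U}^\perp})\geq\ell^{-10i}\bigr\}.
\]
A union bound using Lemma \ref{prop1.5} (or \ref{lemma2.6}) for the $2n$ blocks $B_k,C_k$, Lemma \ref{astronger} for operator norms, and Lemma \ref{lemma3.2} for the boundary matrix yields $\mathbb{P}((\mathcal{E}_K^{(i)})^c)\leq\ell^{-5i}$ for $\ell$ sufficiently large; the $n$-factor from the union bound is absorbed by the polynomial margin since Theorem \ref{theorem1.112} is only applied when $\ell\geq n^{12}$. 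Now I would rerun Propositions \ref{proposition3.4}, \ref{proposition3.6} and \ref{proposition3333} on $\mathcal{E}_K^{(i)}$, replacing every occurrence of $\ell^{-10}$ by $\ell^{-10i}$. Each inductive application of the transfer relation $C_kv_{[k-1]}+A_k^zv_{[k]}+B_kv_{[k+1]}=0$ now propagates mass with loss factor $\ell^{-10i}$ rather than $\ell^{-10}$, so after at most $n$ iterations the guaranteed three-block lower bound on the normal vector becomes $\ell^{-10ni}(\ell n)^{-1/2}$. Combining this with Lemma \ref{lemma3.5} and the (i-independent) anticoncentration bound of Lemma \ref{lemma3.2} for the row inner product directly yields \eqref{momentseverals}.

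For the moment bound, let $X:=s_{min}(\mathcal{T}_{n+2}-zI_{mid})$ and split $|\log X|^\alpha\leq 2^{\alpha-1}((\log^+X)^\alpha+(\log^-X)^\alpha)$. The positive part is easy: $X\leq\|\mathcal{T}_{n+2}-zI_{mid}\|$ and Lemma \ref{astronger} gives $\mathbb{E}[(\log^+X)^\alpha]=O_\alpha(1)$. For the negative part I would use the layer-cake identity
\[
\mathbb{E}[(\log^-X)^\alpha]=\int_0^\infty\alpha u^{\alpha-1}\mathbb{P}(X\leq e^{-u})\,du,
\]
and split the integration at $u_0:=C_0(n\log\ell+\log n)$ for a suitable $C_0\geq 10$. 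On $[0,u_0]$ the probability is bounded by $1$, contributing $u_0^\alpha=O((n(\log\ell+\log n))^\alpha)$. On $[u_0,\infty)$ I would apply \eqref{momentseverals} with the optimized choice $i=i(u):=\lfloor(u-c_1)/((10n+5)\log\ell)\rfloor$ where $c_1:=10\log(\ell^2n)+\log C$; this balances the two summands on the right of \eqref{momentseverals} and yields $\mathbb{P}(X\leq e^{-u})\lesssim\exp(-(u-c_1)/(2n+1))$. The resulting tail integral contributes $O(n^\alpha)$ (the prefactor $\exp(c_1/(2n+1))$ is a bounded universal constant once $n$ is large relative to $\log\ell$, which is the regime of interest). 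Summing the two ranges delivers the claimed bound.

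The main technical content is the bookkeeping in Propositions \ref{proposition3.4}--\ref{proposition3333}: every constant in the inductive normal-vector propagation has to be retraced with the stronger threshold $\ell^{-10i}$, and one must check that the small-ball anticoncentration used in invertibility-via-distance is genuinely independent of $i$, which is exactly what makes the additive $Ct$ on the right of \eqref{momentseverals} independent of $i$ as required. The subsequent moment calculation is a routine optimization of the tail bound in $i$, and the only delicate point there is verifying that the choice of $i(u)$ balances the two error terms in \eqref{momentseverals} at the rate $\exp(-u/(2n+1))$.
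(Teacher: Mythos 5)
Your proposal matches the paper's proof of the tail bound essentially step for step: both strengthen the good event by replacing the singular-value thresholds $\ell^{-10}$ with $\ell^{-10i}$, then rerun the normal-vector propagation in Propositions~\ref{proposition3.4}, \ref{proposition3.6} and \ref{proposition3333} with the degraded propagation factor, and conclude through Lemma~\ref{lemma3.5} with the $i$-independent small-ball estimate of Lemma~\ref{lemma3.2}. The paper then dismisses the moment bound as ``straightforward''; you fill this gap with a concrete layer-cake argument, optimizing $i$ as a function of the integration variable $u$, which is the natural and correct way to extract an $\alpha$-th moment from the two-parameter tail family~\eqref{momentseverals}.

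One small divergence worth noting: the paper's event $\mathcal{E}_K^i$ relaxes the operator-norm threshold to $K^i$ while you keep it at a fixed $K$. With a fixed threshold $K$ and only the ``all moments finite'' assumption, the union-bound claim $\mathbb{P}((\mathcal{E}_K^{(i)})^c)\leq\ell^{-5i}$ cannot hold uniformly in $i$ for a single choice of $\ell$: the operator-norm tail is only polynomial in $\ell$, and for $i$ large $\ell^{-5i}$ drops below that polynomial. The paper's $K^i$ relaxation is designed exactly to absorb this (the propagation constants are correspondingly loosened, which is harmless since $K^i\leq\ell^{c}$ for $\ell$ large enough given $i$). Both write-ups gloss over the precise bookkeeping of the $\ell_0(i)$ dependence, but your fixed-$K$ choice is the one that actually needs the $K^i$-style fix for the tail to be valid at all $i$. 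You should also slightly temper the claim that ``the prefactor $\exp(c_1/(2n+1))$ is a bounded universal constant''; this only holds in the regime $n\gtrsim\log\ell$ (which is where the theorem is used via $n\geq\ell^d$), and in general the dominant contribution comes from the $[0,u_0]$ range, so the final bound survives regardless, as your calculation shows. These are bookkeeping refinements; the structure of your argument is the paper's.
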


\begin{proof}
    We will prove this theorem by modifying the proof of Theorem \ref{theorem1.112}. For each $i\geq 1$ consider the following event $\mathcal{E}_K^i$:
    $$\begin{aligned}
\mathcal{E}_K^i:=&\{
\forall j\in[n]:\|B_j\|,\|C_j\|,\|A_j\|\leq K^i;s_{min}(B_j)\geq\ell^{-9i},s_{min}(C_j)\geq\ell^{-9i},\\&s_{min}([C_1,A_1-zI_\ell]\mid_{P_{V,U}^\perp})\geq\ell^{-9i}
\}.
   \end{aligned}$$
Then we have $\mathbb{P}(\mathcal{E}_K^i)\geq 1-K\ell^{-5i}$. We can do the proof of Proposition \ref{proposition3.4} and \ref{proposition3.6}
again on the event $\mathcal{E}_K^i$, 
with the net effect of replacing the $\ell^{-10n}(\ell n)^{-1/2}$ lower bound by $\ell^{-10ni}(\ell n)^{-1/2}$ in both results. 

Then going through the proof of Proposition \ref{proposition3.85} we can verify that for each $i\in\mathbb{N}_+$,
\begin{equation}
    \mathbb{P}(s_{min}(\mathcal{T}_z^{res,sq})\leq t\ell^{-10ni}\ell^{-2.5}n^{-2})\leq C(t+\ell^{-5i}),\quad t>0.
\end{equation} Then working through the proof of Proposition \ref{proposition3333} on $\mathcal{E}_K^i$ completes the proof of 
\eqref{momentseverals} for each $i\in\mathbb{N}_+$. The moment estimate is straightforward.
\end{proof}

\section{Convergence of the log determinant}\label{section444}
With the least singular value estimate in Theorem \ref{theorem1.112}, the proof of Theorem \ref{theorem1.115}, i.e. the convergence of log determinant, is reduced to the control of small-ish singular values. We will do this via a rigidity estimate as in \cite{jain2021circular}, \cite{han2025circular}. However, two new technical difficulties arise here: (1) the tridiagonal block matrix does not have a doubly stochastic variance profile, and (2) we need to treat the arbitrary upper boundary $[V,U]$.
\subsection{Convergence of Stieltjes transform} We shall derive a convergence rate for the Stieltjes transform to the circular law limit. Since $\mathcal{T}_{n+2}$ is not translationally invariant, we consider a translationally invariant substitute, the following periodic block matrix model :

\begin{equation}\label{eqperiodic}
\mathcal{T}_{n+2}^{per} =\begin{bmatrix}
A_0 &B_0&&&C_0\\C_1&A_1&B_1&&\\&\ddots&\ddots&\ddots&\\&&C_{n}&A_{n}&B_{n}\\B_{n+1}&&&C_{n+1}&A_{n+1}
\end{bmatrix}  \end{equation}
where $\mathcal{T}_{n+2}^{per}$ shares the same set of random matrices with those appearing in $\mathcal{T}_{n+2}$, but with two additional blocks $C_0,B_{n+1}$ being independent from $\mathcal{T}_{n+2}$ and having the same distribution as the individual blocks of $\mathcal{T}_{n+2}$. Then we clearly see that $\operatorname{rank}(\mathcal{T}_{n+2}^{per}-zI_{(n+2)\ell}-\mathcal{T}_{n+2}+zI_{mid})\leq 8\ell$, and for any $z\in\mathbb{C}$ we have
\begin{equation}\label{rankupdateformula}
\operatorname{rank}\left((\mathcal{T}_{n+2}^{per}-zI_{(n+2)\ell})(\mathcal{T}_{n+2}^{per}-zI_{(n+2)\ell})^*-(\mathcal{T}_{n+2}-zI_{mid})(\mathcal{T}_{n+2}-zI_{mid})^*\right)\leq 24\ell.
\end{equation}

The following convergence result is a variant of Proposition \ref{proposition4.1} and a similar result is also proven in \cite{jain2021circular}, Theorem 3.1:
\begin{Proposition}\label{stieltjesconvergence} Assume that $\ell\geq n$, and take any fixed $z\in\mathbb{C}$. Assume that the entry law $\zeta$ satisfies the moment assumptions in Theorem \ref{maincircularlaw1.1}. We denote by \begin{equation}\label{stieltjes}m_{n+2,z}^{per}(\xi):=\frac{1}{(n+2)\ell}\sum_{i=1}^{(n+2)\ell}[\lambda_i\left((\mathcal{T}_{n+2}^{per}-zI_{(n+2)\ell})(\mathcal{T}_{n+2}^{per}-zI_{(n+2)\ell})^*\right)-\xi]^{-1}\end{equation} the Stieltjes transform for the empirical measure of $(\mathcal{T}_{n+2}^{per}-zI_{(n+2)\ell})(\mathcal{T}_{n+2}^{per}-zI_{(n+2)\ell})^*$. Then we can find a deterministic probability measure $\nu_z$ on $[0,\infty)$ such that $m_z(\xi):=\int_\mathbb{R}\frac{d\nu_z(x)}{x-\xi}$ is the unique solution to the following equation 
$$
m_z(\xi)=\left[\frac{|z|^2}{1+m_z(\xi)}-(1+m_z(\xi))\xi\right]^{-1}
$$satisfying $\Im(m_z(\xi))>0$ whenever $\Im(\xi)>0$, and we have the following convergence for all $\{\xi\in\mathbb{C}:\Im\xi>0\}$: for any $c>0$, with probability at least $1-(\ell n)^{-100}$ the following estimate holds:
$$
|m_{n+2,z}^{per}(\xi)-m_z(\xi)|\leq \frac{C}{|\Im\xi|^8}\frac{(\ell n)^c}{\ell^{1/2}},$$ where $C$ depends only on $\zeta$ and $c$.
\end{Proposition}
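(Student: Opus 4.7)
The plan is to Hermitize the matrix $\mathcal{T}_{n+2}^{per} - zI$ and derive a self-consistent equation for the block resolvent, exploiting the periodic translation invariance of the variance profile. Write $H = \mathcal{T}_{n+2}^{per} - zI$, $M = HH^*$, and define the block resolvent $G(\xi) = (M - \xi I)^{-1}$, viewed as an $(n+2)\times(n+2)$ array of $\ell\times\ell$ blocks $G_{ij}(\xi)$. Applying the Schur complement to the $i$-th block row and column of $H$, the diagonal block $G_{ii}(\xi)$ is expressed as the inverse of a local quadratic form in $A_i, B_i, C_i, B_{i-1}, C_{i+1}$ (indices taken mod $n+2$) acting against the minor resolvent $G^{(i)}(\xi)$ obtained by removing the $i$-th block row and column.

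Next, I would exploit that each $A_i, B_i, C_i$ has i.i.d. entries of variance $1/(3\ell)$: Hanson--Wright type concentration gives $\frac{1}{\ell} X^* G^{(i)} X \approx \frac{1}{3\ell} \mathrm{tr}\, G^{(i)}$ with fluctuations of order $(\ell n)^{c}\ell^{-1/2}$ under the finite-moment hypothesis, and one step of resolvent interlacing (plus the rank bound \eqref{rankupdateformula} in disguise) shows $\mathrm{tr}\, G^{(i)} \approx \mathrm{tr}\, G$. Writing $m_i(\xi) := \frac{1}{\ell}\mathrm{tr}\, G_{ii}(\xi)$, the periodic boundary conditions force translation invariance, so all $m_i(\xi)$ are close to each other and to $m_{n+2,z}^{per}(\xi)$. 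Substituting into the Schur complement yields the approximate fixed-point relation
\[
m \approx \left[\frac{|z|^2}{1+m} - (1+m)\xi\right]^{-1},
\]
which, after clearing denominators, becomes a cubic in $m$. Uniqueness of the solution in the domain $\{\Im m > 0,\ \Im(\xi m(\xi^2)) > 0\}$ follows from standard monotonicity arguments as in \cite{jain2021circular}.

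The quantitative error bound follows by tracking error propagation in the Schur expansion: $\|G_{ii}\| \le |\Im\xi|^{-1}$, the quadratic-form concentration costs one factor of $|\Im\xi|^{-1}$, the trace-approximation after interlacing costs another, and the inversion of the linearized fixed-point equation near the true solution costs further factors, producing the stated $|\Im\xi|^{-8}$ dependence. The probability $1-(\ell n)^{-100}$ is achieved by invoking Markov's inequality on a sufficiently high moment of the quadratic-form fluctuation, which is permitted by the all-moment assumption on $\zeta$, together with a union bound over the $O(n)$ Schur complements and a deterministic net for $\xi$ if uniformity in $\xi$ is desired.

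The main obstacle is the stability analysis of the self-consistent equation: near the spectrum the linearization degenerates, and one must verify that the periodic block model genuinely behaves as a mean-field perturbation rather than as a localized one. This is where the assumption $\ell \ge n$ is essential, as the block size then dominates the ergodic direction and the resolvent methods developed in \cite{han2025circular,jain2021circular} adapt with only the routine modifications required to accommodate the tridiagonal variance profile in place of a doubly stochastic one.
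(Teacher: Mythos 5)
Your route is genuinely different from the paper's, and it is worth saying what the paper actually does: Proposition \ref{stieltjesconvergence} is proved there in two lines, by running the machinery of Proposition \ref{proposition4.1} (Hermitization plus the matrix concentration inequality of \cite{brailovskaya2024universality}, which compares the resolvent directly to the solution of the matrix Dyson equation \eqref{MHWMHW}) on the periodic matrix $\mathcal{T}_{n+2}^{per}$, and then observing that by exact translation invariance the MDE solution is constant on the diagonal, equal to the $m_c$ of \eqref{positiveimaginaryfollowing}, which is precisely the scalar equation in the statement; the projection \eqref{m1w2} finishes the argument. You instead re-derive a local-law-type statement by the classical Schur complement/self-consistent-equation route, which is essentially the strategy of \cite{jain2021circular}, Theorem 3.1 (cited in the paper as proving a similar result), so the overall plan is legitimate and, if carried out, more self-contained than the paper's reduction.

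As written, however, there is a concrete quantitative gap in the step ``Hanson--Wright \dots and one step of resolvent interlacing shows $\operatorname{tr} G^{(i)}\approx \operatorname{tr} G$''. First, because of the tridiagonal block structure the removed column only charges the three neighbouring blocks, so the quadratic form concentrates around the partial trace $\tfrac13(m_{i-1}+m_i+m_{i+1})$, not around the global trace; the system you obtain is the vector equation \eqref{miavg}, and collapsing it to a single scalar $m$ requires showing that the individual block traces $m_i$ concentrate around the common deterministic value at rate $\ell^{-1/2}$ --- that is the actual content of the proposition, and it does not follow from exchangeability plus one Hanson--Wright bound; it is exactly the stability/bootstrap analysis you defer as routine, and the place where the paper instead invokes the matrix concentration framework. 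Second, removing a whole block row/column is a rank-$\ell$ perturbation, so interlacing only gives $|\tfrac1N\operatorname{tr}G^{(i)}-\tfrac1N\operatorname{tr}G|=O(1/(n\Im\xi))$, and $O(1/\Im\xi)$ for a normalized block trace; at fixed $\Im\xi$ this already exceeds the target error $(\ell n)^c\ell^{-1/2}$ once $\ell\gg n^{2}$, and the regime the paper actually uses is $\ell\geq n^{12}$ --- indeed the paper highlights precisely this $1/(n\Im\xi)$ rank-perturbation obstruction as the reason it needs the separate rigidity estimate of Proposition \ref{propositionrigidity}. The fix is standard (remove one row/column of the Hermitization at a time, a rank-two perturbation costing only $O(1/(\ell\Im\xi))$ per block trace), but it must be built in explicitly. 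Finally, $\ell\geq n$ is a hypothesis of the statement, not what makes the argument work: the paper's Proposition \ref{proposition4.1} only needs the bandwidth to be polynomially large in $N$, so you should not lean on ``$\ell\geq n$ is essential'' as a substitute for the stability step.
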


The proof of Proposition \ref{stieltjesconvergence} is deferred to the next section.
Let $m_{n+2,z}(\xi)$ be the Stieltjes transform of empirical measure of $(\mathcal{T}_{n+2}-zI_{mid})(\mathcal{T}_{n+2}-zI_{mid})^*$, defined similarly as in \eqref{stieltjes} (but subtract $I_{mid}$ instead of the identity matrix). We can derive the convergence of $m_{n+2,z}(\xi)$ via the following low rank update formula:
\begin{lemma}\label{lemma3.1111}
With the assumptions above, we have for any $\xi\in\mathbb{C}:\Im\xi>0$ and $z\in\mathbb{C}$,
$$
|m_{n+2,z}^{per}(\xi)-m_{n+2,z}(\xi)|\leq \frac{24\pi}{(n+2)|\Im\xi|}.
$$
\end{lemma}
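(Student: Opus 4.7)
The plan is to reduce everything to a low-rank perturbation argument for the Hermitization, which is the standard route for comparing Stieltjes transforms of two Hermitian ensembles differing by a matrix of small rank. Throughout, write $N=(n+2)\ell$, and set
$$H:=(\mathcal{T}_{n+2}-zI_{mid})(\mathcal{T}_{n+2}-zI_{mid})^*,\qquad H^{per}:=(\mathcal{T}_{n+2}^{per}-zI_{N})(\mathcal{T}_{n+2}^{per}-zI_{N})^*,$$
so that $m_{n+2,z}(\xi)$ and $m_{n+2,z}^{per}(\xi)$ are the Stieltjes transforms of the empirical spectral measures of $H$ and $H^{per}$ respectively.

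The first step is to invoke the rank bound \eqref{rankupdateformula}, which gives $\operatorname{rank}(H^{per}-H)\leq 24\ell$. Then Weyl/Cauchy interlacing for Hermitian matrices implies that if $\lambda_1\leq\cdots\leq\lambda_N$ and $\lambda_1^{per}\leq\cdots\leq\lambda_N^{per}$ denote the ordered eigenvalues of $H$ and $H^{per}$, then $\lambda_{k-24\ell}^{per}\leq \lambda_k\leq \lambda_{k+24\ell}^{per}$ for every admissible $k$. Consequently, denoting the empirical cumulative distribution functions by $F_{n+2,z}$ and $F_{n+2,z}^{per}$, we obtain
$$\sup_{x\in\mathbb{R}}\left|F_{n+2,z}^{per}(x)-F_{n+2,z}(x)\right|\leq \frac{24\ell}{N}=\frac{24}{n+2}.$$

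The second step is to pass from this sup-norm bound on CDFs to a bound on Stieltjes transforms. Writing
$$m_{n+2,z}^{per}(\xi)-m_{n+2,z}(\xi)=\int_{\mathbb{R}}\frac{d(F_{n+2,z}^{per}-F_{n+2,z})(x)}{x-\xi}$$
and integrating by parts (the boundary terms vanish because both CDFs equal $0$ for $x$ very negative and $1$ for $x$ very large), we get
$$m_{n+2,z}^{per}(\xi)-m_{n+2,z}(\xi)=\int_{\mathbb{R}}\frac{F_{n+2,z}^{per}(x)-F_{n+2,z}(x)}{(x-\xi)^2}\,dx.$$
Taking absolute values, using the sup-norm estimate from the first step, and applying the elementary identity $\int_{\mathbb{R}}|x-\xi|^{-2}dx=\pi/|\Im\xi|$ (valid for any $\xi$ with $\Im\xi\neq 0$), we obtain
$$\left|m_{n+2,z}^{per}(\xi)-m_{n+2,z}(\xi)\right|\leq \frac{24}{n+2}\cdot\frac{\pi}{|\Im\xi|}=\frac{24\pi}{(n+2)|\Im\xi|},$$
which is the claimed bound.

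There is no real obstacle here; the argument is a textbook application of the interlacing theorem combined with the integration-by-parts representation of the Stieltjes transform. The only ingredient specific to our setting is the rank bound \eqref{rankupdateformula}, which was already established by direct inspection of the block structure of $\mathcal{T}_{n+2}$ versus $\mathcal{T}_{n+2}^{per}$.
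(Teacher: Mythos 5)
Your proof is correct and takes essentially the same route as the paper: both start from the rank bound \eqref{rankupdateformula} and conclude via a low-rank perturbation estimate for Stieltjes transforms. The only difference is that the paper cites the perturbation inequality directly from Bai–Silverstein, whereas you re-derive it from scratch via Cauchy interlacing plus integration by parts — a self-contained and entirely standard derivation of that same cited lemma.
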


\begin{proof}
    This follows from the following perturbation formula (see \cite{bai2010spectral}, Appendix A and B):
    $$
|\frac{1}{N}\operatorname{tr}(A-\xi)^{-1}-\frac{1}{N}\operatorname{tr}(B-\xi)^{-1}|\leq\frac{\pi}{|\Im\xi|}\frac{\operatorname{rank}(A-B)}{N}
    $$ for two square matrices $A,B$ of size $N$, applied to the rank difference in \eqref{rankupdateformula}.
\end{proof}

\subsection{Singular value rigidity estimate} When we combine the convergence in Proposition \ref{stieltjesconvergence} with the rank update in Lemma \ref{lemma3.1111}, a problem arises that the error caused by the rank perturbation, which has the order $1/n\Im\xi$, dominates the error in the convergence in Proposition \ref{stieltjesconvergence} when $\ell$ is much larger than $n$. This increased error is not an artifact: since the matrix $\mathcal{T}_{n+2}$ does not have a doubly stochastic variance profile, its Stieltjes transform differs from the circular law density by a correction having the scale $\ell/n\ell=1/n$. In the polynomial regime $\ell=\operatorname{Poly}(n)$, we still get a polynomial convergence rate of $m_{n+2,z}(\xi)$ to $m_z(\xi)$. However, since the least singular value estimate we get in Theorem \ref{theorem1.112} is exponentially small, a significantly weakened polynomial convergence rate will not permit us to conclude with the convergence of the log determinant, see the details in Section \ref{logdeterminant}.

We will eliminate this problem by proving a rigidity estimate for the small singular values of $\mathcal{T}_{n+2}-zI_{mid}$, which states that with high probability most of its singular values are at least $n^{-10}$. Similar estimates are proven in \cite{han2025circular} when the matrix has a doubly stochastic variance profile and without boundary.
The reason why this rigidity holds for $\mathcal{T}_{n+2}$ is that the added upper and lower boundaries actually do not destroy the invertibility of the matrix near 0, and hence do not destroy singular value rigidity near 0. Also, the fact that we change from periodic to Dirichlet boundary condition (we do not have the upper right and lower left blocks) should also have negligible effect in small singular values. The precise rigidity result is the following:
\begin{Proposition}\label{propositionrigidity}
In the setting of Theorem \ref{theorem1.115}, for any $c>0$, with probability at least $1-C\ell^{-7}$, in the interval $[0,(n\ell)^{-120}]$, the random matrix $\mathcal{T}_{n+2}-zI_{mid}$ has at most $\frac{(n\ell)^{1+c}}{\ell^{0.1}}$ number of singular values. Here $C>0$ depends on $c>0$, the law of $\zeta$ and on $|z|$.
\end{Proposition}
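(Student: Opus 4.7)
The plan is to deduce the rigidity from a quantitative Stieltjes transform convergence at an appropriately chosen scale, via the standard counting inequality
\[
\#\{\lambda_i \leq \eta\} \leq 2\eta N \,\Im m_{n+2,z}(i\eta),
\]
where $\lambda_i$ denote the eigenvalues of $(\mathcal{T}_{n+2}-zI_{mid})(\mathcal{T}_{n+2}-zI_{mid})^*$, $N=(n+2)\ell$, and the inequality follows from each eigenvalue in $[0,\eta]$ contributing at least $(2\eta)^{-1}$ to the sum $\Im m_{n+2,z}(i\eta) = N^{-1}\sum_i \eta/(\lambda_i^2+\eta^2)$. Since $\lambda_i = s_i^2$ and $\#\{\lambda_i \leq \eta\}$ dominates $\#\{s_i \leq (n\ell)^{-50}\}$ whenever $\eta \geq (n\ell)^{-100}$, the task reduces to controlling $\Im m_{n+2,z}(i\eta)$ at some scale $\eta \ll 1$.

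To this end, I would combine Proposition \ref{stieltjesconvergence} (applied to $\mathcal{T}_{n+2}^{per}$) with Lemma \ref{lemma3.1111} by the triangle inequality to obtain, on a set of probability at least $1-(n\ell)^{-100}$,
\[
|m_{n+2,z}(i\eta) - m_z(i\eta)| \leq \frac{C(n\ell)^c}{\eta^8\ell^{1/2}} + \frac{C}{n\eta}.
\]
The limiting measure $\nu_z$ satisfies the self-consistent equation in Proposition \ref{stieltjesconvergence}, from which one reads off that $\nu_z$ has a bounded density on $[0,\infty)$ (the deterministic singular value density of a Ginibre-type matrix shifted by $zI$), and hence $\Im m_z(i\eta)$ is uniformly bounded in $\eta>0$ by a constant $C_z$ depending only on $|z|$. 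Substituting this and choosing $\eta$ at the smallest scale where both error terms remain $O(1)$ then yields
\[
\#\{s_i \leq (n\ell)^{-50}\} \leq 2N\eta(C_z + o(1)) + O(\ell),
\]
with the additive $O(\ell)$ reflecting the rank perturbation between $\mathcal{T}_{n+2}^{per}$ and $\mathcal{T}_{n+2}$ in Lemma \ref{lemma3.1111}.

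The main obstacle is extracting the precise exponent $\ell^{-0.1}$ for arbitrarily small $c>0$. A naive single-scale choice $\eta \sim \ell^{-1/16+O(c)}$ (where the Stieltjes error is tame) yields a count bound of order $n\ell^{15/16+O(c)}$, which matches the claimed target $(n\ell)^{1+c}/\ell^{0.1}$ only after absorbing a small residual gap into the $(n\ell)^c$ tolerance and leveraging the ceiling $n \leq \ell^{1/12}$. Concretely, I would replace the $c$ in Proposition \ref{stieltjesconvergence} with some $c' < c/8$, and balance the rank perturbation error $1/(n\eta)$ against the Stieltjes error $(n\ell)^{c'}/(\eta^8 \ell^{1/2})$ using the assumption $n\geq \ell^d$; if needed, a multi-scale bootstrap starting from a crude rigidity at $\eta\sim\ell^{-1/16}$ and refining down sharpens the final exponent. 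Uniformity over the arbitrary boundary block $[V,U]$ is automatic: it modifies only $O(\ell)$ rows, and through Lemma \ref{lemma3.1111} contributes at most $O(\ell)$ to any count, which is safely absorbed into the target threshold $(n\ell)^{1+c}/\ell^{0.1}$.
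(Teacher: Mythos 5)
Your proposal diverges fundamentally from the paper's proof, and there is a genuine gap that the paper explicitly flags and is structured to avoid.

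The core problem is your treatment of the rank perturbation. You claim that the additive $O(\ell)$ contribution from Lemma \ref{lemma3.1111} is ``safely absorbed into the target threshold $(n\ell)^{1+c}/\ell^{0.1}$.'' But the target equals $n^{1+c}\ell^{0.9+c}$, and $\ell \leq n^{1+c}\ell^{0.9+c}$ requires $n \geq \ell^{(0.1-c)/(1+c)}$. In the setting of Theorem \ref{theorem1.115} the lower exponent $d$ in $n\geq\ell^d$ can be \emph{any} fixed number in $(0,1/12)$, and the claim must hold for arbitrarily small $c>0$. For $d$ and $c$ both small (say $10^{-6}$) the inequality fails badly: $\ell$ dwarfs the target. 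This is not an artifact of a suboptimal estimate; the paper states outright in its preamble to the rigidity section that ``the error caused by the rank perturbation, which has the order $1/n\Im\xi$, dominates the error in the convergence in Proposition \ref{stieltjesconvergence} when $\ell$ is much larger than $n$'' and that ``this increased error is not an artifact.'' The Stieltjes transform of $\mathcal{T}_{n+2}$ genuinely differs from the periodic/circular-law limit at scale $1/n$, because the variance profile is not doubly stochastic; comparing against $m_z$ via low-rank perturbation is structurally the wrong route once $\ell$ is polynomially large in $n$.

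The paper instead sidesteps the rank-perturbation argument entirely. Lemma \ref{lemma5.1} exploits the unitarity of the boundary $[V,U]$: after a unitary change of basis reducing it to $[I_\ell,0]$, any cluster of $k$ near-kernel vectors of $\mathcal{T}_{n+2}-zI_{mid}$ must have tiny mass on the first block, so projecting out that block yields $k$ linearly independent near-kernel vectors of the boundary-stripped matrix $Y_z^R$ at a mildly inflated scale, with \emph{no additive loss} in $k$. Lemma \ref{lemma5.2} then decouples the remaining two blocks. The canonical block tridiagonal minor is analyzed via a local law (Proposition \ref{proposition4.1}) derived \emph{directly} for the non-periodic model, whose MDE solution $\{m_i\}$ satisfies a Dirichlet-type boundary system \eqref{miavg}; Lemma \ref{lemma5.6boundedness} then gives the correct hard-edge bound $\Im m_i \lesssim \eta^{-1/2}$, and Corollary \ref{rigiditylargeboundcor} picks the scale $\eta = W^{-1/10}N^{c/5}$. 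Note this also corrects a second error in your sketch: you assert $\Im m_z(i\eta)$ is uniformly bounded, but the squared-singular-value density of a shifted Ginibre has a $x^{-1/2}$ singularity at the hard edge for $|z|<1$, so $\Im m_z(i\eta)\sim\eta^{-1/2}$ as $\eta\to 0$. This alone would not sink your argument (the $\eta^{-1/2}$ bound suffices), but the $O(\ell)$ additive error from rank perturbation does, and no choice of $\eta$ or multi-scale bootstrap repairs it.
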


The proof of Proposition \ref{propositionrigidity} is rather technical and deferred to Section \ref{959rigidity}. The value of 120 in the exponent is inessential and can be tuned so long as it is polynomially small. This is the only place throughout the paper where we use the fact that $\zeta$ has finite $p$-th moment for all $p$: we choose this strong moment condition to considerably simplify the proof.

Proposition \ref{propositionrigidity} is mainly used for very large $\ell$, say when $\ell=(n\ell)^{0.95}$. Although this estimate does not appear to be strong enough, as it excludes a very large number, say $\frac{(n\ell)^{1+c}}{\ell^{0.1}}$ of singular values, it guarantees that the other singular values are at least polynomially small in $\ell$, rather than exponentially small in $\ell$ as in Theorem \ref{theorem1.112}.

\subsection{Passing to the log determinant}\label{logdeterminant}
We note here an upper bound for $\|\mathcal{T}_{n+2}\|$:

\begin{lemma}\label{operatornormbound}We have the following deterministic operator norm upper bound of $\mathcal{T}_{n+2}$:
    $$
\|\mathcal{T}_{n+2}\|\leq 2+10(\max_{1\leq i\leq n}\|A_i\|+\max_{1\leq i\leq n}\|B_i\|+\max_{1\leq i\leq n}\|C_i\|).
    $$
\end{lemma}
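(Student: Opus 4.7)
The plan is to decompose $\mathcal{T}_{n+2}$ as a sum of five matrices of the same size, bound each one's operator norm, and invoke the triangle inequality. Specifically, write $\mathcal{T}_{n+2}=M_{top}+M_A+M_B+M_C+M_{bot}$, where $M_{top}$ contains only the first block row $[V,U,0,\dots,0]$, $M_{bot}$ only the last block row $[0,\dots,0,S_+^{bd},C_+]$, and $M_A,M_B,M_C$ are the block matrices containing only the diagonal $A_i$-blocks, only the super-diagonal $B_i$-blocks, and only the sub-diagonal $C_i$-blocks, respectively.

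First I would bound the boundary contributions. The matrix $M_{top}$ acts as $[V,U]$ on the first two block-coordinates of the input and sends everything into the first block-coordinate of the output (with zeros elsewhere). Hence $\|M_{top}\|=\|[V,U]\|$, and the hypothesis $[V,U][V,U]^*=I_\ell$ forces all singular values of $[V,U]$ to equal $1$, so $\|M_{top}\|=1$. The same argument, using $[S_+^{bd},C_+][S_+^{bd},C_+]^*=I_\ell$, yields $\|M_{bot}\|=1$.

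Next, I would bound each of the three interior pieces. For $M_A$, note that for any vector $v=(v_0,\dots,v_{n+1})$ partitioned into $\ell$-blocks, $\|M_A v\|^2=\sum_{i=1}^n\|A_i v_i\|^2\le(\max_i\|A_i\|)^2\sum_{i=1}^n\|v_i\|^2\le(\max_i\|A_i\|)^2\|v\|^2$, so $\|M_A\|\le\max_i\|A_i\|$. Exactly the same block-wise computation, with the index of $v$ shifted by one (to $v_{i+1}$ for $M_B$ and to $v_{i-1}$ for $M_C$), gives $\|M_B\|\le\max_i\|B_i\|$ and $\|M_C\|\le\max_i\|C_i\|$. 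Putting the five bounds together through the triangle inequality yields
\[
\|\mathcal{T}_{n+2}\|\le 2+\max_{1\le i\le n}\|A_i\|+\max_{1\le i\le n}\|B_i\|+\max_{1\le i\le n}\|C_i\|,
\]
which is stronger than the stated bound (the factor $10$ in the statement is harmless slack). There is no real obstacle here: everything reduces to the fact that block-diagonal or block-shift matrices have operator norm equal to the largest block norm, and that the two boundary frames are chosen to be isometric.
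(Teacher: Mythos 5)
Your proof is correct and matches the paper's (sketched) approach: the paper merely remarks that the bound ``can be verified by using the tridiagonal block structure of inner rows of $\mathcal{T}_{n+2}$, and that the top and bottom rows have operator norm bounded by 1,'' which is exactly the decomposition into two boundary rows plus three block-shift matrices that you spell out. Your version is in fact slightly sharper (constant $1$ in place of $10$), which is harmless since the lemma is only used as a crude $O(1)$ operator norm bound.
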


This estimate can be verified by using the tridiagonal block structure of inner rows of $\mathcal{T}_{n+2}$, and that the top and bottom rows have operator norm bounded by 1.

Now we proceed to prove convergence of the log determinant to the Gaussian model.
For $z\in\mathbb{C}$, let $G_z:=G-zI_{(n+2)\ell}$ where $G$ is a complex Ginibre matrix of size $(n+2)\ell$ and having normalized variance. Denote by $X_z:=\mathcal{T}_{n+2}-zI_{mid}$ (we are not subtracting by identity matrix) and denote by $s_1(X_z)\geq s_2(X_z)\geq\cdots$ the singular values of $X_z$ in decreasing order. Define $s_i(G_z)$ analogously. We denote by 
$$
\nu_{X_z}(\cdot):=\frac{1}{(n+2)\ell}\sum_{i=1}^{(n+2)\ell}\delta_{s_i^2(X_z)}(\cdot),\quad \nu_{G_z}(\cdot):=\frac{1}{(n+2)\ell}\sum_{i=1}^{(n+2)\ell}\delta_{s_i^2(G_z)}(\cdot) 
$$ the empirical measure of squared singular values of $X_z,G_z$.  For two probability measures $\mu,\nu$ supported on $[0,\infty)$,
we define their Kolmogorov distance via 
$$
\|\mu-\nu\|_{[0,\infty)}:=\sup_{x\geq 0}|\mu([0,x])-\nu([0,x])|.
$$
We can obtain an upper bound for the Kolmogorov distance $\|\nu_{X_z}(\cdot)-\nu_{G_z}(\cdot)\|$ as follows:

\begin{corollary}\label{proofcorollary4.5}In the setting of Theorem \ref{theorem1.115}, where we assume that for some $d\in(0,\frac{1}{2})$ we have $ \ell^{1/2}\geq n\geq\ell^d$, we can find a constant $x_d\in(0,1)$ depending only on $d$ such that, with probability at least $1-(n\ell)^{-90}$, the following estimate holds for some constant $C$ depending only on $\zeta$:
$$
\|\nu_{X_z}(\cdot)-\nu_z(\cdot)\|_{[0,\infty)}\leq C\ell^{-x_d}.
$$
    Exactly the same estimate also holds for $\|\nu_{G_z}(\cdot)-\nu_z(\cdot)\|_{[0,\infty)}$.
\end{corollary}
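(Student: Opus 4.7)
The plan is to convert the Stieltjes transform convergence of Proposition~\ref{stieltjesconvergence} into a Kolmogorov distance bound by means of a standard smoothing inequality, using the rigidity estimate of Proposition~\ref{propositionrigidity} to control the empirical mass at scales below the regularization parameter. First, combining Proposition~\ref{stieltjesconvergence} with the rank perturbation bound Lemma~\ref{lemma3.1111} yields, for each fixed $\xi$ with $\Im\xi>0$ and with probability at least $1-(n\ell)^{-100}$,
\[
|m_{n+2,z}(\xi)-m_z(\xi)| \le \frac{C(\ell n)^c}{|\Im \xi|^{8}\,\ell^{1/2}} + \frac{C}{n\,|\Im\xi|}.
\]
A union bound over an $\ell^{-100}$-net of $\{\Im\xi\ge \eta,\ |\xi|\le K^2+1\}$ combined with the deterministic Lipschitz estimate $|m'_{n+2,z}(\xi)|\le (\Im\xi)^{-2}$ upgrades this to a uniform bound on the whole compact strip.

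Next, Lemma~\ref{operatornormbound} together with Lemma~\ref{astronger} gives $\|X_z\|\le K$ with probability $\ge 1-(n\ell)^{-100}$, so both $\nu_{X_z}$ and $\nu_z$ are supported in $[0,K^2]$. I would then apply a Bai--Silverstein-type smoothing inequality (\cite{bai2010spectral}, Theorem B.12) to derive
\[
\|\nu_{X_z}-\nu_z\|_{[0,\infty)} \lesssim \int_0^{K^2+1} |(m_{n+2,z}-m_z)(E+i\eta)|\,dE + \omega_{\nu_z}(\eta),
\]
where $\omega_{\nu_z}(\eta)$ denotes the modulus of continuity of the cumulative distribution function of $\nu_z$ at scale $\eta$. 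Since the limiting density is smooth in the bulk and has at worst H\"older-$1/2$ behaviour at its support edges, one obtains $\omega_{\nu_z}(\eta)\lesssim \eta^{1/2}$. Inserting the Stieltjes bound leaves
\[
\|\nu_{X_z}-\nu_z\|_{[0,\infty)} \lesssim \frac{(\ell n)^c}{\eta^{8}\,\ell^{1/2}} + \frac{1}{n\,\eta} + \eta^{1/2}.
\]

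Choosing $\eta=\ell^{-y}$ with $y=y(d)\in(0,\min(d,1/20))$ and $c=c(d)$ sufficiently small, every term decays as $\ell^{-x_d}$ for some $x_d=x_d(d)>0$. The rigidity estimate Proposition~\ref{propositionrigidity} is needed to rule out concentration of singular values of $X_z$ in $[0,\eta]$, which the Stieltjes transform at height $\eta$ cannot distinguish from smooth behaviour; it bounds the empirical mass in $[0,(n\ell)^{-50}]$ by $(n\ell)^{c}/\ell^{0.1}$, which fits comfortably inside the target rate.

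For the Ginibre matrix $G_z$, the identical scheme applies with strictly stronger inputs: the local Ginibre law yields Stieltjes convergence down to scale $\eta\gtrsim (n\ell)^{-1+c}$ and rigidity is classical (see e.g.\ \cite{WOS:000281425000010}), so the same Kolmogorov distance bound holds for $\nu_{G_z}$ without ever invoking the block structure. The principal obstacle in the block model is the competition between the $|\Im\xi|^{-8}$ factor in Proposition~\ref{stieltjesconvergence} and the $(n|\Im\xi|)^{-1}$ rank-perturbation error from Lemma~\ref{lemma3.1111}: balancing these constrains $\eta$ to a narrow polynomial window, which is precisely why the assumption $n\ge \ell^{d}$ is what allows $x_d$ to be positive.
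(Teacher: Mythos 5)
Your proposal follows the same route as the paper: pointwise Stieltjes transform comparison from Proposition~\ref{stieltjesconvergence}, the rank perturbation correction of Lemma~\ref{lemma3.1111}, a Lipschitz/net argument to upgrade to uniform control on a horizontal strip, a Bai--Silverstein-type smoothing inequality to pass to Kolmogorov distance, and then balancing the $\eta^{-8}\ell^{-1/2}$, $(n\eta)^{-1}$, and $\sqrt{\eta}$ terms using $\ell^d\leq n\leq\ell^{1/2}$. The exponent tuning is essentially the one the paper uses (there with $\eta=(n/\ell)^{x_d}$ and $x_d<(1-d)/16$; your $\eta=\ell^{-y}$ with $y<\min(d,1/20)$ achieves the same thing), and the constraint $8y<1/2$ that your choice imposes is correct.

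However, your invocation of Proposition~\ref{propositionrigidity} is both unnecessary and conceptually misplaced, and the paper does not use it in the proof of this corollary. You claim rigidity is needed because ``the Stieltjes transform at height $\eta$ cannot distinguish [concentration of singular values in $[0,\eta]$] from smooth behaviour.'' That is not true: a cluster of $k$ singular values at scale $\eta$ forces $\Im m(E+i\eta)\gtrsim k/(N\eta)$, so the pointwise Stieltjes bound already controls such concentration, and the Bai--Silverstein inequality (Corollary~B.15 together with Lemma~B.18 in \cite{bai2010spectral}) translates the integrated Stieltjes error plus the $\sqrt{\eta}$ modulus-of-continuity term of the deterministic $\nu_z$ into the Kolmogorov bound with no side input. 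In the paper, Proposition~\ref{propositionrigidity} enters only later, in the proof of Theorem~\ref{theorem1.115}, where the integral $\int\log x\,(\nu_{X_z}-\nu_{G_z})(dx)$ is split at $(n\ell)^{-50}$: the Kolmogorov estimate you are proving handles $I_2=[(n\ell)^{-50},K]$ via Fact~\ref{fact4,690}, while rigidity is what controls the number of singular values in $I_1=[\ell^{-11n},(n\ell)^{-50}]$, a region to which a Kolmogorov bound of size $\ell^{-x_d}$ is completely blind because the limiting law $\nu_z$ places essentially zero mass there. So your proof would in fact go through if you simply delete the rigidity paragraph; keeping it as stated conflates two distinct steps of the overall argument.
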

The proof of Corollary \ref{proofcorollary4.5} is deferred to the end of this section. We also use a convenient formula for the Kolmogorov distance:
\begin{fact}\label{fact4,690}
    For two probability measures $\mu,\nu$ on $[0,\infty)$, and $0<a<b$,
    $$
\left|\int_a^b\log x\,d\mu(x)-\int_a^b\log x\,d\nu(x)\right|\leq 2[|\log b|+|\log a|]\|\mu-\nu\|_{[0,\infty)}.
    $$
Moreover, for any $\beta>1$ we have
$$
\left|\int_a^b|\log x|^\beta\, d\mu(x)-\int_a^b|\log x|^\beta  \,d\nu(x)\right|\leq 2[|\log b|^\beta+|\log a|^\beta ]\|\mu-\nu\|_{[0,\infty)}.
    $$
    
\end{fact}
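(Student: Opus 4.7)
The plan is to reduce both inequalities to a single integration-by-parts estimate against the signed difference of cumulative distribution functions. Set $F(x) := \mu([0,x]) - \nu([0,x])$; by definition of the Kolmogorov distance, $\sup_{x \geq 0}|F(x)| \leq \|\mu - \nu\|_{[0,\infty)}$. For any $C^1$ function $g$ on $[a,b]$ with $0 < a < b$, Riemann--Stieltjes integration by parts yields
\begin{equation*}
\int_a^b g(x)\, d(\mu - \nu)(x) = g(b) F(b) - g(a) F(a) - \int_a^b g'(x) F(x)\, dx,
\end{equation*}
so that
\begin{equation*}
\left|\int_a^b g(x)\, d(\mu-\nu)(x)\right| \leq \|\mu - \nu\|_{[0,\infty)} \left(|g(b)| + |g(a)| + \int_a^b |g'(x)|\, dx\right).
\end{equation*}

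For the first inequality I would take $g(x) = \log x$, so $|g'(x)| = 1/x$ and $\int_a^b dx/x = \log b - \log a \leq |\log b| + |\log a|$. Substituting into the display above gives the total bound $2(|\log b| + |\log a|) \|\mu - \nu\|_{[0,\infty)}$, as claimed.

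For the second inequality I would take $g(x) = |\log x|^\beta$, which is $C^1$ on $(0,\infty)\setminus\{1\}$ (and the single non-smooth point at $x = 1$ poses no problem since it has measure zero and the integral splits). Then $|g'(x)| = \beta |\log x|^{\beta-1}/x$, and substituting $u = \log x$ gives $\int_a^b |g'(x)|\, dx = \int_{\log a}^{\log b} \beta |u|^{\beta-1}\, du$. A direct case check (according to whether the interval $[\log a, \log b]$ lies in $(-\infty,0]$, in $[0,\infty)$, or straddles $0$) shows that in every case this integral is bounded by $|\log a|^\beta + |\log b|^\beta$; in the straddling case one splits the integral at $0$ and obtains exact equality. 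Combined with the boundary contribution $|g(b)| + |g(a)| = |\log b|^\beta + |\log a|^\beta$, the general bound yields the claimed $2(|\log a|^\beta + |\log b|^\beta)\|\mu-\nu\|_{[0,\infty)}$.

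There is no real obstacle here beyond book-keeping; the only subtlety is the sign of $\log x$ on $[a,b]$ in the $\beta$-version, which is handled by the straightforward case split described above.
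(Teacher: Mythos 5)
Your proof is correct and uses essentially the same integration-by-parts mechanism as the paper. The paper invokes an external lemma for the first claim and, for the second, expands $\varphi$ from each endpoint and sums the two bounds, whereas you package everything as a single Riemann--Stieltjes integration by parts against $F(x) = (\mu-\nu)([0,x])$ --- the same computation, just slightly tidier. One small remark: for $\beta > 1$ the function $|\log x|^{\beta}$ is in fact $C^1$ across $x = 1$ (both one-sided derivatives vanish there), so your parenthetical about a non-smooth point is harmless but unnecessary.
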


\begin{proof} The first claim can be found in \cite{jain2021circular}, Lemma 4.3. For measures $\mu,\nu$ denote by $F_\mu(t)=\mu([a,t])$ and 
$F_\nu(t)=\nu([a,t])$.

To prove the second claim,
    for any $C^1$ function $\varphi$ on $[a,b]$, we have by integration by parts that 
    $$
    \int_a^b \varphi d\mu=\varphi(b)\mu([a,b])-\int_a^bF_\mu(t)\varphi'(t)dt,
    $$and a similar estimate for $\nu$. Subtracting, we get 
    \begin{equation}\label{lines1095}
\left|\int_a^b\varphi d\mu-\int_a^b \varphi d\nu\right|\leq \|\mu-\nu\|_{[0,\infty)}(|\varphi(b)|+\int_a^b|\varphi'(t)|dt).
    \end{equation}
   
    For our choice $\varphi(x)=|\log x|^\beta,\beta>1$, this function is smooth on $(0,\infty)$ except at $x=1$ where it is not differentiable. Thus for any interval $[a,b]$, when $1\in[a,b]$ we decompose $[a,b]=[a,1]\cup[1,b]$ and integrate $\varphi'$ separately on each interval. Then we can compute that
    $$\int_a^b|\varphi'(t)|dt=\beta(\int_{[a,1]}+\int_{[1,b]})\frac{|\log t|^{\beta-1}}{t}dt\leq |\log a|^\beta+|\log b|^\beta
    .$$For the estimates \eqref{lines1095}, if $1\in[a,b]$ then we use triangle inequality separately for the integral over $[a,1]$ and $[1,b]$. 
    Combining the above bounds yield the final factor $2(|\log a|^\beta+|\log b|^\beta)$.
\end{proof}

Now we can prove Theorem \ref{theorem1.115}, the main result of this section:

\begin{proof}[\proofname\ of Theorem \ref{theorem1.115}:  convergence in probability]Recall that we assume, for some very small $d>0$, $\ell^{1/12}\geq n\geq\ell^d$.  With probability at least $1-C\ell^{-5}$, we can assume the following four estimates hold simultaneously: (The following claims for $X_z$
follow from this paper's proof; while the claims for $G_z$, which is a shifted complex Ginibre matrix, are standard and can be found from other papers.)
\begin{enumerate}
    \item The least singular values of both $X_z,G_z$ are at least $\ell^{-11n}$, by Theorem \ref{theorem1.112};
    \item  $\|G_z\|,\|X_z\|\leq K<\infty$ for some constant $K>0$, by Lemma \ref{operatornormbound} and Lemma \ref{astronger};
    \item The convergence rate in Corollary \ref{proofcorollary4.5} holds; 
    \item The two matrices $X_z,G_z$ have at most $\frac{(n\ell)^{1+c}}{\ell^{0.1}}$ number of singular values in the interval $[0,(n\ell)^{-120}]$ for any $c>0$, by Proposition \ref{propositionrigidity}. (The claim for $G_z$ is standard Ginibre estimate without using Proposition \ref{propositionrigidity}.)
\end{enumerate}
Then on this event we decompose the log integration as follows:
\begin{equation}\label{differencelogs}
    \int_0^\infty\log x\nu_{X_z}(dx)- \int_0^\infty\log x\nu_{G_z}(dx)=\int_{I_1\cup I_2}\log x(\nu_{X_z}(dx)-\nu_{G_z}(dx) )  ,
\end{equation}
where $I_1=[\ell^{-22n},(n\ell)^{-240}]$ and $I_2=[(n\ell)^{-240},K^2]$ (note that we are using squared singular values).
By Fact \ref{fact4,690}, the integration on $I_2$ is bounded in absolute value by
$$
 2[|\log K|+|\log ((n\ell)^{-240})|]\|\nu_{X_z}(\cdot)-\nu_{G_z}(\cdot)\|_{[0,\infty)}=o(\ell^{-x_d/2})\to 0
.$$
The integration on $I_1$ is bounded in absolute value by
$$
\frac{1}{n\ell}O(n\log\ell\frac{(n\ell)^{1+c}}{\ell^{0.1}})=o(\ell^{-0.01})\to 0,
$$whenever we take $c>0$ sufficiently small, and we use the assumption $\ell\geq n^{12}$. Combining the two integrations on $I_1,I_2$ completes the proof of convergence in probability.
\end{proof}

Finally, we complete the proof of Corollary \ref{proofcorollary4.5}:

\begin{proof}[\proofname\ of Corollary \ref{proofcorollary4.5}]
Recall that we assume, for some $d\in(0,\frac{1}{2})$, that $ n\geq\ell^d$ and $\ell\geq n^{2}$. Then we can find some $x_d\in(0,(1-d)/16)$ such that $\frac{x_d+d/2}{x_d+1}\leq d$.
For any $D>0$, consider the horizontal line segment $$
L:=\{\xi=\theta+i(\frac{n}{\ell})^{x_d}:\quad -D\leq\theta\leq D\}.
$$Then on $L$ we have $n\Im\xi= n(\frac{n}{\ell})^{x_d}\geq\ell^{d/2}$, so that Lemma \ref{lemma3.1111} yields, for all $\xi\in L$:
$$
|m_{n+2,z}(\xi)-m_{n+2,z}^{per}(\xi)|\leq \frac{1}{\ell^{d/2}}.
$$

By Proposition \ref{stieltjesconvergence}, we have for each $\xi\in L$, for any $c>0$ with probability $1-(\ell n)^{-100}$,
$$
|m_{n+2,z}^{per}(\xi)-m_z(\xi)|\leq \frac{C(\ell n)^c}{n^{8x_d}\ell^{1/2-8x_d}}\leq\frac{C(\ell n)^c}{\ell^{d/2}}.
$$We can take $c>0$ sufficiently small relative to $d$ so that the right hand side is at most $C\ell^{-d/3}$.
Since both $m_{n+2,z}^{per}$ and $m_z$ are Lipschitz in $\xi$ with Lipschitz constant bounded by $|\Im\xi|^{-2}$, we can extend the convergence to all of $L$ and deduce that with probability at least $1-(\ell n)^{-90}$, uniformly for all $\xi\in L$ and for a possibly different constant $C$,
\begin{equation}\label{finalstieltjesbound}
|m_{n+2,z}(\xi)-m_z(\xi)|\leq C\ell^{-d/3}.
\end{equation} Then by \cite{bai2010spectral}, Corollary B.15 and Lemma 11.9, just as in the proof of \cite{jain2021circular}, Lemma 4.4, we have the following estimate with $\eta=\Im\xi$:
$$
\|\nu_{X_z}(\cdot)-\nu_z(\cdot)\|_{[0,\infty)}\leq C\left[\int_{-D}^D|m_{n+2,z}(\xi)-m_z(\xi)|d\xi+\sqrt{\eta}\right]
$$for some universal $C>0$. Taking the estimate \eqref{finalstieltjesbound} inside, we get that with probability at least $1-(n\ell)^{-90}$,
$$\|\nu_{X_z}(\cdot)-\nu_z(\cdot)\|_{[0,\infty)}\leq C(\ell^{-d/3}+(\frac{n}{\ell})^{x_d})\leq C(\ell^{-d/3}+\ell^{-x_d/2}),
$$where the last inequality follows from the $\ell\geq n^2$. Then we replace $x_d$ by $\min(d/3,x_d/2)$ and complete the proof.\end{proof}

\subsection{Uniform integrability of the log determinant}\label{uniformintegrability}

In this section, we upgrade the convergence of log determinants in Proposition \ref{proposition1.999} and Theorem \ref{theorem1.115} to convergence in expectation, by establishing uniform integrability.

\begin{proof}[\proofname\ of Proposition \ref{proposition1.999}]

    The part of almost-sure convergence is essentially an intermediate technical step in the proof of circular law in \cite{WOS:000281425000010}, where they proved that, taking $G$ the same size matrix with i.i.d. complex Gaussian entries of unit variance, then the following limit holds both in probability and almost surely:
    $$
\frac{1}{n}\log|\det (n^{-1/2}A)|-\frac{1}{n}\log|\det (n^{-1/2}G)|\to 0,\quad n\to\infty.
    $$
    It is well-known that, by the Ginibre potential computations, 
    $$
\frac{1}{n}\log|\det(n^{-1/2}G)|\to\int_{|w|\leq 1} \log|w|\frac{dA(w)}{\pi}=-\frac{1}{2}, a.s.,\quad n\to\infty,
    $$where $dA(w)$ is the two-dimensional Lebesgue measure. 

    To prove convergence in expectation, we work separately on good and bad events. We let $s_1\geq s_2\geq\cdots\geq s_n$ denote the singular values of $n^{-1/2}A$ in decreasing order, then by Cauchy-Schwarz we have
    $$|\frac{1}{n}\log|\det n^{-1/2}A||^2\leq\frac{1}{n}\sum_{i=1}^n|\log s_i|^2.
    $$ Let $\nu_A$ denote the empirical measure of squared singular values of $(n^{-1/2}A)(n^{-1/2}A)^*$. Then by the same argument as in Corollary \ref{proofcorollary4.5}, we can find some $c_0>0$ such that with probability $1-n^{-10}$, $\|\nu_A(\cdot)-\nu_0(\cdot)\|_{[0,\infty)}\leq n^{-c_0}$. Let $\Omega_A$ denote the event where this convergence holds, and moreover that $\|n^{-1/2}A\|\leq K$ for some large $K>0$, and $s_{min}(n^{-1/2}A)\geq n^{-9}$. Then by the previous theorems, we have $\mathbb{P}(\Omega_A)\geq 1-n^{-5}$.

    On the event $\Omega_A$, we apply Fact \ref{fact4,690} with $\beta=2$ to verify that
    \begin{equation}\label{toverifyquadratic}
\left|\frac{1}{n}\sum_{i=1}^n|\log s_i|^2-\frac{1}{4}\int_{n^{-18}}^{K^2} |\log x|^2\nu_0(dx)\right|\leq 2(2|\log K|+20\log n)n^{-c_0}=o(1),
\end{equation}and it also follows from a standard computation that $\int_0^\infty |\log x|^2\nu_0(dx)<\infty$.

    On the event $\Omega_A^c$, we take the trivial upper bound $$\frac{1}{n}|\log|\det n^{-1/2}A||\leq \max(|\log s_{max}(n^{-1/2}A)|,|\log s_{min}(n^{-1/2}A)|),$$
and we use Cauchy-Schwarz in the following inequality to get
\begin{equation}\label{upperboundtheauantity}
\mathbb{E}[1_{\Omega_A^c}|\frac{1}{n}\log|\det n^{-1/2}A||]\leq \mathbb{P}(\Omega_A^c)^{3/4}\mathbb{E}[\max(|\log s_{max}(n^{-1/2}A)|,|\log s_{min}(n^{-1/2}A)|)^4]^\frac{1}{4},\end{equation}
 which is upper bounded by $n^{-15/4}\log^2n\to 0$ thanks to Lemma \ref{astronger} and \ref{lemma2.6}. This verifies the uniform boundedness of 
$\mathbb{E}[|\frac{1}{n}\log|\det n^{-1/2}A||^2]
,$ which completes the proof of uniform integrability and thus the convergence in expectation.

Finally, we note that to obtain a quantitative convergence rate in expectation from a convergence rate in probability, we can use the following simple consequence of Hölder inequality. Suppose that random variables $X_n$ converge to $X$ in probability with a tail $\mathbb{P}(|X_n-X|\geq t)\leq r_n(t)$ for all $t\geq 0$ and that $\mathbb{E}|X_n|^2+\mathbb{E}|X|^2\leq M$ for some $M>0$, then for any $t>0$,
$
\mathbb{E}|X_n-X|\leq t+2\sqrt{M\cdot r_n(t)}.$
\end{proof}
The convergence in expectation of Theorem \ref{theorem1.115} can be proven similarly:

\begin{proof}[\proofname\ of Theorem \ref{theorem1.115} : convergence in expectation.] 
We let $N=(n+2)\ell$ and $s_1,\cdots,s_N$ the singular values of the matrix $\mathcal{T}_{n+2}-zI_{mid}$. Let $\Omega_T$ be the event where all the four criteria (1) to (4) in the (in probability convergence part of the) proof of Theorem \ref{theorem1.115} hold true. Then on $\Omega_T$, we can bound $\sum_{i=1}^N\frac{1}{N}\log s_i$ by separating the sum over singular values in $I_1$ and $I_2$ defined there, with the summation over $I_1$ converges to 0 in absolute value (since $\frac{1}{N}\sum_{i:s_i^2\in I_1}|\log s_i|\leq\frac{1}{n\ell}\cdot \frac{(n\ell)^{1+c}}{\ell^{0.1}}\cdot O(n\log\ell)=o(1)$) and the summation over $I_2$ can be estimated similarly to \eqref{toverifyquadratic}.

We have $\mathbb{P}(\Omega_T)\geq 1-C\ell^{-5}$, and on $\Omega_T^c$ we similarly apply Cauchy-Schwarz to upper bound the quantity as in \eqref{upperboundtheauantity}. By Theorem \ref{updatedtails}, we can verify that
$$
\mathbb{E}[|\log |s_{min}(\mathcal{T}_{n+2}-zI_{mid})||^4]\leq C(N\log N)^4
$$ for some $C>0$ depending only on $\zeta$ (and a better tail for $s_{max}$ follows from Lemma \ref{operatornormbound} and \ref{astronger}), so that $\mathbb{E}[1_{\Omega_T^c}\frac{1}{N}\sum_{i=1}^N|\log s_i|]=O(\ell^{-15/4}N\log N)$.
Since $\ell^{-15/4}N\log N\to 0$ thanks to $\ell\geq n^{12}$ and $N=n\ell$, the expectation over $\Omega_T^c$ again vanishes.
\end{proof}

\section{Singular value rigidity estimate for a general boundary case}\label{959rigidity}

In this section, we prove Proposition \ref{propositionrigidity}, the rigidity bound of small singular values of $\mathcal{T}_{n+2}-zI_{mid}$.

\subsection{Removal of the boundary}
The standard method to prove rigidity estimates is to derive a local law for $\mathcal{T}_{n+2}$ and bound the imaginary part of its Stieltjes transform. However, the general boundary on the first row of $\mathcal{T}_{n+2}$ makes the derivation of local law extremely complicated. Here we take a few steps to remove the boundary and reduce to rigidity estimates of a more canonical model.

\begin{lemma}\label{lemma5.1}
We can find a $2\ell\times\ell$ matrix $R$ with orthonormal columns, where we denote by $R=\begin{bmatrix}
    R_1\\R_2
\end{bmatrix}$ its two components of size $\ell$, such that the following statement holds on the event $\{\|\mathcal{T}_{n+2}-zI_{mid}\|\leq K\}$ for some $K>1$. For any $k\in\mathbb{N}_+$ and $\epsilon\in(0,\frac{1}{36K^2})$, suppose that $\mathcal{T}_{n+2}-zI_{mid}$ has at least $k$ singular values in the interval $[0,\frac{\epsilon}{3K\sqrt{2n\ell}}]$, then the following matrix $Y^R_z$ has at least $k$ singular values in the interval $[0,\epsilon]$,
$$
Y^R_z:=\begin{bmatrix}
    \begin{bmatrix}C_1,A_1-zI_\ell\end{bmatrix}R&B_1&&\\C_2 R_2&A_2-zI_\ell&B_2&\\&&\ddots\ddots\ddots\\&&C_n&A_n-zI_\ell
\end{bmatrix}.
$$
\end{lemma}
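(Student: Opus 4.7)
The plan is to choose $R$ to be any $2\ell \times \ell$ matrix whose columns form an orthonormal basis of $\ker([V,U])$, which is $\ell$-dimensional because $[V,U]$ has orthonormal rows by Proposition \ref{proposition168}. With this choice I introduce two linear maps between the ambient spaces. The isometric lift $\iota : \mathbb{C}^{n\ell} \to \mathbb{C}^{(n+2)\ell}$ sends $w = (w_{[1]}, \ldots, w_{[n]})$ to the vector whose first two blocks are $R_1 w_{[1]}$ and $R_2 w_{[1]}$, whose blocks $3$ through $n+1$ equal $w_{[2]}, \ldots, w_{[n]}$, and whose last block vanishes. The projection $\Phi : \mathbb{C}^{(n+2)\ell} \to \mathbb{C}^{n\ell}$ sends $v$ to $(R^\ast[v_{[1]}; v_{[2]}],\, v_{[3]}, \ldots, v_{[n+1]})$. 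A direct block-by-block computation using $[V,U]R = 0$ and the specialization $[S_{+}^{bd}, C_{+}] = [0, I_\ell]$ yields the key identity $(\mathcal{T}_{n+2} - zI_{mid})\,\iota(w) = (0,\, (Y_z^R w)_{[1]},\, \ldots,\, (Y_z^R w)_{[n]},\, 0)$, so that $\|(\mathcal{T}_{n+2} - zI_{mid})\,\iota(w)\| = \|Y_z^R w\|$ for every $w \in \mathbb{C}^{n\ell}$.

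Now let $V_k$ be the right singular subspace of $\mathcal{T}_{n+2} - zI_{mid}$ spanned by the singular vectors associated with the $k$ smallest singular values, all of which are at most $s := \epsilon/(3K\sqrt{2n\ell})$. The first substantive step is to show that $\Phi$ is injective on $V_k$. I would explicitly identify $\ker \Phi$ as the $2\ell$-dimensional subspace of vectors supported only in blocks $1, 2, n+2$ with $[v_{[1]}; v_{[2]}]$ lying in $\mathrm{range}([V,U]^\ast)$. Writing such a vector via $[v_{[1]}; v_{[2]}] = [V,U]^\ast y$ for some $y \in \mathbb{C}^\ell$ and using both $[V,U][V,U]^\ast = I_\ell$ and $C_{+} = I_\ell$, one checks that the first block and the last block of $(\mathcal{T}_{n+2} - zI_{mid}) v$ are $y$ and $v_{[n+2]}$ respectively. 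Since the first and last blocks of $\mathcal{T}_{n+2}-zI_{mid}$ only mix across blocks $1$-$2$ and block $n+2$, this already gives $\|(\mathcal{T}_{n+2} - zI_{mid})v\|^2 \geq \|y\|^2 + \|v_{[n+2]}\|^2 = \|v\|^2$ on $\ker\Phi$. On the other hand every $v \in V_k$ satisfies $\|(\mathcal{T}_{n+2}-zI_{mid})v\| \leq s\|v\|$, and since $\epsilon \leq 1/(36K^2)$ forces $s < 1$, we deduce $V_k \cap \ker \Phi = \{0\}$. Hence $W_k := \Phi(V_k)$ is a $k$-dimensional subspace of $\mathbb{C}^{n\ell}$.

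It remains to bound the action of $Y_z^R$ on $W_k$. For $v \in V_k$ with $w := \Phi(v)$, orthogonally decompose $v = \iota(w) + r$, where $r \perp \mathrm{range}(\iota)$. The residual $r$ is supported on blocks $1, 2, n+2$: its block-$1$/block-$2$ part is the projection of $[v_{[1]}; v_{[2]}]$ onto $\mathrm{range}([V,U]^\ast)$, and its block-$(n+2)$ part equals $v_{[n+2]}$. The norms of these two components coincide respectively with the norms of the first and last blocks of $(\mathcal{T}_{n+2}-zI_{mid})v$, yielding $\|r\| \leq \sqrt{2}\,s\,\|v\|$. Pythagoras then gives $\|w\|^2 = \|v\|^2 - \|r\|^2 \geq (1 - 2s^2)\|v\|^2$, while the key identity and the triangle inequality combined with $\|\mathcal{T}_{n+2}-zI_{mid}\| \leq K$ give $\|Y_z^R w\| \leq \|(\mathcal{T}_{n+2}-zI_{mid})v\| + K\|r\| \leq (1 + \sqrt{2}K)\,s\,\|v\|$. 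Combining these, $\|Y_z^R w\|/\|w\| \leq (1+\sqrt{2}K)\,s/\sqrt{1-2s^2} \leq 3Ks = \epsilon/\sqrt{2n\ell} \leq \epsilon$ under our hypothesis on $\epsilon$. The min-max principle applied to the $k$-dimensional subspace $W_k$ then produces at least $k$ singular values of $Y_z^R$ in $[0, \epsilon]$.

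The main obstacle is ensuring that $\Phi$ does not collapse the singular space $V_k$, which genuinely exploits that the generic top boundary $[V,U]$ has \emph{orthonormal} rows rather than merely being a contraction: this structural feature upgrades $\ker \Phi$ into a subspace on which $\mathcal{T}_{n+2}-zI_{mid}$ acts with operator norm at least one, making $\ker\Phi$ automatically disjoint from $V_k$ once $s < 1$. Everything else is a transparent Pythagorean norm comparison between $v$ and its projection $\iota(w)$ onto $\mathrm{range}(\iota)$, together with the deterministic operator-norm bound from $\mathcal{E}_K$.
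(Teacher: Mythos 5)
Your proof is correct and follows the same overall strategy as the paper: rotate the boundary block columns, restrict to the interior coordinates, and apply Courant--Fischer. However, your handling of the injectivity of the projection $\Phi$ on the low singular subspace $V_k$ is genuinely different from, and sharper than, the paper's. The paper bounds the boundary mass of a generic unit vector $z=\sum_i a_i v_i$ via triangle inequality and Cauchy--Schwarz, which costs a factor of $\sqrt{k}\le\sqrt{2n\ell}$; this loss is precisely what the scaling $s=\epsilon/(3K\sqrt{2n\ell})$ in the hypothesis is calibrated to absorb. You instead exploit the orthonormality of $[V,U]$ (and the specialization $C_+=I_\ell$) to show that $\mathcal{T}_{n+2}-zI_{mid}$ satisfies $\|(\mathcal{T}_{n+2}-zI_{mid})v\|\ge\|v\|$ on $\ker\Phi$, so $V_k\cap\ker\Phi=\{0\}$ is immediate once $s<1$, and you bound the residual $\|r\|=\|(I-\iota\Phi)v\|$ directly for \emph{every} $v\in V_k$ by reading off the first and last blocks of $(\mathcal{T}_{n+2}-zI_{mid})v$, rather than only for each basis singular vector separately. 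This removes the $\sqrt{k}$ factor and gives $\|Y_z^Rw\|/\|w\|\le\epsilon/\sqrt{2n\ell}$, a factor $\sqrt{2n\ell}$ stronger than required. You also treat both boundaries simultaneously, where the paper spells out only the top boundary and declares the bottom ``exactly the same.'' The conclusion is identical, but your explicit isometry $\iota$ with $\Phi=\iota^\ast$ and the lower bound for the operator on $\ker\Phi$ make for a tighter and more self-contained argument.
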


\begin{proof}Let \(U_0\in U(2\ell)\) be a unitary matrix such that
\[
[V,U]U_0=[I_\ell,0].
\]
Write
\[
U_0=[R^\perp,R],
\qquad R^\perp,R\in\mathbb C^{2\ell\times \ell}.
\]
Then \(R\) has orthonormal columns and \([V,U]R=0\). We further decompose
\[
R=\begin{pmatrix}R_1\\ R_2\end{pmatrix},
\qquad R_1,R_2\in\mathbb C^{\ell\times\ell}.
\]
Let \(\widetilde T_z\) be the matrix obtained from
\(\mathcal{T}_{n+2}-zI_{\rm mid}\) after applying this unitary change of variables
on the first two block columns which transforms the upper boundary
\([V,U]\) into \([I_\ell,0]\). This unitary change does not change the
singular values. Under this identification, we delete the first and the
last boundary coordinate blocks and then restrict to the interior rows: this
gives exactly the matrix \(Y_z^R\).

Set
\[
\delta:=\frac{\varepsilon}{3K\sqrt{2n\ell}}.
\]
Assume that \(\widetilde T_z\) has at least \(k\) singular values in
\([0,\delta]\). By the min-max principle, there exists a \(k\)-dimensional
subspace \(E\subset \mathbb C^{(n+2)\ell}\) such that
\[
\|\widetilde T_z v\|\le \delta\|v\|,\qquad v\in E.
\]
We let \(P_{\rm bd}\) be the orthogonal projection onto the two boundary
coordinate blocks, and let \(P_{\rm int}=I-P_{\rm bd}\). Since the two
boundary rows of \(\widetilde T_z\) are the coordinate projections onto
these two boundary blocks, we have
\[
\|P_{\rm bd}v\|\le \|\widetilde T_zv\|\le \delta\|v\|,
\qquad v\in E.
\]
Hence \(P_{\rm int}|_E\) is injective. Let
\[
W:=P_{\rm int}E.
\]
Then \(\dim W=k\), and for every \(u\in W\) there exists \(v\in E\) with
\(u=P_{\rm int}v\). Moreover,
\[
\|u\|\ge (1-\delta)\|v\|.
\]

Using \(\|\widetilde T_z\|\le K\), we get
\[
\|Y_z^R u\|
\le
\|\widetilde T_zP_{\rm int}v\|
\le
\|\widetilde T_zv\|+\|\widetilde T_zP_{\rm bd}v\|
\le
\delta\|v\|+K\delta\|v\|.
\]
Therefore
\[
\|Y_z^R u\|
\le
\frac{(K+1)\delta}{1-\delta}\|u\|
\le \varepsilon\|u\|
\]
for \(\ell\) large, since \(K>1\) and
\(\delta=\varepsilon/(3K\sqrt{2n\ell})\). Thus \(Y_z^R\) is bounded by
\(\varepsilon\) on the \(k\)-dimensional subspace \(W\). Therefore, by the min-max
principle, \(Y_z^R\) has at least \(k\) singular values in
\([0,\varepsilon]\).
\end{proof}

Next, we reduce the rigidity of $Y_z^R$ to the case where $B_1=C_2R_2=0$, by showing that rigidity of $Y_z^R$ can be deduced from rigidity of its lower $[2,n]\times[2,n]$ block-principal minor. For this purpose we first prove an auxiliary lemma:

\begin{lemma}\label{lemmas1231}
    Let $A\in M_N(\mathbb{C})$ be a deterministic matrix and let $P$ be the orthogonal projection onto an output block of dimension $\ell$, and we write $Q=I-P$. Let $B$ be an independent $\ell\times\ell$ random block with the same distribution as $B_i,C_i$, and consider 
    $$
\Delta x=\iota BLx,
    $$where $L:\mathbb{C}^N\to\mathbb{C}^\ell$ is a deterministic contraction and $\iota$ embeds $\mathbb{C}^\ell$ into the selected output block. Assume that for some $K>0$ we have $\|A\|\leq K$. Then we can find two constants $c,C>0$ depending on $K$ and the density of $\zeta$ such that, for any $M>0$ and $a\in(0,1)$, with probability at least $1-C\ell^{-M}$ on the event $\{\|A\|+\|B\|\leq K\}$, we have
    $$
N_{A+\Delta}(ca^2\ell^{-M-9})\leq N_A(a)
    ,$$ where $N_A(t)$ is the number of singular values of $A$ in $[0,t]$.
\end{lemma}

\begin{proof}
Let $E$ be the large singular subspace of $A$, such that 
$$
\|Av\|\geq a\|v\|,\quad v\in E.
$$Since $\Delta$ is only supported in the $P$-row block, we have that 
$$
Q(A+\Delta)v=QAv.
$$
Then we let $F\subset E$ be the subspace spanned by right singular vectors of $QA|_E$, with singular values at most $a/2$. Then $\dim F\leq \ell$ since $PA|_E$ has rank at most $\ell$ (Suppose $\dim F>\ell$, then since $\operatorname{Rank}(PA|_E)\leq\ell$, then $F\cap\ker(PA|_E)\neq 0$. Then take $0\neq f\in F\cap\ker(PA|_E)$, we have $PAf=0$, so $\|Af\|_2^2=\|QAf\|_2^2\leq \frac{a^2}{4}\|f\|^2$. But $f\in F\subset E$, so $\|Af\|\geq a\|f\|$, contradiction). Then on $E\cap F^\perp$, we have
$$
\|QAv\|\geq\frac{a}{2}\|v\|
$$and on $F$, we must have 
$$
\|PAf\|\ge\frac{1}{2}a\|f\|,\quad f\in F.
$$So we have reduced the finite-dimensional problem to the following:
$$
H+BL_F:F\to\mathbb{C}^\ell,\quad H:=PA|_F,L_F=L|F,
$$with $$s_{\text{min}}(H)\geq\frac{1}{2}a.$$
Since we can rewrite this as
$$
H+BL_F=\begin{bmatrix}
    B,I
\end{bmatrix}\begin{pmatrix}
    L_F\\H
\end{pmatrix}
$$ and that 
$$
s_{\text{min}}(\begin{pmatrix}
    L_F\\H
\end{pmatrix})\geq s_{\text{min}}(H)\geq\frac{1}{2}a,
$$we only need to prove that, uniformly over all deterministic isometry $S:F\to\mathbb{C}^{2\ell}$, $\dim F\leq \ell$, with probability at least $1-\ell^{-M}$ we have
\begin{equation}\label{lines1272}
s_{\text{min}}(\begin{bmatrix}
    B,I
\end{bmatrix}S)\geq\ell^{-M-9}.
\end{equation}
Let $r=\dim F$ and we extend $S:F\simeq\mathbb{C}^r\to\mathbb{C}^{2\ell}$ to an isometry $\widetilde{S}:\mathbb{C}^\ell\to\mathbb{C}^{2\ell}$. Then $[B,I]S$ is the restriction of $[B,I]\widetilde{S}$ to a subspace, so that $s_{\text{min}}([B,I]S)\geq s_{\text{min}}([B,I]\widetilde{S})$. Thus we prove the estimate assuming $\dim F=\ell$.

We denote by $\mathcal{M}:=BS_1+S_2$, where $S=\begin{bmatrix}
    S_1\\S_2
\end{bmatrix}$. Let $\mathcal{M}_i$ be the $i$-th row and $H_i$ the span of all the other rows except $\mathcal{M}_i$. Also let $n_i$ be the unit normal to $H_i$. By the negative second moment argument, 
$$
s_{\text{min}}(\mathcal{M})^{-2}\leq \sum_{i=1}^\ell \operatorname{dist}(\mathcal{M}_i,H_i)^{-2}.
$$Condition on all rows except the $i$-th row, we have 
$$
\operatorname{dist}(\mathcal{M}_i,H_i)\geq |\langle b_i,S_1n_i\rangle+(S_2n_i)_i|.
$$Case (1): $\|S_1n_i\|\geq\delta$ for some $\delta>0$ to be fixed later. Then the bounded density assumption gives, for some $C>0$ depending only on the density bound $L$ of $\zeta$:
$$
\mathbb{P}(|\langle b_i,S_1n_i\rangle+(S_2n_i)_i|\leq u\mid\{b_j:j\neq i\})\leq C\frac{\sqrt{\ell}u}{\delta}.
$$
Case (2): $\|S_1n_i\|<\delta$. Then the orthogonality relation $\mathcal{M}_jn_i=0$ for $j\neq i$ implies that, on the event $\|B\|\leq K$,
$$
|(S_2n_i)_j|=|\langle b_j,S_1n_i\rangle|\leq K\delta,\quad j\neq i.
$$
Since $S$ is an isometry, 
$$
\|S_1n_i\|^2+\|S_2n_i\|^2=1.
$$ Then if we choose $\delta=(10K\sqrt{\ell})^{-1}$, the mass of $S_2n_i$ is not concentrated on the $j\neq i$ coordinates and we compute
$$
|(S_2n_i)_i|\geq\frac{1}{2}.
$$
Also, on the operator norm event, 
$$
|\langle b_i,S_1n_i\rangle|\leq K\delta\leq\frac{1}{3}.
$$ Then in case (2), we have deterministically on the operator norm event that 
$$
\operatorname{dist}(\mathcal{M}_i,H_i)\geq \frac{1}{6}.
$$
Then only case (1) contributes the small ball probability. We take $u=\ell^{-M-8}$ and union bound over all $i\leq\ell$ to conclude that with probability $1-C\ell^{-M}$, 
$$
\operatorname{dist}(\mathcal{M}_i,H_i)\geq u\quad\text{for all }i.
$$Therefore, on the same high probability event, 
$$
s_{\text{min}}(\mathcal{M})\geq u\ell^{-1/2}\geq\ell^{-M-9}.
$$This completes the proof of \eqref{lines1272}. Then we have, with probability at least $1-\ell^{-M}$, $$s_{\text{min}}(H+BL_F)\geq\frac{1}{2}a\ell^{-M-9}.$$

Finally, we lift the vector back from $F$ to $E$. If we write $v=f+g\in E,f\in F,g\in F^\perp\cap E$ with $\|v\|=1$, such that $\|(A+\Delta)v\|\leq b$, then the $Q$-part implies $\|g\|\leq 2b/a$ (This is because $F$ is a spectral space of $(QA|_E)^*(QA|_E)$, so that the subspace $QAF$ and $QA(F^\perp\cap E)$ are orthogonal.) Then on the operator norm event, we have
\begin{equation}\label{lines1380}
\|(A+\Delta)f\|\leq \|(A+\Delta)g\|+\|(A+\Delta)v\|\leq b+2Kb/a.
\end{equation}
But we already know that on $F$, with probability at least $1-C\ell^{-M}$,
$$
\|(A+\Delta)f\|\geq \frac{1}{2}a\ell^{-M-9}\|f\|,
$$then we choose $b\leq c a^2\ell^{-M-9}/K$ for a small $c>0$ to guarantee that $\|g\|\leq\frac{1}{2}$ and then $\|f\|\geq \frac{1}{2}$, so that the last expression contradicts \eqref{lines1380}. Therefore, $A+\Delta$ is bounded from below by $b$ on $E$, so that by min-max principle, $N_{A+\Delta}(b)\leq N_A(a)$.
\end{proof}

\begin{lemma}\label{lemma5.2}
Let $Y_z^{R,diag}$ denote the block diagonal matrix obtained from $Y_z^R$ by setting $B_1=0$ and $C_2R_2=0$. Then with probability at least $1-C\ell^{-8}$ with respect to the randomness of $B_1$ and $C_2$, the following is true when $\ell\geq n$: Suppose that $Y_z^{R,diag}$ has at most $k$ singular values in the interval $[0,\ell^{-10}]$, then $Y_z^R$ has at most $k$ singular values in the interval $[0,\ell^{-110}]$ for $\ell$ sufficiently large.     
\end{lemma}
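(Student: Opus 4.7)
My plan is to compare the approximate null spaces of $Y_z^R$ and $Y_z^{R,\mathrm{diag}}$, exploiting that their difference $E := Y_z^R - Y_z^{R,\mathrm{diag}}$ has rank at most $2\ell$, with the nontrivial pieces being $B_1$ at block-position $(1,2)$ and $C_2 R_2$ at block-position $(2,1)$. Throughout I will work on the event (of probability at least $1 - C\ell^{-8}$ obtained by applying Lemma~\ref{lemma3.2} separately to $B_1$ and to $C_2 R_2$; bounded density is preserved for $C_2 R_2$ since $R$ has orthonormal columns) on which the least singular values and operator norms of $B_1$ and $C_2 R_2$ are polynomially controlled in $\ell$.

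First I set up a dimension count. Let $V \subseteq \mathbb{C}^{n\ell}$ be the right singular subspace of $Y_z^R$ associated with singular values in $[0, \ell^{-30}]$, and assume toward contradiction that $\dim V \geq k+1$. Consider the linear projection $\phi: V \to \mathbb{C}^{2\ell}$, $v \mapsto (v_1, v_2)$. For $v \in \ker \phi$ we have $E v = (B_1 v_2, C_2 R_2 v_1, 0, \ldots, 0) = 0$, so $\|Y_z^{R,\mathrm{diag}} v\| = \|Y_z^R v\| \leq \ell^{-30} \leq \ell^{-10}$; hence $\ker \phi$ embeds into the corresponding singular subspace of $Y_z^{R,\mathrm{diag}}$, giving $\dim \ker \phi \leq k$ by hypothesis. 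It therefore suffices to prove that $\phi$ vanishes on $V$, equivalently that no vector in $V$ has nontrivial first two blocks.

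Given a unit $v \in V$ with $(v_1, v_2) \neq 0$, the first two block rows of $\|Y_z^R v\| \leq \ell^{-30}$ read
\begin{align*}
M_1 v_1 + B_1 v_2 &= O(\ell^{-30}), \\
C_2 R_2 v_1 + (A_2 - zI_\ell) v_2 + B_2 v_3 &= O(\ell^{-30}).
\end{align*}
Invertibility of $B_1$ lets me solve for $v_2$ in terms of $v_1$ up to a polynomially bounded error; substituting into the second row and propagating through the interior block-tridiagonal rows via the transfer-matrix recursion of Corollary~\ref{specialcaselemma1.3}, together with the terminal equation at row $n$, produces an effective square linear system $\Phi(B_1, C_2)\, v_1 = O(\ell^{-30}\cdot\ell^{C})$, where $\Phi$ is $\ell \times \ell$ with coefficients depending on $B_1, C_2$ and the fixed blocks, and the polynomial loss $\ell^C$ comes from the $n$-step recursion. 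The propagation is kept well-conditioned on the complement of the (at most $k$-dimensional) small-singular-value subspace of $M_2$, using the hypothesis on $Y_z^{R,\mathrm{diag}}$.

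The hard part will be showing that $s_{\min}(\Phi) \geq \ell^{-20}$ with probability at least $1 - C\ell^{-8}$ over the joint law of $(B_1, C_2)$: combined with the previous paragraph, such a bound forces $\|v_1\|$ and hence $\|v_2\|$ to be negligibly small, contradicting $\dim V \geq k+1$. My approach is to treat $\Phi$ as an affine function of the entries of $C_2$ (with $B_1$ fixed) and apply a second round of Lemma~\ref{lemma3.2}-style anticoncentration, leveraging the bounded-density assumption on $\zeta$; a standard net argument over the unit sphere of $\mathbb{C}^\ell$ upgrades this to a uniform lower bound. The condition $\ell \geq n$ is what makes everything balance, since it ensures that the polynomial-in-$n$ propagation loss is absorbed comfortably within the $\ell^{20}$ slack between the scales $\ell^{-30}$ and $\ell^{-10}$.
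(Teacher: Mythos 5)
Your approach is genuinely different from the paper's, but it has a fatal gap. The paper's proof is a direct subspace-comparison argument: conditioning on $Y_z^{R,\mathrm{diag}}$, it takes singular vectors $v$ of $Y_z^{R,\mathrm{diag}}$ with singular value $\geq\ell^{-10}$, notes that $\|Y_z^Rv\|\leq\ell^{-30}$ would force $\|B_1v_{[2]}\|^2+\|C_2R_2v_{[1]}\|^2\geq\tfrac12\ell^{-20}$, and then applies a single anticoncentration step (Lemma \ref{lemma2.6}) to $B_1$ or $C_2R_2$, followed by a union bound over the at most $2\ell^2$ deterministic (given $Y_z^{R,\mathrm{diag}}$) singular vectors. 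There is no transfer-matrix propagation anywhere.

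The gap in your proposal is the claim that ``the polynomial loss $\ell^C$ comes from the $n$-step recursion.'' The transfer matrices $M_k^{(B)}$ involve $B_k^{-1}$; on the working event one only has $s_{\min}(B_k)\geq\ell^{-10}$ (or at best a polynomially small lower bound), so each application of a transfer matrix can multiply norms by a factor of order $\ell^{O(1)}$, and an $n$-step recursion incurs a loss of order $\ell^{O(n)}$, not $\ell^{O(1)}$. This is exactly the exponential-in-$n$ scale appearing in Theorem \ref{theorem1.112}. Since the slack between the two scales $\ell^{-10}$ and $\ell^{-30}$ is only $\ell^{-20}$, and the lemma is used in the regime $n\to\infty$ (e.g.\ $n\sim\ell^{d}$ in Theorem \ref{theorem1.115}), the propagation loss $\ell^{O(n)}$ overwhelms the slack for any $n\geq 3$, and the effective-system argument for $\Phi$ cannot close. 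This is precisely why the paper's proof avoids transfer matrices in this lemma and works purely with the rank-$\leq 2\ell$ perturbation.

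A secondary issue is the final step of your dimension count. You reduce to showing ``$\phi$ vanishes on $V$,'' but your argument would only show that $\|v_1\|,\|v_2\|$ are quantitatively small, not zero, and then claim this ``contradicts $\dim V\geq k+1$.'' That inference is missing: smallness of $(v_1,v_2)$ for individual vectors does not by itself give $V\subseteq\ker\phi$, nor does it immediately yield $\dim V\leq k$. One would need an additional perturbation/approximation argument comparing $V$ to the genuine kernel of $\phi$ on the appropriate singular subspace; the paper sidesteps this by arguing at the level of singular vectors of $Y_z^{R,\mathrm{diag}}$ rather than of $Y_z^R$, which keeps the test vectors independent of $(B_1,C_2)$.

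One correct and useful observation in your write-up is the initial dimension argument that $\ker\phi$ has dimension at most $k$ (via a min-max count using the hypothesis on $Y_z^{R,\mathrm{diag}}$); this is a cleaner way to phrase the rank-$2\ell$ structure of $E$. But without a way to bound $\dim V-\dim\ker\phi$ that avoids the blow-up above, the approach does not succeed.
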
In other words, this lemma shows that additional randomness in $B_1$ and $C_2$ will not destroy the singular value rigidity of $Y_z^{R,diag}$ too much. 
\begin{proof}
This follows from applying Lemma \ref{lemmas1231} twice with $M=9$. First we start from $Y_z^{R,\text{diag}}$ and switch on $B_1$ with $L_Bx=x[2]$. Then conditioning on $B_1$, we switch on $C_2R_2$ with the map $L_Cx=R_2x[1]$. The fact that $R$ has unitary columns implies $\|R_2\|\leq 1$ and thus $L_C$ is a contraction. After the first application, the number of singular values below
c\(\ell^{-40}\) is still at most \(k\). After conditioning on \(B_1\)
and applying the lemma a second time with \(L_Cx=R_2x[1]\), the number
of singular values below \(c'\ell^{-100}\) is still at most \(k\). For
\(\ell\) sufficiently large we bound $\ell^{-110}\leq c'\ell^{-100}$ for any $c'>0$. The two failure probabilities are absorbed
into \(C\ell^{-8}\). No lower bound on $R_2$ is needed in this proof.
\end{proof}

Now that $Y_z^{R,diag}$ has a block diagonal form and for its top left corner $[C_1,A_1-zI_\ell]R$ we already have good control of its least singular value by Lemma \ref{lemma3.2}, so that we only need to obtain rigidity estimates for its $[2,n]$ principal minor. The latter matrix $[Y_z^{R,diag}]_{[2,n]\times[2,n]}$ has exactly the same form as our block tridiagonal matrix $T$ \eqref{equationforalargeT}.

\subsection{The canonical block tridiagonal case: convergence to MDE}\label{section1463}
We proceed using a local law approach to derive rigidity estimates for $[Y_z^{R,diag}]_{[2,n]\times[2,n]}$, which is a block tridiagonal matrix. There have been well-developed techniques to show that the Green function of $[Y_z^{R,diag}]_{[2,n]\times[2,n]}$ converges to the solution to a set of matrix Dyson equations (MDE), but the main challenge in our case is that the system is not translationally invariant due to the boundaries, so the solution to the MDE cannot be obtained in a closed form.

We let $\mathcal{T}_n$ be the block tridiagonal matrix \eqref{equationforalargeT} of size $n\ell$ and block size $\ell$. We use the following set of parameters: $W=\ell, \text{ and }  N=n\ell=nW.$

We take the notation \begin{equation}Y_z=\mathcal{T}_{n}-zI_N,\end{equation} and we sometimes abbreviate the subscript $z$ by simply writing $Y=Y_z$.
We define the entry-wise variance of $Y_z$ as the following matrix $S\in\mathbb{R}^{N\times N}$:
$$
S=(s_{ij}),\quad  s_{ij}=\operatorname{Var}(\mathcal{T}_n)_{ij}.
$$

The main object of our study is the Green function of $Y_z^*Y_z$, and its trace:
\begin{equation}\label{greenfunctiondef1}
G(w):=G(w,z)=(Y_z^*Y_z-w)^{-1},\quad m(w):=m(w,z)=\frac{1}{N}\operatorname{Tr}G(w,z),\quad w=E+i\eta.
\end{equation}

We also consider the Green function $Y_zY_z^*$:
\begin{equation}\label{greenfunctiondef2}
\mathcal{G}(w):=\mathcal{G}(w,z)=(Y_zY_z^*-w)^{-1}.
\end{equation}

A standard route to study $G,\mathcal{G}$ is to take a reduction to the Hermitization $$\mathcal{H}_z:=\begin{bmatrix}
    0&Y_z\\Y_z^*&0
\end{bmatrix}$$and denote by $$\mathcal{HG}(w):=\mathcal{HG}(w,z)=(\mathcal{H}_z-w)^{-1}.$$

These resolvents satisfy the following linear algebra identities: for any $\lambda\in\mathbb{C}:\Im\lambda>0$,
\begin{equation}\label{equation4.444}
    \mathcal{G}(\lambda^2)={\frac{1}{\lambda}[\mathcal{HG}(\lambda)}]_{[1,N]\times[1,N]},\quad {G}(\lambda^2)={\frac{1}{\lambda}[\mathcal{HG}(\lambda)}]_{[N+1,2N]\times[N+1,2N]}.
\end{equation}

The strategy of this subsection is as follows. We first show, by matrix concentration inequality in Proposition \ref{proposition4.1}, that $\mathcal{HG}(w)$ will converge to the unique solution $M^\mathcal{H}(w)$ with positive imaginary part to the following equation 
\begin{equation}\label{MHWMHW}
  -M^\mathcal{H}(w)^{-1}=wI_{2N}+\begin{bmatrix}0&zI_N\\\bar{z} I_N&0
      \end{bmatrix}+\mathcal{S}^\mathcal{H}[M^\mathcal{H}(w)],
\end{equation}where $\mathcal{S}^\mathcal{H}:\mathbb{C}^{2N\times 2N}\to \mathbb{C}^{2N\times 2N}$ is the following defined self-energy operator 
\begin{equation}
\mathcal{S}^\mathcal{H}[X]=\mathbb{E}\left[\begin{bmatrix}
    0&\mathcal{T}_n\\\mathcal{T}_n^*&0
\end{bmatrix}X\begin{bmatrix}
    0&\mathcal{T}_n\\\mathcal{T}_n^*&0
\end{bmatrix}^*\right].
\end{equation}After fixing the variance profile, the additional off-diagonal part in the self-energy map $\mathcal{S}^\mathcal{H}$ is determined by the pseudovariance $\mathbb{E}[\zeta^2]$.
In the special case $\mathbb{E}[\zeta^2]=0$, we can check that $\mathcal{S}_0^H:=\mathcal{S}^\mathcal{H}$ is block diagonal and acts via two diagonal maps:
$$
\mathcal{S}_0^\mathcal{H}\begin{bmatrix}
    X_{11}&X_{12}\\X_{21}&X_{22}
\end{bmatrix}=\begin{bmatrix}
    \Phi[X_{22}]&0\\0&\widetilde{\Phi}[X_{11}]
\end{bmatrix},
$$where each block has size $N\times N$. The two maps $\Phi,\widetilde{\Phi}$ act diagonally with the expression 
$$
(\Phi[Z])_{ii}=\sum_c s_{ic}Z_{cc},\quad (\widetilde{\Phi}[W])_{cc} =\sum_is_{ic}W_{ii},
$$where the sums are over $[N]$. Indeed,
we have
\[
(\mathcal S^\mathcal H[X])_{12}
=
\mathbb E\,\mathcal T_n X_{21}\mathcal T_n,\qquad
(\mathcal S^\mathcal H[X])_{21}
=
\mathbb E\,\mathcal T_n^* X_{12}\mathcal T_n^*.
\]
These two blocks vanish when \(\mathbb E\zeta^2=0\), because the entries are centered,
independent, and no conjugate pairing appears. The diagonal blocks are
\[
\mathbb E\,\mathcal T_n X_{22}\mathcal T_n^*=\Phi[X_{22}],\qquad
\mathbb E\,\mathcal T_n^* X_{11}\mathcal T_n=\widetilde\Phi[X_{11}].
\]

For general $\zeta$ with $\mathbb{E}[\zeta^2]\neq 0$, we let $\rho=\mathbb{E}[\zeta^2]$ and denote by $\mathcal{S}_\rho^\mathcal{H}$ the self-energy operator associated to the pseudovariance $\rho$. We will use a perturbation argument for general $\rho$ in Lemma \ref{lem:pseudovariance-perturbation}.

Then from \eqref{equation4.444}, we deduce that $(\mathcal{G}(w),G(w))$ should converge with high probability to  $(M_1(w),M_2(w))$ such that 
\begin{equation}\label{m1w2}
M_1(w^2)=\frac{1}{w}[M^\mathcal{H}(w)]_{[1,N]\times[1,N]},\quad M_2(w^2)=\frac{1}{w}[M^\mathcal{H}(w)]_{[N+1,2N]\times [N+1,2N]}.
\end{equation}

The precise matrix concentration result for convergence of $\mathcal{HG}(w)$ towards $M^\mathcal{H}(w)$ is stated as follows: 
\begin{Proposition}\label{proposition4.1}
    Let $\zeta$ satisfy the moment assumptions in Theorem \ref{theorem1.112} (but we do not assume $\mathbb{E}[\zeta^2]=0$), and assume $W\geq \sqrt{N}$. Then for any sufficiently small $c>0$, we can find a constant $C>0$ depending only on $\zeta$ and $c$ such that whenever $N$ is large enough, the following holds with probability at least $1-N^{-100}$: uniformly for $w$ in the spectral domain \(N^{-C_0}\le \Im w\le 1\) for any fixed $C_0>0$,
    $$
\sup_{1\leq i\leq N}|[\mathcal{HG}(w)]_{ii}-[M^\mathcal{H}(w)]_{ii}|\leq \frac{C N^{c}}{\sqrt{W}(\Im w)^5},
    $$
     $$
\sup_{1\leq i\leq N}|[\mathcal{G}(w^2)]_{ii}-[M_1(w^2)]_{ii}|\leq \frac{C N^{c}}{\sqrt{W}(\Im w)^6}.
    $$
\end{Proposition}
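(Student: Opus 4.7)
The plan is to prove Proposition \ref{proposition4.1} by the standard matrix Dyson equation (MDE) approach for Hermitized non-Hermitian matrices, adapted to the inhomogeneous block tridiagonal variance profile. The argument proceeds in three stages: a Schur complement expansion that produces an approximate self-consistent equation for the diagonal entries of $\mathcal{HG}(w)$; a fluctuation bound exploiting the band structure; and an inversion through stability of the MDE. For each index $a\in [1,2N]$, Schur complement gives
\[
-\frac{1}{[\mathcal{HG}(w)]_{aa}} \,=\, w + \mathfrak{z}_a + \bigl\langle h_a,\, \mathcal{HG}^{(a)}(w)\, h_a\bigr\rangle,
\]
where $h_a$ is the $a$-th row of $\mathcal{H}_z$ off the diagonal, $\mathfrak{z}_a$ is the deterministic coupling to $z$ or $\bar z$ coming from the Hermitization, and $\mathcal{HG}^{(a)}$ is the resolvent of the minor obtained by removing row/column $a$. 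Taking the conditional expectation of the quadratic form reproduces the MDE \eqref{MHWMHW} entrywise (with $M^\mathcal{H}$ replaced by $\mathcal{HG}$), and the rank-one resolvent identity relates $\mathcal{HG}^{(a)}$ back to $\mathcal{HG}$. The diagonal $v_a := [\mathcal{HG}(w)]_{aa}$ therefore satisfies the MDE up to an additive error controlled by the centered fluctuation
\[
Z_a := \bigl\langle h_a,\, \mathcal{HG}^{(a)}\, h_a\bigr\rangle - \mathbb{E}\bigl[\bigl\langle h_a,\, \mathcal{HG}^{(a)}\, h_a\bigr\rangle \bigm|\, \mathcal{HG}^{(a)}\bigr].
\]

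To bound $Z_a$, I would use that the block tridiagonal structure forces $h_a$ to have at most $|B_a|\le 3W$ independent nonzero entries, each of variance $(3W)^{-1}$. The high-moment assumption on $\zeta$ then yields a Hanson--Wright-type concentration: on an event of probability at least $1-N^{-200}$,
\[
|Z_a| \,\le\, \frac{N^{c/4}}{W}\, \bigl\|[\mathcal{HG}^{(a)}]_{B_a\times B_a}\bigr\|_F,
\]
and the Ward identity $\sum_c |\mathcal{HG}^{(a)}_{bc}|^2 = \Im \mathcal{HG}^{(a)}_{bb}/\Im w$ applied to $b\in B_a$ gives $\|[\mathcal{HG}^{(a)}]_{B_a\times B_a}\|_F^2 \le |B_a|/(\Im w)^2 \lesssim W/(\Im w)^2$, so $|Z_a| \le N^{c/4}/(\sqrt{W}\,\Im w)$. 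A union bound over the $2N$ indices yields a uniform-in-$a$ fluctuation estimate.

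Finally, I would convert the approximate MDE into the claimed pointwise bound through stability. With $\Delta_a := v_a - [M^\mathcal{H}(w)]_{aa}$, linearization of \eqref{MHWMHW} around $M^\mathcal{H}$ produces a system $(1 - \mathcal{C}_{M^\mathcal{H}})\Delta = E$, where $(\mathcal{C}_{M^\mathcal{H}}[X])_{aa} = [M^\mathcal{H}]_{aa}^2 \sum_b s_{ab} X_{bb}$ and $\|E\|_\infty$ is dominated by the fluctuation above plus a quadratic self-improving term of order $\|\Delta\|_\infty^2/\Im w$. The a priori bound $|[M^\mathcal{H}]_{aa}| \le (\Im w)^{-1}$ together with a crude $\ell^\infty \to \ell^\infty$ operator-norm estimate $\|(1-\mathcal{C}_{M^\mathcal{H}})^{-1}\| \le (\Im w)^{-C_0}$, combined with a continuity bootstrap starting from large $\Im w$ where control is trivial, would deliver the first bound of the proposition. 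The second bound on $G(w^2)$ follows from the block identification \eqref{m1w2} via Cauchy--Schwarz, at the cost of one additional factor $1/\Im w$.

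The main obstacle is the stability estimate itself: because the variance profile $S$ is neither doubly stochastic (the boundary blocks have one fewer neighbor) nor translation invariant, the MDE admits no closed-form solution and $1 - \mathcal{C}_{M^\mathcal{H}}$ must be inverted through a soft monotonicity or contraction argument rather than a translation-invariant Fourier calculation. The generous polynomial loss $(\Im w)^{-5}$ permitted by the statement leaves ample slack, so a suboptimal $\ell^\infty$-based stability estimate --- in the spirit of the local laws derived in \cite{jain2021circular} and \cite{han2025circular} for closely related variance profiles --- should suffice.
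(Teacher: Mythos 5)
You take a genuinely different route from the paper. Your proposal is the classical local law machinery: a Schur-complement self-consistent equation for the diagonal resolvent entries, a Hanson--Wright / Ward-identity bound on the fluctuation term exploiting the $O(W)$-row sparsity of $\mathcal{H}_z$, and then an inversion via stability of the linearized MDE operator $1-\mathcal{C}_{M^\mathcal{H}}$ together with a continuity bootstrap from large $\Im w$. The paper instead bypasses all of this: it first truncates the entries (possible since $\zeta$ has all moments), then invokes the resolvent-level matrix concentration inequality of Brailovskaya and van Handel \cite{brailovskaya2024universality} to compare $\mathcal{HG}(w)$ directly with the deterministic MDE solution $M^{\mathcal H}(w)$, exactly as in \cite{han2025circular}, Section 4.3, and then projects via \eqref{m1w2} to get the bound on $G(w^2)$ with one extra $w^{-1}$ factor. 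The paper explicitly notes that the concentration machinery does not rely on double stochasticity of the variance profile, which is precisely what makes this approach black-box here.

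The trade-off is instructive. Your approach is self-contained and elementary, but it puts the entire burden on the $\ell^\infty\to\ell^\infty$ stability bound for $(1-\mathcal{C}_{M^{\mathcal H}})^{-1}$, which for a non-translation-invariant, non-doubly-stochastic profile with a boundary is exactly where the difficulty lives; you flag this yourself but leave it at the level of ``a suboptimal estimate should suffice.'' That is not automatic: the bootstrap closes only on the range of $\Im w$ where the quadratic self-improving term stays below the linear one, i.e.\ $\tfrac{N^c}{\sqrt{W}(\Im w)^{C_0}}\ll\Im w$, and one must check that this overlaps with the regime where the target bound is not trivially implied by $|\mathcal{HG}_{ii}|,|M^{\mathcal H}_{ii}|\le(\Im w)^{-1}$. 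With the exponents in the statement this does in fact overlap (the nontrivial regime is $\Im w\gtrsim(N^c/\sqrt W)^{1/4}$ and the bootstrap survives down to $\Im w\gtrsim(N^c/\sqrt W)^{1/6}$), so your route can be made to work, but verifying the crude invertibility of $1-\mathcal{C}_{M^{\mathcal H}}$ for all such $w$ --- with a boundary that breaks any flat-band Fourier diagonalization --- is real work that your proposal does not carry out. The paper buys itself out of this step entirely by appealing to \cite{brailovskaya2024universality}, at the cost of needing a moment-truncation preamble and of citing external machinery rather than giving a hands-on argument.
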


\begin{proof}[\proofname\ of Proposition \ref{proposition4.1}]
The proof uses exactly the same strategy in \cite{han2024outliers}, \cite{han2024circular} and in \cite{han2025circular}, Section 4.3. We use the matrix concentration inequality in \cite{brailovskaya2024universality}, at the resolvent level to compare $\mathcal{HG}(w)$ to the MDE solution $M^\mathcal{H}(w)$. As the atom variable $\zeta$ has all moments finite, we take the same truncation argument as in \cite{han2025circular}, Section 4.3 and the quantitative estimates stated there directly carry over to the current setting. (We only need the following conditions: the self-energy operator has uniformly bounded operator norm on the relevant spectral domain, the variance profile has at most $O(W)$ nonzero entries per row and column of size $O(W^{-1})$, and the truncation/local-law inputs used in \cite{han2025circular}, Section 4.3 only require these finite-dimensional bounds and moment assumptions, not the double stochastic property.) Here \(M^\mathcal{H}(w)=M^\mathcal{H}_\rho(w)\) denotes the finite-volume MDE solution with the
full covariance operator \(\mathcal S^\mathcal{H}_\rho\), which also includes the pseudovariance terms for $\rho\neq 0$.
Therefore, the high-probability comparison is made directly towards the full-covariance
deterministic MDE solution. Finally, we use \eqref{m1w2} to project the comparison of $M^\mathcal{H}(w)$ and $\mathcal{HG}(w)$, to the comparison of $G(w),\mathcal{G}(w)$ with $M_2(w),M_1(w)$ and pick up an additional $w^{-1}$ factor. \end{proof}

The precise polynomial rate of convergence in Proposition \ref{proposition4.1} is not important, so long as the rate is polynomial in $W$ and $\Im w$.

We now derive the self-consistency equations solved by the MDE entries $M_1$ and $M_2$. It turns out that the MDE solution can be easily analyzed when $\rho=0$, so we restrict ourselves to this case and handle general $\rho$ via perturbation. All MDEs in this subsection are finite-volume deterministic equations. The dimension is
\(N=n\ell\), and \(M^\mathcal{H}(w)=M_N^\mathcal{H}(w)\in M_{2N}(\mathbb C)\) is the unique solution to the MDE \eqref{MHWMHW} with
positive imaginary part.
The boundary of the block tridiagonal model is encoded in the finite-dimensional
variance maps. Thus we do not use any infinite-volume MDE in
this subsection.

\begin{lemma}\label{proofoflemma4.2}Assume $\rho=\mathbb{E}[\zeta^2]=0$.
Since $M^\mathcal{H}(w)$ solves \eqref{MHWMHW}, then $M_1(w),M_2(w)$ solve the following two (coupled) matrix Dyson equations:

\begin{equation}\label{equaiton1093}
    -M_1(w)^{-1}=wI_N+w\Phi[M_2(w)]-|z|^2(I_N+\widetilde{\Phi}[M_1(w)])^{-1},
\end{equation}and likewise 
\begin{equation}
    -M_2(w)^{-1}=wI_N+w\widetilde{\Phi}[M_1(w)]-|z|^2(I_N+{\Phi}[M_2(w)])^{-1}.
\end{equation}
\end{lemma}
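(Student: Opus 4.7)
The plan is a direct block decomposition and Schur complement inversion of the MDE \eqref{MHWMHW}. Write $M^\mathcal{H}(w)$ as a $2\times 2$ block matrix with $N\times N$ diagonal blocks $A,D$; since $\mathcal{S}^\mathcal{H}$ kills off-diagonal blocks and sends the $(1,1)$, $(2,2)$ blocks to $\Phi[D]$, $\widetilde\Phi[A]$ respectively, the MDE takes the block form
\begin{equation*}
-M^\mathcal{H}(w)^{-1}=\begin{bmatrix}wI_N+\Phi[D] & zI_N\\ \bar z\, I_N & wI_N+\widetilde\Phi[A]\end{bmatrix}.
\end{equation*}

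Next I would invert this block matrix by Schur complements. With the off-diagonal block $R=zI_N$ scalar, one has $RQ^{-1}R^*=|z|^2Q^{-1}$ for any invertible $Q$, so the two diagonal blocks of the inverse give
\begin{equation*}
-A^{-1}=wI_N+\Phi[D]-|z|^2\left(wI_N+\widetilde\Phi[A]\right)^{-1},
\end{equation*}
\begin{equation*}
-D^{-1}=wI_N+\widetilde\Phi[A]-|z|^2\left(wI_N+\Phi[D]\right)^{-1}.
\end{equation*}
The invertibility of the two Schur complement pieces is automatic because $\Im w>0$ together with the positivity-preserving property of $\Phi$ and $\widetilde\Phi$ on diagonal matrices with positive imaginary part forces the corresponding blocks to have strictly positive imaginary parts, so the formula is legitimate.

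Finally, I would substitute the identification \eqref{m1w2} in the form $A=wM_1(w^2)$ and $D=wM_2(w^2)$ into these two relations. By linearity of $\Phi$ and $\widetilde\Phi$ the scalar $w$ factors out of both self-energy terms, and after multiplying through by $w$ and renaming the spectral variable $w^2\mapsto w$ the first relation becomes exactly \eqref{equaiton1093} and the second its companion equation. The entire argument is purely algebraic; the only mild care needed is the bookkeeping of the square-root change of spectral variable between the Hermitization level (parameter $w$) and the Gram matrix level (parameter $w^2$), and I do not anticipate any substantial obstacle beyond this bookkeeping.
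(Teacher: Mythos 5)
Your argument is correct and follows essentially the same route as the paper: writing the MDE in $2\times 2$ block form, inverting via the Schur complement (so that the scalar off-diagonal block $zI_N$ produces the $|z|^2$ factor), and then substituting the identifications $A=wM_1(w^2)$, $D=wM_2(w^2)$ from \eqref{m1w2} with the relabeling $w^2\mapsto w$. The only minor addition you make is the remark that $\Im w>0$ and positivity-preservation of $\Phi,\widetilde\Phi$ guarantee invertibility of the Schur complement blocks, which the paper leaves implicit.
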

The proof is presented at the end of this section.

We now check that the solution $M_1,M_2$ are diagonal matrices with constant diagonal value per block. For each $1\leq k\leq n$ we denote by $m_k$ the Stieltjes transform of the $k$-th diagonal block of $M_1$, i.e. $$m_k=\frac{1}{\ell}\operatorname{Tr}[M_1(w)]_{[(k-1)\ell+1,k\ell]\times [(k-1)\ell+1,k\ell]}.$$
We define $\widetilde{m}_1,\cdots,\widetilde{m}_{n}$ similarly as the normalized Stieltjes transform of each diagonal block of $M_2(w)$. 

\begin{fact}Assume $\rho=\mathbb{E}[\zeta^2]=0$.
The solution $M_1(w)$ is diagonal and has the form
$${M_1(w)}=\operatorname{diag}(m_1I_\ell,m_2I_\ell,\cdots,m_nI_\ell). $$ The other solution $M_2(w)$ also has a similar form $$M_2(w)=\operatorname{diag}(\widetilde{m}_1I_\ell,\widetilde{m}_2I_\ell,\cdots,\widetilde{m}_n I_\ell).$$ 
Moreover, $m_i(w)=\tilde{m}_i(w)$ for all $1\leq i\leq n$, so we are reduced to only one set of equation.
\end{fact}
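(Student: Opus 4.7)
The plan is to combine a symmetry observation on the self-energy operator with the uniqueness of MDE solutions with positive imaginary part. The first step is to recognize that $\Phi = \widetilde{\Phi}$: viewing a diagonal matrix as its diagonal vector, both operators are multiplication by the variance matrix $S=(s_{ij})$, with $\Phi$ corresponding to $v \mapsto Sv$ and $\widetilde{\Phi}$ to $v \mapsto S^T v$. Since the entries of $S$ are variances, $s_{ij} = s_{ji}$, so $S = S^T$ and the two operators coincide. This makes the coupled system of Lemma \ref{proofoflemma4.2} invariant under the involution $M_1 \leftrightarrow M_2$. Since the MDE for $M^{\mathcal{H}}$ in \eqref{MHWMHW} has a unique solution with positive imaginary part, and $(M_1,M_2)$ is determined from it via \eqref{m1w2}, the involution must fix the solution, giving $M_1 = M_2 =: M$ and hence $m_k = \widetilde{m}_k$ for every $k$.

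Having identified $M_1 = M_2 = M$, the reduced equation reads
\[
-M^{-1} = wI_N + w\,\Phi[M] - |z|^2\,(I_N + \Phi[M])^{-1}.
\]
By its definition $\Phi[M]$ is always a diagonal matrix and depends only on the diagonal entries of $M$; hence the right-hand side is diagonal. It follows that $M^{-1}$, and therefore $M$, is a diagonal matrix. Write $M = \operatorname{diag}(\mu_1,\ldots,\mu_N)$.

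For the block-constant structure of the diagonal, I would exploit the permutation invariance of the variance profile. Every nonzero entry of $\mathcal{T}_n$ has variance $\tfrac{1}{3\ell}$ and the zero entries occur precisely in block positions with $|k-k'|>1$, so $s_{ij}$ depends only on the block indices of $i$ and $j$. Consequently, for any block-diagonal permutation $P = \operatorname{diag}(P_1,\ldots,P_n)$ with each $P_k$ a permutation on its $\ell$-block, one has $PSP^{-1} = S$, and a direct calculation shows $\Phi[PMP^{-1}] = P\,\Phi[M]\,P^{-1}$, so the MDE is invariant under $M \mapsto PMP^{-1}$. Uniqueness then gives $PMP^{-1} = M$ for every such $P$; taking $P$ to be a transposition of any two indices inside a single block yields equality of the corresponding $\mu_i$'s, which finishes the proof.

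The main subtle point is the uniqueness argument underlying both $M_1 = M_2$ and $PMP^{-1} = M$: one must check that the positive-imaginary-part condition singling out the MDE solution is preserved under the involution $M_1 \leftrightarrow M_2$ and under conjugation by the block-diagonal permutation $P$. Both are immediate since positive definiteness of imaginary parts is stable under Hermitian conjugation by a unitary, but this stability is what makes the symmetry arguments of the previous two paragraphs rigorous rather than merely heuristic.
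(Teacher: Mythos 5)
Your proposal is essentially correct and travels a parallel route to the paper's. The crucial observation you make explicit, that $\Phi=\widetilde{\Phi}$ because the variance matrix $S$ is symmetric ($s_{ij}$ depends only on the block indices $N_i,N_j$, so $S=S^T$), is exactly what the paper invokes implicitly under the phrase ``homogeneous variance structure.'' Your permutation-invariance argument for block-constancy (conjugating by block-diagonal permutations $P$ with $PSP^{-1}=S$ and invoking uniqueness) is a clean alternative to the paper's fixed-point observation that $\Phi$ sends block-constant diagonal matrices to block-constant diagonal matrices, and both ultimately hinge on the uniqueness of the MDE solution with positive imaginary part from Helton, Far, and Speicher.

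The one place your argument is a bit too quick is the deduction that the involution $M_1\leftrightarrow M_2$ ``must fix the solution.'' The coupled self-consistent equations for $(M_1,M_2)$ are symmetric under the swap once $\Phi=\widetilde{\Phi}$, but to conclude equality you need to lift that swap to a transformation of the full $2N\times 2N$ solution $M^{\mathcal{H}}$ that preserves both the MDE \eqref{MHWMHW} and the positive-imaginary-part normalization. A concrete unitary that does the job is
\[
T=\begin{bmatrix}0 & e^{i\theta}I_N\\ e^{-i\theta}I_N & 0\end{bmatrix},\qquad e^{2i\theta}=z/\bar z,
\]
for which $TZT=Z$ (with $Z$ the $z$-shift block), $T\mathcal{S}^{\mathcal{H}}[X]T=\mathcal{S}^{\mathcal{H}}[TXT]$ \emph{precisely because} $\Phi=\widetilde{\Phi}$, and $(TM^{\mathcal{H}}T)_{11}=M^{\mathcal{H}}_{22}$, $(TM^{\mathcal{H}}T)_{22}=M^{\mathcal{H}}_{11}$. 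Uniqueness then forces $M^{\mathcal{H}}=TM^{\mathcal{H}}T$, i.e.\ $M_1=M_2$. The paper instead swaps the two diagonal blocks directly (which commutes with inversion once all four $N\times N$ blocks are diagonal, so it relies on establishing diagonality first); your unitary conjugation avoids that ordering constraint and makes the positive-imaginary-part preservation transparent. Either way the substance is the same, but the unitary $T$ (or its analogue) needs to be exhibited for the uniqueness argument to bite; merely observing the symmetry of the reduced coupled system is not enough.
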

\begin{proof}
    
    Since each random block in $\mathcal{T}_n$ is i.i.d., one can check that the map $\Phi[M_2(w)]$ depends only on the trace of each block and is constant on each block. Thus $M_1(w)$ is also constant on each diagonal block and is a block identity matrix with diagonal entry $m_k$ on the $k$-th block. One can check that $M_2(w)$ also has the same form. Then this implies that the four blocks on the right hand side of \eqref{MHWMHW}, labeled by $[1,N]\times[1,N],[1,N]\times[N+1,2N],[N+1,2N]\times[1,N],[N+1,2N]\times[N+1,2N]$ are all diagonal matrices, so that $-M^\mathcal{H}(w)$, being the inverse of this matrix, also satisfies that $[M^\mathcal{H}(w)]_{12},[M^\mathcal{H}(w)]_{21}$ are diagonal matrices, where we denote by $M^\mathcal{H}=\begin{bmatrix} [M^\mathcal{H}]_{11}&[M^\mathcal{H}]_{12}\\ [M^\mathcal{H}]_{21}&[M^\mathcal{H}]_{22}
    \end{bmatrix}$. By the homogeneous variance structure of $\mathcal{T}_{n}$, we can verify that $\hat{M}^\mathcal{H}:=\begin{bmatrix} [M^\mathcal{H}]_{22}&[M^\mathcal{H}]_{12}\\ [M^\mathcal{H}]_{21}&[M^\mathcal{H}]_{11}
    \end{bmatrix}$ is also a solution to the MDE \eqref{MHWMHW} with positive imaginary part. By \cite{helton2007operator}, Theorem 2.1, the  solution to MDE with $\Im w>0$ and positive imaginary part is unique. Thus we have $M^\mathcal{H}=\hat{M}^\mathcal{H}$, which implies, via \eqref{equation4.444}, that $m_i=\tilde{m}_i$ for each $1\leq i\leq n$.\end{proof}

We can now extract the equations solved by $m_i$ as follows:
\begin{equation}\label{miavg}
\frac{1}{m_i}=-w(1+m_i^{avg})+|z|^2(1+m_i^{avg})^{-1},\quad 1\leq i\leq n,
\end{equation}
where we denote by 
\begin{equation}
m_i^{avg}=\frac{m_{i-1}+m_i+m_{i+1}}{3},
\end{equation}and we set the zero boundary condition $m_0=m_{n+1}=0.$

When the model is fully translationally invariant with no boundary, we can check that each $m_k,k\in[n+2]$ should take the same value $m_c$, where  $m_c=m_c(w,z)$ is unique solution with positive imaginary part to the following equation 
\begin{equation}\label{positiveimaginaryfollowing}
    m_c^{-1}=-w(1+m_c)+|z|^2(1+m_c)^{-1}.
\end{equation}
This can be verified via taking the candidate solution into \eqref{miavg} and ignoring all boundary non-isotropic effects. This computation that $m_k=\widetilde{m}_k=m_c$ has been used several times in prior circular law papers such as \cite{MR3230002} and \cite{han2025circular}. 

However, for our matrix $Y_z$, the boundary is not periodic so we have no closed form solutions. 
Nevertheless, we can still prove the following uniform upper bound:
\begin{lemma}\label{lemma5.6boundedness} For all $w=i\eta,\eta>0$, the solution to \eqref{miavg} satisfies 
$$
|\Im m_i|\leq \max(6|z|,\sqrt{6})\eta^{-1/2},\quad \forall 1\leq i\leq n.
$$
\end{lemma}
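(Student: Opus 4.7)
Let me write $m_i = a_i + i\beta_i$ and $m_i^{\mathrm{avg}} = A_i + iB_i$, so that $u_i := 1+m_i^{\mathrm{avg}} = p_i + iq_i$ with $p_i = 1+A_i$ and $q_i = B_i$. The first step is a positivity observation: because $m_i(w)$ is the block average of diagonal entries of the MDE solution associated with $G(w)=(Y_z^*Y_z-w)^{-1}$ and $Y_z^*Y_z$ is positive semi-definite, each $m_i$ is the Cauchy--Stieltjes transform at $w$ of a probability measure on $[0,\infty)$. Evaluating at $w=i\eta$ with $\eta>0$ shows $\Re m_i\ge 0$ and $\Im m_i>0$, hence $A_i,B_i\ge 0$ and in particular $p_i\ge 1$.

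Next I solve \eqref{miavg} explicitly as $m_i = u_i/(|z|^2 - i\eta u_i^2)$ and take the imaginary part to get the clean formula
\[
\beta_i \;=\; \frac{|z|^2 B_i + \eta p_i|u_i|^2}{D_i},\qquad
D_i := \bigl||z|^2-i\eta u_i^2\bigr|^2 = |z|^4 + 4\eta|z|^2 p_iB_i + \eta^2|u_i|^4.
\]
I now pick $i^*$ with $\beta_{i^*}=M_\beta:=\max_j\beta_j$. The crucial structural fact is the two-sided bound
\[
\tfrac{M_\beta}{3}\le B_{i^*}\le M_\beta,
\]
where the lower bound is because $B_{i^*}$ averages three nonnegative numbers one of which equals $M_\beta$, and the upper bound is because each $\beta_j\le M_\beta$. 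In particular $|u_{i^*}|^2 \ge 1 + B_{i^*}^2\ge\max(1,M_\beta^2/9)$. Plugging $B_{i^*}\le M_\beta$ and $p_{i^*}\le |u_{i^*}|$ into the formula above and rearranging yields the key inequality
\[
M_\beta(D_{i^*}-|z|^2) \;\le\; \eta|u_{i^*}|^3. \qquad (\star)
\]

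With $(\star)$ in hand I do a case split according to whether $D_{i^*}$ dominates $|z|^2$.

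\textbf{Case I:} $D_{i^*}\ge 2|z|^2$. Then $D_{i^*}-|z|^2\ge D_{i^*}/2\ge \eta^2|u_{i^*}|^4/2$, so $(\star)$ simplifies to $M_\beta\eta|u_{i^*}|\le 2$. Combined with $|u_{i^*}|\ge M_\beta/3$ this gives $M_\beta^2\le 6/\eta$, i.e. $M_\beta\le \sqrt{6}\,\eta^{-1/2}$.

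\textbf{Case II:} $D_{i^*}< 2|z|^2$. Using $D_{i^*}\ge \eta^2|u_{i^*}|^4$ I deduce $|u_{i^*}|^2<\sqrt{2}\,|z|/\eta$, and the lower bound $|u_{i^*}|^2\ge M_\beta^2/9$ then forces $M_\beta^2<9\sqrt{2}\,|z|/\eta$. A slightly finer accounting (combining this with the additional inequalities $|u_{i^*}|^2\ge 1$ and $D_{i^*}\ge 4\eta|z|^2 p_{i^*}B_{i^*}$ derived from the definition of $D_{i^*}$, and revisiting $(\star)$) converts this into $M_\beta\le 2|z|\,\eta^{-1/2}$ in the regime where $2|z|$ dominates $\sqrt{6}$.

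Combining the two cases yields $M_\beta\le\max(2|z|,\sqrt{6})\,\eta^{-1/2}$, which is the claim.

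The main obstacle is recovering the sharp constants in Case II: the naive estimate $M_\beta^2<9\sqrt{2}|z|/\eta$ is already of the right order $|z|/\eta$, but extracting the cleaner $M_\beta\le 2|z|\eta^{-1/2}$ requires balancing the lower bounds $D_{i^*}\ge |z|^4$, $D_{i^*}\ge\eta^2|u_{i^*}|^4$, and $D_{i^*}\ge 4\eta|z|^2 p_{i^*}B_{i^*}$ against one another depending on the relative size of $|z|$ and $\eta^{-1/2}$. The factor $\sqrt{6}=\sqrt{2}\cdot\sqrt{3}$ in Case I has a transparent origin: the $\sqrt{2}$ would already appear in the translation-invariant model (where $m_i\equiv m_c$), while the extra $\sqrt{3}$ is exactly the loss caused by the block averaging $B_{i^*}\ge M_\beta/3$ (sharp when the maximum is attained at an isolated index and the two neighbors contribute zero, as can happen at the Dirichlet boundary $m_0=m_{n+1}=0$).
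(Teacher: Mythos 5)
The overall architecture of your proof is the same as the paper's: pick the extremal index $i^*$, use the three-point averaging bound $B_{i^*}\ge M_\beta/3$ (the paper's $\Im(1+s_{i_*})\ge u_*/3$), and split on whether the dissipative term or the coupling term dominates. Your identity $\Im m_i=(|z|^2B_i+\eta p_i|u_i|^2)/D_i$ and the two-sided bound $M_\beta/3\le B_{i^*}\le M_\beta$ are correct, and your Case~I reproduces the paper's second case with the constant $\sqrt{6}$. Your Case~II, however, is not actually proven: the naive estimate from $\eta^2|u_{i^*}|^4\le D_{i^*}<2|z|^2$ and $|u_{i^*}|\ge M_\beta/3$ gives $M_\beta<3\cdot 2^{1/4}\sqrt{|z|}\,\eta^{-1/2}$, which does \emph{not} dominate $\max(2|z|,\sqrt{6})\,\eta^{-1/2}$ on the whole range where Case~II can occur (recall $D_{i^*}\ge|z|^4$ forces $|z|<\sqrt{2}$, and for $|z|\in(\sqrt{2}/3,\sqrt{2})$ the naive bound is too large). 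The ``slightly finer accounting'' you sketch is exactly the content of this case, and as written it is a genuine gap.

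The good news is that the positivity you already established closes the gap and in fact eliminates the case split altogether. Since $p_i\ge 1>0$, the coupling term contributes $\Re\bigl(|z|^2u_{i^*}^{-1}\bigr)=|z|^2p_{i^*}/|u_{i^*}|^2\ge 0$ to $\Re(1/m_{i^*})$ --- it can only \emph{increase} the real part. Hence $\Re(1/m_{i^*})\ge \eta q_{i^*}\ge \eta M_\beta/3$, and $M_\beta\le|m_{i^*}|\le 1/\Re(1/m_{i^*})\le 3/(\eta M_\beta)$, which gives $M_\beta\le\sqrt{3}\,\eta^{-1/2}$, a $z$-independent bound strictly stronger than the one claimed. (Equivalently, in your exact formula drop the nonnegative term $|z|^2 B_{i^*}$ from the numerator and keep only $\eta^2|u_{i^*}|^4$ in $D_{i^*}$.) For context: the paper's proof treats the coupling term as possibly \emph{reducing} $\Re(1/m_i)$, which is conservative given $p_i\ge 0$; and the paper's Case~1 also has a mild constant slip, since it bounds $\Im(1+s_{i_*})\le 2|z|\eta^{-1/2}$, which after the factor of $3$ from the averaging only yields $\Im m_{i_*}\le 6|z|\eta^{-1/2}$, not $2|z|\eta^{-1/2}$. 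None of these constant issues matter downstream --- Corollary~\ref{rigiditylargeboundcor} uses only the $\eta^{-1/2}$ scaling --- but your Case~II as written should be replaced by the one-line positivity argument above. One caveat worth stating explicitly: the input $\Re m_i(i\eta)\ge 0$ (hence $p_i\ge 1$) relies on $m_i$ being the Stieltjes transform of a measure supported on $[0,\infty)$, which follows from the MDE/Hermitization structure via \eqref{m1w2} but is not immediate from the self-consistent equation \eqref{miavg} alone.
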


\begin{proof}
To upper bound $\Im m_i$, it is enough to show that the real part of $1/m_i$ is not too small, since 
$$
|m_i|\leq\frac{1}{|\Re(1/m_i)|},\quad \Im m_i\leq |m_i|.
$$We denote by $s_i:=m_i^{avg}$, then the dissipative term $-i\eta(1+s_i)$ contributes $\eta\Im(1+s_i)$ to $\Re(1/m_i)$. The coupling term $|z|^2(1+s_i)^{-1}$ may possibly reduce the real part, but by an amount at most $|z|^2/\Im(1+s_i)$. Thus taking the real part of the equation, we have
$$
\Re\frac{1}{m_i}\geq\eta\Im(1+s_i)-\frac{|z|^2}{\Im(1+s_i)}.
$$Now we denote by $u_i=\Im m_i$ and let $u_*=\max_i u_i$ (take $m_0=m_{n+1}=0$). Then let $i_*$ be the index achieving the maximum, we have 
\begin{equation}\label{wehaveline404}
\Im(1+s_{i_*})=\frac{u_{i_*-1}+u_{i_*}+u_{i_*+1}}{3}\geq\frac{u_*}{3}.
\end{equation}
Now, if $\eta\Im(1+s_{i_*})\leq 4\frac{|z|^2}{\Im(1+s_{i_*})}$, this directly yields the desired upper bound $$\Im(1+s_{i_*})\leq 2|z|\eta^{-1/2}.$$ Otherwise, $\eta\Im(1+s_{i_*})\geq4\frac{|z|^2}{\Im(1+s_{i_*})}$, then 
$$
\Re\frac{1}{m_{i_*}}\geq \frac{1}{2}\eta\Im(1+s_{i_*}).
$$This implies that, using \eqref{wehaveline404} for the last inequality in the next equation, 
$$
u_*\leq |m_{i_*}|\leq\frac{2}{\eta\Im(1+s_{i_*})}\leq\frac{6}{\eta u_*},
$$so that $u_*\leq\sqrt{6}\eta^{-1/2}$.
Combining both cases completes the proof. 
\end{proof}
For the general case $\rho=\mathbb{E}[\zeta^2]\neq 0$ we use a perturbation argument.

The following estimate is the block-band analogue of the pseudovariance
comparison in \cite{han2026brown}, Lemma 6.11. We include the short proof because our
variance profile has blocks of size $\ell$ and hence the small parameter is \(1/\ell\).

\begin{lemma}[Pseudovariance comparison]\label{lemmas5.10}
Let \(H_z^\rho\) be the Hermitization of the Gaussian block tridiagonal model
with the same variance profile as above, and the entry law satisfies
\[
        \mathbb E g^2=\rho,\qquad \mathbb E|g|^2=1.
\]
Let \(H_z^0\) be the circular Gaussian case \(\rho=0\). For
\(\lambda\in\mathbb C_+\), set
\[
        G_\rho(\lambda):=(H_z^\rho-\lambda)^{-1}.
\]
Then, for every fixed compact set of \(z\)'s and \(0<\Im\lambda\leq 1\),
\[
 \frac1N\left|
 \mathbb E\operatorname{Tr}\,[\lambda^{-1}G_\rho(\lambda)]_{11}
 -
 \mathbb E\operatorname{Tr}\,[\lambda^{-1}G_0(\lambda)]_{11}
 \right|
 \leq
 \frac{C_z|\rho|}{\ell(\Im\lambda)^8}.
\]
The same estimate holds for the \(22\)-block.
\end{lemma}

\begin{proof}
We use the same interpolation as in the proof of \cite{han2026brown}, Lemma 6.11. Let \(\mathcal B\) denote the set of allowed block positions of the block
tridiagonal matrix. Thus for each block row there are at most three choices of the
block column. Write
$\mathbb C^N=\mathbb C^n\otimes\mathbb C^\ell .$

For \((a,b)\in\mathcal B\) and \(u,v\in[\ell]\), denote the following matrix
\[
        E_{ab}^{uv}=|a,u\rangle\langle b,v|\in M_N(\mathbb C),
\]
and set
\[
        A_{ab}^{uv}:=
        \begin{pmatrix}
        0&E_{ab}^{uv}\\
        0&0
        \end{pmatrix}\in M_{2N}(\mathbb C).
\]
Let \(\sigma=(3\ell)^{-1/2}\). We interpolate between the circular Gaussian model and
the Gaussian model with pseudovariance \(\rho\) by choosing centered complex Gaussian
variables \(g_{ab}^{uv}(t)\) satisfying (note that when $\rho=1$ we call this degenerate complex Gaussian)
\[
        \mathbb E g_{ab}^{uv}(t)\overline{g_{a'b'}^{u'v'}(t)}
        =
        \delta_{aa'}\delta_{bb'}\delta_{uu'}\delta_{vv'},
\]
and
\[
        \mathbb E g_{ab}^{uv}(t)g_{a'b'}^{u'v'}(t)
        =
        t\rho\,
        \delta_{aa'}\delta_{bb'}\delta_{uu'}\delta_{vv'}.
\]
Let \(H_z(t)\) be the corresponding Hermitization and set
\[
        G_t(\lambda):=(H_z(t)-\lambda)^{-1}.
\]
For the \(11\)-block, define
\[
        F(t):=\frac1N\mathbb E\operatorname{Tr}
        \left[\lambda^{-1}G_t(\lambda)\right]_{11}.
\]
It suffices to bound \(F'(t)\).

By the complex Gaussian interpolation formula, for a smooth bivariate function $\Phi$,
\[
        \frac{d}{dt}\mathbb E\Phi(g(t),\bar g(t))
        =
        \frac{\rho}{2}\sum_\alpha
        \mathbb E\,\partial_{g_\alpha}^2\Phi
        +
        \frac{\bar\rho}{2}\sum_\alpha
        \mathbb E\,\partial_{\bar g_\alpha}^2\Phi,
\]
where \(\alpha=(a,b,u,v)\). That is, the differentiation in \(t\) pairs \(g_\alpha\) with
\(g_\alpha\), and \(\bar g_\alpha\) with \(\bar g_\alpha\). This is the
pseudovariance contribution. The usual covariance pairing
\(\mathbb E g_\alpha\bar g_\alpha\) is fixed along the interpolation and therefore does
not appear in \(F'(t)\).

Since
\[
        \partial_{g_\alpha}H_z(t)=\sigma A_\alpha,
        \qquad
        \partial_{\bar g_\alpha}H_z(t)=\sigma A_\alpha^*,
\]
the resolvent identities give
\[
        \partial_{g_\alpha}G_t
        =
        -\sigma G_tA_\alpha G_t,
        \qquad
        \partial_{g_\alpha}^2G_t
        =
        2\sigma^2G_tA_\alpha G_tA_\alpha G_t,
\]
and similarly
\[
        \partial_{\bar g_\alpha}^2G_t
        =
        2\sigma^2G_tA_\alpha^*G_tA_\alpha^*G_t.
\]
Therefore
\[
\begin{aligned}
        |F'(t)|
        &\leq
        \frac{C|\rho|}{N|\lambda|}
        \mathbb E\left|
        \operatorname{Tr}P_1G_t
        \left(\sigma^2\sum_\alpha A_\alpha G_tA_\alpha\right)
        G_t
        \right| \\
        &\quad+
        \frac{C|\rho|}{N|\lambda|}
        \mathbb E\left|
        \operatorname{Tr}P_1G_t
        \left(\sigma^2\sum_\alpha A_\alpha^*G_tA_\alpha^*\right)
        G_t
        \right|,
\end{aligned}
\]
where \(P_1\) is the projection onto the first \(N\) coordinates in the Hermitization.

We now bound the pseudovariance maps. Let \(X\in M_N(\mathbb C)\), and decompose it
into \(\ell\times\ell\) blocks \(X=(X_{ab})_{a,b=1}^n\). For a fixed allowed block
position \((a,b)\), one checks directly that
\[
        \sum_{u,v=1}^{\ell} E_{ab}^{uv}X E_{ab}^{uv}
\]
has only one nonzero block, namely the \((a,b)\)-block, and this block is
$       X_{ba}^{\mathsf t}.$
Indeed, its \((u,v)\)-entry equals \(X_{(b,v),(a,u)}\). Hence
\[
        \left\|
        \sigma^2\sum_{u,v=1}^{\ell}E_{ab}^{uv}X E_{ab}^{uv}
        \right\|
        \leq
        \frac{1}{3\ell}\|X\|.
\]
Thus the pseudovariance map transposes the internal \(\ell\times\ell\) block \(X_{ba}\)
and places it in the \((a,b)\)-block. This is the partial transpose on the
\(\ell\)-dimensional block coordinate. Since transposition preserves singular values, it
does not change the operator norm of each internal block. Since there are at most three
allowed block positions in each block row and each block column, the block Schur bound
implies
\[
        \left\|
        \sigma^2\sum_\alpha A_\alpha Y A_\alpha
        \right\|
        \leq
        \frac{C}{\ell}\|Y\|
\]
for every \(Y\in M_{2N}(\mathbb C)\). The same estimate holds with \(A_\alpha\) replaced
by \(A_\alpha^*\).

Using \(\|G_t(\lambda)\|\leq(\Im\lambda)^{-1}\), \(|\lambda|^{-1}\leq(\Im\lambda)^{-1}\),
and \(N^{-1}|\operatorname{Tr}P_1Y|\leq \|Y\|\), we obtain
\[
        |F'(t)|
        \leq
        \frac{C|\rho|}{\ell(\Im\lambda)^4}
        \leq
        \frac{C|\rho|}{\ell(\Im\lambda)^8},
        \qquad 0<\Im\lambda\leq 1.
\] 
Integrating over \(t\in[0,1]\) justifies the claim. The proof for the 22-block is similar.
\end{proof}

This implies the following comparison estimate:
\begin{lemma}[Pseudovariance perturbation of the finite-volume MDE]
\label{lem:pseudovariance-perturbation}
Let \(M^\mathcal{H}_\rho(\lambda)\) and \(M^\mathcal{H}_0(\lambda)\) be the finite-volume MDE
solutions corresponding respectively to \(\mathcal S^\mathcal{H}_\rho\) and
\(\mathcal S^\mathcal{H}_0\). Define
\[
        M_{\rho,1}(\lambda^2)
        :=
        \lambda^{-1}[M^\mathcal{H}_\rho(\lambda)]_{[1,N]\times[1,N]},
        \qquad
        M_{0,1}(\lambda^2)
        :=
        \lambda^{-1}[M^\mathcal{H}_0(\lambda)]_{[1,N]\times[1,N]}.
\]
Then for every fixed compact set of \(z\)'s there exists \(C_z>0\) such that,
for all \(\lambda\in\mathbb C_+\) with \(0<\Im\lambda\le1\),
\[
        \frac1N
        \left|
        \operatorname{Tr}M_{\rho,1}(\lambda^2)
        -
        \operatorname{Tr}M_{0,1}(\lambda^2)
        \right|
        \leq
        \frac{C_z|\rho|}{\ell(\Im\lambda)^8}+\frac{N^c}{W^{1/2}(\Im\lambda)^6}.
\]
The same estimate holds for the \(22\)-block deterministic equivalent
\(M_{\rho,2}\) obtained from the $[N+1,2N]\times[N+1,2N]$-th corner of $\lambda^{-1}M_\rho^\mathcal{H}(\lambda)$.
\end{lemma}

\begin{proof}Lemma \ref{lemmas5.10} provides a perturbation estimate for the expectation of the trace of $\lambda^{-1}(\mathcal{H}_z^0 -\lambda)^{-1}$ and $\lambda^{-1}(\mathcal{H}_z^\rho -\lambda)^{-1}$. Proposition \ref{proposition4.1} then provides convergence of these random quantities to the deterministic limit $M_{\rho,1},M_{0,1}$ (To pass from the high-probability comparison in Proposition \ref{proposition4.1} to the expectation
of the normalized trace, we use the deterministic resolvent bound
\(\|G_t(\lambda)\|\leq(\Im\lambda)^{-1}\) on the exceptional event, which has probability \(O(N^{-100})\)). That is, we have \[
\left|
\frac1N\mathbb E\operatorname{Tr}[\lambda^{-1}G_\rho(\lambda)]_{11}
-\frac1N\operatorname{Tr}M_{\rho,1}(\lambda^2)
\right|
\leq
\frac{C_zN^c}{W^{1/2}(\Im\lambda)^6},
\]
and the same holds for \(\rho=0\). Combining the two estimates completes the proof. 
\end{proof}

Combining the previous deterministic estimates, we conclude that
\begin{corollary}[Block tridiagonal analogue of \cite{han2025circular}, Lemma 4.7]
\label{lem:block-tridiagonal-MDE-bound}
Let \(M^\mathcal{H}_\rho(\lambda)\) be the finite-volume MDE solution associated with
the canonical block tridiagonal model, and define
\[
        M_{\rho,1}(\lambda^2)
        :=
        \lambda^{-1}[M^\mathcal{H}_\rho(\lambda)]_{[1,N]\times[1,N]}.
\]
Then for every fixed compact set of \(z\)'s there exists \(C_z>0\) such that,
for all \(0<\eta\leq 1\), with
\[
        \lambda=\frac{1+i}{\sqrt2}\eta,
        \qquad \lambda^2=i\eta^2,
\]
we have
\[
        \Im \operatorname{Tr} M_{\rho,1}(i\eta^2)
        \leq
        C_z N\left(\eta^{-1}+\frac{1}{W\eta^8}+\frac{N^c}{W^{1/2}\eta^6}\right),
        \qquad W=\ell.
\]
The same estimate holds for the \(22\)-block deterministic equivalent
\(M_{\rho,2}\).
\end{corollary}

\begin{proof}
When \(\rho=0\), Lemma \ref{lemma5.6boundedness} gives, for \(w=i\eta^2\),
\[
        \Im \operatorname{Tr} M_{0,1}(i\eta^2)
        \leq C_zN\eta^{-1}.
\]
For general \(\rho\), Lemma \ref{lem:pseudovariance-perturbation} gives
\[
        \frac1N
        \left|
        \operatorname{Tr}M_{\rho,1}(i\eta^2)
        -
        \operatorname{Tr}M_{0,1}(i\eta^2)
        \right|
        \leq
        \frac{C_z|\rho|}{\ell \eta^8}+\frac{N^c}{W^{1/2}\eta^6}.
\]
Since \(|\rho|\leq 1\) and \(W=\ell\), the desired estimate follows. The proof for
the \(22\)-block is identical.
\end{proof}

Combining these uniform upper bounds of Stieltjes transform, and matrix concentration inequality in Proposition \ref{proposition4.1}, we can deduce singular value rigidity for $Y_z$:

\begin{corollary}\label{rigiditylargeboundcor}With the same assumption as above, for any sufficiently small constant $c>0$, we have that 
with probability at least $1-N^{-100}$, the number of singular values of $Y_z$ in $[0,W^{-1/10}N^{c/5}]$ is at most $N^{1+c/5}W^{-1/10}$.    
\end{corollary}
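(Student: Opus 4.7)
The plan is to convert the rigidity claim into a Stieltjes transform bound at a carefully chosen imaginary spectral parameter, then balance the local-law error from Proposition~\ref{proposition4.1} against the a priori MDE bound from Lemma~\ref{lemma5.6boundedness}. To begin, I invoke the standard Markov-type inequality: since $\frac{\eta}{x^2+\eta^2}\geq \frac{1}{2\eta}$ for every $x\in[0,\eta]$, the empirical measure $\nu$ of eigenvalues of $Y_z^*Y_z$ satisfies
\begin{equation*}
\#\{\lambda_i(Y_z^*Y_z)\in[0,\eta]\}\leq 2N\eta\,\Im m_\nu(i\eta),\qquad m_\nu(i\eta):=\tfrac{1}{N}\operatorname{Tr}G(i\eta).
\end{equation*}
Since $\#\{s_i(Y_z)\in[0,t]\}=\#\{\lambda_i(Y_z^*Y_z)\in[0,t^2]\}$, it suffices to bound $\Im m_\nu(i\eta)$ at the scale $\eta=t^2$ with $t=W^{-1/10}N^{c/5}$.

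Next I apply Proposition~\ref{proposition4.1} at the choice $w=\sqrt{i\eta}=\sqrt{\eta/2}\,(1+i)$, for which $w^2=i\eta$ and $\Im w=\sqrt{\eta/2}$. On an event of probability at least $1-N^{-100}$, averaging the entrywise bound from that proposition gives
\begin{equation*}
\bigl|\Im m_\nu(i\eta)-\Im\tfrac{1}{N}\operatorname{Tr}M_1(i\eta)\bigr|\leq \sup_i\bigl|[G(i\eta)]_{ii}-[M_1(i\eta)]_{ii}\bigr|\leq \frac{C_1 N^c}{\sqrt{W}\,\eta^3}.
\end{equation*}
The MDE solution $M_1(i\eta)$ is block-diagonal with constant block traces $m_k(i\eta)$, and Lemma~\ref{lemma5.6boundedness} bounds $|\Im m_k(i\eta)|\leq C_z\eta^{-1/2}$ uniformly in $k$. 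Combining with the Markov inequality yields
\begin{equation*}
\#\{\lambda_i(Y_z^*Y_z)\in[0,\eta]\}\leq 2C_z N\sqrt{\eta}+\frac{2C_1 N^{1+c}}{\sqrt{W}\,\eta^2}.
\end{equation*}

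Finally, the choice $\eta=W^{-1/5}N^{2c/5}=t^2$ is precisely the one that balances the two contributions: a direct substitution shows each term is $O(N^{1+c/5}W^{-1/10})$, giving the stated singular value count in $[0,t]$. The argument is essentially bookkeeping of exponents once the two ingredients are in hand; the only genuine subtlety—and the reason Lemma~\ref{lemma5.6boundedness} was needed—is that the non-translation-invariant MDE for $Y_z$ does not admit a closed form, so one cannot simply invoke an explicit computation from the Ginibre fixed-point equation~\eqref{positiveimaginaryfollowing}. Beyond that, the constraint $W\geq\sqrt{N}$ underlying Proposition~\ref{proposition4.1} is compatible with the targeted regime, and $c>0$ can be taken as small as needed so that the implicit constants are absorbed.
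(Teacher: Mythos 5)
Your argument is correct and follows essentially the same route as the paper's own proof: bound the count of small eigenvalues of $Y_z^*Y_z$ by $N\eta\,\Im m_\nu(i\eta)$ (the Poisson-kernel/Markov step), control $\Im m_\nu(i\eta)$ by combining the local law of Proposition~\ref{proposition4.1} at $w$ with $w^2=i\eta$ with the a priori bound $|\Im m_k|\lesssim \eta^{-1/2}$ from Lemma~\ref{lemma5.6boundedness}, and then balance the two terms at $\eta=W^{-1/5}N^{2c/5}$. The only (shared) cosmetic point is the absolute constant in front of $N^{1+c/5}W^{-1/10}$, which, as you note, is absorbed by adjusting $c$, exactly as in the paper.
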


\begin{proof}
With probability at least $1-N^{-100}$ where Proposition \ref{proposition4.1} holds, we bound the number of singular values by the Stieltjes transform via the Poisson kernel formula and get 
$$\begin{aligned}
\#\{\text{Number of singular values of }Y_z\text{ in }[0,\eta]\}&\leq 
2\eta^2\Im(\operatorname{Tr}\mathcal{G}(i\eta^2))\\&=2\eta^2\Im (\operatorname{Tr}\mathcal{G}([(1+i)\eta/\sqrt{2}]^2))
\\&\leq  2\eta^2(\Im\operatorname{Tr}M_{\rho,1}(i\eta^2)+\frac{2N^{1+c}}{W^{1/2}\eta^6})\\&\leq C(z)N\eta^2(\eta^{-1}+\frac{1}{W\eta^8}+\frac{2N^c}{W^{1/2}\eta^6})
\end{aligned}$$ for some constant $C(z)>0$ depending only on $z$. In the third line we used Proposition \ref{proposition4.1} and in the fourth line we used Corollary \ref{lem:block-tridiagonal-MDE-bound}. We take $\eta=W^{-1/10}N^{c/5}$ and this leads to the desired estimate. 
\end{proof}

Now we return to the matrix $\mathcal{T}_{n+2}-zI_{mid}$ and complete the proof of Proposition \ref{propositionrigidity}.

\begin{proof}[\proofname\ of Proposition \ref{propositionrigidity}]We work on an event $\|\mathcal{T}_{n+2}-zI_{mid}\|\leq K$ for some $K$ that holds with probability at least $1-N^{-100}$: the existence of such $K$ follows from the operator norm bound in Lemma \ref{operatornormbound} and \ref{astronger}.
    Then we apply Lemma \ref{lemma5.1} to reduce to singular value rigidity of $Y_z^R$, and apply Lemma \ref{lemma5.2} to reduce to singular value rigidity of the block diagonal matrix $Y_z^{R,diag}$ with probability at least $1-C\ell^{-7}$. Specifically, take $m:=\frac{(n\ell)^{1+c}}{\ell^{0.1}}$ and $t=(n\ell)^{-120}$, if $\mathcal{T}_{n+2}-zI_{\text{mid}}$ has more than $m$ singular values below $t$ then $Y_z^R$ has more than $m$ singular values below $\epsilon=3K\sqrt{2n\ell}t$. Under the assumption $n\leq\ell^{1/12}$, this gives $\epsilon\leq\ell^{-110}$ for $\ell$ large enough, so we can apply Lemma \ref{lemma5.2}.

    Then the rigidity of the upper block follows from Lemma \ref{lemma3.2} and rigidity for the lower large block follows from Corollary \ref{rigiditylargeboundcor}. Specifically, the top block of $Y_z^{R,\text{diag}}$ is block diagonal with top block $[C_1,A_1-zI_\ell]R$, whose least singular value is at least $\ell^{-10}$ with probability at least $1-C\ell^{-8}$. Then since $\ell^{-10}\ll W^{-1/10}N^{c/5}$, the lower block $[Y_z^{R,\text{diag}}]_{[2,n]\times[2,n]}$ has at most $N^{1+c/5}W^{-1/10}\leq\frac{(n\ell)^{1+c}}{\ell^{0.1}}$ number of singular values in the interval $[0,\ell^{-10}]$ after shrinking $c$.
\end{proof}

We also complete the proof of Proposition \ref{stieltjesconvergence}:
\begin{proof}[\proofname\ of Proposition \ref{stieltjesconvergence}]
  We first consider the circular case \(\rho=\mathbb E\zeta^2=0\). Applying the
same local-law argument as in Proposition \ref{proposition4.1} to the periodic matrix
\(\mathcal{T}^{\rm per}_{n+2}\), and using the translation invariance of the periodic variance
profile, we see that the associated finite-volume MDE solution is constant on the diagonal blocks.
The solution is therefore given by the scalar solution \(m_c\) of \eqref{positiveimaginaryfollowing}. Projecting the
Hermitized MDE as in \eqref{m1w2}, and using the standard identification of this scalar
solution with the Stieltjes transform \(m_z\) of the circular-law singular-value
measure, gives the desired estimate in the case \(\rho=0\).

For a general entry distribution, let \(M^\mathcal{H}_\rho\) and \(M^\mathcal{H}_0\) denote the
finite-volume MDE solutions corresponding to the same periodic variance profile
but with pseudovariance \(\rho\) and \(0\), respectively. Set $\xi=\lambda^2$. The same perturbation
argument as in Lemma \ref{lem:pseudovariance-perturbation} gives
\[
\frac1N\left|\operatorname{Tr} M_{\rho,1}(\lambda^2)
-\operatorname{Tr}M_{0,1}(\lambda^2)\right|
\leq
\frac{C_z|\rho|}{\ell(\Im\lambda)^8}+\frac{N^c}{\ell^{1/2}(\Im\lambda)^6}.
\]
This error is smaller than the error term in Proposition \ref{stieltjesconvergence}. Hence the same
convergence to \(m_z\) holds for general \(\rho\). Combining this deterministic
comparison with the local-law estimate for the MDE solution
completes the proof.
\end{proof}

We list here the (standard) proof of Lemma \ref{proofoflemma4.2}.

\begin{proof}[\proofname\ of Lemma \ref{proofoflemma4.2}]
    We denote by $$M^\mathcal{H}=\begin{bmatrix}
        A&B\\C&D
    \end{bmatrix},\quad R:=\begin{bmatrix}
        w I_N+\Phi[D]&zI_N\\\bar{z}I_N& w I_N+\widetilde{\Phi
        }[A]
    \end{bmatrix},$$ so that we can rewrite $M^\mathcal{H}(w)=-R^{-1}$.

    Applying Schur complement formula to $R=\begin{bmatrix}
        R_{11}&R_{12}\\R_{21}&R_{22}
    \end{bmatrix}$ with $R_{22}$ invertible, we have
    $$
(R^{-1})_{11}=(R_{11}-R_{12}R_{22}^{-1}R_{21})^{-1}.
    $$ Applied with the following choices
$$R_{11}=w I_N+\Phi[D],R_{12}=z I_N
,R_{21}=\bar{z}I_N,R_{22}=w I_N+\widetilde{\Phi}[A],$$ we get that 
$$
-A^{-1}=wI_N+\Phi[D]-|z|^2(w I_N+\widetilde{\Phi}[A])^{-1}.
$$ A similar equation can be derived for $D^{-1}$. Denote $A=A[w]$ to signify the dependence on $w$, then we use $M_1(w^2)=\frac{1}{w}A[w]$ in \eqref{m1w2} to rescale and then verify the two identities satisfied by $M_1(w),M_2(w)$.
    \end{proof}

\section*{Acknowledgments}The author thanks Charles Bordenave, Giorgio Cipolloni and Tatyana Shcherbina for many insightful discussions on this topic.

\section*{Funding}
The author receives a fellowship from IAS provided by the S.S. Chern Foundation for Mathematical Research Fund and the Fund for Mathematics.

Part of this work was completed while the author was financially supported by a Simons Foundation Grant (601948, DJ) at Massachusetts Institute of Technology.

\printbibliography

@article{tikhomirov2020invertibility,
  title={Invertibility via distance for noncentered random matrices with continuous distributions},
  author={Tikhomirov, Konstantin},
  journal={Random Structures \& Algorithms},
  volume={57},
  number={2},
  pages={526--562},
  year={2020},
  publisher={Wiley Online Library}
}

@article{schenker2009eigenvector,
  title={Eigenvector localization for random band matrices with power law band width},
  author={Schenker, Jeffrey},
  journal={Communications in Mathematical Physics},
  volume={290},
  number={3},
  pages={1065--1097},
  year={2009},
  publisher={Springer}
}

@article{cipolloni2024dynamical,
  title={Dynamical Localization for Random Band Matrices Up to $W\ll N^{1/4}$},
  author={Cipolloni, Giorgio and Peled, Ron and Schenker, Jeffrey and Shapiro, Jacob},
  journal={Communications in Mathematical Physics},
  volume={405},
  number={3},
  pages={82},
  year={2024},
  publisher={Springer}
}

@article{sah2023limiting,
  title={The limiting spectral law for sparse iid matrices},
  author={Sah, Ashwin and Sahasrabudhe, Julian and Sawhney, Mehtaab},
  journal={arXiv preprint arXiv:2310.17635},
  year={2023}
}

@article {MR4195739,
    AUTHOR = {Litvak, Alexander E. and Lytova, Anna and Tikhomirov,
              Konstantin and Tomczak-Jaegermann, Nicole and Youssef, Pierre},
     TITLE = {Circular law for sparse random regular digraphs},
   JOURNAL = {J. Eur. Math. Soc. (JEMS)},
  FJOURNAL = {Journal of the European Mathematical Society (JEMS)},
    VOLUME = {23},
      YEAR = {2021},
    NUMBER = {2},
     PAGES = {467--501},
      ISSN = {1435-9855,1435-9863},
   MRCLASS = {60B20 (05C20 05C50 15B52 47B80)},
  MRNUMBER = {4195739},
}

@article {MR3945840,
    AUTHOR = {Rudelson, Mark and Tikhomirov, Konstantin},
     TITLE = {The sparse circular law under minimal assumptions},
   JOURNAL = {Geom. Funct. Anal.},
  FJOURNAL = {Geometric and Functional Analysis},
    VOLUME = {29},
      YEAR = {2019},
    NUMBER = {2},
     PAGES = {561--637},
      ISSN = {1016-443X,1420-8970},
   MRCLASS = {60B20 (15A18)},
  MRNUMBER = {3945840},
}

@article{rudelson2008littlewood,
  title={The Littlewood--Offord problem and invertibility of random matrices},
  author={Rudelson, Mark and Vershynin, Roman},
  journal={Advances in Mathematics},
  volume={218},
  number={2},
  pages={600--633},
  year={2008},
  publisher={Elsevier}
}

@article{bordenave2011spectrum,
  title={Spectrum of non-Hermitian heavy tailed random matrices},
  author={Bordenave, Charles and Caputo, Pietro and Chafai, Djalil},
  journal={Communications in mathematical physics},
  volume={307},
  number={2},
  pages={513--560},
  year={2011},
  publisher={Springer}
}

@article {MR2663633,
    AUTHOR = {G\"otze, Friedrich and Tikhomirov, Alexander},
     TITLE = {The circular law for random matrices},
   JOURNAL = {Ann. Probab.},
  FJOURNAL = {The Annals of Probability},
    VOLUME = {38},
      YEAR = {2010},
    NUMBER = {4},
     PAGES = {1444--1491},
      ISSN = {0091-1798,2168-894X},
   MRCLASS = {60B20},
  MRNUMBER = {2663633},
MRREVIEWER = {Sasha\ Sodin},
}

@article{han2026brown,
  title={Brown measure convergence for the spectrum of polynomials in Ginibre matrices},
  author={Han, Yi},
  journal={arXiv preprint arXiv:2606.01664},
  year={2026}
}

@article{tao2008random,
  title={Random matrices: the circular law},
  author={Tao, Terence and Vu, Van},
  journal={Communications in Contemporary Mathematics},
  volume={10},
  number={02},
  pages={261--307},
  year={2008},
  publisher={World Scientific}
}

@article {MR1428519,
    AUTHOR = {Bai, Z. D.},
     TITLE = {Circular law},
   JOURNAL = {Ann. Probab.},
  FJOURNAL = {The Annals of Probability},
    VOLUME = {25},
      YEAR = {1997},
    NUMBER = {1},
     PAGES = {494--529},
      ISSN = {0091-1798,2168-894X},
   MRCLASS = {60F15 (62H99)},
  MRNUMBER = {1428519},
}

@article{wigner1958distribution,
  title={On the distribution of the roots of certain symmetric matrices},
  author={Wigner, Eugene P},
  journal={Annals of Mathematics},
  volume={67},
  number={2},
  pages={325--327},
  year={1958},
  publisher={JSTOR}
}

@article {MR2908617,
    AUTHOR = {Bordenave, Charles and Chafa\"i, Djalil},
     TITLE = {Around the circular law},
   JOURNAL = {Probab. Surv.},
  FJOURNAL = {Probability Surveys},
    VOLUME = {9},
      YEAR = {2012},
     PAGES = {1--89},
      ISSN = {1549-5787},
   MRCLASS = {60B20 (15B52 60F15)},
  MRNUMBER = {2908617},
MRREVIEWER = {Vladislav\ Kargin},
}

@article{han2025invertibility,
  title={Invertibility for non-Hermitian random band matrices with sublinear bandwidth and discrete entries},
  author={Han, Yi},
  journal={arXiv preprint arXiv:2507.05417},
  year={2025}
}

@article {MR4268303,
    AUTHOR = {Hanin, Boris and Paouris, Grigoris},
     TITLE = {Non-asymptotic results for singular values of {G}aussian
              matrix products},
   JOURNAL = {Geom. Funct. Anal.},
  FJOURNAL = {Geometric and Functional Analysis},
    VOLUME = {31},
      YEAR = {2021},
    NUMBER = {2},
     PAGES = {268--324},
      ISSN = {1016-443X,1420-8970},
   MRCLASS = {60B20 (15B52)},
  MRNUMBER = {4268303},
}

@article{oseledec1968multiplicative,
  title={A multiplicative ergodic theorem, Lyapunov characteristic numbers for dynamical systems},
  author={Oseledec, Valery Iustinovich},
  journal={Transactions of the Moscow Mathematical Society},
  volume={19},
  pages={197--231},
  year={1968}
}

@book{bougerol2012products,
  title={Products of random matrices with applications to Schr{\"o}dinger operators},
  author={Bougerol, Philippe and others},
  volume={8},
  year={2012},
  publisher={Springer Science \& Business Media}
}

@article{peled2019wegner,
  title={On the Wegner orbital model},
  author={Peled, Ron and Schenker, Jeffrey and Shamis, Mira and Sodin, Sasha},
  journal={International Mathematics Research Notices},
  volume={2019},
  number={4},
  pages={1030--1058},
  year={2019},
  publisher={Oxford University Press}
}

@article{goldstein2022fluctuations,
  title={Fluctuations and localization length for random band GOE matrix},
  author={Goldstein, Michael},
  journal={arXiv preprint arXiv:2210.04346},
  year={2022}
}

@article{drogin2025localization,
  title={Localization of One-Dimensional Random Band Matrices},
  author={Drogin, Reuben},
  journal={arXiv preprint arXiv:2508.05802},
  year={2025}
}

@article{yau2025delocalization,
  title={Delocalization of One-Dimensional Random Band Matrices},
  author={Yau, Horng-Tzer and Yin, Jun},
  journal={arXiv preprint arXiv:2501.01718},
  year={2025}
}

@article{shcherbina2025characteristic,
  title={Characteristic polynomials of non-Hermitian random band matrices},
  author={Shcherbina, Mariya and Shcherbina, Tatyana},
  journal={arXiv preprint arXiv:2510.04255},
  year={2025}
}

@article{tikhomirov2023pseudospectrum,
  title={On pseudospectrum of inhomogeneous non-Hermitian random matrices},
  author={Tikhomirov, Konstantin},
  journal={arXiv preprint arXiv:2307.08211},
  year={2023}
}

@article{holz2003remarkable,
  title={On the remarkable spectrum of a non-Hermitian random matrix model},
  author={Holz, Daniel E and Orland, Henri and Zee, A},
  journal={Journal of Physics A: Mathematical and General},
  volume={36},
  number={12},
  pages={3385},
  year={2003},
  publisher={IOP Publishing}
}

@article{sankar2006smoothed,
  title={Smoothed analysis of the condition numbers and growth factors of matrices},
  author={Sankar, Arvind and Spielman, Daniel A and Teng, Shang-Hua},
  journal={SIAM Journal on Matrix Analysis and Applications},
  volume={28},
  number={2},
  pages={446--476},
  year={2006},
  publisher={SIAM}
}

@article{MR2191234,
  title={The Thouless formula for random non-Hermitian Jacobi matrices},
  author={Goldsheid, Ilya Ya and Khoruzhenko, Boris A},
  journal={Israel Journal of Mathematics},
  volume={148},
  number={1},
  pages={331--346},
  year={2005},
  publisher={Springer}
}

@article{WOS:000281425000010,
Author = {Tao, Terence and Vu, Van and Krishnapur, Manjunath},
Title = {Random matrices: universality of ESDs and the circular law},
Journal = {Annals of Probability},
Year = {2010},
Volume = {38},
Number = {5},
Pages = {2023-2065},
}

@book{bai2010spectral,
  title={Spectral analysis of large dimensional random matrices},
  author={Bai, Zhidong and Silverstein, Jack W},
  volume={20},
  year={2010},
  publisher={Springer}
}

@article{jain2021circular,
  title={Circular law for random block band matrices with genuinely sublinear bandwidth},
  author={Jain, Vishesh and Jana, Indrajit and Luh, Kyle and O’Rourke, Sean},
  journal={Journal of Mathematical Physics},
  volume={62},
  number={8},
  year={2021},
  publisher={AIP Publishing}
}

@article{dubova2025delocalization,
  title={Delocalization of two-dimensional random band matrices},
  author={Dubova, Sofiia and Yang, Kevin and Yau, Horng-Tzer and Yin, Jun},
  journal={arXiv preprint arXiv:2503.07606},
  year={2025}}

@article{dubova2025delocalization3d,
  title={Delocalization of Non-Mean-Field Random Matrices in Dimensions $ d\geq3$},
  author={Dubova, Sofiia and Yang, Fan and Yau, Horng-Tzer and Yin, Jun},
  journal={arXiv preprint arXiv:2507.20274},
  year={2025}
}

@article{han2024outliers,
  title={Outliers and bounded rank perturbation for non-Hermitian random band matrices},
  author={Han, Yi},
  journal={arXiv preprint arXiv:2408.00567},
  year={2024}
}

@article{erdHos2025zigzag,
  title={The Zigzag Strategy for Random Band Matrices},
  author={Erd{\H{o}}s, L{\'a}szl{\'o} and Riabov, Volodymyr},
  journal={arXiv preprint arXiv:2506.06441},
  year={2025}
}

@article{chen2022random,
  title={Random band matrix localization by scalar fluctuations},
  author={Chen, Nixia and Smart, Charles K},
  journal={arXiv preprint arXiv:2206.06439},
  year={2022}
}

@article{helton2007operator,
  title={Operator-valued semicircular elements: solving a quadratic matrix equation with positivity constraints},
  author={Helton, J William and Far, Reza Rashidi and Speicher, Roland},
  journal={International Mathematics Research Notices},
  volume={2007},
  number={9},
  pages={rnm086--rnm086},
  year={2007},
  publisher={OUP}
}

@article {MR3230002,
    AUTHOR = {Bourgade, Paul and Yau, Horng-Tzer and Yin, Jun},
     TITLE = {Local circular law for random matrices},
   JOURNAL = {Probab. Theory Related Fields},
  FJOURNAL = {Probability Theory and Related Fields},
    VOLUME = {159},
      YEAR = {2014},
    NUMBER = {3-4},
     PAGES = {545--595},
      ISSN = {0178-8051,1432-2064},
   MRCLASS = {15B52 (60B20)},
  MRNUMBER = {3230002},
MRREVIEWER = {Florent\ Benaych-Georges},
}

@incollection {MR4680362,
    AUTHOR = {Tikhomirov, Konstantin},
     TITLE = {Quantitative invertibility of non-{H}ermitian random matrices},
 BOOKTITLE = {I{CM}---{I}nternational {C}ongress of {M}athematicians. {V}ol.
              4. {S}ections 5--8},
     PAGES = {3292--3313},
 PUBLISHER = {EMS Press, Berlin},
      YEAR = {2023},
     
}

@article{rudelson2015small,
  title={Small ball probabilities for linear images of high-dimensional distributions},
  author={Rudelson, Mark and Vershynin, Roman},
  journal={International Mathematics Research Notices},
  volume={2015},
  number={19},
  pages={9594--9617},
  year={2015},
  publisher={Oxford University Press}
}

@article{cook2018lower,
  title={Lower bounds for the smallest singular value of structured random matrices},
  author={Cook, Nicholas},
  journal={The Annals of probability},
  volume={46},
  number={6},
  pages={3442--3500},
  year={2018},
  publisher={JSTOR}
}

@article{han2024circular,
  title={The circular law for random band matrices: improved bandwidth for general models},
  author={Han, Yi},
  journal={arXiv preprint arXiv:2410.16457},
  year={2024}
}

@article{brailovskaya2024universality,
  title={Universality and sharp matrix concentration inequalities},
  author={Brailovskaya, Tatiana and van Handel, Ramon},
  journal={Geometric and Functional Analysis},
  volume={34},
  number={6},
  pages={1734--1838},
  year={2024},
  publisher={Springer}
}

@article{han2025circular,
  title={The circular law for non-Hermitian random band matrices up to bandwidth $N^{1/2+c}$},
  author={Han, Yi},
  journal={arXiv preprint arXiv:2508.18143},
  year={2025}
}

@article{cohen1984stability,
  title={The stability of large random matrices and their products},
  author={Cohen, Joel E and Newman, Charles M},
  journal={The Annals of Probability},
  pages={283--310},
  year={1984},
  publisher={JSTOR}
}

@article{isopi1992triangle,
  title={The triangle law for Lyapunov exponents of large random matrices},
  author={Isopi, Marco and Newman, Charles M},
  journal={Communications in mathematical physics},
  volume={143},
  number={3},
  pages={591--598},
  year={1992},
  publisher={Springer}
}

@article {MR3737935,
    AUTHOR = {Nguyen, Hoi H.},
     TITLE = {Asymptotic {L}yapunov exponents for large random matrices},
   JOURNAL = {Ann. Appl. Probab.},
  FJOURNAL = {The Annals of Applied Probability},
    VOLUME = {27},
      YEAR = {2017},
    NUMBER = {6},
     PAGES = {3672--3705},
      ISSN = {1050-5164,2168-8737},
   MRCLASS = {60B20 (60B10)},
  MRNUMBER = {3737935},
}

\end{document}